\documentclass[10pt,reqno]{amsart}

\usepackage{amsthm,amsmath,amstext,amssymb,amscd,euscript, mathrsfs, dsfont,multicol,times,enumerate,subfig,sidecap}
\usepackage{enumerate}
\usepackage{amsmath, amssymb, amsthm}
\usepackage{mathrsfs}
\usepackage{esint}
\usepackage{xcolor}
\usepackage{mathtools}
\usepackage{bm}
\usepackage{bbm}
\usepackage{todonotes}

\usepackage[colorlinks=true, pdfstartview=FitV, linkcolor=blue, citecolor=red, urlcolor=blue]{hyperref} 

\newtheorem{thm}{Theorem}[section]
\newtheorem{corollary}[thm]{Corollary}
\newtheorem{lem}[thm]{Lemma}
\newtheorem{prop}[thm]{Proposition}
\theoremstyle{definition}
\newtheorem{defn}[thm]{Definition}

\newtheorem{rem}[thm]{Remark}


\newcommand\bE{\mathbb{E}}

\newcommand\bN{\mathbb{N}}

\newcommand\bP{\mathbb{P}}
\newcommand\bQ{\mathbb{Q}}
\newcommand\bR{\mathbb{R}}

\newcommand\bZ{\mathbb{Z}}



\newcommand\cF{\mathcal{F}}

\newcommand\cI{\mathcal{I}}

\newcommand\cL{\mathcal{L}}

\newcommand\cM{\mathcal{M}}

\newcommand\cO{\mathcal{O}}

\newcommand{\dd}{\,\mathrm{d}}

\newcommand{\p}{\partial}

\newcommand{\mysection}[1]{\section{#1}
\setcounter{equation}{0}}

\def\XXint#1#2#3{{\setbox0=\hbox{$#1{#2#3}{\int}$ }
\vcenter{\hbox{$#2#3$ }}\kern-.58\wd0}}

\textwidth14cm
\textheight21.5cm
\evensidemargin1.1cm
\oddsidemargin1.1cm

\addtolength{\headheight}{5.2pt}    

\makeatletter
\def\@tocline#1#2#3#4#5#6#7{\relax
  \ifnum #1>\c@tocdepth 
  \else
    \par \addpenalty\@secpenalty\addvspace{#2}%
    \begingroup \hyphenpenalty\@M
    \@ifempty{#4}{%
      \@tempdima\csname r@tocindent\number#1\endcsname\relax
    }{%
      \@tempdima#4\relax
    }%
    \parindent\z@ \leftskip#3\relax \advance\leftskip\@tempdima\relax
    \rightskip\@pnumwidth plus4em \parfillskip-\@pnumwidth
    #5\leavevmode\hskip-\@tempdima
      \ifcase #1
       \or\or \hskip 1em \or \hskip 2em \else \hskip 3em \fi%
      #6\nobreak\relax
    \dotfill\hbox to\@pnumwidth{\@tocpagenum{#7}}\par
    \nobreak
    \endgroup
  \fi}
\makeatother


\begin{document}
\title[Trace theorem to Volterra-type equations with local or non-local derivatives]
{On the trace theorem to Volterra-type equations with local or non-local derivatives}

\author[J.-H. Choi]{Jae-Hwan Choi}
\address[J.-H. Choi]{Department of Mathematical Sciences, Korea Advanced Institute of Science and Technology, 291 Daehak-ro, Yuseong-gu, Daejeon 34141, Republic of Korea}
\email{jaehwanchoi@kaist.ac.kr}

\author[J. B. Lee]{Jin Bong Lee}
\address[J. B. Lee]{Research Institute of Mathematics, Seoul National University, Gwanak-ro 1, Gwanak-gu, Seoul 08826, Republic of Korea}
\email{jinblee@snu.ac.kr}

\author[J. Seo]{Jinsol Seo}
\address[J. Seo]{School of Mathematics, Korea Institute for Advanced Study, 85 Hoegiro Dongdaemun-gu, Seoul 02455, Republic of Korea}
\email{seo9401@kias.re.kr}

\author[K. Woo]{Kwan Woo}
\address[K. Woo]{Department of Mathematical Sciences, Ulsan National Institute of Science and Technology, Unist-gil 50, Eonyang-eup, Ulju-gun, Ulsan 44919, Republic of Korea}
\email{gonow@unist.ac.kr}
\email{}

\thanks{}

\subjclass[2020]{46E35, 45D05, 26A33, 46B70, 60G51}

\keywords{Volterra integral equations, Time fractional equations, Trace theorem, Initial value problem, Generalized real interpolation, Weighted function spaces}

\maketitle
\begin{abstract}
This paper considers traces at the initial time for solutions of evolution equations with local or non-local derivatives in vector-valued $L_p$ spaces with $A_p$ weights.
To achieve this, we begin by introducing a generalized real interpolation method.
Within the framework of generalized interpolation theory, we make use of stochastic process theory and two-weight Hardy's inequality to derive our trace and extension theorems.
Our results encompass findings applicable to time-fractional equations with broad temporal weight functions.
\end{abstract}
\tableofcontents

\mysection{Introduction and Main results}

In this article, we are interested in the trace and extension theorem for the following evolution equations in vector-valued $L_p$ spaces with $A_p$ weights:
\begin{align}
\label{eq0121_01}
    \partial_t u(t) = f(t), \quad u(0) = u_0,\\
    \label{eq0121_02}
    \partial_t^\kappa u(t) =  f(t), \quad u(0) = u_0.
\end{align}
Here, $\partial_t^{\kappa}$ stands for a (generalized) time-fractional derivative with a kernel $\kappa$ given by
$$
	\partial_t^\kappa u(t):=\partial_t \left( \int_0^t \kappa(t-s)(u(s) - u(0))~\mathrm{d}s \right).
$$
Note that $\partial_t^\kappa$ becomes the Caputo fractional derivative $\partial_t^{\alpha}$ for $\kappa(t) =t^{-\alpha}/\Gamma(1-\alpha)$ with $\alpha\in(0,1)$, where $\Gamma$ is the Gamma function.

Our trace theorems also hold for the following Volterra-type equations:
\begin{equation}
\label{23.08.14.13.29}
    \int_0^t(u(s)-u_0)\mathrm{d}s=\int_0^t\varkappa(t-s)f(s)\mathrm{d}s.
\end{equation}
Under our assumptions on kernel $\kappa$, the evolution equation \eqref{eq0121_02} can be rewritten as the Volterra-type equation \eqref{23.08.14.13.29},
where $\varkappa$ is a kernel that is uniquely determined by $\kappa$.
We discuss the equivalence at the end of Section \ref{volterra}.

In the studies of trace theorems, interpolation theory is mainly used in the literature.
That is, one expects to obtain the following inequality:
\begin{align}\label{trace_powerweight}
    \|u_0\|_{(X_0, X_1)_{\theta, p}} \lesssim \|u\|_{L_p(\bR_+, t^{\gamma_1}\,\mathrm{d}t; X_0)} + \|f\|_{L_q(\bR_+, t^{\gamma_2}\,\mathrm{d}t; X_1)}.
\end{align}
Note that $(X_0, X_1)_{\theta,p}$ is an interpolation space whose norm is given for a functional $K$ by
\begin{align}\label{inter_Knorm}
    \|u_0\|_{(X_0, X_1)_{\theta, p}} := \left( \int_0^\infty t^{-\theta p} K\left( t, u_0; X_0, X_1 \right)^p \,\frac{\mathrm{d}t}{t} \right)^{1/p},\quad \theta\in(0,1),\,\,p\in[1,\infty).
\end{align}
Precise information for $K$ and other interpolation methods are given in Section~\ref{sec_inter}.
The idea of trace inequalities in the literature is that one can express the functional $K$ as certain quantities of $\|u\|_{X_0}$ and $\|f\|_{X_1}$,
and the remained term $t^{-\theta p}$ contributes to a time weight in the right-hand side of \eqref{trace_powerweight}.
For detailed argument, we recommend \cite{Veraar23,KimWoo23,triebel} and references therein.

From the perspective of the classical approaches, to obtain \eqref{trace_powerweight} in terms of general temporal weights, 
it is natural to consider a generalization of $(X_0, X_1)_{\theta, p}$-interpolation in the sense that we put $\phi(t^{-1})$ rather than just $t^{-\theta}$. One of the novelties of this paper is that we suggest a function class, $\mathcal{I}_o(a,b)$, introduced in \cite{Gus_Pee1977} for such $\phi$ as well as construct general interpolation space $(X_0, X_1)_{\phi, p}$, whose norm is given by
\begin{align}\label{inter_Knorm_phi}
    \|u_0\|_{(X_0, X_1)_{\phi, p}} := \left( \int_0^\infty \phi \left( t^{-1} \right)^p K\left( t, u_0; X_0, X_1 \right)^p \,\frac{\mathrm{d}t}{t} \right)^{1/p},\quad \phi\in \mathcal{I}_o(0,1).
\end{align}
The definition of $\mathcal{I}_o(a,b)$ and its properties are given in the first part of Section~\ref{sec_prob}, and we explain the interpolation in Section~\ref{sec_inter}.
For short introduction of $\mathcal{I}_o(a,b)$, we note that $\phi \in \mathcal{I}_o(a,b)$  if and only if there exists $\varepsilon>0$ such that
$$
\lambda^{a+\varepsilon}\lesssim \frac{\phi(\lambda t)}{\phi(t)}\lesssim \lambda^{b-\varepsilon},\quad \forall t>0,\,\,\lambda\geq1.
$$
The advantage of the general interpolation is that we can handle not only trace inequalities for time local equations \eqref{eq0121_01} but also time non-local equations \eqref{eq0121_02}, including time-fractional derivatives for broader weight classes than those in the literature.
To the best of our knowledge, our result is new even for the local derivative case, in the sense that trace and extension theorems hold with Muckenhoupt's $A_p$-weight classes.

Before introducing the main results of this paper, we give a brief overview of the literature related to initial value problems in (weighted) Sobolev spaces with or without non-local derivatives.
The initial value problem of parabolic equations in $L_p$-Sobolev spaces with local time derivatives has been studied for a long time, and trace (and extension) theorem with or without power-type temporal weights can be found in, for example, Weidemaier \cite{Weidmaier95}, Pr\"uss \cite{Pruss02}, Lindemulder and Veraar \cite{LindeVeraar20} and references therein.
In the case of the parabolic equations with non-local derivatives, Zacher \cite{Zacher05, Zacher06} obtained an unweighted $L_p$-theory for the Volterra-type equations, including the time-fractional heat equations.
In particular, the kernel $\varkappa$ of \eqref{23.08.14.13.29} considered in \cite{Zacher05, Zacher06} belongs to a certain class, and its Laplace transform $\cL[\varkappa]$ satisfies some growth conditions near zero and infinity, in which case the solution space for \eqref{23.08.14.13.29} is given as the vector-valued Bessel potential spaces $H_p^{\alpha}$ with $\alpha > 1/p$.
See \cite{Pruss1993} for a detailed description of kernel $\kappa$ of $\partial_t^{\kappa}$ and solution spaces used in \cite{Zacher05, Zacher06}.
In Meyries and Schnaubelt \cite{Meyries12}, Meyries and Veraar \cite{MeyriesVeraar14}, Agresti, Lindemulder, and Veraar \cite{Veraar23}, the initial trace results in \cite{Zacher05, Zacher06} were extended to the case of vector-valued Triebel-Lizorkin spaces containing power-type temporal weight: 
$$
\text{$F_{p, q}^{\alpha}(\bR_+, w_{\gamma}; X_0) \cap L_p(\bR_+, w_{\gamma}; X_1)$, $w_{\gamma}(t) = |t|^{\gamma}$},
$$
where $\gamma \in (-1, p-1)$ and $\alpha > (1+\gamma)/p$.
While the aforementioned results are basically based on the semigroup approach, Dong and Kim \cite{DongKim21}, Dong and Liu \cite{DongLiu22}, and Kim and Woo \cite{KimWoo23} use different approaches to solve initial value problems for time-fractional parabolic equations and derive initial trace estimates for solutions.
Particularly, the authors in \cite{KimWoo23} construct new solution spaces suitable for discussing the initial value problem of time-fractional evolution equations and investigate the initial behavior of the solution using an appropriate representation via standard mollification.
The trace estimates in \cite{KimWoo23} are partially consistent with Theorem \ref{trace_non_local} in this paper. (See Remark \ref{rem_frac}.)
For a certain type of evolution equation with local derivatives, Choi, Kim, and Lee \cite{ChoiKimLee23} prove an extension theorem with general temporal weights by solving an initial value problem.
We emphasize that our main results allow one to consider $A_p$ weights in time variable and generalized time-fractional derivatives, which contain power-type temporal weights and standard time-fractional derivatives, respectively.

\subsection{Main results}
The main results of this paper are trace and extension theorems that identify the optimal function space for the initial conditions of equations in Sobolev spaces with $A_p$ weights.
Notably, the initial data spaces are closely related to the temporal weights and the properties of local or non-local time derivatives.

We introduce trace theorems with local and non-local derivatives, respectively.
Let $(X_0, X_1)$ be an interpolation couple and
$X_0+X_1$ be a Banach space with the norm 
$$
\|a\|_{X_0+X_1}:=\inf\{\|a_0\|_{X_0}+\|a_1\|_{X_1}: a = a_0 + a_1,\, a_0\in X_0,\,a_1\in X_1\}.
$$
\begin{thm}[Trace theorem with local derivative; half line]
    \label{trace_local}
    Let $p\in(1,\infty)$ and $w\in A_p$. Suppose that $u\in L_p(\bR_+,w\,\mathrm{d}t;X_0)$, $f\in L_p(\bR_+,w\,\mathrm{d}t;X_1)$, $u_0 \in X_0 + X_1$, and
    \begin{equation}
    \label{local_rep}
    u(t) = u_0 + \int_0^tf(s)\mathrm{d}s
    \end{equation}
    for $ t\in\bR_+$. Then $u_0 \in (X_0,X_1)_{W^{1/p},p}$ and
    \begin{align}
    \label{trace 1}
        \|u_0\|_{(X_0,X_1)_{W^{1/p},p}}\lesssim_{p, [w]_{A_p}} \|u\|_{L_p(\bR_+,w\,\mathrm{d}t;X_0)}+\|f\|_{L_p(\bR_+,w\,\mathrm{d}t;X_1)},
    \end{align}
    where $W(t) := \int_0^t w(s)~\mathrm{d}s$.
\end{thm}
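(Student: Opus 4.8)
\emph{Proof sketch.} The plan is to bound the $K$-functional $K(t,u_0;X_0,X_1)$ pointwise in $t$ by a $w$-average of $\|u(\cdot)\|_{X_0}$ on $(0,1/t)$ plus a truncated integral of $\|f(\cdot)\|_{X_1}$, and then to convert the two resulting quantities into $\|u\|_{L_p(\bR_+,w\dd t;X_0)}$ and $\|f\|_{L_p(\bR_+,w\dd t;X_1)}$ via the two-weight Hardy (Muckenhoupt) inequality. Before anything else one checks that the target space is meaningful, i.e. $W^{1/p}\in\mathcal{I}_o(0,1)$: by the characterization of $\mathcal{I}_o(a,b)$ this says $\lambda^{p\varepsilon}\lesssim W(\lambda t)/W(t)\lesssim\lambda^{p-p\varepsilon}$ for all $t>0$, $\lambda\ge1$ and some $\varepsilon>0$, and the lower bound follows from the reverse-doubling property of $W$ (a consequence of $w\in A_\infty$) while the upper bound follows from the self-improvement $w\in A_{p-\eta}$ for some $\eta>0$ applied to the nested intervals $(0,t)\subset(0,\lambda t)$; both are among the $A_p$-facts collected in Section~\ref{sec_prob}.

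Fix $t>0$. Since $u\in L_p(\bR_+,w\dd r;X_0)$, for a.e.\ $s>0$ we have $u(s)\in X_0$, and $\int_0^sf(r)\dd r$ is a Bochner integral converging in $X_1$ because $\int_0^s\|f(r)\|_{X_1}\dd r\le\|f\|_{L_p(\bR_+,w\dd r;X_1)}\big(\int_0^sw^{1-p'}\dd r\big)^{1/p'}<\infty$ by H\"older's inequality and the local integrability of $w^{1-p'}$ (where $p'=p/(p-1)$). Thus $u_0=u(s)-\int_0^sf(r)\dd r$ is a legitimate decomposition, giving $K(t,u_0;X_0,X_1)\le\|u(s)\|_{X_0}+t\int_0^s\|f(r)\|_{X_1}\dd r$ for a.e.\ $s>0$; averaging this against $w(s)\dd s$ over $s\in(0,1/t)$ and using that $s\mapsto\int_0^s\|f\|_{X_1}$ is nondecreasing gives
\begin{equation*}
K(t,u_0;X_0,X_1)\le\frac{1}{W(1/t)}\int_0^{1/t}\|u(s)\|_{X_0}\,w(s)\dd s+t\int_0^{1/t}\|f(r)\|_{X_1}\dd r .
\end{equation*}
Recalling that $\|u_0\|_{(X_0,X_1)_{W^{1/p},p}}^p=\int_0^\infty W(1/t)\,K(t,u_0;X_0,X_1)^p\,\dd t/t$, raising the previous display to the $p$-th power (using $(a+b)^p\le2^{p-1}(a^p+b^p)$), multiplying by $W(1/t)$, integrating over $t\in(0,\infty)$ against $\dd t/t$, and substituting $t\mapsto1/t$ (which leaves $\dd t/t$ invariant) bounds $\|u_0\|_{(X_0,X_1)_{W^{1/p},p}}^p$ by $2^{p-1}\big((I)+(II)\big)$, where
\begin{equation*}
(I)=\int_0^\infty W(t)^{1-p}\Big(\int_0^t\|u(s)\|_{X_0}\,w(s)\dd s\Big)^p\frac{\dd t}{t},\qquad
(II)=\int_0^\infty W(t)\,t^{-p-1}\Big(\int_0^t\|f(r)\|_{X_1}\dd r\Big)^p\dd t .
\end{equation*}

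It remains to estimate $(I)$ and $(II)$, and this is the heart of the matter. For $(I)$, setting $h=\|u(\cdot)\|_{X_0}\,w$ one has $\int_0^\infty h(t)^pw(t)^{1-p}\dd t=\|u\|_{L_p(\bR_+,w\dd t;X_0)}^p$, and since $(1-p)(1-p')=1$ Muckenhoupt's characterization of the two-weight Hardy inequality reduces the bound $(I)\lesssim\|u\|_{L_p(\bR_+,w\dd t;X_0)}^p$ to the boundedness in $r>0$ of $\big(\int_r^\infty W(t)^{1-p}\,\dd t/t\big)^{1/p}\,W(r)^{1/p'}$; using $W(t)\gtrsim W(r)(t/r)^\delta$ for $t\ge r$ (reverse doubling, with $\delta>0$) gives $\int_r^\infty W(t)^{1-p}\,\dd t/t\lesssim W(r)^{1-p}$, so this product is bounded because $\tfrac{1-p}{p}+\tfrac{1}{p'}=0$. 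For $(II)$, Muckenhoupt's characterization reduces $(II)\lesssim\|f\|_{L_p(\bR_+,w\dd t;X_1)}^p$ to the boundedness of $\big(\int_r^\infty W(t)\,t^{-p-1}\dd t\big)^{1/p}\big(\int_0^rw^{1-p'}\dd t\big)^{1/p'}$; the $A_p$-balance $W(r)\big(\int_0^rw^{1-p'}\big)^{p-1}\lesssim_{[w]_{A_p}}r^p$ controls the second factor by $r\,W(r)^{-1/p}$, while $W(t)\lesssim W(r)(t/r)^{p-\eta}$ for $t\ge r$ (from $w\in A_{p-\eta}$ with $\eta>0$) gives $\int_r^\infty W(t)\,t^{-p-1}\dd t\lesssim W(r)\,r^{-p}$, and these two factors multiply to a constant independent of $r$.

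Combining, $(I)\lesssim\|u\|_{L_p(\bR_+,w\dd t;X_0)}^p$ and $(II)\lesssim\|f\|_{L_p(\bR_+,w\dd t;X_1)}^p$, which yields \eqref{trace 1} with implicit constant depending only on $p$ and $[w]_{A_p}$ (through the exponents $\delta,\eta$). The step I expect to be the main obstacle is the last one, the verification of the Muckenhoupt conditions: the plain $A_p$-balance is not enough by itself — without the strict gain $\eta>0$ coming from self-improvement the tail $\int_r^\infty W(t)\,t^{-p-1}\dd t$ diverges logarithmically — and the choice of averaging radius $1/t$ together with the measure $w\dd s$ is precisely what forces the powers of $W$ and of $t$ to cancel after the substitution $t\mapsto1/t$.
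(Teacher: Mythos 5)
Your proof is correct, but it takes a genuinely different route from the paper's. The paper passes to the Laplace transform of \eqref{local_rep}, writing $u_0=\lambda\cL[u](\lambda)-\cL[f](\lambda)$, so that both terms are reduced to the single estimate $\int_0^\infty W(\lambda^{-1})|\lambda\cL[h](\lambda)|^p\,\mathrm{d}\lambda/\lambda\lesssim\|h\|^p_{L_p(\bR_+,w\,\mathrm{d}t)}$; this is then proved by dominating the exponential average $\int_0^\infty t e^{-t}h(st)\,\mathrm{d}t$ by the Hardy--Littlewood maximal function (Lemma \ref{lem0114_01}), applying one two-weight Hardy inequality (Lemma \ref{trace lem1}), and invoking the weighted $L_p$-boundedness of $\cM$. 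You instead use the elementary decomposition $u_0=u(s)-\int_0^s f$, average it against $w(s)\,\mathrm{d}s/W(1/t)$ over $s\in(0,1/t)$, and end up with \emph{two} applications of Muckenhoupt's theorem (Lemma \ref{two weight hardy}): the $f$-term gives exactly the pair of Lemma \ref{trace lem1}, while the $u$-term gives a new pair $U=W^{(1-p)/p}t^{-1/p}$, $V=w^{(1-p)/p}$, whose Muckenhoupt condition collapses to $W(r)^{(1-p)/p}W(r)^{1/p'}=1$ by reverse doubling of $W$ — I checked this and it is right. Your route avoids the maximal function entirely (so it does not need the constant $[w]_{A_p}^{1/(p-1)}$ from $\|\cM\|_{L_p(w)\to L_p(w)}$) and replaces the Laplace-transform bookkeeping by a sharp cutoff at $s=1/t$; the paper's route has the advantage that the identity $u_0=\lambda\cL[u]-\cL[f]$ recycles verbatim in the non-local case (Theorem \ref{trace_non_local}), where a hard cutoff would be less natural. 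Your verifications of $W^{1/p}\in\cI_o(0,1)$ and of the two Muckenhoupt conditions, including the observation that the strict gain $\eta>0$ from the self-improvement $w\in A_{p-\eta}$ is what makes $\int_r^\infty W(t)t^{-p-1}\,\mathrm{d}t$ converge, coincide with the paper's Remark \ref{22.10.28.18.21} and the proof of Lemma \ref{trace lem1}.
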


For \eqref{eq0121_02}, assume that $\kappa:\bR_+ \to \bR_+$ is a right-continuous decreasing function with $\kappa\in\cI_o(-1,0)$.
Note that $\mathcal{I}_o(-1,0)$ contains the kernel $t^{-\alpha}/\Gamma(1-\alpha)$ generating the Caputo fractional derivative of order $\alpha\in(0,1)$.
Let us denote
\begin{equation}
\label{23.08.11.17.26}
\kappa^\ast(t):=\kappa^{-1}\big(1/t\big).
\end{equation}
Here, $\kappa^{-1}$ is a generalized inverse of $\kappa$ defined in  \eqref{2309190655}.
Then the following theorem holds:
\begin{thm}[Trace theorem with non-local derivative; half line]
    \label{trace_non_local}
    Let $p\in(1,\infty)$, $w\in A_p$ and $\kappa \in \cI_o(-1, 0)$. Suppose that $W \circ \kappa^{*} \in \cI_o(0, p)$, $u\in L_p(\bR_+,w\,\mathrm{d}t;X_0)$, $f\in L_p(\bR_+,w\,\mathrm{d}t;X_1)$, $u_0 \in X_0+X_1$, and
    \begin{equation}
    \label{eq0128_01}
    \int_0^t \kappa(t-s) \left( u(s) - u_0 \right)\,\mathrm{d}s = \int_0^t f(s)\,\mathrm{d}s,
    \end{equation}
    for $t \in \bR_+$.
    Then $u_0 \in (X_0,X_1)_{(W\circ\kappa^{*})^{1/p},p}$ and
    \begin{align}
    \label{trace 2}
        \|u_0\|_{(X_0,X_1)_{(W\circ\kappa^{*})^{1/p},p}} \lesssim_{p, [w]_{A_p}, \kappa} \|u\|_{L_p(\bR_+,w\,\mathrm{d}t;X_0)}+\|f\|_{L_p(\bR_+,w\,\mathrm{d}t;X_1)}.
    \end{align}
\end{thm}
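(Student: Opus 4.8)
The plan is to reduce the non-local statement to the already-established local Theorem \ref{trace_local} by a change of time variable that straightens out the kernel $\kappa$, and then to transport the interpolation norm through that change of variables using the hypothesis $W\circ\kappa^{*}\in\cI_o(0,p)$. The starting point is the observation that the Volterra identity \eqref{eq0128_01} can be rewritten, using the resolvent kernel $\varkappa$ uniquely associated to $\kappa$ (as discussed at the end of Section \ref{volterra}), in the form $\int_0^t(u(s)-u_0)\,\mathrm{d}s=\int_0^t\varkappa(t-s)f(s)\,\mathrm{d}s$. Differentiating formally gives $u(t)=u_0+\partial_t\int_0^t\varkappa(t-s)f(s)\,\mathrm{d}s$, which is the exact analogue of \eqref{local_rep} but with $f$ replaced by the ``fractionally integrated'' data. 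The key structural fact I would isolate is that under $\kappa\in\cI_o(-1,0)$ the generalized inverse $\kappa^{-1}$, hence $\kappa^{*}(t)=\kappa^{-1}(1/t)$, is well behaved (strictly increasing, $\cI_o(0,1)$-type bounds), so the substitution $t=\kappa^{*}(\tau)$ is a bona fide bijection of $\bR_+$ onto itself.

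Concretely, I would set $v(\tau):=u(\kappa^{*}(\tau))$ and $g(\tau):=$ the derivative of the convolution term expressed in the $\tau$-variable, and check that the pair $(v,g)$ satisfies a relation of the type \eqref{local_rep} with respect to $\tau$, up to the resolvent bookkeeping: that is, $v(\tau)=u_0+\int_0^\tau g(\sigma)\,\mathrm{d}\sigma$. The weight $w\,\mathrm{d}t$ then transforms into a new weight $\widetilde w\,\mathrm{d}\tau$ on $\bR_+$, and one must verify $\widetilde w\in A_p$ with an $A_p$-constant controlled by $[w]_{A_p}$ and $\kappa$; this is where the $\cI_o$ doubling-type estimates on $\kappa^{*}$ enter, since $A_p$ is stable under bi-Lipschitz-in-logarithmic-scale changes of variable. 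Having done this, Theorem \ref{trace_local} applied to $(v,g,\widetilde w)$ yields $u_0\in(X_0,X_1)_{\widetilde W^{1/p},p}$ with $\widetilde W(\tau)=\int_0^\tau\widetilde w$, and the chain-rule for the substitution gives $\widetilde W(\tau)=W(\kappa^{*}(\tau))=(W\circ\kappa^{*})(\tau)$, which is exactly the claimed function $\phi^p$ with $\phi=(W\circ\kappa^{*})^{1/p}$. The hypothesis $W\circ\kappa^{*}\in\cI_o(0,p)$ is precisely what makes $\phi\in\cI_o(0,1)$, so that $(X_0,X_1)_{\phi,p}$ is a legitimate interpolation space in the sense of \eqref{inter_Knorm_phi}.

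The remaining ingredient is to control $\|g\|_{L_p(\bR_+,\widetilde w\,\mathrm{d}\tau;X_1)}$ by $\|f\|_{L_p(\bR_+,w\,\mathrm{d}t;X_1)}$; here $g$ is essentially $\tau\mapsto$ (derivative of $\int_0^{\kappa^{*}(\tau)}\varkappa(\kappa^{*}(\tau)-s)f(s)\,\mathrm{d}s$), so after the substitution this becomes a weighted estimate for a Volterra-type integral operator whose kernel inherits the decay of $\varkappa$. I expect the main obstacle to be exactly this boundedness statement: unlike the local case where $g=f$ literally, here one must show that convolution against $\varkappa$ followed by the time substitution is bounded on the weighted $L_p$ space. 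This is where the \emph{two-weight Hardy's inequality} advertised in the abstract should be invoked — one writes $g$ (or rather its relevant majorant) as a Hardy-type average of $|f|$ against the measure $\kappa(t-s)\,\mathrm{d}s$, and the condition $W\circ\kappa^{*}\in\cI_o(0,p)$ together with $w\in A_p$ supplies precisely the pair of weights for which the Hardy inequality holds with constant depending only on $p$, $[w]_{A_p}$, and $\kappa$. I would also need to handle the $X_0$-term, but that one transports trivially since $v(\tau)=u(\kappa^{*}(\tau))$ and only the weight changes. Finally, one should double-check the measurability/right-continuity hypotheses on $\kappa$ are enough to justify the differentiation under the integral sign and the use of the resolvent identity; these are the routine points I would relegate to lemmas from Section \ref{sec_prob} and Section \ref{volterra}.
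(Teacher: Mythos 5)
There is a genuine gap at the heart of your reduction, and it is not the two\-/weight Hardy step you flag as the ``main obstacle'' but the step before it. You claim that after the substitution $t=\kappa^{*}(\tau)$ the pair $(v,g)$ with $v(\tau)=u(\kappa^{*}(\tau))$ satisfies $v(\tau)=u_0+\int_0^\tau g(\sigma)\,\mathrm{d}\sigma$ with $g$ in a weighted $L_p$ space, so that Theorem~\ref{trace_local} applies. This fails: the resolvent form of \eqref{eq0128_01} gives $u(t)-u_0=\partial_t(\varkappa*f)(t)$, which for $f$ merely in $L_p$ is \emph{not} absolutely continuous in $t$ (in the Caputo case it is $I^{\alpha}f$, a function of fractional smoothness $\alpha<1$), and no reparametrization of the time variable can turn a function of fractional regularity into an absolutely continuous one. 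Concretely, your $g$ would have to be $\frac{\mathrm{d}}{\mathrm{d}\tau}\bigl[(I^{\alpha}f)(\kappa^{*}(\tau))\bigr]$, which involves the fractional derivative $I^{\alpha-1}f$ of order $1-\alpha$ of $f$; the bound $\|g\|_{L_p(\widetilde w)}\lesssim\|f\|_{L_p(w)}$ is false in general, so the local theorem cannot be invoked. The substitution $\tau\mapsto\phi(\lambda^{-1})$ is indeed the right idea, but it must act on the \emph{interpolation parameter}, not on the time variable of the equation.

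What the paper actually does: take Laplace transforms of \eqref{eq0128_01} to get the pointwise-in-$\lambda$ decomposition $u_0=\lambda\cL[u](\lambda)+\lambda\cL[f](\lambda)/\phi(\lambda)$ with $\phi(\lambda)=\lambda\cL[\kappa](\lambda)$, read this as a decomposition of $u_0$ in $X_0+X_1$ to bound $K(\tau,u_0;X_0,X_1)$ with $\tau=\phi(\lambda)$, and then change variables $\tau\to\phi(\lambda^{-1})$ \emph{inside the interpolation integral} (using $\phi\simeq\kappa(\,\cdot^{-1})$ and \eqref{2302231059}). After that substitution the non-local case collapses to exactly the weighted Laplace-transform estimate \eqref{22.10.30.17.46} already proved for the local case, which is where Lemma~\ref{lem0114_01} (maximal-function domination) and the two-weight Hardy inequality (Lemma~\ref{trace lem1}) enter. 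If you want to keep your outline, replace the time-change of the equation by this Laplace-transform/$K$-functional argument; the parts of your proposal about $\kappa^{*}$ being a well-behaved bijection and about $W\circ\kappa^{*}\in\cI_o(0,p)$ making $(X_0,X_1)_{(W\circ\kappa^{*})^{1/p},p}$ a legitimate space are correct and are used in the paper in essentially that way.
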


\begin{rem}
If $\kappa$ is a strictly decreasing continuous function, then the assumption $\kappa\in\cI_o(0,1)$ and $W\circ \kappa^\ast\in\cI_o(0,p)$ can be replaced by the following: there exist constants $0<a_1<a_2$ and $0<b_1<b_2<p$ such that for any $t>0$ and $\lambda \geq 1$,
$$
\lambda^{a_1}\lesssim \left(\frac{\kappa(t)}{\kappa(\lambda t)}\right)^{a_2}\lesssim \frac{W(\lambda t)}{W(t)}\lesssim \left(\frac{\kappa(t)}{\kappa(\lambda t)}\right)^{b_2}\lesssim \lambda^{b_1}.
$$
For this, see Lemma \ref{22.10.26.15.52}-$(vi)$.
\end{rem}

For the Volterra-type equations \eqref{23.08.14.13.29}, it is shown in Section~\ref{volterra} that $u$ and $f$ of \eqref{23.08.14.13.29} satisfy \eqref{eq0128_01}.
Thus \eqref{trace 2} also holds for $u$ and $f$ of \eqref{23.08.14.13.29}.
The same is true for Theorem~\ref{extension_non_local}.

\begin{rem}
In Theorems \ref{trace_local} and \ref{trace_non_local}, we assume that $u$ (and $f$) is defined on $\bR_+$, but if $X_0$ is continuously embedded into $X_1$, \textit{i.e.}, $X_0 \subset X_1$, we also obtain trace estimates of $u$ defined on a finite time interval $(0, T)$. 
See Section \ref{23.08.09.13.27} for this.
\end{rem}

\hfill

Another novelty is that we construct $u$ and $f$ satisfying \eqref{eq0128_01} by means of probability theory, which is motivated by spectral theory.
Regarding the case of \eqref{eq0121_01}, it is natural to consider its fundamental solution as one-parameter semigroups.
Then together with generalized interpolation theory given in Section~\ref{sec_inter}, we have the following result:
\begin{thm}[Extension theorem with local derivative]
    \label{extension_local}
    Let $p\in(1,\infty)$, $w\in A_p$, and $a\in (X_0,X_1)_{W^{1/p},p}$. 
    Then there exist $u\in L_p(\bR_+,w\,\mathrm{d}t;X_0)$ and $f\in L_p(\bR_+,w\,\mathrm{d}t;X_1)$
    such that $u(0) = a$ and
    $$
    u(t)=a+\int_0^tf(s)\, \mathrm{d}s, 
    $$
    for $t \in \bR_+$. Furthermore, 
    $$
    \|u\|_{L_p(\bR_+,w\,\mathrm{d}t;X_0)}+\|f\|_{L_p(\bR_+,w\,\mathrm{d}t;X_1)}\lesssim_{p, [w]_{A_p}} \|a\|_{(X_0,X_1)_{W^{1/p},p}}.
    $$
\end{thm}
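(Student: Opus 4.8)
The plan is to build the pair $(u,f)$ from a $J$-representation of $a$ via a ``heat‑semigroup'' formula, and then to reduce the two required norm bounds to a single two‑weight Hardy inequality.

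Set $\phi:=W^{1/p}$. Since $w\in A_p$ one has $\phi\in\cI_o(0,1)$ (equivalently $W\in\cI_o(0,p)$), with the associated constants controlled by $[w]_{A_p}$; this is where the weight enters and is among the properties of $\cI_o(a,b)$ recorded in Section~\ref{sec_prob}. By the generalized $J$-method of Section~\ref{sec_inter}, the assumption $a\in(X_0,X_1)_{\phi,p}$ yields a strongly measurable $v\colon\bR_+\to X_0\cap X_1$ with $a=\int_0^\infty v(s)\,\tfrac{\mathrm{d}s}{s}$ (convergence in $X_0+X_1$) and
\[
\int_0^\infty \phi(s^{-1})^p\, J\!\left(s,v(s)\right)^p\,\frac{\mathrm{d}s}{s}\;\lesssim_{p,[w]_{A_p}}\;\|a\|_{(X_0,X_1)_{\phi,p}}^p,\qquad J(s,x):=\max\bigl(\|x\|_{X_0},\,s\|x\|_{X_1}\bigr).
\]
In particular the scalar functions $h_0(s):=\|v(s)\|_{X_0}$ and $\tilde h_1(s):=s\|v(s)\|_{X_1}$ are both $\le J(\cdot,v(\cdot))$, hence both lie in $L_p\bigl(\bR_+,\phi(s^{-1})^p\,\tfrac{\mathrm{d}s}{s}\bigr)$ with norms $\lesssim\|a\|_{(X_0,X_1)_{\phi,p}}$. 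One also checks, using $\|v(s)\|_{X_0+X_1}\le\min(1,s^{-1})J(s,v(s))$ and Hölder together with the two‑sided power bounds for $\phi$, that $\int_0^\infty\|v(s)\|_{X_0+X_1}\,\tfrac{\mathrm{d}s}{s}<\infty$.

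Viewing $\{v(s)\,\tfrac{\mathrm{d}s}{s}\}_{s>0}$ as a spectral resolution of $a$ with spectral parameter $s$, I would define
\[
u(t):=\int_0^\infty e^{-ts}\,v(s)\,\frac{\mathrm{d}s}{s},\qquad f(t):=-\int_0^\infty s\,e^{-ts}\,v(s)\,\frac{\mathrm{d}s}{s}.
\]
This is the degenerate (local) instance of the probabilistic construction used for \eqref{eq0128_01}: writing $e^{-ts}=\bE[\mathbf 1_{\{E>ts\}}]$ for an exponential variable $E$ gives $u(t)=\bE\bigl[\int_0^{E/t}v(s)\,\tfrac{\mathrm{d}s}{s}\bigr]$, and in the non‑local case one simply replaces $e^{-ts}$ by the Laplace transform of the relevant inverse‑subordinator density. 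By dominated convergence, $u$ and $f$ are strongly measurable and locally Bochner integrable, and $u(0)=\int_0^\infty v(s)\,\tfrac{\mathrm{d}s}{s}=a$; integrating the identity $\int_0^t s\,e^{-rs}\,\mathrm{d}r=1-e^{-ts}$ against $v(s)\,\tfrac{\mathrm{d}s}{s}$ (Fubini is justified by the absolute integrability above) gives $u(t)=a+\int_0^t f(s)\,\mathrm{d}s$ for every $t>0$, so $f=u'$ in the required sense.

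Moving norms under the integral sign, $\|u(t)\|_{X_0}\le\int_0^\infty e^{-ts}h_0(s)\,\tfrac{\mathrm{d}s}{s}$ and $\|f(t)\|_{X_1}\le\int_0^\infty s\,e^{-ts}\|v(s)\|_{X_1}\,\tfrac{\mathrm{d}s}{s}=\int_0^\infty e^{-ts}\tilde h_1(s)\,\tfrac{\mathrm{d}s}{s}$, so the theorem follows (with the factor $\|a\|_{(X_0,X_1)_{\phi,p}}$ on the right) once one establishes the single scalar estimate
\[
\int_0^\infty w(t)\left(\int_0^\infty e^{-ts}\,h(s)\,\frac{\mathrm{d}s}{s}\right)^{p}\mathrm{d}t\;\lesssim_{p,[w]_{A_p}}\;\int_0^\infty\phi(s^{-1})^p\,h(s)^p\,\frac{\mathrm{d}s}{s}\qquad(h\ge0),
\]
applied with $h=h_0$ and with $h=\tilde h_1$. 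To prove this I would bound $e^{-ts}$ by $\mathbf 1_{\{0<s\le 1/t\}}$ plus the rapidly decaying remainder $e^{-ts}\mathbf 1_{\{s>1/t\}}$. For the main term the substitution $s\mapsto s^{-1}$ turns $\int_0^{1/t}h(s)\,\tfrac{\mathrm{d}s}{s}$ into $\int_t^\infty H(\sigma)\,\tfrac{\mathrm{d}\sigma}{\sigma}$ with $H(\sigma)=h(\sigma^{-1})$, and the inequality becomes boundedness of the adjoint Hardy operator $H\mapsto\int_t^\infty H(\sigma)\,\tfrac{\mathrm{d}\sigma}{\sigma}$ from $L_p\bigl(\bR_+,\phi(\sigma)^p\,\tfrac{\mathrm{d}\sigma}{\sigma}\bigr)$ to $L_p(\bR_+,w\,\mathrm{d}t)$. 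By the two‑weight Hardy inequality this holds iff $\sup_{r>0}W(r)^{1/p}\bigl(\int_r^\infty\phi(\sigma)^{-p'}\,\tfrac{\mathrm{d}\sigma}{\sigma}\bigr)^{1/p'}<\infty$, and the lower bound $\phi(\sigma)/\phi(r)\gtrsim(\sigma/r)^{\varepsilon}$ for $\sigma\ge r$ (valid for some $\varepsilon>0$ because $\phi\in\cI_o(0,1)$) gives $\int_r^\infty\phi(\sigma)^{-p'}\,\tfrac{\mathrm{d}\sigma}{\sigma}\lesssim\phi(r)^{-p'}$, whence the supremum is $\lesssim\sup_r W(r)^{1/p}\phi(r)^{-1}=1$ since $\phi=W^{1/p}$. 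The remaining piece $\int_{1/t}^\infty e^{-ts}h(s)\,\tfrac{\mathrm{d}s}{s}$ is absorbed the same way — e.g.\ by Minkowski's integral inequality in the variable $\lambda=ts$ together with the doubling of $W$ coming from $w\in A_p$, the factor $e^{-\lambda}$ killing any polynomial loss. The genuinely load‑bearing point is this last step: choosing the semigroup kernel so that, after $s\mapsto s^{-1}$, one lands exactly on the adjoint Hardy operator whose Muckenhoupt constant is governed by $\phi=W^{1/p}$, and then verifying the two‑weight Hardy/Muckenhoupt condition with constants depending only on $p$ and $[w]_{A_p}$; the construction and the bookkeeping above are routine given the interpolation theory of Section~\ref{sec_inter}.
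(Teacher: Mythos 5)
Your construction and reduction are exactly the paper's: the same $J$-representation $a=\int_0^\infty v(s)\,\frac{\mathrm{d}s}{s}$, the same semigroup formulas for $u$ and $f$, and the same reduction to the scalar estimate of Lemma~\ref{extension lem1}; your treatment of the main term $\int_0^{1/t}h(s)\,\frac{\mathrm{d}s}{s}$ via the adjoint two-weight Hardy inequality is precisely the paper's estimate of $F_2$ (Lemma~\ref{two weight hardy}-(ii) with $U=w^{1/p}$, $V=W^{1/p}t^{1-1/p}$), including the verification of the Muckenhoupt condition from $W\in\cI_o(0,p)$.

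The one step that does not go through as written is the tail term $\int_{1/t}^\infty e^{-ts}h(s)\,\frac{\mathrm{d}s}{s}$. After the substitution $\lambda=ts$, Minkowski's inequality reduces the matter to bounding $\bigl(\int_0^\infty w(t)\,h(\lambda/t)^p\,\mathrm{d}t\bigr)^{1/p}$ by $C(\lambda)\bigl(\int_0^\infty W(\sigma^{-1})h(\sigma)^p\,\frac{\mathrm{d}\sigma}{\sigma}\bigr)^{1/p}$, which after the change of variable $\sigma=\lambda/t$ amounts to the pointwise inequality $w(\lambda/\sigma)\,(\lambda/\sigma)\lesssim C(\lambda)^p\,W(1/\sigma)$. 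This compares the pointwise value of $w$ with an average of $w$, and it fails for general $A_p$ weights (which may have arbitrarily large local spikes); doubling of $W$ does not rescue it. The correct treatment — and what the paper does for its term $F_1$ — is to keep the decay of the kernel in averaged form: bound $e^{-ts}\le (ts)^{-1}$ so that the tail is dominated by $\frac{1}{t}\int_0^t h(\lambda^{-1})\,\mathrm{d}\lambda$, and then apply the \emph{first} two-weight Hardy inequality (Lemma~\ref{two weight hardy}-(i)) with $U(t)=w(t)^{1/p}/t$ and $V(t)=W(t)^{1/p}t^{-1/p}$; the Muckenhoupt condition \eqref{two weight 1} again follows from the two-sided power bounds for $W$ in Remark~\ref{22.10.28.18.21}. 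With that substitution your argument is complete and coincides with the paper's proof.
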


Even if we have a non-local kernel $\kappa$ as in \eqref{eq0121_02}, we can accomplish a similar result.
It is because the kernel $\kappa$ corresponds to a Bernstein function $\phi$, and $\phi$ yields a subordinator $S = (S_t)_{t\geq0}$ whose Laplace exponent is $\phi$.
Then one can establish a solution to \eqref{eq0121_02} by making use of the subordinator $S$.
This procedure is given in the second part of Section~\ref{sec_prob}, and particularly in Proposition~\ref{22.11.07.13.23}.
Finally, we have the following theorem:
\begin{thm}[Extension theorem with non-local derivative]
    \label{extension_non_local}
    Let $p\in(1,\infty)$ and $w\in A_p$. Suppose that $\kappa \in \cI_o(-1, 0)$, $W\circ\kappa^{*}\in \cI_o(0,p)$, and $a\in (X_0,X_1)_{(W\circ\kappa^{*})^{1/p},p}$.
    Then there exist $u\in L_p(\bR_+,w\,\mathrm{d}t;X_0)$ and $f\in L_p(\bR_+,w\,\mathrm{d}t;X_1)$
    such that
    \begin{equation}
    \label{eq0129_01}
    \int_0^t \kappa \left( t-s \right) \left( u(s) - a \right) \,\mathrm{d}s = \int_0^t f(s)\,\mathrm{d}s
    \end{equation}
    for $t \in \bR_+$ and $u(0) = a$. Furthermore,
\begin{equation}
	\label{eq0922_01}
    \|u\|_{L_p(\bR_+,w\,\mathrm{d}t;X_0)}+\|f\|_{L_p(\bR_+,w\,\mathrm{d}t;X_1)} \lesssim_{p, [w]_{A_p}, \kappa,  W \circ \kappa^{*}} \|a\|_{(X_0,X_1)_{(W\circ\kappa^{*})^{1/p},p}}
  \end{equation}
\end{thm}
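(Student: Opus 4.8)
The plan is to reduce the non-local extension theorem to the local one (Theorem~\ref{extension_local}) via the subordinator machinery described in Section~\ref{sec_prob}. Starting from $a \in (X_0,X_1)_{(W\circ\kappa^{*})^{1/p},p}$, I first need to relate this interpolation space to the one appearing in the local case. The key observation is that $\kappa \in \cI_o(-1,0)$ corresponds via its Laplace transform to a Bernstein function $\phi$, which in turn generates a subordinator $S=(S_t)_{t\ge 0}$; the function $\kappa^{*}(t)=\kappa^{-1}(1/t)$ is, up to constants, the inverse of the Laplace exponent, so it plays the role of a time-change. I would first apply Theorem~\ref{extension_local} to obtain a pair $(\tilde u, \tilde f)$ with $\tilde u(t) = a + \int_0^t \tilde f(s)\,\mathrm{d}s$ and the bound $\|\tilde u\|_{L_p} + \|\tilde f\|_{L_p} \lesssim \|a\|_{(X_0,X_1)_{W^{1/p},p}}$, but applied on a suitably rescaled/reweighted half-line so that $W$ is replaced effectively by $W\circ\kappa^{*}$. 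Concretely, since $W\circ\kappa^{*}\in\cI_o(0,p)$ is the relevant "gauge," I expect to need a version of the local extension result where the weight $w$ is replaced so that the cumulative weight becomes $W\circ\kappa^{*}$; this is where the hypothesis $W\circ\kappa^{*}\in\cI_o(0,p)$ is consumed, guaranteeing that the associated density is again an $A_p$ weight (via Lemma~\ref{22.10.26.15.52} and the two-weight Hardy inequality machinery).

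Next I would perform the subordination. Given the local-type extension $\tilde u(r) = a + \int_0^r g(\rho)\,\mathrm{d}\rho$ in an auxiliary "subordinated time" variable $r$, I set $u(t) := \bE[\tilde u(S_t)]$ (or the analogous deterministic time-change when $S$ is a stable subordinator), and define $f$ by the corresponding convolution identity. Proposition~\ref{22.11.07.13.23} should do exactly this: it takes data for the classical problem and produces $u,f$ solving $\int_0^t \kappa(t-s)(u(s)-a)\,\mathrm{d}s = \int_0^t f(s)\,\mathrm{d}s$, with the $L_p(\bR_+,w\,\mathrm{d}t;X_i)$ norms of the outputs controlled by the $L_p$ norms of the inputs against the time-changed weight. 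Verifying the Volterra identity \eqref{eq0129_01} is a computation with Laplace transforms: applying $\cL$ to both sides turns convolution into multiplication, and one checks $\cL[\kappa](\lambda)\,(\cL[u](\lambda) - a/\lambda) = \cL[f](\lambda)/\lambda$, which holds by construction since $\cL[\kappa]=\phi$ is the Laplace exponent of $S$ and the subordination identity $\bE[e^{-\lambda S_t}]=e^{-t\phi(\lambda)}$ translates precisely into this. The equality $u(0)=a$ follows by right-continuity (using that $\kappa$ is right-continuous and $S_0=0$).

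The main obstacle, and the technical heart of the argument, is the norm estimate \eqref{eq0922_01}: showing that the subordination map does not blow up the weighted $L_p$ norms, i.e. that
$$
\|u\|_{L_p(\bR_+,w\,\mathrm{d}t;X_0)} + \|f\|_{L_p(\bR_+,w\,\mathrm{d}t;X_1)} \lesssim \|\tilde u\|_{L_p(\bR_+, \tilde w\,\mathrm{d}t;X_0)} + \|g\|_{L_p(\bR_+,\tilde w\,\mathrm{d}t;X_1)},
$$
where $\tilde w$ is the time-changed weight whose cumulative distribution is comparable to $W\circ\kappa^{*}$. This requires controlling $\int_0^\infty \|\bE[\tilde u(S_t)]\|_{X_0}^p\, w(t)\,\mathrm{d}t$ by $\int_0^\infty \|\tilde u(r)\|_{X_0}^p\,\tilde w(r)\,\mathrm{d}r$; by Jensen's inequality the left side is at most $\int_0^\infty \bE\|\tilde u(S_t)\|_{X_0}^p\, w(t)\,\mathrm{d}t = \int_0^\infty \|\tilde u(r)\|_{X_0}^p\, \big(\int_0^\infty p_t(\mathrm{d}r)\, w(t)\,\mathrm{d}t\big)$, so one needs the sharp two-sided bound $\int_0^\infty p_t(r)\,w(t)\,\mathrm{d}t \lesssim \tilde w(r)$ for the subordinator densities $p_t$. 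This is precisely where the hypothesis $W\circ\kappa^{*}\in\cI_o(0,p)$ enters decisively: it encodes the correct scaling relation between $W$ and $\kappa$ so that the subordinator's occupation measure behaves like $W\circ\kappa^{*}$ on average, and combined with the $A_p$ property of $w$ (the $\lesssim_{[w]_{A_p}}$ dependence in the constant) and the two-weight Hardy inequality, it yields the estimate. The term with $f$ needs the analogous but slightly more delicate bound since $f$ is built from $g$ through an extra convolution against a kernel related to $\kappa$; here one uses the $\cI_o(-1,0)$ membership of $\kappa$ to ensure the relevant operator is bounded on the weighted space. Once these weighted norm transfer estimates are in hand, combining them with Theorem~\ref{extension_local} applied with the transformed weight finishes the proof.
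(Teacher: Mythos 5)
Your construction of $(u,f)$ is, after unwinding, the same as the paper's: taking a $J$-method representation $a=\int_0^\infty v(\lambda)\frac{\mathrm{d}\lambda}{\lambda}$, the local extension is $\tilde u(r)=\int_0^\infty \mathrm{e}^{-r\lambda}v(\lambda)\frac{\mathrm{d}\lambda}{\lambda}$, and composing with the inverse subordinator gives $\bE[\tilde u(E_t)]=\int_0^\infty\Theta(t,\lambda)v(\lambda)\frac{\mathrm{d}\lambda}{\lambda}$, which is exactly the $u$ of Proposition~\ref{22.11.07.13.23} (note, though, that the time change must be by the \emph{inverse} subordinator $E_t=\inf\{r:S_r>t\}$, since $\Theta(t,\lambda)=\bE[\mathrm{e}^{-\lambda E_t}]$; your $u(t):=\bE[\tilde u(S_t)]$ has the time change the wrong way around). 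The identity \eqref{eq0129_01} and $u(0)=a$ then do follow from Proposition~\ref{22.11.07.13.23}-($i$), as you indicate.

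The genuine gap is in your two-stage strategy for the norm estimate \eqref{eq0922_01}. First, applying Theorem~\ref{extension_local} "with the weight replaced so that the cumulative weight becomes $W\circ\kappa^{*}$" requires the transformed density to be an $A_p$ weight, and the hypothesis $W\circ\kappa^{*}\in\cI_o(0,p)$ does \emph{not} give this: membership of the primitive in $\cI_o(0,p)$ is a necessary consequence of $A_p$ (Remark~\ref{22.10.28.18.21}) but is far from sufficient. Second, your transfer step hinges on a pointwise bound $\int_0^\infty \bP(E_t\in\mathrm{d}r)\,w(t)\,\mathrm{d}t\lesssim \tilde w(r)\,\mathrm{d}r$ for the occupation measure of the inverse subordinator; this presupposes the existence of, and sharp upper bounds for, densities that a general subordinator with $\phi\in\cI_o(0,1)$ need not admit, and neither you nor the paper proves such a bound. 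The paper sidesteps both issues by proving the single weighted estimate of Lemma~\ref{extension lem2} directly: it bounds $\|u(t)\|_{X_0}+\|f(t)\|_{X_1}$ by $\int_0^\infty\Theta(t,\lambda)J(\lambda,v(\lambda);X_0,X_1)\frac{\mathrm{d}\lambda}{\lambda}$, replaces $\Theta(t,\phi(\lambda^{-1}))$ by the two-sided elementary bound $1\wedge\frac{\phi(t^{-1})}{\phi(\lambda^{-1})}$ from Proposition~\ref{22.11.07.13.23}-($ii$) (which only needs the tail estimates \eqref{23022310472}--\eqref{23022310473}, not density estimates), and then verifies the Muckenhoupt conditions \eqref{two weight 1}--\eqref{two weight 2} of the two-weight Hardy inequality, which is exactly where $W\circ\psi\in\cI_o(0,p)$ is consumed. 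To repair your argument you would need to either prove the occupation-density bound (unavailable in this generality) or collapse the two stages into a direct two-weight Hardy estimate, which is what the paper does.
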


Note that \eqref{eq0922_01} in Theorem \ref{extension_non_local} depends on $W\circ\kappa^{*}$ while \eqref{trace 2} in Theorem~\ref{trace_non_local} does not.
The difference arises because equivalence of $\|a\|_{(X_0,X_1)_{(W\circ\kappa^{*})^{1/p},p}^K}$ and $\|a\|_{(X_0,X_1)_{(W\circ\kappa^{*})^{1/p},p}^J}$ depends on $W\circ\kappa^{*}$, and we use $K$ and $J$ methods to prove Theorems~\ref{trace_non_local} and \ref{extension_non_local}, respectively.
In other words, the trace and extension theorems with non-local derivatives essentially depend on $p$, $[w]_{A_p}$ and $\kappa$.

\begin{rem}\label{rem_frac}
In this paper, while the results on the local derivative (Theorem~\ref{trace_local}, Corollary \ref{2307061138} and Theorem \ref{extension_local}) hold for any $A_p$ weights, the results on the non-local derivatives (Theorems~\ref{trace_non_local}, \ref{2307111235} and \ref{extension_non_local}) hold for $w\in A_p$ with $W \circ \kappa^{*} \in \cI_o(0,p)$.
In particular, if we consider the Caputo fractional derivative $\partial_t^\alpha$ in \eqref{eq0121_02}, then Theorems~\ref{trace_non_local}, \ref{2307111235} and \ref{extension_non_local} hold for $w(t) = |t|^\gamma$ with $-1<\gamma<p-1$ and $\gamma+1 < p\alpha$, which is a special case of main results in \cite{Veraar23} and \cite{KimWoo23}.

Indeed, let $\alpha \in (0, 1)$, $q \in (1, \infty)$, and $w(t) = |t|^{\gamma}$ where $\gamma \in (-1, q-1)$ and $1 + \gamma < q\alpha$. Note that $w \in A_q(\bR)$. 
If we take $\kappa(t) = \big( \Gamma ( 1-\alpha ) \big)^{-1}t^{-\alpha}$ in the definition of $I^{\kappa}$ (see \eqref{eq_non-local}), for sufficiently smooth function $u = u(t, x)$ defined on $[0, \infty) \times \Omega$,
$$
I^{\kappa}u (t, x) = \int_0^t \kappa(t-s) \big(u(s, x) - u(0, x)\big)\,\mathrm{d}s = I^{1-\alpha} \big( u ( \cdot, x ) - u(0, x)\big)(t),
$$
where $I^{1-\alpha}$ stands for the classical fractional integral,  \textit{i.e.}, the Riemann-Liouville fractional integral.
Also let $X_0 = H_p^2$ and $X_1 = L_p$, $1 < p < \infty$, where $H_p^2$ is a usual Sobolev space for the second order PDEs.
By the choice of $\kappa$, it is clear that $\kappa \in \cI_o(-1, 0)$ and 
$$
\kappa^\ast(t)=\big(\Gamma(1-\alpha)\big)^{1/\alpha}t^{1/\alpha}.
$$
Moreover, since $W(t) \simeq t^{1+\gamma}$, we also have $(W \circ \kappa^\ast)(\lambda) \simeq \lambda^{(1+\gamma)/\alpha} \in \cI_o(0, q)$ by the choice of $\alpha$, $q$, and $\gamma$. Then by Theorem \ref{trace_non_local}, 
$$
u(0, \cdot) \in (X_0, X_1)_{(W\circ\kappa^\ast)^{1/q}, q} = (H_p^2, L_p)_{(W\circ\kappa^\ast)^{1/q}, q}
$$
where $(W\circ\kappa^\ast)^{1/q}(\lambda) = \lambda^{(1+\gamma)/q\alpha}$.
On the other hand, by Proposition \ref{exs} (\textit{ii}), we have that $(H_p^2, L_p)_{(W\circ\kappa^\ast)^{1/q}, q} = B_{p,q}^{(W\circ\kappa^\ast)^{1/q}(2,0)}$.
    That is, we have
\begin{align*}
        \|f\|_{B_{p,q}^{(W\circ\kappa^\ast)^{1/q}(0,2)}} \,&= \|S_0 f\|_p + \left(\sum_{j\geq1} 2^{2qj}\big(W\circ\kappa^\ast\big)(2^{-2j}) \| \Delta_j f\|_{L_p}^q \right)^{\frac{1}{q}}\\
&\simeq \|S_0 f\|_p + \left(\sum_{j\geq1} 2^{2(1 - \frac{1+\gamma}{q\alpha})qj} \| \Delta_j f\|_{L_p}^q \right)^{\frac{1}{q}}
= \|f\|_{B_{p,q}^{2-\frac{2(\gamma+1)}{q\alpha}}}.
\end{align*} 
Thus, $u(0, \cdot) \in B_{p,q}^{2-\frac{2(\gamma+1)}{q\alpha}}$ and this coincides with \cite[Theorem 3.11]{KimWoo23} for $k=0$ in there. The extension part can be dealt with similar way.

We also remark that if we take $X_0$ and $X_1$ as weighted Sobolev spaces, the generalized interpolation space $(X_0, X_1)$ becomes a weighted Besov space with variable smoothness, which is characterized by means of difference operators.
For more detail, see \cite{btl}.

\end{rem}

\section*{Notations}
Throughout this paper, we will use the following standard notations.

$\bullet$ $\bR$, $\bQ$, and $\bZ$ are the set of all real numbers, rational numbers, and integers, respectively.

$\bullet$ $\bR_+ = (0, \infty)$, and $\bQ_+ = \bQ \cap \bR_+$.

$\bullet$ $a \wedge b = \min (a, b)$ and $a \vee b = \max  (a, b)$ for $a, b \in \bR$.

$\bullet$ For a set $\{a, b, \ldots \}$ and $X, Y \in \bR$, we say $X \lesssim_{a, b, \ldots} Y$, if $X \le N Y$ holds for some $N = N(a, b, \ldots) > 0$. Sometimes we omit $a, b, \ldots$ and just say $X \lesssim Y$ if the dependency on $a, b, \ldots$ is clear from the context.

$\bullet$ For $p \in [1, \infty]$, $p' := \frac{p}{p-1}$ is the H\"older conjugate ($\frac{1}{0} := \infty$ and $\frac{\infty}{\infty} := 1$).

$\bullet$ $\mathbbm{1}_A$ is a characteristic function on a set $A$.

$\bullet$ For a positive function $f$ defined on $\bR_+$, we use the standard big-$O$ notation $O\big(f(\lambda)\big)$ and little-$o$ notation $o\big(f(\lambda)\big)$.

\mysection{Preliminaries}\label{sec_prob}

\subsection{Functions of type $\cI(a, b)$ and $\cI_o(a,b)$ and their properties}

We introduce a class $\cI(a,b)$ and $\cI_o(a,b)$. 
In particular, functions in $\cI_o(0,1)$ are essential in a generalized interpolation introduced in Section~\ref{sec_inter}.
\begin{defn}
For a function $\phi:\bR_+\rightarrow \bR_+$, we define 
$$
s_\phi(\lambda) = \sup_{t>0}\frac{\phi(\lambda t)}{\phi(t)}\,,
$$
so that $s_{\phi}:\bR_+\rightarrow (0,\infty]$.
Observe that $s_\phi$ is \textit{submultiplicative}, \textit{i.e}., $s_\phi(\lambda\tau) \leq s_\phi(\lambda) s_\phi(\tau)$ for $\lambda, \tau>0$.
Together with the definition of $s_\phi(\lambda)$ we introduce the following class of functions. For $a, b \in \mathbb{R}$,
	\begin{align}\label{ftn class}
		\mathcal{I}(a, b) := \{\phi : \text{$s_\phi(\lambda) = O(\lambda^a)$ as $\lambda \to 0$, and $s_\phi(\lambda) = O(\lambda^b)$ as $\lambda \to \infty$} \}.
	\end{align}
We also define $\mathcal{I}_o(a,b)$, where $o$ denotes that we replace $O$-notation with $o$-notation in \eqref{ftn class}. Clearly, $\mathcal{I}_o(a,b)\subsetneq \mathcal{I}(a,b)$.
\end{defn}
\begin{rem}
\label{rmk_bdd}
Due to $1 = s_{\phi}(1)\leq s_{\phi}(\lambda)\,s_{\phi}(\lambda^{-1})$, we obtain that $\mathcal{I}(a,b)=\emptyset$ for $a> b$, and $\mathcal{I}_o(a,b)=\emptyset$ for $a\geq b$.
In addition, for $a<b$, $\phi\in \mathcal{I}_o(a,b)$ if and only if there exists a bounded function $K_{\phi}:\bR_+\rightarrow \bR_+$ such that
$$
s_{\phi}(\lambda)\leq \left(\lambda^a+\lambda^b\right)K_{\phi}(\lambda)\quad \textrm{and}\quad \lim_{\lambda\rightarrow 0}K_{\phi}(\lambda)=\lim_{\lambda\rightarrow \infty}K_{\phi}(\lambda)=0.
$$
See, for example, \cite[Proposition 1.1]{Gus_Pee1977}.
\end{rem}

The following lemma provides useful information about the classes $\cI(a, b)$ and $\cI_o(a, b)$:
\begin{lem}
\label{22.10.26.15.52}
    Let $a,b\in\bR$ and $\phi\in \mathcal{I}(a,b)$.
    \begin{enumerate}[(i)]
        \item For $\alpha\in\bR$, $t^\alpha \phi(t) \in \mathcal{I}(a+\alpha, b+\alpha)$.
        \item For $\alpha\geq0$, $\phi(t^\alpha), \phi(t)^\alpha \in \mathcal{I}(\alpha a, \alpha b)$.
        \item For $\alpha \leq 0$, $\phi(t^\alpha), \phi(t)^\alpha \in \mathcal{I}(\alpha b, \alpha a)$.
        \item Let $a,b>0$ and $\phi$ be strictly increasing, continuous, and
        $$
        \lim_{\lambda\uparrow\infty}\phi(\lambda)=\infty.
        $$
        Then its inverse $\phi^{-1}$ is of class $\mathcal{I}(\frac{1}{b}, \frac{1}{a})$.
        \item For $a<c$, $b>d$, $\mathcal{I}(c,d)\subset \mathcal{I}_{o}(a,b)$ and
        \begin{align}\label{ineq-221014 13300}
		\bigcup_{c>a,\, d<b} \mathcal{I}(c, d) = \mathcal{I}_o(a, b).
        \end{align}
        \item $\phi\in\mathcal{I}_{o}(a,b)$ if and only if there exists $\varepsilon >0$ such that
        \begin{align}\label{w scaling}
		\lambda^{a+\varepsilon} \lesssim \frac{\phi(\lambda t)}{\phi(t)} \lesssim \lambda^{b - \varepsilon}\quad \forall \, t\geq 0, \, \lambda\geq 1.
	    \end{align}
        \item If $p\geq0$ and $\phi\in\mathcal{I}_{o}(0,p)$, then
        \begin{align}
        \label{22.10.27.13.31}
		\int_0^\infty \left( 1 \wedge \frac{x}{t}\right)^p \phi(t) \frac{\mathrm{d}t}{t} \lesssim \phi(x).
        \end{align}
        \item If $p\geq0$ and $\phi\in\mathcal{I}_o(-p,0)$, then
        \begin{align*}
		\int_0^\infty \left( 1 \wedge \frac{t}{x}\right)^p \phi(t) \frac{\mathrm{d}t}{t} \lesssim \phi(x),
        \end{align*}
        
        \item For $\phi \in \mathcal{I}_o(a, b)$, the assertions $(i)$ -- $(iii)$ also hold for $\mathcal{I}_{o}(a,b)$ instead of $ \mathcal{I}(a,b)$.
    \end{enumerate}
\end{lem}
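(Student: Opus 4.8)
The plan is to reduce every assertion to a statement about the single submultiplicative function $s_\phi$ and the elementary way it transforms under the operations in $(i)$--$(iv)$; the integral estimates $(vii)$--$(viii)$ are then obtained from the scaling characterization $(vi)$.

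For $(i)$--$(iii)$ and $(ix)$ I would record exact identities. If $\psi(t)=t^{\alpha}\phi(t)$ then $s_\psi(\lambda)=\lambda^{\alpha}s_\phi(\lambda)$; if $\psi(t)=\phi(t)^{\alpha}$ then, using that $x\mapsto x^{\alpha}$ is monotone, $s_\psi(\lambda)=s_\phi(\lambda)^{\alpha}$ for $\alpha\ge0$ and $s_\psi(\lambda)=s_\phi(\lambda^{-1})^{-\alpha}$ for $\alpha\le0$ (the last after substituting $t\mapsto\lambda^{-1}t$ in the supremum); and if $\psi(t)=\phi(t^{\alpha})$ then substituting $t\mapsto t^{1/\alpha}$ gives $s_\psi(\lambda)=s_\phi(\lambda^{\alpha})$. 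Inserting the $O$-bounds defining $\mathcal I(a,b)$ and reading off the exponents yields $(i)$--$(iii)$; since these are exact identities, the same computation with $o$ in place of $O$ gives $(ix)$ (the degenerate cases $\alpha=0$ in $(ii)$--$(iii)$ produce constants, for which the $\mathcal I_o$-statement is vacuous because $\mathcal I_o$ with equal endpoints is empty).

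For $(iv)$: since $a>0$ forces $s_\phi(\lambda)\to0$ as $\lambda\to0$, we get $\phi(0+)=0$, so $\phi:(0,\infty)\to(0,\infty)$ is a continuous increasing bijection. The crux is to upgrade the one-sided bounds to a two-sided power estimate
\begin{equation*}
C^{-1}\lambda^{a}\le\frac{\phi(\lambda t)}{\phi(t)}\le C\lambda^{b}\qquad(t>0,\ \lambda\ge\Lambda)
\end{equation*}
for suitable $C\ge1$, $\Lambda\ge1$: the upper bound is $\phi(\lambda t)/\phi(t)\le s_\phi(\lambda)=O(\lambda^{b})$ at $\infty$, while the lower bound follows from $\phi(t)/\phi(\lambda t)\le s_\phi(\lambda^{-1})=O(\lambda^{-a})$ at $\infty$; substituting $t\mapsto\lambda t,\ \lambda\mapsto\lambda^{-1}$ gives the companion estimate $C^{-1}\lambda^{b}\le\phi(\lambda t)/\phi(t)\le C\lambda^{a}$ for $\lambda\le\Lambda^{-1}$. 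Now, given $s>0$ put $t=\phi^{-1}(s)$; for $\mu$ large take $\lambda=(C\mu)^{1/a}\ge\Lambda$, so $\phi(\lambda t)\ge C^{-1}\lambda^{a}\phi(t)=\mu s$, whence $\phi^{-1}(\mu s)\le\lambda t=(C\mu)^{1/a}\phi^{-1}(s)$ and thus $s_{\phi^{-1}}(\mu)=O(\mu^{1/a})$ as $\mu\to\infty$; for $\mu$ small take $\lambda=(C\mu)^{1/b}\le\Lambda^{-1}$ and argue identically to get $s_{\phi^{-1}}(\mu)=O(\mu^{1/b})$ as $\mu\to0$. All thresholds on $\mu$ are independent of $s$, so $\phi^{-1}\in\mathcal I(1/b,1/a)$.

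For $(v)$ and $(vi)$: the inclusion $\mathcal I(c,d)\subset\mathcal I_o(a,b)$ for $a<c$, $b>d$ is immediate, since $O(\lambda^{c})=o(\lambda^{a})$ as $\lambda\to0$ when $c>a$, and symmetrically at $\infty$. The reverse inclusion in $(v)$ -- that $\phi\in\mathcal I_o(a,b)$ already forces $\phi\in\mathcal I(a+\varepsilon,b-\varepsilon)$ for some $\varepsilon>0$ -- is the structure theorem for submultiplicative functions recorded in Remark~\ref{rmk_bdd} (\cite[Proposition~1.1]{Gus_Pee1977}): picking $\lambda_0<1$ with $s_\phi(\lambda_0)<\lambda_0^{a}$, setting $\varepsilon$ by $s_\phi(\lambda_0)=\lambda_0^{a+\varepsilon}$, and iterating submultiplicativity over the blocks $\lambda_0^{k}$ (the gap intervals being controlled by local boundedness of $s_\phi$, which follows from the same Remark) gives the $O(\lambda^{a+\varepsilon})$ bound at $0$, and similarly at $\infty$. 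Then $(vi)$ follows: the forward direction rewrites $s_\phi(\lambda)\lesssim\lambda^{a+\varepsilon}$ (valid for $\lambda\le1$) as the lower bound $\phi(\lambda t)/\phi(t)\gtrsim\lambda^{a+\varepsilon}$ for $\lambda\ge1$ via the substitution $t\mapsto\lambda t,\ \lambda\mapsto\lambda^{-1}$, and the converse runs the same substitution backwards. Finally $(vii)$ and $(viii)$ are direct integrations: splitting at $t=x$ and substituting $t=x\lambda$,
\begin{equation*}
\int_0^{\infty}\Big(1\wedge\tfrac{x}{t}\Big)^{p}\phi(t)\,\frac{\dd t}{t}=\int_0^{1}\phi(x\lambda)\,\frac{\dd\lambda}{\lambda}+\int_1^{\infty}\lambda^{-p}\phi(x\lambda)\,\frac{\dd\lambda}{\lambda},
\end{equation*}
and bounding $\phi(x\lambda)\le s_\phi(\lambda)\phi(x)\lesssim\lambda^{p-\varepsilon}\phi(x)$ on $\{\lambda\ge1\}$ and $\phi(x\lambda)\lesssim\lambda^{\varepsilon}\phi(x)$ on $\{\lambda\le1\}$ (using $\phi\in\mathcal I_o(0,p)$ and $(vi)$) leaves the convergent integrals $\int_1^\infty\lambda^{-\varepsilon-1}\dd\lambda$ and $\int_0^1\lambda^{\varepsilon-1}\dd\lambda$ times $\phi(x)$; $(viii)$ is identical with the two halves interchanged and $\phi\in\mathcal I_o(-p,0)$. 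I expect the only genuinely delicate point to be the $o$-to-$O$-of-a-better-power upgrade in $(v)$--$(vi)$ (and the attendant local boundedness of $s_\phi$), which is precisely why we invoke Remark~\ref{rmk_bdd}/\cite{Gus_Pee1977}; everything else is the bookkeeping of keeping thresholds uniform in the auxiliary variable.
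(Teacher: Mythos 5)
Your proof is correct and follows essentially the same route as the paper's: exact identities for $s_\phi$ handle $(i)$--$(iii)$ and $(ix)$, the inversion argument for $(iv)$ matches the paper's, the submultiplicative block iteration (Remark~\ref{rmk_bdd}, \cite{Gus_Pee1977}) gives $(v)$--$(vi)$ exactly as in the text, and $(vii)$ is the same split-at-$t=x$ computation. The only cosmetic difference is $(viii)$, which you integrate directly while the paper reduces it to $(vii)$ by applying $(i)$ to $t^p\phi(t)$.
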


\begin{proof}
    $(i)$--$(iii)$ and $(ix)$ follows from direct computations. 
    
    For $(iv)$, first, we observe that there exists a constant $C>0$ such that
    $$
    \phi(\lambda t)\leq C\lambda^a\phi(t)
    $$
    for $t \in \bR_+$ and $\lambda \in (0,1]$.
    On the other hand, for $x\in\bR_+$, by the assumption for $\phi$, there exists $t\in\bR_+$ such that $C\phi(t)=x$. Then, again by the assumption for $\phi$, we have $\lambda\phi^{-1}(x/C)\leq\phi^{-1}(\lambda^ax)$, and hence
    $$
 \frac{\phi^{-1}(x/(C\lambda^a))}{\phi^{-1}(x)}\leq \frac{1}{\lambda}
    $$
    for all $x \in \bR_+$ and $\lambda\in(0,1]$.
    This certainly implies that $s_{\phi^{-1}}(\lambda)=O(\lambda^{1/a})$ as $\lambda \to \infty$.
    Similarly, we also have $s_{\phi^{-1}}(\lambda)=O(\lambda^{1/b})$ as $\lambda\to0$.
    
    $(v)$ Clearly, $\mathcal{I}(c,d)\subset\mathcal{I}_o(a,b)$ for $a< c$ and $b > d$, and thus
    $$
    \bigcup_{a<c,b>d}\mathcal{I}(c,d) \subset \mathcal{I}_o(a,b).
    $$
Therefore, it suffices to show that ``$\supset$" holds instead of ``$=$" in \eqref{ineq-221014 13300}. Let $\phi \in \mathcal{I}_o(a,b)$ and take a constant $N>0$ such that $s_{\phi}(\lambda)\leq \frac{1}{2}\lambda^a$ if $\lambda\leq 2^{-N}$, and $s_{\phi}(\lambda) \leq \frac{1}{2}\lambda^b$ if $\lambda\geq 2^{N}$.
Then for $\lambda\leq 2^{-N}$, there is $k\in\mathbb{N}$ such that $2^{-(k+1)N}< \lambda \leq 2^{-kN}$, which yields
	\begin{align}\label{ineq-221014 1359 3}
		s_{\phi} \left( \lambda \right) \leq \left(s_{\phi} \left( 2^{-N} \right) \right)^{k-1} s_{\phi} \left( 2^{N \left( k-1 \right)}\lambda \right) \leq 2^{-k} \lambda^a \leq 2 \lambda^{a + \frac{1}{N}}
	\end{align}
by the submultiplicativity.	
Similarly, for $\lambda \geq 2^{N}$ there is $l\in \mathbb{N}$ such that $2^{lN} \leq \lambda < 2^{(l+1)N}$, which yields
	\begin{align}\label{ineq-221014 1359 4}
	s_{\phi} \left( \lambda \right) \leq \left(s_{\phi} \left( 2^N \right)\right)^{l-1} s_{\phi}\left( 2^{-N\left( l-1 \right)} \lambda \right) \leq 2^{-l} \lambda^{b} \leq 2 \lambda^{b - \frac{1}{N}}.
	\end{align}
By \eqref{ineq-221014 1359 3} and \eqref{ineq-221014 1359 4}, $\phi \in \mathcal{I}(a + \frac{1}{N}, b - \frac{1}{N})$. This proves \eqref{ineq-221014 13300}.

$(vi)$ This is a direct result of $(v)$ and its proof with Remark \ref{rmk_bdd}.

$(vii)$ By $(v)$ and $(vi)$, there exists $\varepsilon>0$ such that $\phi\in\mathcal{I}(\varepsilon,p-\varepsilon)$ and
$$
\frac{\phi(t)}{\phi(x)}\lesssim\left(\frac{t}{x}\right)^{\varepsilon}\mathbbm{1}_{t\leq x}+\left(\frac{t}{x}\right)^{p-\varepsilon}\mathbbm{1}_{t>x}. 
$$
Therefore,
\begin{equation}
\label{22.10.27.13.30}
\begin{aligned}
    \int_0^{\infty}\left(1\wedge\frac{x}{t}\right)^p\frac{\phi(t)}{\phi(x)}\frac{\mathrm{d}t}{t}&=\int_0^x\cdots\,\,+\int_x^{\infty}\cdots\\
    &\lesssim \int_0^x\left(\frac{t}{x}\right)^{\varepsilon}\frac{\mathrm{d}t}{t}+\int_x^{\infty}\left(\frac{x}{t}\right)^p\left(\frac{t}{x}\right)^{p-\varepsilon}\frac{\mathrm{d}t}{t}\lesssim 1.
\end{aligned}    
\end{equation}

$(viii)$ Since $\phi\in\mathcal{I}_{o}(-p,0)$, by $(i)$, $t^p\phi(t)\in\mathcal{I}_{o}(0,p)$. Using $(vii)$,
\begin{align*}
    \int_0^{\infty}\left(1\wedge\frac{t}{x}\right)^p\phi(t)\,\frac{\mathrm{d}t}{t}=x^{-p}\int_0^{\infty}\left(1\wedge\frac{x}{t}\right)^pt^p\phi(t)\,\frac{\mathrm{d}t}{t}\lesssim \phi(x).
\end{align*}
The lemma is proved.
\end{proof}

When a kernel $\kappa$ of $\partial_t^\kappa$ is of class $\cI_o(-1,0)$, we have the following proposition:
\begin{prop}
\label{22.11.07.11.09}
Let $\kappa$ be a positive function defined on $\bR_+$.
\begin{enumerate}[(i)]
\item If $\kappa\in\mathcal{I}_o(-1,0)$, then for $\lambda \in \bR_+$,
\begin{align}\label{221103637}
\cL[\kappa](\lambda) := \int_0^{\infty} \mathrm{e}^{-\lambda t} \kappa(t) \,\mathrm{d}t \simeq \frac{\kappa(\lambda^{-1})}{\lambda },
\end{align}
where the equivalence ($\simeq$) depends only on $K_\phi$ in Remark~\ref{rmk_bdd}.
Moreover, $\cL[\kappa]\in\mathcal{I}_o(-1,0)$ (by Lemma \ref{22.10.26.15.52}-$(ix)$).

\item If $\kappa$ is a decreasing function on $\bR_+$, and $\cL[\kappa]\in\cI_o(-1,0)$, then \eqref{221103637} holds, and the equivalence in \eqref{221103637} depends only on $K_{\cL[\kappa]}$.
Moreover, $\kappa\in\cI_o(-1,0)$ (by Lemma \ref{22.10.26.15.52}-$(ix)$).
\end{enumerate}
\end{prop}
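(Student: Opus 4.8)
The plan is to handle $(i)$ by the rescaling $t=s/\lambda$ inside the Laplace integral, and $(ii)$ by routing through the antiderivative $g(r):=\int_0^r\kappa(s)\,\mathrm{d}s$: first compare $g$ with $\cL[\kappa](1/\,\cdot\,)$ using only monotonicity, and then transfer the $\cI_o$-scaling from $\cL[\kappa]$ to $g$ and finally to $\kappa$ itself.

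For $(i)$, by Lemma~\ref{22.10.26.15.52}-$(vi)$ there is $\varepsilon>0$, with $\varepsilon$ and the implied constants controlled by the function $K_\kappa$ of Remark~\ref{rmk_bdd}, such that $\kappa(\mu t)/\kappa(t)\lesssim \mu^{-\varepsilon}$ for $\mu\ge1$ and $\mu^{-\varepsilon}\lesssim\kappa(\mu t)/\kappa(t)\lesssim\mu^{-1+\varepsilon}$ for $0<\mu\le1$, uniformly in $t>0$. Substituting $t=s/\lambda$,
\[
\cL[\kappa](\lambda)=\frac{\kappa(1/\lambda)}{\lambda}\int_0^\infty e^{-s}\,\frac{\kappa(s/\lambda)}{\kappa(1/\lambda)}\,\mathrm{d}s .
\]
For the upper estimate the integrand is dominated by $e^{-s}\big(s^{-1+\varepsilon}\mathbbm{1}_{s\le1}+s^{-\varepsilon}\mathbbm{1}_{s\ge1}\big)$, which is integrable precisely because $\varepsilon>0$ (this is the one place where $\cI_o(-1,0)$ is used rather than the weaker $\cI(-1,0)$). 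For the lower estimate it suffices to restrict the integral to $s\in[1/2,1]$, where $\kappa(s/\lambda)/\kappa(1/\lambda)\gtrsim1$. This proves \eqref{221103637} with constants depending only on $K_\kappa$, and the membership $\cL[\kappa]\in\cI_o(-1,0)$ is then read off from \eqref{221103637} together with Lemma~\ref{22.10.26.15.52}-$(i)$,$(iii)$,$(ix)$ applied to $\kappa$.

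For $(ii)$, set $g(r):=\int_0^r\kappa(s)\,\mathrm{d}s$, which is finite since $g(r)\le e^{\lambda r}\cL[\kappa](\lambda)$. Using only that $\kappa$ is decreasing one obtains $e^{-1}g(r)\le\cL[\kappa](1/r)\le(1+e^{-1})g(r)$: the lower bound from $\cL[\kappa](1/r)\ge\int_0^r e^{-s/r}\kappa(s)\,\mathrm{d}s\ge e^{-1}g(r)$, the upper bound by splitting at $r$ and estimating $\int_r^\infty e^{-s/r}\kappa(s)\,\mathrm{d}s\le\kappa(r)\int_r^\infty e^{-s/r}\,\mathrm{d}s=e^{-1}r\kappa(r)\le e^{-1}g(r)$. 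Hence $g(r)\simeq\cL[\kappa](1/r)$, and since $\cL[\kappa]\in\cI_o(-1,0)$, Lemma~\ref{22.10.26.15.52}-$(iii)$,$(ix)$ give $g\in\cI_o(0,1)$. Now Lemma~\ref{22.10.26.15.52}-$(vi)$ supplies $\varepsilon>0$ with $g(\lambda t)\gtrsim\lambda^\varepsilon g(t)$ for $\lambda\ge1$, so I can fix $m\in\bN$ with $g(2^m t)\ge2g(t)$ for all $t>0$; monotonicity of $\kappa$ then yields $g(t)\le g(2^m t)-g(t)=\int_t^{2^m t}\kappa(s)\,\mathrm{d}s\le(2^m-1)\,t\,\kappa(t)$, while $t\kappa(t)\le g(t)$ trivially, so $g(t)\simeq t\kappa(t)$. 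Chaining $t\kappa(t)\simeq g(t)\simeq\cL[\kappa](1/t)$ and setting $t=1/\lambda$ proves \eqref{221103637}, with every constant controlled by $K_{\cL[\kappa]}$ (through the choice of $m$); and $\kappa(t)\simeq t^{-1}g(t)\in\cI_o(-1,0)$ by Lemma~\ref{22.10.26.15.52}-$(i)$,$(ix)$ gives $\kappa\in\cI_o(-1,0)$.

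The routine parts — the substitution in $(i)$ and the elementary Laplace comparisons in $(ii)$ — are not where the difficulty lies. The main obstacle is the upper bound $\cL[\kappa](\lambda)\lesssim\kappa(1/\lambda)/\lambda$ in $(ii)$: one cannot exploit scaling of $\kappa$ as in $(i)$, since a priori $\kappa$ belongs to no $\cI_o$ class, and a direct dyadic decomposition of $\int_0^\infty e^{-\lambda t}\kappa(t)\,\mathrm{d}t$ only reproduces $\cL[\kappa](\lambda)$ on the right-hand side. The device that breaks this circularity is the antiderivative $g$: monotonicity alone pins $g$ to $\cL[\kappa](1/\,\cdot\,)$, which upgrades $g$ to an $\cI_o(0,1)$ function, and then a single doubling of $g$ combined once more with monotonicity of $\kappa$ forces the self-improving identity $g(t)\simeq t\kappa(t)$. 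I will also have to keep careful track that all implied constants in $(ii)$ depend only on the $\cI_o(-1,0)$-data of $\cL[\kappa]$, i.e. on $K_{\cL[\kappa]}$, so that the asserted dependence of the equivalence in \eqref{221103637} is justified.
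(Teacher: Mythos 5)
Your proof is correct. Part $(i)$ is essentially the paper's argument: the substitution $t=s/\lambda$ is just a reparametrization of the paper's splitting of the integral at $t=\lambda^{-1}$, and both use the scaling bounds from Lemma \ref{22.10.26.15.52}-$(vi)$ to dominate the integrand by $e^{-s}(s^{-1+\varepsilon}\mathbbm{1}_{s\le 1}+s^{-\varepsilon}\mathbbm{1}_{s\ge 1})$ and to bound it below on a fixed compact interval.

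Part $(ii)$ is where you genuinely diverge. The paper attacks the upper bound $\cL[\kappa](\lambda^{-1})\lesssim\lambda\kappa(\lambda)$ by an absorption argument on the Laplace transform itself: it inserts the already-proved lower bound $\lambda z\,\kappa(\lambda z)\le e\,\cL[\kappa]((\lambda z)^{-1})$ into $\lambda\int_0^\delta e^{-z}\kappa(\lambda z)\,\mathrm{d}z$, uses the $\cI_o(-1,0)$-scaling of $\cL[\kappa]$ to make this piece at most $\tfrac12\cL[\kappa](\lambda^{-1})$ for $\delta=\delta(K_{\cL[\kappa]})$ small, absorbs it, and then bounds the tail by $\lambda\kappa(\delta\lambda)$ via monotonicity. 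You instead route through the primitive $g(r)=\int_0^r\kappa$, pin $g(r)\simeq\cL[\kappa](1/r)$ with \emph{absolute} constants using only monotonicity, transfer the $\cI_o(0,1)$-scaling from $\cL[\kappa](1/\cdot)$ to $g$, and convert a single doubling step $g(2^mt)\ge 2g(t)$ into $g(t)\simeq t\kappa(t)$. Both arguments use monotonicity of $\kappa$ in an essential way and both trace all constants back to $K_{\cL[\kappa]}$ (yours through the choice of $m$, the paper's through the choice of $\delta$). What your route buys is a cleaner separation of the two inputs (monotonicity alone gives $g\simeq\cL[\kappa](1/\cdot)$; the $\cI_o$ hypothesis is used only once, for the doubling of $g$) and, as a byproduct, the identity $K(t)=\int_0^t\kappa(s)\,\mathrm{d}s\simeq t\kappa(t)$, which is of independent use elsewhere in the paper. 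The paper's route avoids introducing $g$ but pays with the slightly more delicate self-referential absorption step.
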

\begin{proof}
$(i)$ First note that $\cL[\kappa](\lambda)$ is well-defined on $\bR_+$ due to $\kappa\in \cI_o(0,1)$. Then for $\lambda\in\bR_+$,
$$
\cL[\kappa](\lambda)=\int_0^{\lambda^{-1}}\mathrm{e}^{-\lambda t}\kappa(t)\mathrm{d}t+\int_{\lambda^{-1}}^{\infty}\mathrm{e}^{-\lambda t}\kappa(t)\mathrm{d}t=:I_0(\lambda)+I_1(\lambda).
$$
Since $\kappa\in\cI_o(-1,0)$, by Lemma \ref{22.10.26.15.52}-$(vi)$, there exists $\varepsilon>0$ such that
$$
(s^{-1} t)^{-1+\varepsilon}\kappa(s) \lesssim\kappa(t)\lesssim (s^{-1} t)^{-\varepsilon}\kappa(s)\quad\forall\,\, 0<s<t<\infty.
$$
Therefore, we have
\[
\kappa(\lambda^{-1})\int_0^{\lambda^{-1}}\mathrm{e}^{-\lambda t}(\lambda t)^{-1+\varepsilon}\mathrm{d}t \lesssim I_0(\lambda)\lesssim \kappa(\lambda^{-1})\int_0^{\lambda^{-1}}\mathrm{e}^{-\lambda t}(\lambda t)^{-\varepsilon}\mathrm{d}t,
\]
and
\[
\kappa(\lambda^{-1})\int_{\lambda^{-1}}^{\infty}\mathrm{e}^{-\lambda t}(\lambda t)^{-1+\varepsilon}\mathrm{d}t \lesssim I_1(\lambda)\lesssim \kappa(\lambda^{-1})\int_{\lambda^{-1}}^{\infty}\mathrm{e}^{-\lambda t}(\lambda t)^{-\varepsilon}\mathrm{d}t.
\]
This implies
$$
\cL[\kappa](\lambda)=I_0(\lambda)+I_1(\lambda)\simeq\frac{\kappa(\lambda^{-1})}{\lambda}.
$$

$(ii)$ We assume that $\cL[\kappa]\in\cI_o(-1,0)$. Observe that
$$
\cL[\kappa](\lambda^{-1})=\lambda\int_0^{\infty}\mathrm{e}^{-z}\kappa(\lambda z)\mathrm{d}z.
$$
Since $\kappa$ is a decreasing function, we see that
\begin{align}
\label{22.11.01.17.20}
    \cL[\kappa](\lambda^{-1})\geq \lambda\int_0^1\mathrm{e}^{-z}\kappa(\lambda z)\mathrm{d}z\geq \mathrm{e}^{-1}\lambda \kappa(\lambda).
\end{align}
On the other hand, by \eqref{22.11.01.17.20}, for any $\delta \in \bR_+$,
$$
\lambda \int_0^{\delta}\mathrm{e}^{-z} \kappa(\lambda z) \mathrm{d}z  \leq \mathrm{e} \int_0^{\delta} \frac{\mathrm{e}^{-z}}{z}\cL[\kappa](\lambda^{-1}z^{-1})\,\mathrm{d}z.
$$
For $0<\delta\leq 1$, by Lemma \ref{22.10.26.15.52}-$(vi)$ with the fact that $\cL[\kappa] \in \cI_o(-1, 0)$, we have 
\begin{align*}
    \int_0^{\delta} \frac{\mathrm{e}^{-z}}{z}\cL[\kappa](\lambda^{-1}z^{-1})\mathrm{d}z &= \cL[\kappa](\lambda^{-1}) \int_0^{\delta} \frac{\mathrm{e}^{-z}}{z} \frac{\cL[\kappa](\lambda^{-1}z^{-1})}{ \cL[\kappa](\lambda^{-1})} \, \mathrm{d}z\\
    &\lesssim \cL[\kappa](\lambda^{-1})  \int_0^{\delta} \mathrm{e}^{-z} z^{\varepsilon - 1} \, \mathrm{d}z,
\end{align*}
for some $\varepsilon > 0$.
Hence
$$
\lambda \int_0^{\delta}\mathrm{e}^{-z} \kappa(\lambda z) \mathrm{d}z \leq \frac{1}{2} \cL[\kappa](\lambda^{-1}),
$$
for sufficiently small $ \delta = \delta(K_{\cL[\kappa]}) \in (0, 1)$. Then we have
\begin{align*}
    \cL[\kappa](\lambda^{-1}) &= \lambda \int_0^{\delta} \mathrm{e}^{-z} \kappa (\lambda z )\,\mathrm{d}z + \lambda \int_{\delta}^{\infty} \mathrm{e}^{-z} \kappa (\lambda z )\,\mathrm{d}z \\
    &\leq \frac{1}{2} \cL[\kappa](\lambda^{-1}) +  \lambda \int_{\delta}^{\infty} \mathrm{e}^{-z} \kappa (\lambda z )\,\mathrm{d}z.
\end{align*}
Since $\kappa$ is decreasing, 
$$
\cL[\kappa](\lambda^{-1}) \le 2\lambda \int_{\delta}^{\infty} \mathrm{e}^{-z} \kappa (\lambda z )\,\mathrm{d}z \le 2\lambda \kappa(\delta \lambda),
$$
and then, 
$$
\cL[\kappa](\lambda^{-1}) \le \cL[\kappa](\delta\lambda^{-1}) \le 2\delta^{-1} \lambda \kappa(\lambda)
$$
for all $\lambda>0$, where the first inequality is due to $\delta\leq 1$. This certainly implies that $\cL[\kappa](\lambda^{-1}) \lesssim \lambda \kappa(\lambda)$. The proposition is proved.
\end{proof}

\subsection{Volterra-type equations associated with Bernstein functions}\label{volterra}

In this subsection, we give a connection between a kernel $\kappa$ of non-local derivative in the evolution equation \eqref{eq0121_02} and a Bernstein function in probability theory.
Moreover, we use properties of Bernstein functions to construct $u$ and $f$ satisfying \eqref{eq0128_01}.

Let $\kappa:\bR_+\rightarrow \bR_+$ be a right-continuous decreasing function with $\kappa(0+) = \lim_{t \downarrow 0} \kappa(t) = \infty$ and $\kappa(\infty)=0$.
Then there exists a unique non-negative measure $\mu$ on $\bR_+$ such that
\begin{align}\label{2302231052}
\kappa(s)-\kappa(t)=\mu \left((s,t]\right),
\end{align}
for $0<s<t<\infty$.
Note that
\begin{align}\label{2302231625}
\int_0^\infty(1\wedge  t)\mu(\mathrm{d}t)=\int_0^1\kappa(s)\mathrm{d}s\quad \textrm{and}\quad \mu(\bR_+)=\kappa(0+) = \infty.
\end{align}
We will consider an  evolution equation
\begin{equation}
\label{eq_non-local}
I^{\kappa}u(t):=\int_0^t\kappa(t-s)\left(u(s)-u(0)\right)\mathrm{d}s=\int_0^tf(s)\mathrm{d}s\,,
\end{equation}
or for simplicity, we just denote \eqref{eq_non-local} by $\partial_t^\kappa u=f$.

\begin{rem}
\label{rmk_0129_01}
If $\kappa\in\cI_o(-1,0)$, then by Lemma \ref{22.10.26.15.52}-($vi$), there exists $\varepsilon>0$ such that
$$
s^{-\varepsilon}\kappa(1)\lesssim \kappa(s)\lesssim s^{-1+\varepsilon}\kappa(1)
$$
for $s\leq 1$ and
$$
\kappa(r)\lesssim r^{-\varepsilon}\kappa(1)
$$
for $r\geq 1$ (see \eqref{w scaling}). Therefore, we have
$$
\int_0^1 \kappa(s)\, \mathrm{d}s<\infty, \quad \lim_{t\rightarrow 0}\kappa(t)=\infty, \quad \textrm{and} \quad \lim_{t\rightarrow \infty}\kappa(t)=0.
$$
\end{rem}

\begin{defn}
    An infinitely differentiable function $\phi:(0,\infty)\to[0,\infty)$ is called a Bernstein function if
    $$
    (-1)^nD^n\phi(\lambda)\leq0,\quad \forall n\in\bN,\,\lambda\in(0,\infty).
    $$
\end{defn}
It is well known (\textit{e.g.} \cite[Theorem 3.2]{schbern}) that Bernstein function $\phi$ has a unique representation
\begin{align}\label{230919757}
\phi(\lambda)=a+b\lambda+\int_{0}^{\infty}\left(1-\mathrm{e}^{-\lambda t}\right)\,\mu(\mathrm{d}t),
\end{align}
where $a,b\geq0$ and
$\mu$ is a nonnegative measure on $\bR_+$ satisfying
\begin{align}\label{2211041216}
\int_0^{\infty}\left(1\wedge t\right)\mu(\mathrm{d}t)<\infty.
\end{align}
In \eqref{230919757}, $a$, $b$, and $\mu$ are called the \textit{killing term}, \textit{drift}, and \textit{L\'evy measure} of $\phi$, respectively.
Furthermore, the triplet $(a,b,\mu)$, which determines a Bernstein function $\phi$ uniquely, is called the \textit{L\'evy triplet} of $\phi$.
In this article, we only consider Bernstein functions without killing term, \textit{i.e.} $\phi(0+)=0$.
Indeed, it is well-known the connection between Bernstein functions without killing term and subordinators. Before its description, we first give definitions of subordinators and convolution semigroup of probability measures.
\begin{defn}
\label{23.02.28.15.38}
Let $S = (S_t)_{t \ge 0}$ be a real-valued L\'evy process defined on a probability space $(\Omega, \cF, \bP)$. We say $S = (S_t)_{t \ge 0}$ is a \textit{subordinator} if $\bP(S_t\geq0,\,\forall t\geq0)=1$. A \textit{convolution semigroup of probability measures} on $[0,\infty)$ is a family of probability measures $\{\mu_t\}_{t\geq0}$ satisfying the following properties;
\begin{enumerate}[(i)]
    \item For all $t\geq0$, $\mu_t([0,\infty))=1$.
    \item For all $t,s\geq0$, $\mu_t\ast\mu_s=\mu_{t+s}$, where
    $$
    \mu_t\ast\mu_s(B):=\int_0^{\infty}\int_{0}^{\infty}\mathbbm{1}_{B}(t'+s')\mu_t(\mathrm{d}t')\mu_s(\mathrm{d}s').
    $$
    \item For every compactly supported continuous functions $f:[0,\infty)\to\bR$,
    $$
    \lim_{t\downarrow0}\int_{0}^{\infty}f(t')\mu_t(\mathrm{d}t')=\int_{0}^{\infty}f(t')\varepsilon_0(\mathrm{d}t'),
    $$
    where $\varepsilon_0$ is the centered Dirac measure on $\bR$.
\end{enumerate}
\end{defn}
For the definition and properties of L\'evy processes, see \textit{e.g.} \cite{sato1999levy}. Now we give a short description for the connection between Bernstein functions without killing term and subordinators.  
\cite[Theorem 5.2]{schbern} provides one-to-one correspondence between Bernstein functions and convolution semigroups of sub-probability measures in the sense that
$$
\int_{[0,\infty)}\mathrm{e}^{-\lambda r}\,\mu_t(\mathrm{d}r)=\mathrm{e}^{-t\phi(\lambda)},\quad \forall (t,\lambda)\in[0,\infty)\times\bR_+.
$$
Here, convolution semigroups of sub-probability measures implies Definition \ref{23.02.28.15.38} with $\mu_t([0,\infty))=1$ replaced by $\mu_t([0,\infty))\leq1$.
However, the proof of \cite[Theorem 5.2]{schbern} includes that there is one-to-one correspondence between Bernstein functions without killing term and convolution semigroups of probability measures.
Similarly, from \cite[Proposition 5.5]{schbern}, we also obtain the fact that every law of subordinators is a convolution semigroup of probability measures on $[0,\infty)$.
Combining these results, for a given subordinator $S=(S_t)_{t\geq0}$, there exists a Bernstein function $\phi$ such that $\phi(0+)=0$ and
\begin{equation}
\label{sub_laplace}
\left( \int_{\Omega} \mathrm{e}^{-\lambda S_t(\omega)} \, \bP(\mathrm{d}\omega) =: \right) \bE[\mathrm{e}^{-\lambda S_t}] =\mathrm{e}^{-t\phi(\lambda)},\quad \forall (t,\lambda)\in[0,\infty)\times\bR_+.
\end{equation}
For the converse, we observe that any convolution semigroups of probability measures on $[0,\infty)$ is infinitely divisible (see e.g. \cite[Definition 7.1]{sato1999levy}). Then by \cite[Theorem 7.10]{sato1999levy}, there exists a real-valued L\'evy process $S=(S_t)_{t\geq0}$ such that $(\mu_1)^t(\cdot)=\bP(S_t\in\cdot)$, where $(\mu_1)^t$ is a probability measure satisfying
$$
\int_{-\infty}^{\infty}\mathrm{e}^{iz\cdot r}(\mu_1)^t(\mathrm{d}r)=\left(\int_{-\infty}^{\infty}\mathrm{e}^{iz\cdot r}\mu_1(\mathrm{d}r)\right)^t,\quad\forall z\in\bR. 
$$
If $t$ is a positive rational number, then by Definition \ref{23.02.28.15.38} $(ii)$, $(\mu_1)^t=\mu_t$. Using the density argument and Definition \ref{23.02.28.15.38} $(iii)$, $(\mu_1)^t=\mu_t$ for all $t\geq0$. Hence $\mu_t(\cdot)=\bP(S_t\in\cdot)$. Since the support of $\mu_t$ is contained in $[0,\infty)$, the state space of $S$ is $[0,\infty)$. This certainly implies that $S$ is a subordinator. Therefore for a given Bernstein function $\phi$ without killing term, there exists a subordinator $S=(S_t)_{t\geq0}$ satisfying \eqref{sub_laplace}. Summarizing these facts,
\begin{align*}
    \text{Subordinators}&
    \rightleftharpoons \text{Convolution semigroups of probability measures on }[0,\infty) \\
    &\rightleftharpoons \text{Bernstein functions without killing term}.
\end{align*}
Since there is a one-to-one correspondence between Bernstein functions without killing term and subordinators, we say a subordinator $S=(S_t)_{t\geq0}$ is a \textit{subordinator with Laplace exponent} $\phi$ if \eqref{sub_laplace} holds.

The following proposition gives a relation between a kernel $\kappa$ and a Bernstein function $\phi$.
\begin{prop}
\label{prop0129_01}
    Let $\mu$ be a nonnegative measure on $\bR_+$ with \eqref{2211041216}.
    Then
    $$
    \kappa(x):=\int_0^{\infty}\mathbbm{1}_{(x,\infty)}(t)\mu(\mathrm{d}t)\in\mathcal{I}_o(-1,0)
    $$
    if and only if
    $$
    \phi(\lambda):=\int_0^{\infty}(1-\mathrm{e}^{-\lambda t})\mu(\mathrm{d}t)\in\mathcal{I}_o(0,1).
    $$
    Moreover if $\kappa \in \cI_o(-1,0)$ or $\phi\in\cI_o(0,1)$, then for all $\lambda \in \bR_+$, we have
    \begin{equation}
    \label{22.11.01.17.45}
        \phi(\lambda) \simeq \kappa(\lambda^{-1}).
    \end{equation}
\end{prop}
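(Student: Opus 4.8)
The plan is to reduce the whole statement to the single identity $\phi(\lambda) = \lambda\,\cL[\kappa](\lambda)$ and then quote Proposition~\ref{22.11.07.11.09}.

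\medskip
\noindent\textbf{Step 1: the basic identity.} First I would record that, as a tail of the measure $\mu$, the function $\kappa(x) = \int_0^\infty \mathbbm{1}_{(x,\infty)}(t)\,\mu(\mathrm{d}t) = \mu\big((x,\infty)\big)$ is decreasing on $\bR_+$, and that $\phi(\lambda)<\infty$ for every $\lambda>0$ because $\int_0^\infty(1\wedge t)\,\mu(\mathrm{d}t)<\infty$. Then, writing $1-\mathrm{e}^{-\lambda t} = \lambda\int_0^t \mathrm{e}^{-\lambda s}\,\mathrm{d}s$ and applying Tonelli's theorem (all integrands are nonnegative),
\[
\phi(\lambda) = \int_0^\infty \lambda\int_0^t \mathrm{e}^{-\lambda s}\,\mathrm{d}s\,\mu(\mathrm{d}t) = \lambda\int_0^\infty \mathrm{e}^{-\lambda s}\,\mu\big((s,\infty)\big)\,\mathrm{d}s = \lambda\int_0^\infty \mathrm{e}^{-\lambda s}\kappa(s)\,\mathrm{d}s = \lambda\,\cL[\kappa](\lambda).
\]
In particular $\cL[\kappa](\lambda) = \phi(\lambda)/\lambda$ is finite for all $\lambda>0$.

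\medskip
\noindent\textbf{Step 2: the two implications.} Suppose $\kappa\in\cI_o(-1,0)$. By Proposition~\ref{22.11.07.11.09}-$(i)$, $\cL[\kappa](\lambda)\simeq\kappa(\lambda^{-1})/\lambda$ and $\cL[\kappa]\in\cI_o(-1,0)$; multiplying by $\lambda$ and using Step~1 gives $\phi(\lambda)\simeq\kappa(\lambda^{-1})$, which is \eqref{22.11.01.17.45}, and $\phi(\lambda) = \lambda\cL[\kappa](\lambda)\in\cI_o(0,1)$ by Lemma~\ref{22.10.26.15.52}-$(i)$,$(ix)$. Conversely, suppose $\phi\in\cI_o(0,1)$. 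Then $\cL[\kappa](\lambda) = \lambda^{-1}\phi(\lambda)\in\cI_o(-1,0)$ by Lemma~\ref{22.10.26.15.52}-$(i)$,$(ix)$; since $\kappa$ is decreasing (Step~1), Proposition~\ref{22.11.07.11.09}-$(ii)$ applies and yields both $\cL[\kappa](\lambda)\simeq\kappa(\lambda^{-1})/\lambda$ (hence $\phi(\lambda)\simeq\kappa(\lambda^{-1})$ again) and $\kappa\in\cI_o(-1,0)$. This proves the equivalence together with \eqref{22.11.01.17.45}.

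\medskip
\noindent\textbf{Main obstacle.} There is no real obstacle: the only genuine content is the identity $\phi = \lambda\,\cL[\kappa]$ and the observation that $\kappa$ is decreasing, so that part $(ii)$ of Proposition~\ref{22.11.07.11.09}, not merely part $(i)$, is available for the converse direction. After that the proposition is a formal consequence of results already established in the excerpt. The points to be careful about are that the argument runs in both directions and that the implied constants in $\simeq$ depend only on the data named in the statement, which is already tracked through $K_\phi$ and $K_{\cL[\kappa]}$ in Proposition~\ref{22.11.07.11.09}.
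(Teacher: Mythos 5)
Your proposal is correct and follows essentially the same route as the paper: the identity $\phi(\lambda)=\lambda\,\cL[\kappa](\lambda)$ via Tonelli (the paper's \eqref{22.11.02.14.24}), then Lemma \ref{22.10.26.15.52}-$(i)$,$(ix)$ to pass between $\cI_o(-1,0)$ and $\cI_o(0,1)$, and Proposition \ref{22.11.07.11.09} (part $(i)$ in one direction, part $(ii)$ in the other, using that $\kappa$ is decreasing) for both the equivalence of classes and \eqref{22.11.01.17.45}. Your write-up merely spells out the two implications that the paper compresses into one sentence.
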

\begin{proof}
Observe that
\begin{align}
\label{22.11.02.14.24}
    \phi(\lambda)=\int_0^{\infty}\left(\int_0^t\lambda \mathrm{e}^{-\lambda s}\mathrm{d}s\right)\mu(\mathrm{d}t)=\lambda\int_0^{\infty}\left(\int_s^{\infty}\mu(\mathrm{d}t)\right)\mathrm{e}^{-\lambda s}\mathrm{d}s=\lambda \cL[\kappa](\lambda).
\end{align}
By Lemma \ref{22.10.26.15.52}-$(ix)$ and Proposition \ref{22.11.07.11.09}, $\kappa \in \cI_o(-1, 0)$ is equivalent to $\cL[\kappa] \in \cI_o(-1,0)$, and to $\phi\in\cI_o(0,1)$.
The equivalence \eqref{22.11.01.17.45} is also obtained from Proposition \ref{22.11.07.11.09} with the fact that $\kappa$ is a decreasing function. The proposition is proved.
\end{proof}

\begin{rem}
\label{23.03.02.15.47}
From \eqref{22.11.01.17.45}, we also obtain
\begin{align}\label{2309190652}
\phi^{-1}\simeq \frac{1}{\kappa^{-1}},
\end{align}
where $\phi^{-1}$ is the inverse of $\phi$ and $\kappa^{-1}$ is a generalized inverse of $\kappa$ defined by
\begin{align}\label{2309190655}
\kappa^{-1}(\lambda):=\inf\{s>0\,:\,\kappa(s)\leq\lambda\}
\end{align}
(for more detail on properties of the generalized inverse, see \cite[Proposition 1]{EmbrechtsHofert2013} with $T=-\kappa$ in there).
Indeed, due to \eqref{22.11.01.17.45}, there exists a constant $N\geq 1$ such that 
\begin{align*}
\{s>0\,:\,\phi\left(1/s\right)\leq N^{-1}\lambda\}\subset\{s>0\,:\,\kappa\left(s\right)\leq \lambda\}\subset \{s>0\,:\,\phi\left(1/s\right)\leq N\lambda\}\,,
\end{align*}
which implies that
$$
\frac{1}{\phi^{-1}(N\lambda)}\leq \kappa^{-1}(\lambda)\leq \frac{1}{\phi^{-1}(N^{-1}\lambda)}.
$$
Due to $\phi\in\cI_o(0,1)$ and \ref{22.10.26.15.52}-$(iv)$ and $(v)$, we have
$$
\phi^{-1}(N^{-1}\,\cdot\,)\simeq \phi^{-1}\simeq \phi^{-1}(N\,\cdot\,).
$$
This proves \eqref{2309190652}.
\end{rem}

The following lemma shows that the second assertion of \eqref{2302231625} is equivalent to a subordinator $S$ being strictly increasing.
\begin{lem}\label{2211041251}
Let $S=(S_t)_{t\geq0}$ be a subordinator with Laplace exponent
$$
\phi(\lambda):=\int_0^{\infty}(1-\mathrm{e}^{-\lambda t})\mu(\mathrm{d}t),
$$
i.e., \eqref{sub_laplace} holds for $(t, \lambda) \in [0, \infty) \times \bR_+$, where $\mu$ is a nonnegative measure defined on $\bR_+$ with \eqref{2211041216}. Then $\mu(\bR_+)=\infty$ if and only if $S$ is strictly increasing almost surely.
\end{lem}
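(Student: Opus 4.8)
The plan is to deduce both implications from one computation: the mass that the law of $S_r$ assigns to the origin. Write $\mu_r$ for the distribution of $S_r$ on $[0,\infty)$. By the defining identity \eqref{sub_laplace}, $\int_{[0,\infty)}\mathrm e^{-\lambda y}\,\mu_r(\mathrm dy)=\mathrm e^{-r\phi(\lambda)}$ for all $\lambda>0$. I would let $\lambda\to\infty$ on both sides: on the left, dominated convergence gives the limit $\mu_r(\{0\})=\bP(S_r=0)$; on the right, monotone convergence (using \eqref{2211041216}) gives $\phi(\lambda)\uparrow\int_0^\infty\mu(\mathrm dt)=\mu(\bR_+)\in(0,\infty]$, so $\mathrm e^{-r\phi(\lambda)}\to\mathrm e^{-r\mu(\bR_+)}$ with the convention $\mathrm e^{-\infty}:=0$. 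This yields
\[
\bP(S_r=0)=\mathrm e^{-r\mu(\bR_+)},\qquad r>0.
\]

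For the ``only if'' direction I would argue by contraposition. If $\mu(\bR_+)<\infty$, then $\bP(S_1=0)=\mathrm e^{-\mu(\bR_+)}>0$; recalling that a subordinator has almost surely non-decreasing sample paths (a standard fact; alternatively $S_t-S_s\overset{d}{=}S_{t-s}\ge 0$ together with right-continuity) and $S_0=0$ a.s., on the event $\{S_1=0\}$ one has $S_t=0$ for all $t\in[0,1]$, so $\bP(S\text{ is strictly increasing})\le\bP(S_0<S_1)=1-\mathrm e^{-\mu(\bR_+)}<1$. Hence $S$ is not a.s.\ strictly increasing. (The degenerate case $\mu(\bR_+)=0$, i.e.\ $S\equiv 0$, is included.)

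For the ``if'' direction, assume $\mu(\bR_+)=\infty$, so $\bP(S_r=0)=0$ for every $r>0$ by the displayed identity. For a fixed pair $s<t$, stationarity and non-negativity of increments give $\bP(S_s=S_t)=\bP(S_t-S_s=0)=\bP(S_{t-s}=0)=0$. Intersecting over all rational pairs $0\le s<t$ — a countable family of probability-one events — and combining with the a.s.\ monotonicity of paths, I obtain that almost surely $S_u<S_v$ for all reals $0\le u<v$: choose rationals with $u<s<t<v$ and use $S_u\le S_s<S_t\le S_v$. This is precisely strict monotonicity of $S$.

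The only genuinely delicate point is this last step — upgrading ``$S_s<S_t$ a.s.\ for each fixed pair'' to ``a.s., $S_s<S_t$ for all pairs simultaneously'' — which is handled by the standard density-of-rationals argument once the a.s.\ monotonicity of sample paths is invoked; everything else is the Laplace-transform computation of the atom at $0$, which is exactly where the dichotomy $\mu(\bR_+)<\infty$ versus $\mu(\bR_+)=\infty$ enters. In particular, no use of the L\'evy--It\^o decomposition or of the density of jump times is needed.
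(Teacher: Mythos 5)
Your proposal is correct and follows essentially the same route as the paper: compute $\bP(S_r=0)=\lim_{\lambda\to\infty}\mathrm{e}^{-r\phi(\lambda)}$ via the Laplace transform, observe that this vanishes for all $r>0$ exactly when $\mu(\bR_+)=\infty$, and reduce strict monotonicity to the countable family of events $\{S_s<S_t\}$ over rational pairs. You merely spell out more carefully the step (upgrading from fixed rational pairs to all real pairs via a.s.\ path monotonicity and density of the rationals) that the paper's sketch leaves implicit.
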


\begin{proof}We only give a sketch of the proof.
By the definition of subordinator, $S$ is strictly increasing almost surely if and only if 
\begin{align}
\label{23.02.28.18.21}
    \bP\left( \cup_{ \substack{p, q \in \bQ_+ \\ p > q}} \{S_{p} - S_{q} = 0\} \right)= \sum_{ \substack{p, q \in \bQ_+ \\ p > q}}\bP\left( S_{p} - S_{q} = 0 \right) =\sum_{ \substack{p, q \in \bQ_+ \\ p > q}}\bP\left( S_{p-q} = 0 \right) =  0.
\end{align}
Also \eqref{23.02.28.18.21} is equivalent to $\bP(S_t = 0 ) = 0$ for any $t > 0$, which means that
$$
\lim_{\lambda \to \infty} \mathrm{e}^{-t\phi(\lambda)}=\lim_{\lambda \to \infty} \bE[\mathrm{e}^{-\lambda S_t}] =\bP(S_t=0)+\lim_{\lambda \to \infty} \bE[\mathrm{e}^{-\lambda S_t}\mathbbm{1}_{\{S_t>0\}}] 
$$ for any $t > 0$. Since $\lim_{\lambda \to \infty} \bE[\mathrm{e}^{-\lambda S_t}\mathbbm{1}_{\{S_t>0\}}]=0$ by the Lebesgue's dominated convergence theorem, $\bP(S_t=0)=0$ is equivalent to $\lim_{\lambda \to \infty} \mathrm{e}^{-t\phi(\lambda)}=0$.
This proves that $S$ is strictly increasing almost surely if and only if $\lim_{\lambda \to  \infty}\phi(\lambda)=\infty$, that is, $\mu(\bR_+) = \infty$.
\end{proof}

Below are properties of subordinators in the literature, which we use frequently and crucially in this paper.
\begin{prop}
\label{23.08.14.18.03}
Let $S=(S_t)_{t\geq0}$ be a subordinator with Laplace exponent $\phi(\lambda):=\int_0^\infty(1-\mathrm{e}^{-\lambda s})\mu(\mathrm{d}s)$, where $\mu$ satisfies \eqref{2302231052}.
Suppose that $\phi\in \cI_o(0,1)$, and put $K(t)=\int_0^t\kappa(s)\mathrm{d}s$.
	\begin{enumerate}[(i)]
		\item (\cite[Lemma 2.1]{chen2017time}) There exists a null set $\mathcal{N}\subset (0,\infty)$ (under the Lebesgue measure) such that 
		$$
		\bP(S_s\geq t)=\int_0^s \bE\left[\kappa(t-S_r)\mathbbm{1}_{\{t\geq S_r\}}\right]\mathrm{d} r
		$$
		for all $s>0$ and $t\in(0,\infty)\setminus\mathcal{N}$.
	    In particular, 
		\begin{align}\label{22.11.03.10.57}
		\int_0^{l} \bP(S_s\geq t)\mathrm{d}t=\int_0^{s}\bE\left[K(l-S_r)\mathbbm{1}_{\{l\geq S_r\}}\right]\mathrm{d}r
		\end{align}
		
		\item (\cite[(2.5)]{chen2017time}) For any $t>0$,
		\begin{align}\label{22.11.03.10.51}
		\int_0^t\kappa(t-r)\bP(S_s>r)\mathrm{d} r=K(t)-\bE\left[K(t-S_s)\mathbbm{1}_{\{t\geq S_s\}}\right].
		\end{align}
		
		\item (\cite[Lemma 3.1]{chen2018heat}) Let $\phi\in \cI_{o}(0,1)$. Then there exists $N\geq 1$ such that
		\begin{align}\label{2302231059}
		\lambda \phi'(\lambda)\leq \phi(\lambda)\leq N\lambda\phi'(\lambda).
		\end{align}
		In particular, $\phi'\in\mathcal{I}_{o}(-1,0)$.

		\item(\cite[Proposition 3.3.(i)]{chen2018heat})  Suppose that $\phi\in\cI_o(0,1)$.
		Then there are constant $c_1,\,c_2>0$ such that for all $r,t\geq 0$,
		\begin{align}\label{23022310471}
		\bP\Big(S_r\geq t\big(1+\mathrm{e}^{r\phi(1/t)}\big)\Big)\leq c_1r\phi(1/t)
		\end{align}
		and
		\begin{align}\label{23022310472}
		\bP(S_r\geq t)\geq 1-\mathrm{e}^{-c_2r\phi(1/t)}.
		\end{align}
		In particular, for each $L$, there exist constants $c_{1,L}, c_{2,L}>0$ such that for all $r\phi(1/t)\leq L$,
		\begin{align}\label{23022310473}
		c_{1,L}r\phi(1/t)\leq \bP(S_r\geq t)\leq c_{2,L}r\phi(1/t).
		\end{align}
	\end{enumerate}
The constants $c_1, c_2$, $c_{1, L}$ and $c_{2, L}$ in \eqref{23022310471}--\eqref{23022310473} depends only on $K_\phi$ in Remark~\ref{rmk_bdd}.
\end{prop}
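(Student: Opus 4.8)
The statement collects four facts about subordinators drawn from \cite{chen2017time} and \cite{chen2018heat}; the plan is to quote those and then derive the remaining ``in particular'' displays by elementary manipulations. Under the standing hypothesis $\phi\in\cI_o(0,1)$ we have, by Proposition~\ref{prop0129_01} and Remark~\ref{rmk_0129_01}, that $\int_0^1\kappa<\infty$, $\kappa(0+)=\infty$ and $\kappa(\infty)=0$, so the hypotheses of \cite[Lemma 2.1]{chen2017time} are met and the pointwise identity in $(i)$ holds for all $s>0$ and $t\in(0,\infty)\setminus\mathcal{N}$. To obtain \eqref{22.11.03.10.57} I would integrate that identity over $t\in(0,l)$: the set $\mathcal{N}$ is Lebesgue-negligible and hence harmless, and since every integrand is nonnegative Tonelli's theorem allows exchanging $\mathrm{d}t$ with $\bP(\mathrm{d}\omega)\otimes\mathrm{d}r$; then $\int_0^l\kappa(t-S_r)\mathbbm{1}_{\{t\geq S_r\}}\,\mathrm{d}t=\mathbbm{1}_{\{S_r\leq l\}}\int_0^{l-S_r}\kappa(u)\,\mathrm{d}u=\mathbbm{1}_{\{S_r\leq l\}}K(l-S_r)$, which is the claim. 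Identity \eqref{22.11.03.10.51} of $(ii)$ is precisely \cite[(2.5)]{chen2017time}.

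For $(iii)$, the two-sided bound \eqref{2302231059} is \cite[Lemma 3.1]{chen2018heat}; I note that its left half is automatic for a Bernstein function without drift or killing, since differentiating \eqref{230919757} under the integral gives $\phi(\lambda)-\lambda\phi'(\lambda)=\int_0^\infty\big((1-\mathrm{e}^{-\lambda t})-\lambda t\,\mathrm{e}^{-\lambda t}\big)\,\mu(\mathrm{d}t)\geq 0$, the integrand being nonnegative on $[0,\infty)$. From \eqref{2302231059} we get $\phi'(\lambda)\simeq\phi(\lambda)/\lambda$; by Lemma~\ref{22.10.26.15.52}-$(i)$,$(ix)$ the function $\lambda\mapsto\phi(\lambda)/\lambda$ lies in $\cI_o(-1,0)$, and since membership in $\cI_o(a,b)$ depends only on $s_\psi(\lambda)=\sup_{t>0}\psi(\lambda t)/\psi(t)$, which is altered by at most a fixed multiplicative constant under a two-sided comparison of functions, it follows that $\phi'\in\cI_o(-1,0)$.

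For $(iv)$, the estimates \eqref{23022310471} and \eqref{23022310472} are \cite[Proposition 3.3.(i)]{chen2018heat}. To get \eqref{23022310473}, the lower bound follows from \eqref{23022310472} and the concavity inequality $1-\mathrm{e}^{-c_2x}\geq\frac{1-\mathrm{e}^{-c_2L}}{L}\,x$ for $x\in[0,L]$, so one may take $c_{1,L}=(1-\mathrm{e}^{-c_2L})/L$. For the upper bound I would argue by Markov's inequality rather than through \eqref{23022310471}: since $\{S_r\geq t\}=\{1-\mathrm{e}^{-S_r/t}\geq 1-\mathrm{e}^{-1}\}$ and $\bE[1-\mathrm{e}^{-S_r/t}]=1-\mathrm{e}^{-r\phi(1/t)}$ by \eqref{sub_laplace},
$$
\bP(S_r\geq t)\leq\frac{1-\mathrm{e}^{-r\phi(1/t)}}{1-\mathrm{e}^{-1}}\leq\frac{r\phi(1/t)}{1-\mathrm{e}^{-1}},
$$
which holds for all $r,t>0$, so $c_{2,L}=(1-\mathrm{e}^{-1})^{-1}$ works. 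The constants $c_1,c_2$ taken from \cite{chen2018heat} depend only on $K_\phi$, while $c_{1,L},c_{2,L}$ are explicit in $c_2$ and $L$, which gives the asserted dependence.

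There is no genuinely hard step here: the substance lies in the two cited papers. The only points needing care are the Tonelli exchange together with the null-set bookkeeping in the passage to \eqref{22.11.03.10.57}, and verifying that $\cI_o$-membership is stable under the two-sided comparison $\phi'\simeq\phi(\lambda)/\lambda$; both are routine.
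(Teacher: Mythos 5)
Your proposal is correct and takes essentially the same route as the paper, which in fact gives no proof of this proposition at all — it simply attributes each item to \cite{chen2017time} and \cite{chen2018heat} and leaves the ``in particular'' displays implicit. Your fill-ins are all sound: the Tonelli exchange for \eqref{22.11.03.10.57}, the stability of $\cI_o$-membership under two-sided comparison for $\phi'\in\cI_o(-1,0)$, and the chord bound for the lower half of \eqref{23022310473}; your Markov-inequality derivation of the upper half of \eqref{23022310473} (valid for all $r,t>0$, not just $r\phi(1/t)\le L$) is cleaner than routing through \eqref{23022310471} and makes the dependence of $c_{2,L}$ on nothing but an absolute constant explicit.
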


The following lemma helps us to prove Theorem~\ref{extension_non_local} by constructing appropriate $u$ and $f$ satisfying \eqref{eq0121_02}.
\begin{prop}
\label{22.11.07.13.23}
Let $\kappa:\bR_+\rightarrow \bR_+$ be a right-continuous decreasing function  with $\kappa(0+) = \infty$, $\kappa(\infty)=0$, and $\kappa\in \cI_o(-1,0)$. We denote $\mu$ a nonnegative measure on $\bR_+$ defined by $\kappa$ such that $\mu$ satisfies $\kappa(s) - \kappa(t) = \mu\left( \left[s, t\right) \right)$ for $0 < s < t < \infty$,
and 
$$
\Theta(t,\lambda):=\lambda\int_0^{\infty}\mathrm{e}^{-\lambda r}\bP(S_r\geq t)\,\mathrm{d}r,
$$
where $S = (S_t)_{t \ge 0}$ is a subordinator with Laplace exponent
$$
\phi(\lambda):=\int_0^{\infty}(1-\mathrm{e}^{-\lambda t})\mu(\mathrm{d}t).
$$
\begin{enumerate}[(i)]
    \item For $t, \lambda \in \bR_+$, the function $\Theta$ satisfies
    $$
     \int_0^t \kappa(t-s) \big( \Theta ( s, \lambda ) - \Theta ( 0, \lambda ) \big)\,\mathrm{d}s  =:  I^{\kappa} \Theta(t, \lambda) = -\int_0^t \lambda \Theta(s, \lambda)\,\mathrm{d}s,
    $$
    that is, 
    $$
    \p_t^\kappa\Theta(t,\lambda)=-\lambda\Theta(t,\lambda),
    $$
    where $\kappa(x)=\int_0^{\infty}\mathbbm{1}_{(x,\infty)}\mu(\mathrm{d}t)$.
    \item For $t,\lambda \in \bR_+$,
    $$
    \Theta(t,\lambda) \simeq 1\wedge\frac{\phi(t^{-1})}{\lambda}\,,
    $$
where the equivalence depends only on $K_\phi$ in Remark \ref{rmk_bdd}.
\end{enumerate}
\end{prop}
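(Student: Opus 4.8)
The plan is to derive $(i)$ from the two probabilistic identities in Proposition~\ref{23.08.14.18.03}$(i)$--$(ii)$, and $(ii)$ from the quantitative estimates in Proposition~\ref{23.08.14.18.03}$(iv)$; both are applicable because $\kappa\in\cI_o(-1,0)$ forces $\phi\in\cI_o(0,1)$ by Proposition~\ref{prop0129_01}.

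For $(i)$, I would first record that $\Theta(0,\lambda)=\lambda\int_0^\infty\mathrm{e}^{-\lambda r}\,\dd r=1$, and observe that since $\kappa$ is locally integrable (Remark~\ref{rmk_0129_01}) and $0\le\Theta\le 1$, Tonelli's theorem lets me interchange the $r$-integral with the $\dd s$-integral defining $I^\kappa\Theta$:
\begin{equation*}
I^{\kappa}\Theta(t,\lambda)=\lambda\int_0^\infty\mathrm{e}^{-\lambda r}\left(\int_0^t\kappa(t-s)\big(\bP(S_r\ge s)-1\big)\dd s\right)\dd r .
\end{equation*}
For fixed $r$ the function $s\mapsto\bP(S_r\ge s)$ agrees with $s\mapsto\bP(S_r>s)$ off a countable set, so using $\int_0^t\kappa(t-s)\dd s=K(t)$ together with \eqref{22.11.03.10.51} the inner integral equals $-\bE[K(t-S_r)\mathbbm{1}_{\{t\ge S_r\}}]$. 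On the other side, applying Tonelli and then \eqref{22.11.03.10.57} to $\int_0^t\lambda\Theta(s,\lambda)\dd s$, followed by the elementary swap $\int_0^\infty\mathrm{e}^{-\lambda r}\int_0^r g(\rho)\dd\rho\,\dd r=\lambda^{-1}\int_0^\infty\mathrm{e}^{-\lambda\rho}g(\rho)\dd\rho$ with $g(\rho)=\bE[K(t-S_\rho)\mathbbm{1}_{\{t\ge S_\rho\}}]$, gives exactly $\lambda\int_0^\infty\mathrm{e}^{-\lambda\rho}g(\rho)\dd\rho$; hence $I^\kappa\Theta(t,\lambda)=-\int_0^t\lambda\Theta(s,\lambda)\dd s$, and differentiating in $t$ yields $\p_t^\kappa\Theta(t,\lambda)=-\lambda\Theta(t,\lambda)$. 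A shorter alternative is to take Laplace transforms in $t$: from $\int_0^\infty\mathrm{e}^{-zt}\bP(S_r\ge t)\dd t=z^{-1}(1-\mathrm{e}^{-r\phi(z)})$ and $\cL[\kappa](z)=\phi(z)/z$ (see \eqref{22.11.02.14.24}), both sides of the claimed identity have transform $-\lambda\phi(z)\big(z^2(\lambda+\phi(z))\big)^{-1}$, so they coincide.

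For $(ii)$, set $\beta=\phi(t^{-1})$. The bound $\Theta(t,\lambda)\le\lambda\int_0^\infty\mathrm{e}^{-\lambda r}\dd r=1$ is free. For $\Theta(t,\lambda)\lesssim\beta/\lambda$ I would split the $r$-integral at $r=1/\beta$: on $(0,1/\beta)$ use \eqref{23022310473} with $L=1$ to get $\bP(S_r\ge t)\le c_{2,1}r\beta$, and on $(1/\beta,\infty)$ use $\bP(S_r\ge t)\le 1$, so that $\Theta(t,\lambda)\le c_{2,1}\beta/\lambda+\mathrm{e}^{-\lambda/\beta}$, and conclude with $x\mathrm{e}^{-x}\le\mathrm{e}^{-1}$. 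For the matching lower bound I would distinguish two regimes: if $\beta\ge\lambda$ then \eqref{23022310472} gives $\Theta(t,\lambda)\ge c_2\beta/(\lambda+c_2\beta)\ge c_2/(1+c_2)$, which matches $1\wedge(\beta/\lambda)=1$; if $\beta<\lambda$ then \eqref{23022310473} with $L=1$ on $(0,1/\lambda)\subset(0,1/\beta)$ gives $\Theta(t,\lambda)\ge c_{1,1}\lambda\beta\int_0^{1/\lambda}\mathrm{e}^{-\lambda r}r\,\dd r\ge\tfrac12\mathrm{e}^{-1}c_{1,1}(\beta/\lambda)$, which matches $1\wedge(\beta/\lambda)=\beta/\lambda$. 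All constants are assembled from $c_1,c_2,c_{1,1},c_{2,1}$ and therefore depend only on $K_\phi$, as asserted.

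I do not expect a serious obstacle: all the analytic weight is carried by Proposition~\ref{23.08.14.18.03}, after which $(i)$ is Fubini bookkeeping and $(ii)$ an elementary two-regime split of the defining integral. The only points needing genuine (if routine) care are justifying the Tonelli/Fubini interchanges --- guaranteed by boundedness of the survival probabilities, local integrability of $\kappa$, and positivity --- and replacing $\bP(S_r\ge s)$ by $\bP(S_r>s)$ under the $\dd s$-integral, which is legitimate since the two differ only on a Lebesgue-null (countable) set.
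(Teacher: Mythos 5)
Your proposal is correct and follows essentially the same route as the paper: part $(i)$ is the same Fubini computation reducing to \eqref{22.11.03.10.51} and \eqref{22.11.03.10.57}, and part $(ii)$ is the same split of the $r$-integral using Proposition~\ref{23.08.14.18.03}$(iv)$ (the paper splits at $2/\phi(t^{-1})$ with $L=2$ and gets the lower bound in one stroke from \eqref{23022310472} via $1\wedge a\simeq a/(1+a)$, whereas you split at $1/\phi(t^{-1})$ and treat the lower bound in two regimes — an immaterial variation). Your explicit care with $\bP(S_r\ge s)$ versus $\bP(S_r>s)$ is a point the paper glosses over, and is handled correctly.
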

\begin{proof}
First, note that for fixed $t \in \bR_+$, $\bP(S_r \ge t)$ is (Borel) measurable with respect to $r \in \bR_+$ by the definition of subordinator, so $\Theta(t, \lambda)$ is well-defined.

$(i)$ It is clear that $\kappa$ and $\mu$ satisfy $\kappa(x) = \int_0^{\infty}\mathbbm{1}_{(x,\infty)}\mu(\mathrm{d}t)$ and $\Theta(0,\lambda)=1$ for all $\lambda\in\bR_+$. Then by Fubini's theorem,
\begin{align*}
    I^\kappa\big(\Theta(\cdot,\lambda)\big)(t)&=\int_0^t\kappa(t-s)\big(\Theta(s,\lambda)-1\big)\,\mathrm{d}s\\
    &=\lambda\int_0^{\infty}\mathrm{e}^{-\lambda r}\left(\int_0^t\kappa(t-s)\bP(S_r\geq s)\mathrm{d}s\right)\,\mathrm{d}r-K(t),
\end{align*}
where $K(t)=\int_0^t\kappa(s)\mathrm{d}s$.
From \eqref{22.11.03.10.51}, it is easily seen that
$$
I^\kappa\big(\Theta(\cdot,\lambda)\big)(t)=-\lambda\int_0^{\infty}\mathrm{e}^{-\lambda r}\bE[K(t-S_r)\mathbbm{1}_{\{t\geq S_r\}}]\mathrm{d}r.
$$
On the other hand, \eqref{22.11.03.10.57} and Fubini's theorem implies that
$$
\int_0^t\Theta(s,\lambda)\mathrm{d}s=\lambda\int_0^{\infty}\mathrm{e}^{-\lambda r}\left(\int_0^t\bP(S_r\geq s)\mathrm{d}s\right)\mathrm{d}r=\int_0^{\infty}\mathrm{e}^{-\lambda r}\bE[K(t-S_r)\mathbbm{1}_{\{t\geq S_r\}}]\mathrm{d}r.
$$
Therefore, we prove $\p_t^\kappa\Theta(t,\lambda)=-\lambda\Theta(t,\lambda)$ for $t,\lambda \in \bR_+$.

$(ii)$ Due to Proposition \ref{prop0129_01}, $\phi\in \cI_o(0,1)$. Then by \eqref{23022310472}, there exists a constant $c_2 = c_2(K_{\phi}) = c_2(\kappa)>0$ such that $\bP(S_r \ge t) \ge 1 - \mathrm{e}^{-c_2r\phi(t^{-1})}$ and then
$$
\Theta(t,\lambda)\geq \lambda\int_0^{\infty}\mathrm{e}^{-\lambda r}\left(1-\mathrm{e}^{-c_2 r\phi(t^{-1})}\right)\mathrm{d}r=1-\frac{\lambda}{\lambda+c_2 \phi(t^{-1})}.
$$
Obviously, one can take $c_2 < 1$ in the above estimate.
Then since $1\wedge a \simeq a/(1+a)$ for $a>0$ and
$$
\left( 1-\frac{\lambda}{\lambda+c_2\phi(t^{-1})} \right)^{-1} = \frac{\lambda}{c_2 \phi(t^{-1})} + 1 \lesssim_{c_2} \frac{\lambda}{ \phi(t^{-1})} + 1,
$$
we obtain $\Theta(t,\lambda) \gtrsim_{c_2} 1\wedge\frac{\phi(t^{-1})}{\lambda} $.

Now, we prove the opposite inequality.
Divide $\Theta$ into two parts;
$$
\Theta(t,\lambda)=\lambda\int_0^{\infty}\mathrm{e}^{-\lambda r}\bP(S_r\geq t)\mathrm{d}r = \int_0^{2/\phi(t^{-1})}\cdots+\int_{2/\phi(t^{-1})}^{\infty}\cdots=:I_1(t,\lambda)+I_2(t,\lambda).
$$
By taking $L = 2$ in \eqref{23022310473}, there exists a constant $c_{2, L} = c_{2, L}(K_{\phi}) = c_{2, L}(\kappa) > 0$ such that $\bP(S_r \ge t) \le c_{2, L} r\phi(t^{-1})$ for any $r$ and $t$ satisfying $r\phi(t^{-1}) \le 2 (= L)$. Then,
$$
I_1(t,\lambda) \leq c_{2, L} \phi(t^{-1})\lambda \int_0^{2/\phi(t^{-1})}r\mathrm{e}^{-\lambda r}\,\mathrm{d}r.
$$
By direct calculation, one can check that
$$
\lambda \int_0^{2/\phi(t^{-1})}r\mathrm{e}^{-\lambda r}\,\mathrm{d}r \le \frac{1}{\lambda}\wedge\frac{2}{\phi(t^{-1})}, 
$$
and hence $I_1(t,\lambda) \lesssim_{c_{2, L}} 1 \wedge \frac{\phi(t^{-1})}{\lambda}$.
For $I_2$, since $\bP(S_r\geq t)\leq 1$ for all $r,t\in\bR_+$,
$$
I_2(t,\lambda)\leq \lambda \int_{2/\phi(t^{-1})}^{\infty}\mathrm{e}^{-\lambda r}\mathrm{d}r=\mathrm{e}^{-2\lambda/\phi(t^{-1})}\lesssim 1\wedge\frac{\phi(t^{-1})}{\lambda},
$$
where we use the fact that $\mathrm{e}^{-t} \lesssim 1 \wedge t^{-1}$ for $t \in \bR_+$. We obtain $\Theta(t,\lambda) \lesssim_{c_{2, L}} 1\wedge\frac{\phi(t^{-1})}{\lambda} $.
The proposition is proved.
\end{proof}

We end this section by showing the equivalence of two equations:
\begin{align}
\label{23.08.15.18.24}
    \int_0^t\kappa(t-s)(u(s)-u_0)\mathrm{d}s=\int_0^tf(s)\mathrm{d}s
    \Longleftrightarrow 
    \int_0^t(u(s)-u_0)\mathrm{d}s=\int_0^t\varkappa(t-s)f(s)\mathrm{d}s,
\end{align}
where $\varkappa(t):=\int_0^{\infty}\bP(S_r\leq t)\,\mathrm{d}r$ and $S_r$ is a subordinator introduced in Proposition~\ref{23.08.14.18.03}.
By Proposition \ref{23.08.14.18.03},
\begin{equation}
\label{23.08.15.18.21}
    \int_0^t\kappa(t-s)\varkappa(\mathrm{d}s)=\int_0^{\infty}\bE[\kappa(t-S_{r})\mathbbm{1}_{t\geq S_{r}}]\mathrm{d}r=\lim_{r\to{\infty}}\bP(S_{r}\geq t)=1,
\end{equation}
and
\begin{equation}
\label{23.08.15.18.22}
    \begin{aligned}
    \int_0^t\kappa(t-s)\varkappa(s)\mathrm{d}s&=\int_0^{\infty}\int_0^t\kappa(t-s)\bP(S_r\leq s)\mathrm{d}s\mathrm{d}r\\
    &=\int_{0}^{\infty}\bE[K(t-S_r)\mathbbm{1}_{\{t\geq S_r\}}]\mathrm{d}r=\int_0^{t}\lim_{r\to\infty}\bP(S_r\geq l)\mathrm{d}l=t,
\end{aligned}
\end{equation}
where $K(t)=\int_0^t\kappa(s)\mathrm{d}s$.
Taking the integration to the left equation in \eqref{23.08.15.18.24} with respect to $\varkappa(\mathrm{d}s)$, and making use of Fubini's theorem and \eqref{23.08.15.18.21}, we have the right equation in \eqref{23.08.15.18.24}:
\begin{align*}
\int_0^t(u(l)-u_0)\mathrm{d}l&=\int_0^t\left(\int_0^{t-l}\kappa(t-l-s)\varkappa(\mathrm{d}s)\right)(u(l)-u_0)\mathrm{d}l\\
&=\int_0^t\int_0^{t-s}\kappa(t-s-l)(u(l)-u_0)\mathrm{d}l\,\varkappa(\mathrm{d}s)\\
&=\int_0^{t}\int_0^{t-s}f(l)\mathrm{d}l\,\varkappa(\mathrm{d}s)=\int_0^t\varkappa(t-l)f(l)
\mathrm{d}l.
\end{align*}
Similarly, taking the non-local derivative $\partial_t^{\kappa}$ to the right equation in \eqref{23.08.15.18.24}, and making use of Fubini's theorem and \eqref{23.08.15.18.22}, we have the left equation in \eqref{23.08.15.18.24}:
\begin{align*}
    \int_0^t\kappa(t-l)(u(l)-u_0)\mathrm{d}l&=\partial_t\left(\int_0^tK(t-l)(u(l)-u_0)\mathrm{d}l\right)\\
    &=\partial_t^{\kappa}\left(\int_0^t(u(s)-u_0)\mathrm{d}s\right)\\&=\partial_t^{\kappa}\left(\int_0^t\varkappa(t-s)f(s)\mathrm{d}s\right)\\
    &=\partial_t\left(\int_0^t\kappa(t-s)\left(\int_0^s
    \varkappa(s-l)f(l)\mathrm{d}l\right)\mathrm{d}s\right)\\
    &=\partial_t\left(\int_0^t(t-l)f(l)\mathrm{d}l\right)=\int_0^tf(l)\mathrm{d}l.
\end{align*}

\mysection{Generalized real interpolation}\label{sec_inter}

In this section, we present a generalized real interpolation in the sense that we put $\phi(t^{-1})$ rather than $t^{-\theta}$ in \eqref{inter_Knorm}.
The function $\phi$ is chosen to be of class $\cI_o(0,1)$ introduced in the first part of Section~\ref{sec_prob}.
Among various interpolation methods, $K$ and $J$ methods are our main focus.
\begin{defn}
Given two Banach spaces $A_0$ and $A_1$, we say that an ordered pair $(A_0, A_1)$ is an interpolation couple if both $A_0$ and $A_1$ are continuously embedded in the same Hausdorff topological vector space $Z$.
\end{defn}
It can be easily checked that the two subspaces of $Z$
\begin{equation*}
    \begin{gathered}
    A_0\cap A_1:=\{a\in Z: a\in A_0,\,a\in A_1\},\\
    A_1+A_1:=\{a\in Z:a=a_0+a_1,\,a_0\in A_0,\,a_1\in A_1\}
    \end{gathered}
\end{equation*}
are Banach spaces with the norm
\begin{equation*}
\begin{gathered}
\|a\|_{A_0\cap A_1}:=\max(\|a\|_{A_0},\|a\|_{A_1}),\\
\|a\|_{A_0+A_1}:=\inf\{\|a_0\|_{A_0}+\|a_1\|_{A_1}:a=a_0+a_1,\,a_0\in A_0,\,a_1\in A_1\}.
\end{gathered}
\end{equation*}

Through this section, we assume that $(A_0,A_1)$ is an interpolation couple, $\phi\in\cI_o(0,1)$, and $p\in[1,\infty]$.

For $t>0$ we define
        \begin{align*}
                    & K(t, a; A_0, A_1) := \inf \{ \|a_0\|_{A_0} + t\|a_1\|_{A_1}:a=a_0+a_1,a_0\in A_0,a_1\in A_1\},\\
                    & J(t,a;A_0,A_1):=\max_{a\in A_0\cap A_1}(\|a\|_{A_0},t\|a\|_{A_1}).
        \end{align*}
Note that $K$ and $J$ are merely the functional used in $K$-method and $J$-method of the classical interpolation theory, respectively.
For measurable functions $F:\bR_+\rightarrow [0,\infty]$, the functional $\Phi^{\phi}_{p}(F)$ is defined by
$$
\Phi^{\phi}_{p}(F):=\begin{cases}
    \left(\int_0^\infty \left(\phi(t^{-1})F(t) \right)^p \frac{\mathrm{d}t}{t}\right)^{1/p}\quad &\textrm{if}\quad p\in[1,\infty),\\
    \sup_{t>0}\phi(t^{-1})F(t)\quad &\textrm{if}\quad p=\infty.
    \end{cases}
$$
The space $(A_0, A_1)_{\phi, p}^K$ is defined by
\begin{equation*}
    \begin{gathered}
   (A_0, A_1)_{\phi, p}^K := \{a\in A_0 + A_1 : \|a\|_{(A_0, A_1)_{\phi, p}^K}:=\Phi^{\phi}_{p}(K(\cdot,a;A_0,A_1)) <\infty \}.
    \end{gathered}
\end{equation*}
The space $(A_0,A_1)_{\phi,p}^J$ is defined by
\begin{equation*}
    \begin{gathered}
    (A_0,A_1)_{\phi,p}^J:=\{a\in A_0+A_1:\|a\|_{(A_1,A_1)_{\phi,p}^J}:=\inf_{v}\Phi^{\phi}_p(J(\cdot,v(\cdot);A_0,A_1))<\infty\},
    \end{gathered}
\end{equation*}
where the infimum is over all measurable functions $v:\bR_+\to A_0\cap A_1$ satisfying $$
a=\int_0^{\infty}v(t)\frac{\mathrm{d}t}{t}.
$$
We provide concrete examples of $(A_0, A_1)_{\phi, p}^K$ and $(A_0, A_1)_{\phi, p}^J$ in Appendix \ref{23.08.09.13.53}.

By $\ell_p$, we denote the set of all real sequences $a = (a_j)_{j \in \bZ}$ satisfying $\|a\|_{\ell_p}<\infty$, where 
$$
\| a \|_{\ell_p}:=\begin{cases}
     \left(\sum_{j \in \bZ} |a_j|^p \right)^{1/p} \quad &\textrm{for} \quad p\in[1,\infty),\\
     \sup_{j \in \bZ} |a_j| \quad &\textrm{for} \quad p=\infty.
    \end{cases}
$$

The following proposition suggests sequential expression for interpolation norms: 
\begin{prop}
\label{22.10.27.16.40}\,

    \begin{enumerate}[(i)]
        \item For $a\in (A_0,A_1)_{\phi,p}^K$,
        $$
        \|a\|_{(A_0,A_1)_{\phi,p}^K} \simeq_{\phi} \|\alpha^{\phi}_K(a)\|_{\ell_p},
        $$
        where $\alpha^{\phi}_K(a):=\big(\alpha^{\phi}_{K,j}(a)\big)_{j\in\bZ}:=\big(\phi(2^{-j})K(2^j,a;A_0,A_1)\big)_{j\in\bZ}$.
        \item For $a\in(A_0,A_1)_{\phi,p}^J$,
        $$
        \|a\|_{(A_0,A_1)_{\phi,p}^J} \simeq_{\phi} \inf_{u}\|\alpha^{\phi}_J(u)\|_{\ell_p},
        $$
        where $\alpha^{\phi}_{J}(u):=(\alpha^{\phi}_{J,j})_{j\in\bZ}:=\big(\phi(2^{-j})J(2^j,u_j;A_0,A_1)\big)_{j\in\bZ}$ and the infimum is over all sequences $(u_j)_{j\in\bZ}\subset A_0 \cap A_1$ satisfying 
        $$
        a=\sum_{j\in\bZ}u_j\,\,\textrm{(convergence in }A_0+A_1).
        $$
    \end{enumerate}
\end{prop}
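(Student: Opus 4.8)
The plan is to follow the proof of the classical discretization (``fundamental'') lemma of real interpolation, the only new input being that a function $\phi\in\cI_o(0,1)$ is essentially constant on dyadic blocks. Concretely, I would use two elementary comparisons, both valid for every $j\in\bZ$ and $t\in[2^j,2^{j+1})$: (a) $\phi(t^{-1})\simeq_\phi\phi(2^{-j})$ with constants depending only on $\phi$, which follows from Lemma~\ref{22.10.26.15.52}-$(vi)$ (equivalently from submultiplicativity of $s_\phi$ together with $s_\phi(2),s_\phi(1/2)<\infty$); and (b) $F(2^j)\le F(t)\le 2F(2^j)$ whenever $F$ is nondecreasing and satisfies $F(2t)\le 2F(t)$, which holds both for $F(t)=K(t,a;A_0,A_1)$ and for $F(t)=J(t,u;A_0,A_1)$.

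For $(i)$, I would split $\Phi^\phi_p(K(\cdot,a;A_0,A_1))^p$ over the dyadic intervals $[2^j,2^{j+1})$; on each of them comparisons (a) and (b) let me replace $\phi(t^{-1})$ by $\phi(2^{-j})$ and $K(t,a;A_0,A_1)$ by $K(2^j,a;A_0,A_1)$, so that $\int_{2^j}^{2^{j+1}}\big(\phi(t^{-1})K(t,a;A_0,A_1)\big)^p\,\tfrac{\mathrm{d}t}{t}\simeq_\phi(\log 2)\,\big(\alpha^\phi_{K,j}(a)\big)^p$. Summing over $j\in\bZ$ gives the claim for $p\in[1,\infty)$; the case $p=\infty$ is identical with $\sup_{t>0}$ written as $\sup_j\sup_{t\in[2^j,2^{j+1})}$.

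For $(ii)$, the content is a dictionary between the continuous representations $a=\int_0^\infty v(t)\,\tfrac{\mathrm{d}t}{t}$ over which $\|a\|_{(A_0,A_1)_{\phi,p}^J}$ is an infimum and the discrete representations $a=\sum_{j\in\bZ}u_j$ over which $\inf_u\|\alpha^\phi_J(u)\|_{\ell_p}$ is an infimum. Given a discrete representation, I set $v(t):=(\log 2)^{-1}u_j$ on $[2^j,2^{j+1})$; then $\int_0^\infty v(t)\,\tfrac{\mathrm{d}t}{t}=a$, and comparisons (a), (b) with $F=J(\cdot,u_j;A_0,A_1)$ give $\Phi^\phi_p(J(\cdot,v(\cdot);A_0,A_1))\simeq_\phi\|\alpha^\phi_J(u)\|_{\ell_p}$, hence $\|a\|_{(A_0,A_1)_{\phi,p}^J}\lesssim_\phi\inf_u\|\alpha^\phi_J(u)\|_{\ell_p}$. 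Conversely, given a continuous representation with $\Phi^\phi_p(J(\cdot,v(\cdot);A_0,A_1))<\infty$, I set $u_j:=\int_{2^j}^{2^{j+1}}v(t)\,\tfrac{\mathrm{d}t}{t}$, so that the partial sums $\sum_{|j|\le N}u_j=\int_{2^{-N}}^{2^{N+1}}v(t)\,\tfrac{\mathrm{d}t}{t}$ converge to $a$ in $A_0+A_1$; since $\|v(t)\|_{A_0}\le J(t,v(t);A_0,A_1)$ and $2^j\|v(t)\|_{A_1}\le J(t,v(t);A_0,A_1)$ on $[2^j,2^{j+1})$, I obtain $J(2^j,u_j;A_0,A_1)\le\int_{2^j}^{2^{j+1}}J(t,v(t);A_0,A_1)\,\tfrac{\mathrm{d}t}{t}$, which is finite by Hölder (using $\phi(2^{-j})>0$) and in particular shows $u_j\in A_0\cap A_1$. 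Multiplying by $\phi(2^{-j})$, applying (a), and using Jensen's inequality on $[2^j,2^{j+1})$ (for $p=\infty$ just the trivial estimate) yields $\big(\alpha^\phi_{J,j}(u)\big)^p\lesssim_\phi\int_{2^j}^{2^{j+1}}\big(\phi(t^{-1})J(t,v(t);A_0,A_1)\big)^p\,\tfrac{\mathrm{d}t}{t}$; summing over $j$ and then passing to the infimum over all such $v$ gives $\inf_u\|\alpha^\phi_J(u)\|_{\ell_p}\lesssim_\phi\|a\|_{(A_0,A_1)_{\phi,p}^J}$.

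The routine part is $(i)$; the delicate part is the bookkeeping in $(ii)$ — measurability of $t\mapsto v(t)$ and of $t\mapsto J(t,v(t);A_0,A_1)$, the membership $u_j\in A_0\cap A_1$, and the convergence $\sum_j u_j=a$ in $A_0+A_1$ — together with the requirement that \emph{every} constant produced along the way depend only on $\phi$, and not on $a$, the couple $(A_0,A_1)$, or $p$; this last point is exactly where the uniform bound $\sup_{\lambda\in[1/2,2]}s_\phi(\lambda)<\infty$ is used.
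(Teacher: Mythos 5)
Your proposal is correct and follows essentially the same route as the paper: the same dyadic decomposition of $\Phi^\phi_p$, the same two comparisons ($\phi(t^{-1})\simeq_\phi\phi(2^{-j})$ on $[2^j,2^{1+j}]$ via Lemma~\ref{22.10.26.15.52}-$(vi)$, and doubling of the $K$- and $J$-functionals), and in $(ii)$ the identical dictionary $u_j:=\int_{2^j}^{2^{j+1}}v(t)\,\frac{\mathrm{d}t}{t}$ and $v(t):=(\log 2)^{-1}\sum_j u_j\mathbbm{1}_{(2^j,2^{j+1}]}(t)$. The only difference is that you make explicit the Jensen/H\"older step on each dyadic block and the bookkeeping (measurability, $u_j\in A_0\cap A_1$, convergence in $A_0+A_1$) that the paper leaves implicit.
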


\begin{proof}
$(i)$ It can be easily checked that for $t\in[2^j,2^{j+1}]$, $j \in \bZ$, 
$$
K(2^{j},a;A_0,A_1)\leq K(t,a;A_0,A_1)\leq 2K(2^{j},a;A_0,A_1).
$$
Since $\phi \in \cI_o(0, 1)$, by Lemma \ref{22.10.26.15.52}-$(vi)$, we have
$$
1 \le (2^{-j}t)^{\varepsilon} \lesssim \frac{\phi(2^{-j})}{\phi(t^{-1})} \lesssim (2^{-j}t)^{1-\varepsilon} \le 2^{1-\varepsilon}\quad \textrm{for all}\,\,t\in[2^j,2^{j+1}]\,,
$$
\textit{i.e.},
\begin{equation}
\label{eq0110_04}
\phi(t^{-1}) \simeq\phi(2^{-j})\quad \textrm{for all}\,\,t\in[2^j,2^{j+1}].
\end{equation}
This certainly implies that
$$
\|a\|_{(A_0,A_1)_{\phi,p}^K}\simeq_{\phi}\|\alpha^{\phi}_K(a)\|_{\ell_p}.
$$

$(ii)$ We only prove for $p \in [1, \infty)$ since the proof of the case $p=\infty$ is similar. Since $a\in (A_0,A_1)_{\phi,p}^J$, there exists a measurable function $u:\bR_+\to A_0\cap A_1$ such that $a=\int_0^{\infty}u(t)\frac{\mathrm{d}t}{t}$. Put
$$
u_j:=\int_{2^j}^{2^{j+1}}u(t)\frac{\mathrm{d}t}{t}\in A_0\cap A_1,
$$
then $a=\sum_{j\in\bZ}u_j$. By the definition of $J$-functional and Minkowski's inequality,
$$
\alpha_{J,j}^{\phi} \le \phi(2^{-j}) \int_{2^j}^{2^{j+1}} \max (\| u\|_{A_0}, t\|u\|_{A_1})\,\frac{\mathrm{d}t}{t} \lesssim \int_{2^j}^{2^{j+1}}\phi(t^{-1})J(t,u(t);A_0,A_1)\frac{\mathrm{d}t}{t},
$$
where we use \eqref{eq0110_04} for $t \in [2^j, 2^{j+1}]$.
This implies that $\|\alpha^{\phi}_J(u)\|_{\ell_p}\lesssim_{\phi} \Phi^{\phi}_p\big(J(\cdot,u;A_0,A_1)\big)$, and then,
$$
 \inf_{u}\|\alpha^{\phi}_J(u)\|_{\ell_p}\lesssim_{\phi}\|a\|_{(A_0,A_1)_{\phi,p}^J}.
$$
For the converse, assume that there exists $u=(u_j)_{j\in\bZ }\subseteq A_0\cap A_1$ such that
$$
\alpha_J^{\phi}(u)\in\ell_p,\quad a=\sum_{j\in\bZ}u_j.
$$
Set
$$
v(t):=\frac{1}{\log 2}\sum_{j\in\bZ}u_j\mathbbm{1}_{(2^j,2^{j+1}]}(t)\in A_0\cap A_1,
$$
then $a=\int_0^{\infty}v(t)\frac{\mathrm{d}t}{t}$. Clearly, $v$ is (Bochner) measurable. Therefore, by \eqref{eq0110_04} for $t \in [2^j, 2^{j+1}]$ again, we have
\begin{align*}
\|a\|_{(A_0,A_1)_{\phi,p}^J}^p \le \Phi^{\phi}_p\big(J(\cdot,v;A_0,A_1)\big)^p&= \sum_{j\in\bZ}\int_{2^j}^{2^{j+1}}\big(\phi(t^{-1})J(t,v(t);A_0,A_1)\big)^p\frac{\mathrm{d}t}{t}\\
&\lesssim \sum_{j\in\bZ}(\phi(2^{-j})J(2^j,u_j;A_0,A_1))^p=\|\alpha_J^{\phi}(u)\|_{\ell_p}^p,
\end{align*}
and taking the infimum in $u$, we have the result. The proposition is proved.
\end{proof}

\begin{lem}
\label{lem0120_01}
For $a\in (A_0,A_1)_{\phi,p}^J$, there exists an infinitely differentiable function $v:\bR_+\to A_0\cap A_1$ such that $a=\int_0^{\infty}v(t)\frac{\mathrm{d}t}{t}$.
\end{lem}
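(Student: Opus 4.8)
The plan is to take any integral representation $a=\int_0^\infty v_0(t)\,\frac{\mathrm{d}t}{t}$ furnished by the definition of the $J$-norm, carry it over to the additive group $\bR$ via $t=\mathrm{e}^s$, mollify it there, and carry the smoothed function back to $\bR_+$. The crux is to ensure that the mollification stays inside the smaller space $A_0\cap A_1$, and this is exactly where the finiteness of the $J$-functional — not merely the convergence of the defining integral in $A_0+A_1$ — comes in.

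First I would fix a (strongly) measurable $v_0:\bR_+\to A_0\cap A_1$ with $a=\int_0^\infty v_0(t)\,\frac{\mathrm{d}t}{t}$ and $M:=\Phi^\phi_p\big(J(\cdot,v_0;A_0,A_1)\big)<\infty$, and set $w(s):=v_0(\mathrm{e}^s)$, so that $a=\int_\bR w(s)\,\mathrm{d}s$ and, after the substitution $t=\mathrm{e}^s$, $\int_\bR\big(\phi(\mathrm{e}^{-s})\,J(\mathrm{e}^s,w(s))\big)^p\,\mathrm{d}s=M^p$ (with the usual modification if $p=\infty$). Since $\phi\in\cI_o(0,1)$, submultiplicativity and finiteness of $s_\phi$ make $\phi$ bounded above and below by positive constants on each compact subset of $\bR_+$; combined with the elementary bounds $\|v_0(\tau)\|_{A_0\cap A_1}\le(1\vee\tau^{-1})\,J(\tau,v_0(\tau))$ and $\|v_0(\tau)\|_{A_0+A_1}\le(1\wedge\tau^{-1})\,J(\tau,v_0(\tau))$, the bound $M<\infty$ yields, using Lemma~\ref{22.10.26.15.52}~$(vii)$ and $(ix)$ and H\"older's inequality for the global estimate,
$$
w\in L^1_{\mathrm{loc}}(\bR;A_0\cap A_1)\cap L^1(\bR;A_0+A_1).
$$

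Next I would pick $\rho\in C_c^\infty(\bR)$ with $\rho\ge0$, $\int_\bR\rho=1$, $\mathrm{supp}\,\rho\subset[-R,R]$, and set $w_\rho(s):=\int_\bR\rho(u)\,w(s-u)\,\mathrm{d}u$. For each fixed $s$ the integrand is supported in a compact $u$-set on which $w$ is Bochner integrable into $A_0\cap A_1$, so $w_\rho(s)\in A_0\cap A_1$ for every $s$; differentiating under the integral sign (legitimate since $\rho\in C_c^\infty$ and $w\in L^1_{\mathrm{loc}}(\bR;A_0\cap A_1)$) gives $w_\rho\in C^\infty(\bR;A_0\cap A_1)$. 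Because $w\in L^1(\bR;A_0+A_1)$ and $\rho\in L^1(\bR)$, Fubini's theorem for Bochner integrals gives $\int_\bR w_\rho(s)\,\mathrm{d}s=\big(\int_\bR\rho\big)\big(\int_\bR w(s)\,\mathrm{d}s\big)=a$. Setting $v(t):=w_\rho(\log t)$, composition with the smooth map $\log$ makes $v:\bR_+\to A_0\cap A_1$ infinitely differentiable, and $t=\mathrm{e}^s$ gives $\int_0^\infty v(t)\,\frac{\mathrm{d}t}{t}=\int_\bR w_\rho(s)\,\mathrm{d}s=a$, as required. One also obtains, from $v(t)=\int_\bR\rho(u)\,v_0(t\mathrm{e}^{-u})\,\mathrm{d}u$ together with $J(t,\cdot)\le\mathrm{e}^R J(t\mathrm{e}^{-u},\cdot)$ and $\phi(t^{-1})\simeq\phi((t\mathrm{e}^{-u})^{-1})$ for $|u|\le R$, the estimate $\Phi^\phi_p(J(\cdot,v))\lesssim_\phi M$ by Minkowski's integral inequality and the invariance of $\frac{\mathrm{d}t}{t}$ under $t\mapsto t\mathrm{e}^{-u}$; thus the infimum defining $\|a\|_{(A_0,A_1)^J_{\phi,p}}$ may be restricted to smooth functions.

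I expect the main obstacle to be the displayed regularity of $w$ in the first step: the representing function $v_0$ is a priori only integrable into $A_0+A_1$ with respect to $\frac{\mathrm{d}t}{t}$, and if $w$ failed to be locally integrable into the smaller space $A_0\cap A_1$ the convolution $w_\rho$ would not take values there, let alone be smooth there. Extracting this local (and global) integrability from $M<\infty$ via the slow variation of $\phi$ and part $(vii)$ of Lemma~\ref{22.10.26.15.52} is the substantive point; everything afterwards is the classical mollification argument on the multiplicative group $(\bR_+,\frac{\mathrm{d}t}{t})\cong(\bR,\mathrm{d}s)$.
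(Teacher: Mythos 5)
Your proof is correct and is essentially the paper's argument written in logarithmic coordinates: the paper mollifies directly on the multiplicative group $(\bR_+,\tfrac{\mathrm{d}t}{t})$ via $v(t)=\int_0^\infty\varphi(t/s)\,u(s)\,\tfrac{\mathrm{d}s}{s}$ with $\int_0^\infty\varphi(s^{-1})\tfrac{\mathrm{d}s}{s}=1$, which is exactly your $w_\rho(\log t)$ with $\varphi=\rho\circ\log$. Your derivation of the local Bochner integrability of the representing function into $A_0\cap A_1$ from $M<\infty$ (and the resulting bound $\Phi^\phi_p(J(\cdot,v))\lesssim_\phi M$) is a careful justification of a step the paper dismisses as ``clearly''.
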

\begin{proof}
Let $\varphi$ be a nonnegative infinitely differentiable function with compact support in $(0,\infty)$,
\begin{equation}
\label{molifier}
    \int_0^{\infty}\varphi(s^{-1})\frac{\mathrm{d}s}{s}=1.
\end{equation}
Since $a\in(A_0,A_1)_{\phi,p}^J$, there exists measurable $u:\bR_+\to A_0\cap A_1$ such that $a=\int_0^{\infty}u(t)\frac{\mathrm{d}t}{t}$. Put
$$
v(t):=\int_0^{\infty}\varphi \left( \frac{t}{s}  \right)u \left( s \right)\frac{\mathrm{d}s}{s}.
$$
Clearly, $v$ is an infinitely differentiable function from $\bR_+$ to $A_0\cap A_1$. By Fubini's theorem (\textit{e.g.} \cite[Proposition 1.2.7.]{hytonen2016analysis}) and \eqref{molifier},
$$
a=\int_0^{\infty}v(t)\frac{\mathrm{d}t}{t}.
$$
The lemma is proved.
\end{proof}

\begin{lem}
\label{22.10.27.16.32}
For any $a\in A_0+A_1$, 
$$
K(t,a;A_0,A_1)\lesssim_{\phi,p} \frac{\|a\|_{(A_0,A_1)_{\phi,p}^K}}{\phi(t^{-1})}
$$
\end{lem}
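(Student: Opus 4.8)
The plan is to bound the interpolation norm from below by a single value of the $K$-functional, using that $s\mapsto K(s,a;A_0,A_1)$ is nondecreasing while $s\mapsto\phi(s^{-1})$ varies slowly. If $\|a\|_{(A_0,A_1)_{\phi,p}^K}=\infty$ there is nothing to prove, so one may assume it is finite; in any case $K(t,a;A_0,A_1)<\infty$ for every $t>0$ since $a\in A_0+A_1$.

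First I would record two elementary facts. From the very definition of the $K$-functional, for any decomposition $a=a_0+a_1$ and $0<s\le t$ one has $\|a_0\|_{A_0}+s\|a_1\|_{A_1}\le\|a_0\|_{A_0}+t\|a_1\|_{A_1}$, and passing to the infimum shows that $s\mapsto K(s,a;A_0,A_1)$ is nondecreasing. Second, since $\phi\in\cI_o(0,1)$, Lemma~\ref{22.10.26.15.52}-$(vi)$ --- equivalently \eqref{eq0110_04} --- gives $\phi(s^{-1})\simeq_\phi\phi(t^{-1})$ whenever $s\in[t,2t]$.

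Fixing $t>0$ and treating first the case $p\in[1,\infty)$, I would restrict the integral defining $\|a\|_{(A_0,A_1)_{\phi,p}^K}$ to the window $[t,2t]$: there $K(s,a;A_0,A_1)\ge K(t,a;A_0,A_1)$ by monotonicity and $\phi(s^{-1})\gtrsim_\phi\phi(t^{-1})$ by slow variation, so the estimate reads
\begin{align*}
\|a\|_{(A_0,A_1)_{\phi,p}^K}^p
&\ge\int_t^{2t}\big(\phi(s^{-1})K(s,a;A_0,A_1)\big)^p\,\frac{\mathrm{d}s}{s}\\
&\gtrsim_\phi\big(\phi(t^{-1})K(t,a;A_0,A_1)\big)^p\log 2.
\end{align*}
Taking $p$-th roots and rearranging yields $K(t,a;A_0,A_1)\lesssim_{\phi,p}\|a\|_{(A_0,A_1)_{\phi,p}^K}/\phi(t^{-1})$; the constant is in fact independent of $p$ because $(\log 2)^{1/p}\ge\log 2$ for $p\ge1$, but $p$-dependence is harmless. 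For $p=\infty$ the bound is immediate from $\sup_{s>0}\phi(s^{-1})K(s,a;A_0,A_1)\ge\phi(t^{-1})K(t,a;A_0,A_1)$.

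I do not expect any genuine obstacle. The only point needing a little care is lining up the slowly varying weight $\phi(\cdot^{-1})$ against the monotonicity of $K$ over a fixed logarithmic window, which is exactly what \eqref{eq0110_04} (Lemma~\ref{22.10.26.15.52}-$(vi)$) provides; everything else is routine.
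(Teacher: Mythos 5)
Your proof is correct and follows essentially the same idea as the paper's: bound the norm from below by a single value of the $K$-functional using the comparison of $K$ at different scales together with the slow variation of $\phi$ from Lemma \ref{22.10.26.15.52}-$(vi)$. The only cosmetic difference is that you localize the integral to the window $[t,2t]$ using monotonicity of $K$, whereas the paper uses the global bound $\min(1,s/t)K(t,a;A_0,A_1)\le K(s,a;A_0,A_1)$ and then estimates $\Phi_p^{\phi}(\min(1,\cdot/t))$ from below; both are valid and yield the same constant dependence.
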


\begin{proof}
The case $p = \infty$ is trivial, so we only prove the case $p \in [1, \infty)$.
By the definition of $K$-functional, for $t, s > 0$,
$$
K(t,a;A_0,A_1)\leq \max(1,t/s)K(s,a:A_0,A_1).
$$
This also implies that
$$
\min(1,s/t)K(t,a;A_0,A_1)\leq K(s,a;A_0,A_1).
$$
Taking $\Phi^{\phi}_p$ both sides with respect to $s$,
$$
\Phi^{\phi}_p(\min(1,\cdot/t))K(t,a,;A_0,A_1)\leq \|a\|_{(A_0,A_1)_{\phi,p}^K}.
$$
Therefore it suffices to prove that $\Phi^{\phi}_p(\min(1,\cdot/t)) \gtrsim \phi(t^{-1})$, and this can be verified by Lemma \ref{22.10.26.15.52}-$(vi)$ since
$$
\frac{\Phi^{\phi}_p(\min(1,\cdot/t))}{ \phi(t^{-1})} = \left( \int_0^{\infty} \left| \frac{\phi(s^{-1})}{\phi(t^{-1})} \left(1 \wedge \frac{s}{t} \right) \right|^p\,\frac{\mathrm{d}s}{s} \right)^{1/p}.
$$
The lemma is proved.
\end{proof}

We present two lemmas to show equivalence between $K$ and $J$ methods (Theorem~\ref{thm_equiv}) and further properties of our generalized interpolation (Proposition~\ref{prop_230223}).

\begin{thm}[Equivalence theorem]\label{thm_equiv}
$(A_0,A_1)_{\phi,p}^K$ and $(A_0,A_1)_{\phi,p}^J$ have same elements.
In addition, for any $a\in (A_0,A_1)_{\phi,p}^K\big(=(A_0,A_1)_{\phi,p}^J\big)$,
$$
\|a\|_{(A_0,A_1)_{\phi,p}^K}\simeq \|a\|_{(A_0,A_1)_{\phi,p}^J}.
$$
\end{thm}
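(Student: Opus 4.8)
The proof splits into the two continuous inclusions $(A_0,A_1)_{\phi,p}^J\hookrightarrow(A_0,A_1)_{\phi,p}^K$ and $(A_0,A_1)_{\phi,p}^K\hookrightarrow(A_0,A_1)_{\phi,p}^J$. Throughout I would work with the sequential norms of Proposition~\ref{22.10.27.16.40}, so that $\phi$ enters only through the numbers $\phi(2^{-j})$ and the whole dependence on $\phi$ is channelled through the two-sided control on $s_\phi$ furnished by Lemma~\ref{22.10.26.15.52}-$(vi)$: there is $\varepsilon>0$ with $s_\phi(\lambda)\lesssim\lambda^{\varepsilon}$ for $\lambda\le1$ and $s_\phi(\lambda)\lesssim\lambda^{1-\varepsilon}$ for $\lambda\ge1$.

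\emph{The inclusion $J\hookrightarrow K$.} Given $a\in(A_0,A_1)_{\phi,p}^J$, fix a representation $a=\sum_{k\in\bZ}u_k$ with $u_k\in A_0\cap A_1$ and $\alpha_J^\phi(u)\in\ell_p$. Since $K(2^j,\cdot;A_0,A_1)$ is subadditive and $K(2^j,u_k;A_0,A_1)\le\min(1,2^{j-k})J(2^k,u_k;A_0,A_1)$ (apply the trivial decompositions of $u_k$), one gets
$$
\alpha_{K,j}^\phi(a)\le\sum_{k\in\bZ}\frac{\phi(2^{-j})}{\phi(2^{-k})}\min(1,2^{j-k})\,\alpha_{J,k}^\phi(u).
$$
Using $\phi(2^{-j})/\phi(2^{-k})\le s_\phi(2^{k-j})$ together with the two-sided bound above, the kernel $\frac{\phi(2^{-j})}{\phi(2^{-k})}\min(1,2^{j-k})$ is dominated by $2^{-\varepsilon|j-k|}$, which lies in $\ell_1(\bZ)$; the discrete Young convolution inequality then gives $\|\alpha_K^\phi(a)\|_{\ell_p}\lesssim_\phi\|\alpha_J^\phi(u)\|_{\ell_p}$, and passing to the infimum over representations and back through Proposition~\ref{22.10.27.16.40} yields $\|a\|_{(A_0,A_1)_{\phi,p}^K}\lesssim_\phi\|a\|_{(A_0,A_1)_{\phi,p}^J}$.

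\emph{The inclusion $K\hookrightarrow J$.} This is the harder half and rests on the classical fundamental lemma of the real method, which needs the decay condition $\min(1,t^{-1})K(t,a;A_0,A_1)\to0$ as $t\to0$ and as $t\to\infty$. This is exactly the point where the strict interior exponents of $\phi\in\cI_o(0,1)$ are used: Lemma~\ref{22.10.26.15.52}-$(vi)$ gives $\phi(2^{-j})\gtrsim2^{-\varepsilon j}$ for $j\le0$ and $2^{j}\phi(2^{-j})\gtrsim2^{\varepsilon j}$ for $j\ge0$, so, knowing from Proposition~\ref{22.10.27.16.40} that $(\phi(2^{-j})K(2^j,a;A_0,A_1))_j\in\ell_p$ (hence bounded, and null when $p<\infty$), one obtains $K(2^j,a)\to0$ as $j\to-\infty$ and $2^{-j}K(2^j,a)\to0$ as $j\to+\infty$; since $t\mapsto K(t,a)$ is nondecreasing and $t\mapsto t^{-1}K(t,a)$ nonincreasing, this upgrades to the required continuous decay. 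Granting it, for each $j$ choose a decomposition $a=g_{0,j}+g_{1,j}$ with $\|g_{0,j}\|_{A_0}+2^j\|g_{1,j}\|_{A_1}\le2K(2^j,a;A_0,A_1)$ and set $u_j:=g_{0,j}-g_{0,j-1}=g_{1,j-1}-g_{1,j}\in A_0\cap A_1$; the telescoping partial sums of $\sum_j u_j$ equal $g_{0,M}-g_{0,-N-1}$ and converge to $a$ in $A_0+A_1$ because the decay kills $g_{1,M}$ in $A_1$ and $g_{0,-N-1}$ in $A_0$, while the monotonicity of $K(\cdot,a)$ gives $J(2^j,u_j;A_0,A_1)\le 6\,K(2^j,a;A_0,A_1)$. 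Multiplying by $\phi(2^{-j})$ gives $\alpha_{J,j}^\phi(u)\lesssim\alpha_{K,j}^\phi(a)$ pointwise in $j$, hence $\|\alpha_J^\phi(u)\|_{\ell_p}\lesssim\|\alpha_K^\phi(a)\|_{\ell_p}$, and Proposition~\ref{22.10.27.16.40} yields $\|a\|_{(A_0,A_1)_{\phi,p}^J}\lesssim_\phi\|a\|_{(A_0,A_1)_{\phi,p}^K}$.

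The main obstacle is the $K\hookrightarrow J$ direction, and inside it the verification of the decay condition uniformly in $p\in[1,\infty]$: in the classical power-weight case the factor $t^{-\theta}$ with $\theta\in(0,1)$ makes it immediate, whereas here one has to extract from membership in $\cI_o(0,1)$ a genuine power-type lower bound on $\phi$ near $0$ and $\infty$ that beats the factors $t$ and $t^{-1}$ in $\min(1,t^{-1})$ — and this is precisely what Lemma~\ref{22.10.26.15.52}-$(vi)$ supplies. Once the decay is secured, the telescoping construction and the discrete Young estimate are routine and do not see $\phi$ beyond the bound on $s_\phi$; alternatively the fundamental lemma may simply be quoted from the standard real-interpolation literature (e.g.\ \cite{triebel}) after the decay hypothesis has been checked.
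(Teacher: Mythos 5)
Your proof is correct and follows essentially the same route as the paper: the $J\hookrightarrow K$ half is the discrete (Young's inequality) version of the paper's continuous Minkowski estimate based on $K(t,v)\le\min(1,t/s)J(s,v)$ and the scaling bound from Lemma~\ref{22.10.26.15.52}-$(vi)$, and the $K\hookrightarrow J$ half verifies the same decay condition (the content of Lemma~\ref{22.10.27.16.32}) before invoking the fundamental lemma, which you write out explicitly where the paper cites \cite[Lemma 3.3.2]{1976bergh}.
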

\begin{proof}
The proof is comparable to the proof of classical real interpolation. However, for the sake of completeness, we present proof.
Let $a\in(A_0,A_1)_{\phi,p}^J$. Then by Lemma \ref{lem0120_01}, there exists  an infinitely differentiable $v:\bR_+\to A_0\cap A_1$ such that $a=\int_0^{\infty}v(t)\frac{\mathrm{d}t}{t}$. Then by \cite[Lemma 3.2.1.]{1976bergh},
\begin{align*}
    K(t,a;A_0,A_1)&\leq \int_0^{\infty}K(t,v(s);A_0,A_1)\frac{\mathrm{d}s}{s}\leq \int_0^{\infty}\min(1,t/s)J(s,v(s);A_0,A_1)\frac{\mathrm{d}s}{s}\\
    &=\int_0^{\infty}\min(1,s^{-1})J(ts,v(ts);A_0,A_1)\frac{\mathrm{d}s}{s}.
\end{align*}
Since $\phi\in \mathcal{I}_o(0,1)$, by Lemma \ref{22.10.26.15.52}-$(vi)$, there exists $\varepsilon>0$ such that
\begin{equation}
\label{22.10.26.18.14}
    \frac{\phi(t^{-1}s)}{\phi(t^{-1})}\lesssim s^{\varepsilon}\mathbbm{1}_{s\leq1}+s^{1-\varepsilon}\mathbbm{1}_{s>1}.
\end{equation}
Then by \eqref{22.10.26.18.14}, for $p \in [1, \infty)$,
\begin{align*}
    \|a\|_{(A_0,A_1)_{\phi,p}^K}&\leq \int_{0}^{\infty}\left(\int_0^{\infty}\big(\phi(t^{-1})J(ts,v(ts);A_0,A_1)\big)^p\frac{\mathrm{d}t}{t}\right)^{1/p}\min(1,s^{-1})\frac{\mathrm{d}s}{s}\\
    &= \int_{0}^{\infty}\left(\int_0^{\infty}\big(\phi(t^{-1}s)J(t,v(t);A_0,A_1)\big)^p\frac{\mathrm{d}t}{t}\right)^{1/p}\min(1,s^{-1})\frac{\mathrm{d}s}{s}\\
    &\lesssim \Phi^{\phi}_p(v)\int_{0}^{\infty}(s^{\varepsilon}\mathbbm{1}_{s\leq1}+s^{1-\varepsilon}\mathbbm{1}_{s>1})\min(1,s^{-1})\frac{\mathrm{d}s}{s}\lesssim\Phi^{\phi}_p(v).
\end{align*}
Taking the infimum in $v$, we have
$$
(A_0,A_1)_{\phi,p}^J\subseteq (A_0,A_1)_{\phi,p}^K.
$$
The case $p = \infty$ is almost the same.

For the converse, note that by Lemma \ref{22.10.26.15.52}-($vi$) and Lemma \ref{22.10.27.16.32},
$$
\lim_{t \to \infty}\min(1,t^{-1})K(t,a;A_0,A_1) = \lim_{t\downarrow0}\min(1,t^{-1})K(t,a;A_0,A_1)=0,
$$
for all $a\in (A_0,A_1)_{\phi,p}^K$. Since the assumption in \cite[Lemma 3.3.2.]{1976bergh} is satisfied, by this lemma, there exists $u=(u_j)_{j\in\bZ}\subseteq A_0\cap A_1$ such that
$$
a=\sum_{j\in\bZ}u_j,\quad J(2^j,u_j;A_0,A_1)\leq 4K(2^j,a;A_0,A_1).
$$
Then by Proposition \ref{22.10.27.16.40}, we have $(A_0,A_1)_{\phi,p}^K\subseteq (A_0,A_1)_{\phi,p}^J$.
The theorem is proved.
\end{proof}
Note that the inequality \eqref{22.10.26.18.14} actually depends on $\phi$ itself.
Thus, if we put $\phi = W\circ\psi$, then we have
$$
	\|a\|_{(A_0,A_1)_{W\circ\psi,p}^K} \simeq_{W\circ\psi} \|a\|_{(A_0,A_1)_{W\circ\psi,p}^J}
$$

Due to the equivalence theorem, we now denote
\begin{equation}
\label{eq0122_01}
(A_0, A_1)_{\phi, p}:=(A_0,A_1)_{\phi,p}^K \big(=(A_0,A_1)_{\phi,p}^J\big)\quad\textrm{and}\quad \|a\|_{(A_0,A_1)_{\phi,p}}:=\|a\|_{(A_0,A_1)^K_{\phi,p}}.
\end{equation}
The space $(A_0, A_1)_{\phi, p}$ satisfies the following properties.
\begin{prop}\label{prop_230223}\,

    \begin{enumerate}[(i)]
        \item For $1\leq p_0\leq p_1\leq\infty$, we have
        $$
        A_0\cap A_1 \subset  (A_0,A_1)_{\phi,p_0} \subset (A_0,A_1)_{\phi,p_1} \subset A_0+A_1,
        $$
that is, 
        $$
        \|a\|_{A_0+A_1}\lesssim \|a\|_{(A_0,A_1)_{\phi,p_1}}\lesssim\|a\|_{(A_0,A_1)_{\phi,p_0}}\lesssim\|a\|_{A_0\cap A_1}.
        $$
        
        \item Endowed with the norm $\|\cdot\|_{(A_0,A_1)_{\phi,p}}$ the space $(A_0,A_1)_{\phi,p}$ is a Banach space.
        \item For $\tilde{\phi}(t):=t\phi(t^{-1})$, $(A_0,A_1)_{\phi,p}=(A_1,A_0)_{\tilde{\phi},p}$.
        \item Suppose that that $A_0$ be continuously embedded into $A_1$.
	If $\phi,\,\psi\in\cI_o(0,1)$ satisfies that
	\begin{align}\label{230705516}
	N_1^{-1}\phi(s)\leq \psi(s)\leq N_1\phi(s) \quad \forall\,\, 0<s\leq c\,,
	\end{align}
        for some fixed $c>0$, then $(A_0,A_1)_{\phi,p}=(A_0,A_1)_{\psi,p}$.
	Moreover, for any $a\in (A_0,A_1)_{\phi,p}\,(=(A_0,A_1)_{\psi,p})$,
	\begin{align}\label{230705651}
	\|a\|_{(A_0,A_1)_{\phi,p}}\simeq \|a\|_{(A_0,A_1)_{\psi,p}}.
	\end{align}

 \item If $a\in A_0\cap A_1$ and $a\neq 0$, then
 $$
 \|a\|_{(A_0,A_1)_{\phi,p}}\lesssim \|a\|_{A_0} \, \phi\left(\|a\|_{A_1} / \|a\|_{A_0}\right).
 $$
    \end{enumerate}
\end{prop}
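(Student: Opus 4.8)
The plan is to establish $(i)$–$(v)$ in turn, relying on the sequential description of the interpolation norm in Proposition~\ref{22.10.27.16.40}, the pointwise bound in Lemma~\ref{22.10.27.16.32}, and the scaling characterization \eqref{w scaling} of $\cI_o(0,1)$. For $(i)$, the outermost embedding is Lemma~\ref{22.10.27.16.32} at $t=1$, since $K(1,a;A_0,A_1)=\|a\|_{A_0+A_1}$; the innermost one follows from $K(t,a;A_0,A_1)\le\min(\|a\|_{A_0},t\|a\|_{A_1})\le\min(1,t)\|a\|_{A_0\cap A_1}$ together with $\Phi^{\phi}_{p_0}(\min(1,\cdot))<\infty$, which is precisely the integrability computation already done inside the proof of Lemma~\ref{22.10.27.16.32} (it uses $\phi(s^{-1})\lesssim_\phi s^{\varepsilon-1}$ as $s\to0$ and $\phi(s^{-1})\lesssim_\phi s^{-\varepsilon}$ as $s\to\infty$, both consequences of \eqref{w scaling}); and the monotonicity in $p$ reduces, via Proposition~\ref{22.10.27.16.40}$(i)$, to the trivial inclusion $\ell_{p_0}\subset\ell_{p_1}$ applied to the sequence $\alpha^{\phi}_K(a)$. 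For $(ii)$, the norm axioms are routine — subadditivity of $a\mapsto K(t,a;A_0,A_1)$ and the fact that $\Phi^{\phi}_p$ is a norm give the triangle inequality, homogeneity is clear, and $\|a\|_{(A_0,A_1)_{\phi,p}}=0$ forces $K(\cdot,a;A_0,A_1)\equiv0$ (using continuity of $t\mapsto K(t,a;A_0,A_1)$ and positivity of $\phi$), hence $a=0$. For completeness I would take $(a_n)$ Cauchy, use $(i)$ to see it is Cauchy in the Banach space $A_0+A_1$, so $a_n\to a$ there, note $K(t,a_n-a_m;A_0,A_1)\to K(t,a_n-a;A_0,A_1)$ as $m\to\infty$ for each fixed $t$, and conclude $\|a_n-a\|_{(A_0,A_1)_{\phi,p}}\le\liminf_m\|a_n-a_m\|_{(A_0,A_1)_{\phi,p}}\to0$ by Fatou's lemma (by lower semicontinuity of the supremum when $p=\infty$), which also puts $a$ in the space.

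For $(iii)$, the computation hinges on the identity $K(t,a;A_1,A_0)=t\,K(t^{-1},a;A_0,A_1)$, obtained by pulling $t$ out of the infimum. Substituting $\tilde{\phi}(t^{-1})=t^{-1}\phi(t)$ and this identity into the definition of $\|a\|_{(A_1,A_0)_{\tilde{\phi},p}}$ and then changing variables $s=t^{-1}$ (under which $\mathrm{d}t/t$ is invariant) gives $\|a\|_{(A_1,A_0)_{\tilde{\phi},p}}=\|a\|_{(A_0,A_1)_{\phi,p}}$ exactly — the identification is in fact isometric — and the case $p=\infty$ is the same; one also checks $\tilde{\phi}\in\cI_o(0,1)$ via Lemma~\ref{22.10.26.15.52}$(iii)$, $(i)$, $(ix)$, which yield successively $\phi(t^{-1})\in\cI_o(-1,0)$ and $t\,\phi(t^{-1})\in\cI_o(0,1)$. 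For $(v)$, with $r:=\|a\|_{A_1}/\|a\|_{A_0}$, the bound $K(t,a;A_0,A_1)\le\|a\|_{A_0}\min(1,tr)$ and the substitution $s=tr$ give $\|a\|_{(A_0,A_1)_{\phi,p}}\le\|a\|_{A_0}\big(\int_0^\infty(\phi(r/s)\min(1,s))^p\,\mathrm{d}s/s\big)^{1/p}$; applying \eqref{w scaling} to compare $\phi(r/s)$ with $\phi(r)$ — so $\phi(r/s)\lesssim_\phi\phi(r)s^{-(1-\varepsilon)}$ for $s\le1$ and $\phi(r/s)\lesssim_\phi\phi(r)s^{-\varepsilon}$ for $s\ge1$ — bounds the remaining integral by $\phi(r)^p\big(\int_0^1 s^{\varepsilon p}\,\mathrm{d}s/s+\int_1^\infty s^{-\varepsilon p}\,\mathrm{d}s/s\big)\lesssim_{\phi,p}\phi(r)^p$, which is the assertion ($p=\infty$ is identical with a supremum).

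The step I expect to require the most care is $(iv)$. The key observation is that the embedding $A_0\hookrightarrow A_1$, say with constant $C$, forces
$$
K(t,a;A_0,A_1)=t\,\|a\|_{A_1}\qquad\text{for all }a\in A_0+A_1=A_1\text{ and all }t\le C^{-1},
$$
the upper bound coming from the decomposition $a=0+a$ and the lower bound from $\|a_0\|_{A_0}+t\|a_1\|_{A_1}\ge\min(C^{-1},t)(\|a_0\|_{A_1}+\|a_1\|_{A_1})\ge\min(C^{-1},t)\|a\|_{A_1}$. By symmetry in $\phi$ and $\psi$ it suffices to bound $\|a\|_{(A_0,A_1)_{\psi,p}}$ by $\|a\|_{(A_0,A_1)_{\phi,p}}$, and I would split $\bR_+$ at $t_*:=\min(C^{-1},c^{-1})$ and at $c^{-1}$. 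On $[c^{-1},\infty)$ one has $t^{-1}\le c$, so $\psi(t^{-1})\simeq\phi(t^{-1})$ by the hypothesis \eqref{230705516}, and the $\psi$-contribution is comparable to the $\phi$-contribution, hence $\le\|a\|_{(A_0,A_1)_{\phi,p}}^p$. On $(0,t_*)$ one has $K(t,a;A_0,A_1)=t\|a\|_{A_1}$, while $\psi\in\cI_o(0,1)$ gives $\psi(t^{-1})\lesssim_\psi t^{-(1-\varepsilon)}$ for small $t$, so the integrand is $\lesssim_\psi(t^{\varepsilon}\|a\|_{A_1})^p/t$, whose integral over $(0,t_*)$ is $\lesssim_{\psi,p}\|a\|_{A_1}^p\simeq\|a\|_{A_0+A_1}^p\lesssim\|a\|_{(A_0,A_1)_{\phi,p}}^p$ by $(i)$. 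Finally, $[t_*,c^{-1})$ is a bounded interval, bounded away from $0$ and $\infty$ (and empty if $t_*=c^{-1}$): there $\psi(t^{-1})$ is comparable to a constant and $K(t,a;A_0,A_1)\le tC\,K(C^{-1},a;A_0,A_1)\lesssim\|a\|_{A_1}$, so this piece is again $\lesssim\|a\|_{A_1}^p\lesssim\|a\|_{(A_0,A_1)_{\phi,p}}^p$. Combining the three pieces and exchanging the roles of $\phi$ and $\psi$ yields \eqref{230705651}. The subtlety is precisely that \eqref{230705516} only controls $\phi$ and $\psi$ near $0$ — i.e.\ for large arguments $t^{-1}$ — so it is the embedding $A_0\hookrightarrow A_1$, and not any property of the two functions, that makes the small-$t$ portion of the integral harmless.
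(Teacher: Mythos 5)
Your proposal is correct and follows essentially the same route as the paper: part $(iv)$ in particular rests on the same two observations — that \eqref{230705516} makes the integrands comparable for $t\ge c^{-1}$, and that the embedding $A_0\hookrightarrow A_1$ reduces the small-$t$ portion to $\|a\|_{A_1}^p$, which is then recovered from the large-$t$ part of the other norm — and parts $(iii)$ and $(v)$ match the paper's computations. The only (harmless) variations are in $(i)$, where you deduce monotonicity in $p$ from the dyadic characterization and $\ell_{p_0}\hookrightarrow\ell_{p_1}$ instead of the paper's interpolation inequality $\Phi^{\phi}_{p_1}\le (\Phi^{\phi}_{p_0})^{p_0/p_1}(\Phi^{\phi}_{\infty})^{1-p_0/p_1}$, and in $(ii)$, where you use Fatou's lemma in place of the paper's truncation over $[r,R]$.
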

\begin{proof}
$(i)$ The first and third embeddings follow from
$$
\min(1,t)\|a\|_{A_0+A_1}\leq K(t,a;A_0,A_1)\leq \min(1,t)\|a\|_{A_0\cap A_1}
$$
and Lemma \ref{22.10.26.15.52}-$(vi)$.
For the second embedding, by Lemma \ref{22.10.27.16.32}, ($ \frac{\infty}{\infty} := 1$)
\begin{align*}
    \|a\|_{(A_0,A_1)_{\phi,p_1}}&\leq \Phi_{p_0}^{\phi}(K(\cdot,a;A_0,A_1))^{p_0/p_1}\Phi_{\infty}^{\phi}(K(\cdot,a;A_0,A_1))^{1-p_0/p_1}\\
    &\lesssim \|a\|_{(A_0,A_1)_{\phi,p_0}}^{p_0/p_1}\|a\|_{(A_0,A_1)_{\phi,p_0}}^{1-p_0/p_1}=\|a\|_{(A_0,A_1)_{\phi,p_0}}. 
\end{align*}

$(ii)$ The case $p=\infty$ is clear, thus we prove completeness only for $p\in[1,\infty)$.
Let $\{a_n\}_{n=1}^{\infty}$ be a Cauchy sequence in $(A_0,A_1)_{\phi,p}$.
By $(i)$, $\{a_n\}_{n=1}^{\infty}$ is also a Cauchy sequence in $A_0+A_1$, hence there exists $\lim_{n\rightarrow \infty}a_n=: a$ in $A_0+A_1$.
For given $\varepsilon>0$, there is $N>0$ such that if $n,m\geq N$, then $\|a_n-a_m\|_{(A_0,A_1)_{\phi,p}}<\varepsilon$.
Therefore for $0<r\leq R<\infty$, we have
\begin{align*}
    &\left(\int_r^R\big(\phi(t^{-1})K(t,a-a_n;A_0,A_1)\big)^p\frac{\mathrm{d}t}{t}\right)^{1/p}\\
    &<\varepsilon+\left(\int_r^R\big(\phi(t^{-1})K(t,a-a_m;A_0,A_1)\big)^p\frac{\mathrm{d}t}{t}\right)^{1/p}.
\end{align*}
Using $K(t,a-a_m;A_0,A_1)\leq \max(1,t)\|a-a_m\|_{A_0+A_1}$ and letting $m\rightarrow \infty$, we have
$$
\left(\int_r^R\big(\phi(t^{-1})K(t,a-a_n;A_0,A_1)\big)^p\frac{\mathrm{d}t}{t}\right)^{1/p}<\varepsilon.
$$
Since $r,\,R$ are arbitrary constant, by letting $r\rightarrow0,\,R\rightarrow\infty$, we obtain that $\|a_n-a\|_{(A_0,A_1)_{\phi,p}}\leq \varepsilon$.
Therefore we conclude that $a\in (A_0,A_1)_{\phi,p}$ and $a_n$ converges to $a$ in $(A_0,A_1)_{\phi,p}$.

$(iii)$ By Lemma \ref{22.10.26.15.52} $(ix)$, $\tilde{\phi}\in \mathcal{I}_o(0,1)$.
Since
$$
K(t,a; A_1,A_0)=tK(t^{-1},a;A_0,A_1),
$$
we have
\begin{align*}
\|a\|_{(A_1,A_0)_{\tilde{\phi},p}}^p&=\int_0^{\infty}\left(\tilde{\phi}(t^{-1})K(t,a;A_1,A_0)\right)^p\frac{\mathrm{d}t}{t}\\
&=\int_0^{\infty}\left(\phi(t)K(t^{-1},a;A_0,A_1)\right)^p\frac{\mathrm{d}t}{t}=\|a\|_{(A_0,A_1)_{\phi,p}}^p.
\end{align*}

$(iv)$ Due to the definition of the interpolation spaces, we only need to prove that for any $a\in A_0+A_1$, \eqref{230705651} holds.
Let us fix $a\in A_0+A_1\,(\subseteq A_1)$.
Since \eqref{230705516} implies that
	$$
	\int_{c^{-1}}^{\infty}\big|\phi(t^{-1})K(t,a;A_0,A_1)\big|^p\,\frac{\mathrm{d}t}{t}\simeq \int_{c^{-1}}^{\infty}\big|\psi(t^{-1})K(t,a;A_0,A_1)\big|^p\,\frac{\mathrm{d}t}{t}\,,
	$$
	it is sufficient to prove that 
	\begin{align}\label{230705534}
	\int_0^{c^{-1}}\big|\phi(t^{-1})K(t,a;A_0,A_1)\big|^p\,\frac{\mathrm{d}t}{t}&\lesssim\int_0^{\infty}\big|\psi(t^{-1})K(t,a;A_0,A_1)\big|^p\,\frac{\mathrm{d}t}{t},\\
 \int_0^{c^{-1}}\big|\psi(t^{-1})K(t,a;A_0,A_1)\big|^p\,\frac{\mathrm{d}t}{t}&\lesssim\int_0^{\infty}\big|\phi(t^{-1})K(t,a;A_0,A_1)\big|^p\,\frac{\mathrm{d}t}{t}.
	\end{align}
	Due to the similarity, we only prove \eqref{230705534}.
	Since $a\in A_1$, we have
	$$
	K(t,a;A_0,A_1)\leq t\|a\|_{A_1}\quad\forall t>0\,,
	$$
	which implies
	\begin{align}\label{230705559}
	\int_0^{c^{-1}}\big|\phi(t^{-1})K(t,a;A_0,A_1)\big|^p\,\frac{\mathrm{d}t}{t}\lesssim \|a\|_{A_1}
	\end{align}
	(recall that $\phi\in\cI_o(0,1)$).
	Since $A_0$ is continuously embedded in $A_1$, we obtain that for any $a_0\in A_0\,(\subset A_1)$ and $a_1\in A_1$ with $a=a_0+a_1$,
	\begin{align*}
	\|a\|_{A_1}\leq \|a_0\|_{A_1}+\|a_1\|_{A_1}\lesssim\|a_0\|_{A_0}+\|a_1\|_{A_1}\lesssim \|a_0\|_{A_0}+t\|a_1\|_{A_1}\quad \forall\,\,t\geq c^{-1}.
	\end{align*}
	Therefore
	$$
	\|a\|_{A_1}\lesssim K(a,t;A_0,A_1)\quad \forall t\geq c^{-1}\,,
	$$
	which implies
	\begin{align}\label{230705558}
	\|a\|_{A_1}\lesssim \int_{c^{-1}}^{c^{-2}}\big|\psi(t^{-1})K(t,a;A_0,A_1)\big|^p\,\frac{\mathrm{d}t}{t}\leq \int_{c^{-1}}^{\infty}\big|\psi(t^{-1})K(t,a;A_0,A_1)\big|^p\,\frac{\mathrm{d}t}{t}
	\end{align}
	(note that $\psi(t^{-1})\simeq \psi(1)$ for all $1\leq t\leq 2$).
	By combining \eqref{230705559} and \eqref{230705558}, \eqref{230705534} is obtained.

 $(v)$ Put $\alpha:=\|a\|_{A_0}/\|a\|_{A_1}$.
 Then 
 \begin{align}\label{230920957}
 K(t,a;A_0,A_1)\leq \min(\|a\|_0,t\|a\|_{A_1})=\|a\|_{A_0}\min\Big(1,\frac{t}{\alpha}\Big)\,,
 \end{align}
 and due to $\phi\in\cI_o(0,1)$, 
 \begin{align}\label{230920958}
 \phi(t^{-1})\lesssim \phi(\alpha^{-1})\max \left( \left( \frac{\alpha}{t} \right)^{\varepsilon}, \left(\frac{\alpha}{t}\right)^{1-\varepsilon}\right)
 \end{align}
 for some $\varepsilon\in(0,1/2)$.
 \eqref{230920957} and \eqref{230920957} imply that
 $$
 \phi(t^{-1})K(t,a;A_0,A_1)\lesssim \|a\|_{A_0}\phi(\alpha^{-1})\min\Big(\big(\frac{\alpha}{t}\big)^\varepsilon,\big(\frac{t}{\alpha}\big)^\varepsilon\Big).
 $$
 Therefore, we have
 \begin{align*}
 \|a\|_{(A_0,A_1)_{\phi,p}}:=& \left( \int_0^{\infty} \left|\phi \left( t^{-1} \right) K \left( t, a; A_0, A_1\right)\right|^p \,\frac{\mathrm{d}t}{t} \right)^{1/p} \\
 \lesssim &\|a\|_{A_0}\phi(\alpha^{-1}) \left( \int_0^{\infty} \left| \min \left( \frac{\alpha}{t}, \frac{t}{\alpha}  \right)   \right|^{p\varepsilon} \,\frac{\mathrm{d}t}{t} \right)^{1/p}\\
 \lesssim &\|a\|_{A_0}\phi\left(\|a\|_{A_1} / \|a\|_{A_0}\right).
 \end{align*}
The lemma is proved.
\end{proof}

To compute interpolation norms of examples in Appendix, it might be helpful if one has a stability theorem such as Theorem 1.10.2 in \cite{triebel}.
We first give a definition for the stability theorem.
Note that for $\theta \in (0, 1)$ and $p \in [1, \infty]$, if we take $\phi(t) = t^{\theta} \in \cI_o(0, 1)$, then $(A_0, A_1)_{\phi, p}$ coincides with the classical real interpolation space $(A_0, A_1)_{\theta, p}$.
\begin{defn}

We say a Banach space $E$ belongs to the class $J(\theta, A_0, A_1)$ if 
$$
 (A_0 \cap A_1 \subset\,) \,\,\,  (A_0, A_1)_{\theta, 1} \subset E \subset A_0 + A_1,
$$
and $E$ belongs to the class $K(\theta, A_0, A_1)$ if 
$$
    A_0 \cap A_1 \subset E \subset (A_0, A_1)_{\theta, \infty}\,\,\, (\,\subset A_0 + A_1).
$$
\end{defn}

From the above notions, we have the following lemma:
\begin{lem}[\cite{triebel}, Lemma 1.10.1]\label{lem 221112 1609}
    Let $E$ be a Banach space such that $A_0\cap A_1 \subset E \subset A_0 + A_1$ and $\theta \in(0,1)$.
    \begin{enumerate}[(i)]
        \item The following statements are equivalent.
            \begin{enumerate}[(a)]
                \item $E$ is of class $J(\theta)$.
                \item There exists a positive constant $c$ such that for all $a\in A_0\cap A_1$, 
                $$
                \|a\|_E \leq c \|a\|_{A_0}^{1-\theta} \|a\|_{A_1}^\theta.
                $$
                \item There exists a positive constant $c$ such that for all $a\in A_0\cap A_1$ and $t\in (0,\infty)$, $$
                \|a\|_E \leq c t^{-\theta}J(t,a;A_0, A_1).
                $$
            \end{enumerate}
        \item $E$ is of class $K(\theta)$ if and only if there exists a positive constant $c$  such that for all $a\in E$ and $t\in (0,\infty)$, $K(t,a;A_0, A_1)  \leq c t^{\theta}\|a\|_E$.
    \end{enumerate}
\end{lem}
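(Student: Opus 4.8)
The plan is to reduce everything to the special case $\phi(t)=t^{\theta}$, which lies in $\cI_o(0,1)$ precisely for $\theta\in(0,1)$, so that $(A_0,A_1)_{\theta,p}$ is the generalized real interpolation space of Section~\ref{sec_inter} and Theorem~\ref{thm_equiv} and Proposition~\ref{prop_230223} are available. For part $(i)$, I would first dispose of the equivalence $(b)\Leftrightarrow(c)$ using the elementary minimization
\[
\inf_{t>0}\,t^{-\theta}J(t,a;A_0,A_1)=\|a\|_{A_0}^{1-\theta}\|a\|_{A_1}^{\theta},\qquad a\in A_0\cap A_1,
\]
which holds because $t\mapsto t^{-\theta}\max(\|a\|_{A_0},t\|a\|_{A_1})$ is decreasing and then increasing, with minimum at $t=\|a\|_{A_0}/\|a\|_{A_1}$. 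Since $(c)$ asserts $\|a\|_E\le c\,t^{-\theta}J(t,a;A_0,A_1)$ for every $t>0$ --- equivalently $\|a\|_E\le c\inf_{t>0}t^{-\theta}J(t,a;A_0,A_1)$ --- this identity makes $(b)$ and $(c)$ literally the same statement, with the same constant.

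For $(a)\Rightarrow(b)$: if $E$ is of class $J(\theta)$, the inclusion $(A_0,A_1)_{\theta,1}\subset E$ is a continuous embedding (by the paper's convention for $\subset$, or by the closed graph theorem, both spaces being Banach and continuously embedded in $A_0+A_1$), so $\|a\|_E\lesssim\|a\|_{(A_0,A_1)_{\theta,1}}$ for $a\in A_0\cap A_1$. Applying Proposition~\ref{prop_230223}$(v)$ with $\phi(s)=s^{\theta}$ gives, for $a\neq0$,
\[
\|a\|_{(A_0,A_1)_{\theta,1}}\lesssim\|a\|_{A_0}\big(\|a\|_{A_1}/\|a\|_{A_0}\big)^{\theta}=\|a\|_{A_0}^{1-\theta}\|a\|_{A_1}^{\theta},
\]
which is exactly $(b)$ (the case $a=0$ being trivial).

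For $(c)\Rightarrow(a)$: given $a\in(A_0,A_1)_{\theta,1}$ and $\varepsilon>0$, pass to the $J$-description and choose a measurable $v:\bR_+\to A_0\cap A_1$ with $a=\int_0^{\infty}v(t)\,\frac{\mathrm{d}t}{t}$ and $\int_0^{\infty}t^{-\theta}J(t,v(t);A_0,A_1)\,\frac{\mathrm{d}t}{t}\le(1+\varepsilon)\|a\|_{(A_0,A_1)_{\theta,1}^{J}}$. Since $A_0\cap A_1$ embeds continuously in $E$, the map $v$ is also Bochner measurable as an $E$-valued function, and $(c)$ yields $\int_0^{\infty}\|v(t)\|_E\,\frac{\mathrm{d}t}{t}\le c\int_0^{\infty}t^{-\theta}J(t,v(t);A_0,A_1)\,\frac{\mathrm{d}t}{t}<\infty$; hence the integral converges absolutely in $E$, and since $E$ embeds continuously in $A_0+A_1$ its value in $E$ is $a$. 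Letting $\varepsilon\downarrow0$ and invoking Theorem~\ref{thm_equiv} gives $(A_0,A_1)_{\theta,1}\subset E$, which together with the hypothesis $E\subset A_0+A_1$ means $E$ is of class $J(\theta)$. This closes the cycle $(a)\Rightarrow(b)\Leftrightarrow(c)\Rightarrow(a)$.

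Part $(ii)$ is a direct unpacking of the definition: with $\phi(s)=s^{\theta}$ one has $\|a\|_{(A_0,A_1)_{\theta,\infty}}=\Phi^{\phi}_{\infty}\big(K(\cdot,a;A_0,A_1)\big)=\sup_{t>0}t^{-\theta}K(t,a;A_0,A_1)$, so the estimate $K(t,a;A_0,A_1)\le c\,t^{\theta}\|a\|_E$ for all $a\in E$ and $t>0$ is precisely the continuous embedding $E\subset(A_0,A_1)_{\theta,\infty}$; adjoined to the standing hypothesis $A_0\cap A_1\subset E$, this is membership in $K(\theta)$, and the converse reverses the same chain. I expect the only genuinely delicate point to be the convergence issue in $(c)\Rightarrow(a)$: the $J$-representation $a=\int_0^{\infty}v(t)\,\frac{\mathrm{d}t}{t}$, a priori convergent only in $A_0+A_1$, must be shown to converge in $E$ and to the same element --- this is secured by the absolute bound $\int_0^{\infty}\|v(t)\|_E\,\frac{\mathrm{d}t}{t}<\infty$ from $(c)$ together with the continuous embeddings $A_0\cap A_1\hookrightarrow E\hookrightarrow A_0+A_1$. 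Everything else --- the optimization behind $(b)\Leftrightarrow(c)$, the two norm identifications, and the application of Proposition~\ref{prop_230223}$(v)$ --- is routine.
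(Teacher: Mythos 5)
Your proof is correct. The paper does not prove this lemma at all --- it is imported verbatim as \cite[Lemma 1.10.1]{triebel} --- so there is no internal argument to compare against; your write-up is essentially the standard Triebel proof, rephrased through the paper's own machinery. All the key steps check out: the minimization $\inf_{t>0}t^{-\theta}J(t,a;A_0,A_1)=\|a\|_{A_0}^{1-\theta}\|a\|_{A_1}^{\theta}$ (attained where the decreasing branch $t^{-\theta}\|a\|_{A_0}$ meets the increasing branch $t^{1-\theta}\|a\|_{A_1}$) correctly identifies $(b)$ with $(c)$; $(a)\Rightarrow(b)$ follows from Proposition~\ref{prop_230223}$(v)$ with $\phi(s)=s^{\theta}\in\cI_o(0,1)$ once the set inclusion $(A_0,A_1)_{\theta,1}\subset E$ is upgraded to a bounded embedding via the closed graph theorem (both spaces being continuously embedded in $A_0+A_1$, by Proposition~\ref{prop_230223}$(i)$); and your handling of $(c)\Rightarrow(a)$ correctly isolates and resolves the one delicate point, namely that the $J$-representation $a=\int_0^{\infty}v(t)\,\frac{\mathrm{d}t}{t}$, a priori convergent only in $A_0+A_1$, converges absolutely in $E$ to the same limit because of the bound $\int_0^{\infty}\|v(t)\|_E\,\frac{\mathrm{d}t}{t}\lesssim\int_0^{\infty}t^{-\theta}J(t,v(t);A_0,A_1)\,\frac{\mathrm{d}t}{t}$ and the continuous embeddings $A_0\cap A_1\hookrightarrow E\hookrightarrow A_0+A_1$. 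Part $(ii)$ is, as you say, just the definition of the $(A_0,A_1)_{\theta,\infty}$ norm plus the same closed-graph remark.
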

Using Lemma \ref{lem 221112 1609}, we state a $\phi$-analogue of the stability theorem (Theorem 1.10.2 in \cite{triebel}), and it will be used to compute examples in Appendix~.
\begin{thm}[Stability theorem]\label{thm stability}
    Let $\theta_0,\,\theta_1\in [0,1]$ with $\theta_0\neq \theta_1$, $E_j \in K(\theta_j)\cap J(\theta_j)$, $j=0,1$, and $p\in[1,\infty]$.
    Then 
    $$
        (E_0, E_1)_{\phi, p} = (A_0, A_1)_{\psi, p},\quad \psi(s) = s^{\theta_0}\phi(s^{\theta_1 - \theta_0}).
    $$
\end{thm}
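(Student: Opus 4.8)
The plan is to prove the two inclusions $(E_0,E_1)_{\phi,p}\subset (A_0,A_1)_{\psi,p}$ and $(A_0,A_1)_{\psi,p}\subset (E_0,E_1)_{\phi,p}$ by combining the $J$- and $K$-characterizations of the classes $K(\theta_j)$ and $J(\theta_j)$ from Lemma \ref{lem 221112 1609} with the sequential norms of Proposition \ref{22.10.27.16.40} and the equivalence theorem (Theorem \ref{thm_equiv}). First I would record that, since $\phi\in\cI_o(0,1)$, the function $\psi(s)=s^{\theta_0}\phi(s^{\theta_1-\theta_0})$ again lies in $\cI_o(0,1)$: this follows from Lemma \ref{22.10.26.15.52}-$(i)$, $(ii)$, $(iii)$ and $(ix)$ (the sign of the exponent $\theta_1-\theta_0$ matters but both cases are covered), so that $(A_0,A_1)_{\psi,p}$ is well-defined. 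I would also fix the normalization $\theta_0<\theta_1$ without loss of generality, noting that swapping the roles of $E_0,E_1$ replaces $\phi$ by $\widetilde\phi(t)=t\phi(t^{-1})$ via Proposition \ref{prop_230223}-$(iii)$, which is harmless.

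For the inclusion $(A_0,A_1)_{\psi,p}\hookrightarrow (E_0,E_1)_{\phi,p}$ I would use the $J$-method on the source and the $K$-method on the target. Start from $a\in(A_0,A_1)_{\psi,p}=(A_0,A_1)_{\psi,p}^J$ and, by Proposition \ref{22.10.27.16.40}-$(ii)$, pick a decomposition $a=\sum_{j\in\bZ}u_j$ with $u_j\in A_0\cap A_1\subset E_0\cap E_1$ and $\|(\psi(2^{-j})J(2^j,u_j;A_0,A_1))_j\|_{\ell_p}\lesssim\|a\|_{(A_0,A_1)_{\psi,p}}$. Since $E_j\in J(\theta_j)$, Lemma \ref{lem 221112 1609}-$(i)(c)$ gives, for every $t>0$,
\begin{align*}
\|u_j\|_{E_0}\lesssim t^{-\theta_0}J(t,u_j;A_0,A_1),\qquad \|u_j\|_{E_1}\lesssim t^{-\theta_1}J(t,u_j;A_0,A_1).
\end{align*}
Multiplying the second by $2^i$ and optimizing in $t$ (the optimal choice is $t\simeq 2^{i/(\theta_1-\theta_0)}$ up to constants, balancing the two bounds) yields $J(2^i,u_j;E_0,E_1)\lesssim \inf_{t>0}\max(t^{-\theta_0},2^i t^{-\theta_1})J(t,u_j;A_0,A_1)$, and a direct computation shows the right side is $\lesssim 2^{i\theta_0/(\theta_1-\theta_0)}J(2^{i/(\theta_1-\theta_0)},u_j;A_0,A_1)$. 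Summing the $u_j$ over a reindexed range (grouping the dyadic blocks $i\mapsto i/(\theta_1-\theta_0)$ into unit-length blocks in the $A$-scale, which costs only a bounded overlap factor depending on $\theta_1-\theta_0$) and using $\phi(2^{-i})\,2^{i\theta_0/(\theta_1-\theta_0)}\simeq\psi(2^{-i/(\theta_1-\theta_0)})$ up to the same reindexing, I would bound $\|\alpha^\phi_J\|_{\ell_p}$ by $\|\alpha^\psi_J\|_{\ell_p}$ (again using \eqref{eq0110_04}-type dyadic comparability of $\phi,\psi$ on blocks), hence $\|a\|_{(E_0,E_1)_{\phi,p}}\lesssim\|a\|_{(A_0,A_1)_{\psi,p}}$.

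For the reverse inclusion I would dualize the roles: use the $K$-method on the source $(E_0,E_1)_{\phi,p}=(E_0,E_1)_{\phi,p}^K$ and the fact that $E_j\in K(\theta_j)$, so by Lemma \ref{lem 221112 1609}-$(ii)$, $K(t,a;E_0,E_1)\lesssim t^{\theta_j}\|a\|_{E_j}$ for $j=0,1$ whenever $a\in E_j$. Given $a\in(E_0,E_1)_{\phi,p}$ and any decomposition $a=b_0+b_1$ with $b_j\in E_j$, one has $K(s,a;A_0,A_1)\lesssim$ (a mixture governed by $K(\cdot;E_0,E_1)$ via the $K(\theta_j)$ bounds), and after the same optimize-in-$t$ / reindex manoeuvre one gets $K(2^{i(\theta_1-\theta_0)},a;A_0,A_1)\lesssim 2^{i\theta_0}K(2^i,a;E_0,E_1)$, whence $\|\alpha^\psi_K(a)\|_{\ell_p}\lesssim\|\alpha^\phi_K(a)\|_{\ell_p}$ and $\|a\|_{(A_0,A_1)_{\psi,p}}\lesssim\|a\|_{(E_0,E_1)_{\phi,p}}$. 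Throughout, the equivalence theorem lets me freely switch between $K$ and $J$ presentations on whichever side is convenient.

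The main obstacle I expect is the bookkeeping in the change of scale $i\mapsto i(\theta_1-\theta_0)$ (or its inverse): the dyadic index in the $E$-scale maps to a non-integer-spaced index in the $A$-scale, so one must cover with overlapping blocks and absorb bounded multiplicities, and one must check that the weight identity $\phi(2^{-i})2^{i\theta_0/(\theta_1-\theta_0)}\simeq\psi(2^{-i/(\theta_1-\theta_0)})$ holds uniformly on those blocks using the scaling characterization of $\cI_o(0,1)$ in Lemma \ref{22.10.26.15.52}-$(vi)$ rather than any exact equality. The boundary cases $\theta_0=0$ or $\theta_1=1$ need a separate remark, since then $E_j$ may only be an intermediate space in a degenerate sense; one checks the $J(\theta_j)$/$K(\theta_j)$ inequalities still make sense ($t^{0}J$ and $t^{0}$-bounds reduce to plain norm comparisons with $A_0+A_1$ or $A_0\cap A_1$), so the argument goes through verbatim. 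The case $p=\infty$ requires replacing sums by suprema but is otherwise identical.
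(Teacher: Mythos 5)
Your proposal is correct and follows essentially the same route as the paper: one inclusion via the $K$-functional together with Lemma \ref{lem 221112 1609}-$(ii)$, the other via the $J$-functional together with Lemma \ref{lem 221112 1609}-$(i)$-$(c)$, in both cases rescaling by $t\mapsto t^{1/(\theta_1-\theta_0)}$ (the paper performs this as a continuous change of variables inside the integral norms, which sidesteps the dyadic block-overlap bookkeeping you anticipate). The only blemish is a sign slip in your intermediate exponents --- the balanced value of $\max(t^{-\theta_0},2^{i}t^{-\theta_1})$ at $t=2^{i/(\theta_1-\theta_0)}$ is $2^{-i\theta_0/(\theta_1-\theta_0)}$, and correspondingly $\phi(2^{-i})\,2^{-i\theta_0/(\theta_1-\theta_0)}=\psi(2^{-i/(\theta_1-\theta_0)})$ --- but the two slips cancel when combined, so the final estimate you reach is the correct one.
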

\begin{proof}
 Due to Proposition~\ref{prop_230223}.(iii), it suffices to prove for the case $0\leq \theta_0<\theta_1\leq 1$.
 Note that $\psi\in \cI_o(\theta_0,\theta_1)$, and observe that
 $$
 \phi(s)=s^{-\frac{\theta_0}{\theta_1 - \theta_0}} \psi(s^{\frac{1}{\theta_1 - \theta_0}}),
 $$
    We borrow the argument in \cite[Theorem 1.10.2]{triebel} and only deal with the case $p \in [1, \infty)$ since proof for the case $p=\infty$ is much simpler.

        We first prove $(E_0, E_1)_{\phi,p} \subset (A_0, A_1)_{\psi,p}$.
        For $a\in  (E_0, E_1)_{\phi,p}$, take arbitrary $e_0 \in E_0$ and $e_1\in E_1$ satisfying $a = e_0 + e_1$.
        By Lemma \ref{lem 221112 1609}-($ii$),
        \begin{align*}
            K(t,a; A_0, A_1) 
            &\leq K(t, e_0; A_0, A_1) + K(t, e_1; A_0, A_1)\\
            &\lesssim t^{\theta_0}\|e_0\|_{E_0} + t^{\theta_1}\|e_1\|_{E_1}\\
            &= t^{\theta_0}\left( \|e_0\|_{E_0}+ t^{\theta_1 -\theta_0}\|e_1\|_{E_1}\right).
        \end{align*}
        Taking the infimum for $e_0,\,e_1$ gives
        $$
            K(t,a; A_0, A_1) \lesssim t^{\theta_0} K(t^{\theta_1 - \theta_0}, a; E_0, E_1).
        $$
        Then it follows that
        \begin{align*}
            \|a\|_{(A_0, A_1)_{\psi,p}}
            &= \left\|\psi(t^{-1}) K(t, a; A_0, A_1)\right\|_{L_p\left(\bR_+,\frac{\mathrm{d}t}{t}\right)}\\
            &\lesssim \left\|\psi(t^{-1}) t^{\theta_0 } K\big(t^{\theta_1 - \theta_0}, a; E_0, E_1\big)\right\|_{L_p\left(\bR_+,\frac{\mathrm{d}t}{t}\right)}\\
            &\simeq \left\|\psi(t^{-\frac{1}{\theta_1 - \theta_0}}) t^{\frac{\theta_0 }{\theta_1 - \theta_0}} K(t, a; E_0, E_1)\right\|_{L_p\left(\bR_+,\frac{\mathrm{d}t}{t}\right)}\\
            &= \|a\|_{(E_0, E_1)_{\phi,p}}.
        \end{align*}
        Hence we have $(E_0, E_1)_{\phi,p} \subset (A_0, A_1)_{\psi,p}$.

        We now prove $(E_0, E_1)_{\phi,p} \supset (A_0, A_1)_{\psi,p}$.
        For a fixed $a \in (A_0, A_1)_{\psi, p}$, take $u:\bR_+\rightarrow A_0\cap A_1$ such that
        \begin{align}\label{230706742}
        a = \int_0^\infty u(t^{\theta_1 - \theta_0}) \frac{\mathrm{d}t}{t},\quad \big\| \psi(t^{-1}) J(t, u(t^{\theta_1 - \theta_0}); A_0, A_1) \big\|_{L_p(\mathbb{R}_+, \frac{\mathrm{d}t}{t})}\leq 2\|a\|_{(A_0,A_1)_{\psi,p}^J}.
        \end{align}
        Since $A_0\cap A_1\subset E_0\cap E_1$, the integral in \eqref{230706742} holds in $E_0 \cap E_1$.
        By Lemma \ref{lem 221112 1609}-($c$), we obtain that for any $\tau \in (0,\infty)$
        \begin{align*}
            J(t, u(t); E_0, E_1) 
            &= \max\left( \|u(t)\|_{E_0}, t\|u(t)\|_{E_1}\right)\\
            &\lesssim \max\left( \tau^{-\theta_0}J(\tau, u(t); A_0, A_1), t \tau^{-\theta_1}J(\tau, u(t); A_0, A_1)\right).
        \end{align*}
        Take $\tau^{\theta_1 - \theta_0} =t$ so that $J(t, u(t); E_0, E_1) \lesssim t^{-\frac{\theta_0}{\theta_1 - \theta_0}} J(t^{\frac{1}{\theta_1 - \theta_0}}, u(t); A_0, A_1)$.
        Finally, it follows that
        \begin{align*}
            \|a\|_{(E_0,E_1)_{\phi,p}^J}\,&\leq \left\|\phi(t^{-1}) J(t, u(t); E_0, E_1)\right\|_{L_p\left(\bR_+,\frac{\mathrm{d}t}{t}\right)}\\
            &\lesssim \left\|t^{\frac{\theta_0}{\theta_1 - \theta_0}} \psi(t^{-\frac{1}{\theta_1 - \theta_0}}) t^{-\frac{\theta_0}{\theta_1 - \theta_0}} J(t^{\frac{1}{\theta_1 - \theta_0}}, u(t); A_0, A_1)\right\|_{L_p\left(\bR_+,\frac{\mathrm{d}t}{t}\right)}\\
            &\simeq \left\| \psi(t^{-1}) J\big(t,  u(t^{\theta_1-\theta_0}); A_0, A_1\big) \right\|_{L_p\left(\bR_+,\frac{\mathrm{d}t}{t}\right)}\leq 2\|a\|_{(A_0,A_1)_{\psi,p}^J}
        \end{align*}
        Hence we have $(A_0, A_1)_{\psi,p} \subset (E_0, E_1)_{\phi,p}$.
    The theorem is proved.
\end{proof}

\mysection{Proof of Main Results}

We begin with definitions and lemmas, which are crucially used in our proof.
Proofs of Theorems~\ref{trace_local}, \ref{trace_non_local}, \ref{extension_local} and \ref{extension_non_local} are given in Subsections~\ref{ssec_local} and \ref{ssec_nlocal}.
We first prove the results with local derivative (Theorems~\ref{trace_local} and \ref{extension_local}) in Section~\ref{ssec_local}.
The non-local results for half line case (Theorems~\ref{trace_non_local} and \ref{extension_non_local}) are proved in Section~\ref{ssec_nlocal}.

\begin{defn}
    Let $p \in [1, \infty]$, $v : \bR \to [0, \infty)$ be a measurable function, and $X$ be a Banach space. For a $X$-valued measurable function $f = f(t)$, we say $f \in L_p(\cO, v\, \mathrm{d}t;X)$ if
$$
\| f\|_{L_p(\cO, v\, \mathrm{d}t;X)}:=\begin{cases}
     \left( \int_{\cO} \|f(t)\|_{X}^p v(t) \,\mathrm{d}t \right)^{1/p} < \infty \quad &\textrm{for}\quad p\in[1,\infty),\\
    \sup_{t\in\cO} \|f(t)\|_{X} < \infty \quad &\textrm{for} \quad p=\infty.
    \end{cases}
    $$
Here $\cO$ is an open subset of $\bR_+$.
If $X = \bR$, we simply set $L_p(\cO, v\, \mathrm{d}t;X) = L_p(\cO, v\, \mathrm{d}t)$. 
\end{defn}

\begin{defn}\label{230920806}
Let $p \in (1, \infty)$. For a nonnegative measurable function $w: \bR \to [0, \infty)$, we say $w \in A_p(\bR)(=A_p)$ (Muckenhoupt $A_p$-weight) if 
$$
[w]_{A_p(\bR)} = [w]_{A_p} := \sup_{[a,b]\subset \bR} \left( \frac{1}{b-a} \int_a^b w(t) \,\mathrm{d}t \right)\left( \frac{1}{b-a} \int_a^b w(t)^{-\frac{1}{p-1}} \,\mathrm{d}t \right)^{p-1} < \infty.
$$
For $p=1$, we define
$$
[w]_{A_1(\bR)} = [w]_{A_1} := \sup_{[a,b]\subset \bR} \left[\left( \frac{1}{b-a} \int_a^b w \left( t \right) \,\mathrm{d}t \right) \sup_{t\in[a,b]}\left( w \left( t \right)^{-1}\right)\right] < \infty.
$$
For given $w \in A_p$, we denote $W(t):=\int_0^tw(s)\, \mathrm{d}s$.
\end{defn}

While this paper focuses on evolution equations in $\bR_+$, for convenience, we consider Muckenhoupt $A_p$-weight on $\bR$.
It is worth noting that if a function $w_0$, defined on $[0,\infty)$, is a Muckenhoupt $A\big([0,\infty)\big)$-weight (as defined by Definition \ref{230920806} with $\bR$ replaced by $[0,\infty)$), then the even extension of $w_0$ is a Muckenhoupt $A_p(\bR)$-weight.

\begin{rem}
\label{22.10.28.18.21}
Let $p \in (1,\infty)$ and $w\in A_p$. Then we claim that $W^{1/p}\in\mathcal{I}_o(0,1)$.
It suffices to prove that there exists $\varepsilon = \varepsilon(p, [w]_{A_p}) > 0$ such that
\begin{equation}
\label{eq0126_01}
\lambda^{\varepsilon} \lesssim_{p, [w]_{A_p}} \frac{W(\lambda t)}{W(t)} \lesssim_{p, [w]_{A_p}} \lambda^{p - \varepsilon}
\end{equation}
for $t \in \bR_+$ and $\lambda \ge 1$.
By \cite[Corollary 7.2.8.]{grafakos2014classical}, there exists $\delta = \delta(p, [w]_{A_p})\in(0,1)$ such that for $\lambda\in[0,1]$,
\begin{equation}
\label{22.10.28.13.57}
    s_{W}(\lambda):=\sup_{t>0}\frac{W(\lambda t)}{W(t)}\leq C\lambda^{\delta},
\end{equation}
where $C=C(p,[w]_{A_p}) > 0$. 
On the other hand, by \cite[Corollary 7.2.6.]{grafakos2014classical}, there exists $\delta_0 = \delta_0(p, [w]_{A_p}) \in (0, 1)$ such that $p - \delta_0 \ge 1$ and $w\in A_{p-\delta_0}$. In the virtue of \cite[Proposition 7.1.5.-(8)]{grafakos2014classical} (by taking $f = \mathbbm{1}_{(0, t)}$ and $Q = (0, \lambda t)$ in there), for $\lambda\in[1,\infty)$,
\begin{equation}
\label{22.10.28.13.58}
    W(\lambda t)
    \leq \lambda^{p-\delta_0} [w]_{A_{p-\delta_0}} W(t) 
    \lesssim_{p, [w]_{A_p}} \lambda^{p-\delta_0}W(t). 
\end{equation}
By combining \eqref{22.10.28.13.57}, \eqref{22.10.28.13.58}, and by taking $\varepsilon = \delta \wedge \delta_0$, 
we obtain  \eqref{eq0126_01}, thus $W^{1/p} \in \cI_o(0, 1)$ is also.
\end{rem}

\begin{defn}\label{23.08.08.16.02}
For $h:\bR\to \bR$, the (uncentered) Hardy-Littlewood maximal operator $\cM$ is defined by
$$
\cM h(t):=\sup_{[a,b]\ni t}\frac{1}{b-a}\int_{a}^b|h(s)|\mathrm{d}s.
$$
\end{defn}
It is well-known that for $p\in(1,\infty)$, $\cM : L_p(\bR, w\,\mathrm{d}t)\to L_p(\bR, w\,\mathrm{d}t)$ is bounded if and only if $w\in A_p$.
Moreover,
$$
\|\cM\|_{L_p(\bR, w\,\mathrm{d}t)\to L_p(\bR, w\,\mathrm{d}t)}\lesssim_{p} [w]_{A_p}^{\frac{1}{p-1}}
$$
For more detail, see \cite[Chapter 7]{grafakos2014classical}.

\begin{lem}[\cite{muckenhoupt1972hardy}, Theorems 1 and 2]
\label{two weight hardy}
Let $p\in[1,\infty]$ and
$$
F_0(\lambda):=\int_0^{\lambda}f(s)\mathrm{d}s,\quad F_{\infty}(\lambda):=\int_{\lambda}^{\infty}f(s)\mathrm{d}s. 
$$
Then
\begin{enumerate}[(i)]
    \item there exists a constant $C_0 > 0$ such that
    $$
    \|F_0U\|_{L_p(\bR_+)}\leq C_0\|fV\|_{L_p(\bR_+)}
    $$
    if and only if
    \begin{align}
    \label{two weight 1}
        B_0:= \sup_{r\in\bR_+}\|U\|_{L_p((r,\infty))}\|1/V\|_{L_{p'}((0,r))}<\infty.
    \end{align}
    Moreover, if \eqref{two weight 1} is true, then one can take $C_0$ such that $C_0\lesssim_p B_0$.
    \item there exists a constant $C_{\infty} > 0$ such that
    $$
    \|F_{\infty}U\|_{L_p(\bR_+)}\leq C_{\infty}\|fV\|_{L_p(\bR_+)}
    $$
    if and only if
    \begin{align}
    \label{two weight 2}
        B_{\infty}:=\sup_{r\in\bR_+}\|U\|_{L_p((0,r))}\|1/V\|_{L_{p'}((r,\infty))}<\infty.
    \end{align}
    Moreover, if \eqref{two weight 2} is true, one can take $C_{\infty}$ such that $C_{\infty}\lesssim_p B_{\infty}$.
\end{enumerate}
\end{lem}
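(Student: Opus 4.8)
This is the classical two-weight Hardy inequality of Muckenhoupt, so one option is simply to cite \cite{muckenhoupt1972hardy}; here, though, is the argument I would follow. The substitution $s\mapsto 1/s$ interchanges $F_0$ with $F_\infty$, and $B_0$ with $B_\infty$, once the weights $U,V$ are transformed accordingly, so it suffices to prove $(i)$. I would dispose of the endpoints first: for $p=1$, Fubini gives $\|F_0 U\|_{L_1(\bR_+)}=\int_0^\infty f(s)\big(\int_s^\infty U(\lambda)\,\mathrm{d}\lambda\big)\,\mathrm{d}s$, so the inequality holds iff $\int_s^\infty U(\lambda)\,\mathrm{d}\lambda\le C_0 V(s)$ for a.e.\ $s$, which is immediately equivalent to $B_0<\infty$ with $C_0\simeq B_0$; the case $p=\infty$ is analogous and even easier.

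For $p\in(1,\infty)$ I would assume $f\ge 0$ and $V>0$ a.e.\ (the general case following by truncation and a limiting argument), and set $\sigma:=V^{-p'}$, $\Sigma(r):=\int_0^r\sigma(s)\,\mathrm{d}s$, so that $\|1/V\|_{L_{p'}((0,r))}=\Sigma(r)^{1/p'}$ and $B_0=\sup_{r>0}\|U\|_{L_p((r,\infty))}\Sigma(r)^{1/p'}$. Fix $\alpha=\alpha(p)\in(0,1/p')$. The key step is the weighted Hölder estimate
\[
F_0(\lambda)=\int_0^\lambda \big(fV\,\Sigma^{\alpha}\big)\big(V^{-1}\Sigma^{-\alpha}\big)\,\mathrm{d}s\le\Big(\int_0^\lambda f^pV^p\Sigma^{\alpha p}\,\mathrm{d}s\Big)^{1/p}\Big(\int_0^\lambda \Sigma^{-\alpha p'}\,\mathrm{d}\Sigma\Big)^{1/p'},
\]
where $\mathrm{d}\Sigma=V^{-p'}\,\mathrm{d}s$ and the second factor is $\le c_\alpha\Sigma(\lambda)^{(1-\alpha p')/p'}$ because $\alpha p'<1$. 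Raising to the $p$-th power, multiplying by $U(\lambda)^p$, integrating in $\lambda$ and applying Fubini gives
\[
\|F_0 U\|_{L_p(\bR_+)}^p\lesssim_\alpha \int_0^\infty f(s)^pV(s)^p\Sigma(s)^{\alpha p}\Big(\int_s^\infty U(\lambda)^p\Sigma(\lambda)^{(1-\alpha p')p/p'}\,\mathrm{d}\lambda\Big)\,\mathrm{d}s.
\]
It then remains to show the inner integral is $\lesssim_\alpha B_0^p\Sigma(s)^{-\alpha p}$; I would get this by integrating by parts in $\lambda$ with $G(r):=\int_r^\infty U^p$ and the pointwise bound $G(r)\le B_0^p\Sigma(r)^{-p/p'}$, which is exactly the definition of $B_0$. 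The exponents combine so that the boundary term at $s$ is $\le B_0^p\Sigma(s)^{-\alpha p}$, the boundary term at infinity vanishes, and the residual integral $\int_s^\infty\Sigma^{-\alpha p-1}\,\mathrm{d}\Sigma$ converges (since $\alpha p>0$) and is again $\lesssim_\alpha B_0^p\Sigma(s)^{-\alpha p}$. Substituting back cancels $\Sigma(s)^{\alpha p}$ and yields $\|F_0 U\|_{L_p(\bR_+)}\lesssim_p B_0\|fV\|_{L_p(\bR_+)}$, i.e.\ $C_0\lesssim_p B_0$.

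For the converse, that the inequality forces $B_0\le C_0$, I would test it against $f=V^{-p'}\mathbbm{1}_{(0,r)}$: then $\|fV\|_{L_p(\bR_+)}^p=\Sigma(r)$ while $F_0(\lambda)=\Sigma(r)$ for every $\lambda\ge r$, so $\Sigma(r)^p\int_r^\infty U^p\le C_0^p\Sigma(r)$, which rearranges (using $p/p'=p-1$) to $\|U\|_{L_p((r,\infty))}\Sigma(r)^{1/p'}\le C_0$; taking the supremum over $r$ gives $B_0\le C_0$. The main technical obstacle is the bookkeeping in the integration-by-parts step: keeping track of the degenerate cases where $\Sigma(0+)>0$ or $\Sigma(\infty)<\infty$, where $V$ vanishes or blows up on a positive-measure set, and where $U$ fails to be locally integrable, so that every boundary term carries the correct sign or size. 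The restriction $0<\alpha<1/p'$ is precisely what makes all the relevant integrals converge at the right endpoints.
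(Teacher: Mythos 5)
The paper does not prove this lemma at all: it is stated as a quotation of Theorems 1 and 2 of Muckenhoupt's 1972 paper, so there is no internal proof to compare against. Your reconstruction is correct and is essentially Muckenhoupt's original argument: the reduction of $(ii)$ to $(i)$ by inversion, the $p=1$ and $p=\infty$ endpoint cases by Fubini/duality of the condition, the Hölder step with the auxiliary weight $\Sigma^{\alpha}$ (Muckenhoupt's choice corresponds to $\alpha=1/(pp')$, and any $\alpha\in(0,1/p')$ works, as you note), the integration by parts against $G(r)=\int_r^\infty U^p$ using the pointwise consequence $G(r)\le B_0^p\Sigma(r)^{-p/p'}$ of the definition of $B_0$, and the test function $f=V^{-p'}\mathbbm{1}_{(0,r)}$ for necessity. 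The exponent bookkeeping checks out (the boundary term at $s$ and the residual integral are both $\lesssim_{\alpha,p}B_0^p\Sigma(s)^{-\alpha p}$, cancelling the $\Sigma(s)^{\alpha p}$ from the Hölder step), and you correctly flag the only real technical care needed, namely the degenerate situations $\Sigma(0+)>0$, $\Sigma(r)=\infty$, or $V$ vanishing on a set of positive measure, which are handled by truncation exactly as in the original reference. Nothing further is needed.
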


\begin{lem}
\label{lem0114_01}
For any measurable function $h:\bR_+\rightarrow [0,\infty]$ and $s>0$,
$$
g(s) := \int_0^{\infty}te^{-t}h(st)\mathrm{d}t \lesssim \cM(h\mathbbm{1}_{\bR_+})(s).
$$
Here $\cM$ is the (uncentered) Hardy-Littlewood maximal operator defined in Definition \ref{23.08.08.16.02}, and the inequality is independent of $h$ and $s$.
\end{lem}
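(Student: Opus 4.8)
The plan is to exploit that the weight $t\mapsto te^{-t}$ is increasing on $(0,1)$ and decreasing on $(1,\infty)$: I would split the integral defining $g(s)$ at $t=1$ and bound each piece by an average of $h$ over an interval that genuinely contains the point $s$, so that the Hardy--Littlewood maximal function appears.

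For the part over $(0,1)$, I would simply use $te^{-t}\le 1$ together with the substitution $u=st$ to get $\int_0^1 te^{-t}h(st)\,\mathrm{d}t\le\int_0^1 h(st)\,\mathrm{d}t=\frac1s\int_0^s h(u)\,\mathrm{d}u$, which is $\le\cM(h\mathbbm{1}_{\bR_+})(s)$ because $s\in[0,s]$. For the part over $(1,\infty)$, I would first record the elementary identity $te^{-t}=\int_t^\infty(r-1)e^{-r}\,\mathrm{d}r$ for $t\ge1$ (valid since $r\mapsto re^{-r}$ is $C^1$, decreasing, and vanishes at infinity on $[1,\infty)$), insert it, and apply Tonelli's theorem to obtain
$$
\int_1^\infty te^{-t}h(st)\,\mathrm{d}t=\int_1^\infty (r-1)e^{-r}\Big(\int_1^r h(st)\,\mathrm{d}t\Big)\,\mathrm{d}r .
$$
The key estimate is then that for each fixed $r\ge1$ one has $\int_1^r h(st)\,\mathrm{d}t=\tfrac1s\int_s^{sr}h(u)\,\mathrm{d}u\le\tfrac1s\int_0^{sr}h(u)\,\mathrm{d}u\le r\,\cM(h\mathbbm{1}_{\bR_+})(s)$, where the last inequality uses that $r\ge1$ makes $[0,sr]$ an interval containing $s$. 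This bounds the $(1,\infty)$ part by $\big(\int_1^\infty r(r-1)e^{-r}\,\mathrm{d}r\big)\,\cM(h\mathbbm{1}_{\bR_+})(s)=3e^{-1}\,\cM(h\mathbbm{1}_{\bR_+})(s)$, and adding the two pieces yields $g(s)\le(1+3e^{-1})\,\cM(h\mathbbm{1}_{\bR_+})(s)$ with an absolute constant, as claimed.

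The only point requiring care — and what I would flag as the main, though mild, obstacle — is that $\int_0^\infty te^{-t}h(st)\,\mathrm{d}t$ is a \emph{multiplicative} (dilation) average of $h$ rather than a convolution, so the standard reasoning ``$\varphi$ radially decreasing and integrable $\Rightarrow\varphi\ast h\lesssim\|\varphi\|_{L^1}\cM h$'' does not apply verbatim: for small values of the dilation variable it would produce intervals concentrated near the origin, not near $s$. Splitting at $t=1$ and, crucially, keeping $t=1$ (not $t=0$) as the lower limit in the large-$t$ piece is precisely what forces every interval that appears in the argument to contain $s$. I would also note that, since $h$ may take the value $+\infty$, all integrals are interpreted in $[0,\infty]$ and Tonelli applies with no integrability hypothesis, and if $\cM(h\mathbbm{1}_{\bR_+})(s)=\infty$ the claim is trivial.
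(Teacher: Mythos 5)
Your proof is correct and follows essentially the same route as the paper's: split at $t=1$, bound the small-$t$ piece by a direct average over an interval containing $s$, and handle the large-$t$ piece by writing the decreasing weight as an integral over its tail (the paper uses $te^{-t}\lesssim e^{-t/2}=\tfrac12\int_t^\infty e^{-x/2}\,\mathrm{d}x$ where you use the exact identity $te^{-t}=\int_t^\infty(r-1)e^{-r}\,\mathrm{d}r$), then applying Tonelli and bounding the inner integrals by $\cM(h\mathbbm{1}_{\bR_+})(s)$. The only differences are cosmetic, e.g.\ your version yields the explicit constant $1+3e^{-1}$.
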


\begin{proof}
Observe that $g(s)= \int_0^1 \ldots + \int_1^{\infty} \ldots$ and
$$
\int_0^{1} te^{-t}h(st)\, \mathrm{d}t = \frac{1}{s} \int_0^{s} \frac{t}{s}e^{-t/s}h(t)\, \mathrm{d}t \lesssim  \frac{1}{2s} \int_0^{2s} h(t)\,\mathrm{d}t \le \cM(h\mathbbm{1}_{\bR_+})(s).
$$
On the other hand, 
$$
\int_1^{\infty} te^{-t}h(st)\, \mathrm{d}t \lesssim \int_1^{\infty} e^{-t/2} h(st)\,\mathrm{d}t  = 2 \int_1^{\infty} \left( \int_t^{\infty} e^{-x/2}\,\mathrm{d}x\right) h(st)\,\mathrm{d}t
$$
$$
 \simeq \int_1^{\infty} \left( \int_1^x h(st) \,\mathrm{d}t \right) e^{-x/2} \,\mathrm{d}x = \int_1^{\infty} \left( \int_s^{xs} h(t)  \,\mathrm{d}t \right) \frac{e^{-x/2}}{s} \,\mathrm{d}x 
 $$
 $$
 \le \int_1^{\infty} \frac{xs}{s}\left( \frac{1}{xs}\int_{0}^{xs} h(t)  \,\mathrm{d}t \right) e^{-x/2} \,\mathrm{d}x \le  \cM(h\mathbbm{1}_{\bR_+})(s) \int_1^{\infty} xe^{-x/2}\,\mathrm{d}x \simeq \cM(h\mathbbm{1}_{\bR_+})(s).
$$
The lemma is proved.
\end{proof}

In the following subsections, we identify the (optimal) initial trace spaces for evolutionary equations such as \eqref{eq0121_01} and \eqref{eq0121_02}. More precisely, for $u \in L_p(\bR_+, w\, \mathrm{d}t;X_0)$ such that $\partial_t u$ or $\partial_t^{\kappa} u \in L_p(\bR_+, w\, \mathrm{d}t; X_1)$ in a proper sense, we prove which space $u(0)$ belongs to in a trace sense by using the tools from Sections \ref{sec_prob} and \ref{sec_inter}.

\subsection{Proof of Theorems~\ref{trace_local} and \ref{extension_local}: local derivative case}\label{ssec_local}

In this subsection, we prove the trace theorem (Theorem~\ref{trace_local}) for functions $u \in L_p(\bR_+, w\,\mathrm{d}t; X_0)$ with the following expression
$$
u(t) = u_0 + \int_0^t f(s)\,\mathrm{d}s
$$
for some $u_0 \in X_0 + X_1$ and $f \in L_p(\bR_+, w\,\mathrm{d}t; X_1)$. 
We also prove the corresponding extension theorem (Theorem \ref{extension_local}). 
Note that in the proofs of Theorems \ref{trace_local} and \ref{extension_local}, we do not need the results from Section \ref{volterra}.

\subsubsection{Proof of Theorem~\ref{trace_local}}

Observe that taking the Laplace transform in \eqref{local_rep} yields
\begin{align}\label{23.02.20.18.16}
	u_0 = \lambda\cL[u](\lambda) - \cL[f](\lambda).
\end{align}
By \eqref{23.02.20.18.16} and \eqref{eq0122_01}, we have
\begin{align}
\label{22.10.30.17.36}
    K(\lambda,u_0;X_0,X_1)\leq \lambda\cL[\|u\|_{X_0}](\lambda)+\lambda\cL[\|f\|_{X_1}](\lambda).
\end{align}
Then it follows that
\begin{equation}\label{23.02.20.18.21}
\begin{aligned}
	\|u_0\|_{(X_0,X_1)_{W^{1/p},p}}^p
	&= \int_0^\infty W(\lambda^{-1}) K(\lambda, u_0 ; X_0, X_1)^p~\frac{\mathrm{d}\lambda}{\lambda}\\
	&\lesssim \int_0^\infty W(\lambda^{-1}) |\lambda \cL[\|u\|_{X_0}](\lambda)|^p~\frac{\mathrm{d}\lambda}{\lambda}\\
	&\quad+ \int_0^\infty W(\lambda^{-1}) |\lambda \cL[\|f\|_{X_1}](\lambda)|^p~\frac{\mathrm{d}\lambda}{\lambda}.
\end{aligned}
\end{equation}
Due to \eqref{23.02.20.18.21}, it suffices for \eqref{trace 1} to show that for nonnegative $h \in L_p(\bR_+,w\,\mathrm{d}t)$,
\begin{align}
\label{22.10.30.17.46}
    \left(\int_{0}^{\infty}W(\lambda^{-1})|\lambda\cL[h](\lambda)|^p\frac{\mathrm{d}\lambda}{\lambda}\right)^{1/p}\lesssim \|h\|_{L_p(\bR_+,w\,\mathrm{d}t)},
\end{align}
where $\cL[h](\lambda) = \int_0^{\infty}e^{-\lambda t}h(t) \,\mathrm{d}t$.
Note that \eqref{22.10.30.17.46} also verifies a well-definedness of \eqref{22.10.30.17.36} in $X_0+X_1$.

To prove \eqref{22.10.30.17.46}, we use Lemma~\ref{lem0114_01} and assume the following lemma for a moment:
\begin{lem}
    \label{trace lem1}
    Let $p\in(1,\infty)$ and $w\in A_p$. Then for $f \in L_p(\bR, w\,\mathrm{d}t)$, we have
    \begin{align*}
        \int_0^{\infty}|F(t)|^p\frac{W(t)}{t^{p+1}}\, \mathrm{d}t \lesssim_{p, [w]_{A_p}} \int_0^{\infty}|f(t)|^pw(t)\, \mathrm{d}t,
    \end{align*}
    where $F(t):=\int_0^tf(s)\, \mathrm{d}s$.
\end{lem}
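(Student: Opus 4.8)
The plan is to read the claimed bound as a two-weight Hardy inequality and invoke Lemma~\ref{two weight hardy}-$(i)$. After taking $p$-th roots (and, since $|F(t)|\le\int_0^t|f(s)|\,\mathrm{d}s$, replacing $f$ by $|f|$ so that $F$ may be taken nonnegative and nondecreasing), the estimate to prove is
\[
\|F_0 U\|_{L_p(\bR_+)}\lesssim_{p,[w]_{A_p}}\|fV\|_{L_p(\bR_+)},
\]
where $F_0(\lambda)=\int_0^\lambda f(s)\,\mathrm{d}s$, $U(t)=W(t)^{1/p}t^{-(p+1)/p}$ and $V(t)=w(t)^{1/p}$; indeed then $\|fV\|_{L_p(\bR_+)}^p=\int_0^\infty|f(t)|^pw(t)\,\mathrm{d}t$ and $\|F_0U\|_{L_p(\bR_+)}^p=\int_0^\infty|F(t)|^pW(t)t^{-p-1}\,\mathrm{d}t$. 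By Lemma~\ref{two weight hardy}-$(i)$ it then suffices to bound the Muckenhoupt-type quantity
\[
B_0=\sup_{r>0}\|U\|_{L_p((r,\infty))}\,\|1/V\|_{L_{p'}((0,r))}
\]
by a constant depending only on $p$ and $[w]_{A_p}$, since one may then take the constant $C_0$ in Lemma~\ref{two weight hardy}-$(i)$ with $C_0\lesssim_pB_0$.

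For the first factor I would use that $w\in A_p$ controls the growth of $W$: by Remark~\ref{22.10.28.18.21}, and specifically \eqref{eq0126_01} applied with $\lambda=t/r$, there is $\varepsilon=\varepsilon(p,[w]_{A_p})>0$ with $W(t)\lesssim_{p,[w]_{A_p}}(t/r)^{p-\varepsilon}W(r)$ for all $t\ge r>0$. Substituting this into $\int_r^\infty W(t)t^{-p-1}\,\mathrm{d}t$ leaves the convergent integral $\int_r^\infty t^{-1-\varepsilon}\,\mathrm{d}t=\varepsilon^{-1}r^{-\varepsilon}$, whence $\|U\|_{L_p((r,\infty))}\lesssim_{p,[w]_{A_p}}W(r)^{1/p}/r$. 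For the second factor I would apply the $A_p$ condition of Definition~\ref{230920806} to the interval $[0,r]\subset\bR$: from $\big(\tfrac1rW(r)\big)\big(\tfrac1r\int_0^rw^{-1/(p-1)}\,\mathrm{d}t\big)^{p-1}\le[w]_{A_p}$ one solves for $\int_0^rw^{-1/(p-1)}\,\mathrm{d}t$ and, using $1+\tfrac1{p-1}=p'$ and $\tfrac1{p-1}=\tfrac{p'}{p}$, obtains $\int_0^rw^{-1/(p-1)}\,\mathrm{d}t\lesssim_{p,[w]_{A_p}}r^{p'}W(r)^{-p'/p}$, hence $\|1/V\|_{L_{p'}((0,r))}=\big(\int_0^rw^{-1/(p-1)}\,\mathrm{d}t\big)^{1/p'}\lesssim_{p,[w]_{A_p}}r\,W(r)^{-1/p}$. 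Multiplying the two bounds, the factors $r$ and $W(r)^{1/p}$ cancel and $B_0\lesssim_{p,[w]_{A_p}}1$; Lemma~\ref{two weight hardy}-$(i)$ then yields the desired inequality after raising to the $p$-th power.

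I do not expect a genuine conceptual obstacle; the points demanding care are the exponent bookkeeping when turning the $A_p$ inequality on $[0,r]$ into the bound for $\int_0^rw^{-1/(p-1)}\,\mathrm{d}t$ (keeping $p$, $p'=\tfrac{p}{p-1}$, $\tfrac1{p-1}$ and $\tfrac{p'}{p}$ straight), and checking that the scaling exponent $\varepsilon$ — and hence every implied constant — depends only on $p$ and $[w]_{A_p}$ rather than on $w$ itself. One should also observe that an $A_p$ weight is positive almost everywhere, so $W(r)>0$ for all $r>0$ and the quotients above make sense.
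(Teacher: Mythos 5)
Your proposal is correct and follows essentially the same route as the paper: reduce to the two-weight Hardy inequality (Lemma~\ref{two weight hardy}-$(i)$) with the same choice of $U$ and $V$, bound $\int_r^\infty W(t)t^{-p-1}\,\mathrm{d}t\lesssim r^{-p}W(r)$ via the scaling estimate \eqref{eq0126_01}, and absorb the remaining product into $[w]_{A_p}^{1/p}$. The only cosmetic difference is that the paper keeps the two factors together and recognizes the $A_p$ quantity directly, whereas you solve the $A_p$ condition for $\int_0^r w^{-1/(p-1)}\,\mathrm{d}t$ first; the algebra is identical.
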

By changing variables $\lambda \to \lambda^{-1}$,
$$
\int_{0}^{\infty}W(\lambda^{-1})|\lambda\cL[h](\lambda)|^p\frac{\mathrm{d}\lambda}{\lambda}
=\int_0^{\infty}|\cL[h](\lambda^{-1})|^p\frac{W(\lambda)}{\lambda^{p+1}}\mathrm{d}\lambda.
$$
Since
$$
\cL[h](\lambda^{-1})=\int_0^{\lambda}g(s)\mathrm{d}s,\quad 
g(s):=\int_0^{\infty}te^{-t}h(st)\mathrm{d}t,
$$
by Lemma \ref{lem0114_01}, we have
$$
\int_{0}^{\infty}W(\lambda^{-1})|\lambda\cL[h](\lambda)|^p\frac{\mathrm{d}\lambda}{\lambda}
\lesssim \int_0^{\infty}\left|\int_0^{\lambda}\cM (h\mathbbm{1}_{\bR_+})(s)\mathrm{d}s\right|^p\frac{W(\lambda)}{\lambda^{p+1}}\mathrm{d}s.
$$
On the other hand, by Lemma \ref{trace lem1} and $L_p$ estimates for Hardy-Littlewood maximal function with $A_p$-weights,
\begin{align*}
\int_0^{\infty}\left|\int_0^{\lambda}\cM (h\mathbbm{1}_{\bR_+})(s)\mathrm{d}s\right|^p\frac{W(\lambda)}{\lambda^{p+1}}\mathrm{d}\lambda 
&\lesssim_{p, [w]_{A_p}} \int_0^{\infty}|\cM (h\mathbbm{1}_{\bR_+})(s)|^pw(s)\mathrm{d}s\\
&\lesssim_{p, [w]_{A_p}} \|h\|_{L_p(\bR_+,w\,\mathrm{d}t)}^p.
\end{align*}
This certainly implies \eqref{22.10.30.17.46}.

We prove Lemma~\ref{trace lem1} and complete the proof of Theorem~\ref{trace_local}
\begin{proof}[Proof of Lemma~\ref{trace lem1}]
With the help of Theorem \ref{two weight hardy}-($i$), it is enough to check
$$
\sup_{r>0}\left(\int_{r}^{\infty}\frac{W(t)}{t^{p+1}}\mathrm{d}t\right)^{1/p}\left(\int_0^rw(t)^{-\frac{1}{p-1}}\mathrm{d}t\right)^{1/p'}<\infty.
$$
By Lemma \ref{22.10.26.15.52}-($vi$) and \eqref{eq0126_01} of Lemma \ref{22.10.28.18.21},
$$
\int_r^{\infty}\frac{W(t)}{t^{p+1}}\mathrm{d}t\lesssim_{p, [w]_{A_p}} r^{-p} W(r) = r^{-p} \int_0^r w(s) \, \mathrm{d}s.
$$
Therefore,
$$
\left(\int_{r}^{\infty}\frac{W(t)}{t^{p+1}}\mathrm{d}t\right)^{1/p} \left(\int_0^rw(t)^{-\frac{1}{p-1}}\mathrm{d}t\right)^{1/p'} \lesssim \left( \frac{1}{r} \int_0^r w(s) \, \mathrm{d}s \right)^{1/p} \left( \frac{1}{r} \int_0^rw(t)^{-\frac{1}{p-1}}\mathrm{d}t\right)^{1/p'},
$$
and then,
$$
\sup_{r>0} \left(\int_{r}^{\infty}\frac{W(t)}{t^{p+1}}\mathrm{d}t\right)^{1/p}\left(\int_0^rw(t)^{-\frac{1}{p-1}}\mathrm{d}t\right)^{1/p'} \lesssim  [w]_{A_p}^{1/p}.
$$
The lemma is proved.
\end{proof}

\subsubsection{Proof of Theorem~\ref{extension_local}}

Since 
$$
a \in (X_0,X_1)_{W^{1/p},p} = (X_0,X_1)_{W^{1/p},p}^J,
$$
there exists a measurable function $v:\bR_+\to X_0\cap X_1$ such that
$$
a=\int_0^{\infty}v(\lambda)\frac{\mathrm{d}\lambda}{\lambda} \quad \textrm{and} \quad \Phi_{p}^{W^{1/p}} \left(J\left( \cdot,v\left( \cdot \right);X_0,X_1 \right) \right)\leq2\|a\|_{(X_0,X_1)_{W^{1/p},p}^J} \big(\simeq \|a\|_{(X_0,X_1)_{W^{1/p},p}}\big).
$$
Let
$$
u(t):=\int_0^{\infty}\mathrm{e}^{-t\lambda}v(\lambda)\frac{\mathrm{d}\lambda}{\lambda},\quad f(t):=-\int_0^{\infty}\lambda \mathrm{e}^{-t\lambda}v(\lambda)\frac{\mathrm{d}\lambda}{\lambda}
$$
for $t \in [0, \infty)$ and $t \in \bR_+$, respectively. 
Then $u(t) - \int_0^t f(s)\mathrm{d}s = a$ for all $t\geq0$, since 
$$
\int_0^t f(s)~\mathrm{d}s = -\int_0^\infty (1- \mathrm{e}^{-t\lambda}) v(\lambda) \frac{\mathrm{d}\lambda}{\lambda}.
$$
It is clear that $u(0) = a$ and
\begin{align}\label{23.02.22.20.40}
\|u(t)\|_{X_0}+\|f(t)\|_{X_1}\lesssim \int_0^{\infty}\mathrm{e}^{-t\lambda}J(\lambda,v(\lambda);X_0,X_1)\frac{\mathrm{d}\lambda}{\lambda}
\end{align}
for $t \in \bR_+$. 
Note that the right-hand side of \eqref{23.02.22.20.40} is a form of $\int_0^{\infty}\mathrm{e}^{-t\lambda}f(\lambda)\frac{\mathrm{d}\lambda}{\lambda}$.
For this type of quantity, we assume the following lemma:
\begin{lem}
\label{extension lem1}
Let $p\in(1,\infty)$ and $w\in A_p$. Then for $f \in L_{p}(\bR_+, W(t^{-1})\frac{\mathrm{d}t}{t})$, we have
\begin{align*}
    \int_{0}^{\infty}|F(t)|^pw(t)\mathrm{d}t \lesssim_{p, [w]_{A_p}} \int_0^{\infty}|f(t)|^pW(t^{-1})\frac{\mathrm{d}t}{t},
\end{align*}
where $F(t) :=\int_0^{\infty}\mathrm{e}^{-t\lambda}f(\lambda)\frac{\mathrm{d}\lambda}{\lambda}$.
\end{lem}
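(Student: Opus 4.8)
The plan is to reduce the estimate to two applications of the two‑weight Hardy inequality (Lemma~\ref{two weight hardy}) after a pointwise splitting of the kernel $\mathrm{e}^{-t\lambda}$; this is in the same spirit as the proof of Lemma~\ref{trace lem1}, but since $\mathrm{e}^{-t\lambda}$ decays at both ends we will need both parts of Lemma~\ref{two weight hardy}. Since $|F(t)|\le\int_0^\infty \mathrm{e}^{-t\lambda}|f(\lambda)|\,\frac{\mathrm{d}\lambda}{\lambda}$, we may assume $f\ge 0$. Using $\mathrm{e}^{-x}\le 1\wedge x^{-1}$ for $x>0$ (which follows from $\sup_{x>0}x\mathrm{e}^{-x}=\mathrm{e}^{-1}$), I would split
\[
F(t)\le\int_0^{1/t}\frac{f(\lambda)}{\lambda}\,\mathrm{d}\lambda+\frac1t\int_{1/t}^\infty\frac{f(\lambda)}{\lambda^2}\,\mathrm{d}\lambda=:F_1(t)+F_2(t),
\]
so that it suffices to bound each of $\|F_i\|_{L_p(\bR_+,w\,\mathrm{d}t)}$ ($i=1,2$) by $\|f\|_{L_p(\bR_+,W(t^{-1})\,\mathrm{d}t/t)}$.

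For $F_1$, the substitution $t\mapsto t^{-1}$ turns $\int_0^\infty F_1(t)^pw(t)\,\mathrm{d}t$ into $\int_0^\infty\big(\int_0^x\lambda^{-1}f(\lambda)\,\mathrm{d}\lambda\big)^p\,x^{-2}w(x^{-1})\,\mathrm{d}x$, while $\int_0^\infty f(\lambda)^pW(\lambda^{-1})\,\frac{\mathrm{d}\lambda}{\lambda}=\int_0^\infty(\lambda^{-1}f(\lambda))^p\,\lambda^{p-1}W(\lambda^{-1})\,\mathrm{d}\lambda$. Thus I would invoke Lemma~\ref{two weight hardy}-$(i)$ with $U(x)=(x^{-2}w(x^{-1}))^{1/p}$ and $V(\lambda)=(\lambda^{p-1}W(\lambda^{-1}))^{1/p}$; after substituting $\mu=x^{-1}$, resp. $\mu=\lambda^{-1}$, in the two one‑variable norms, the admissibility condition \eqref{two weight 1} becomes
\[
B_0=\sup_{r>0}W(1/r)^{1/p}\Big(\int_{1/r}^\infty W(\mu)^{-(p'-1)}\,\frac{\mathrm{d}\mu}{\mu}\Big)^{1/p'}<\infty .
\]
Since $w\in A_p$, Remark~\ref{22.10.28.18.21} gives $W^{1/p}\in\cI_o(0,1)$, hence $W\in\cI_o(0,p)$, and Lemma~\ref{22.10.26.15.52}-$(vi)$ supplies $\varepsilon>0$ with $W(\mu)\gtrsim(\mu/R)^{\varepsilon}W(R)$ for $\mu\ge R$; this yields $\int_R^\infty W(\mu)^{-(p'-1)}\,\mathrm{d}\mu/\mu\lesssim W(R)^{-(p'-1)}$, and since $(p'-1)/p'=1/p$ one gets $B_0\lesssim_{p,[w]_{A_p}}1$.

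For $F_2$, the same substitution turns $\int_0^\infty F_2(t)^pw(t)\,\mathrm{d}t$ into $\int_0^\infty\big(\int_x^\infty\lambda^{-2}f(\lambda)\,\mathrm{d}\lambda\big)^p\,x^{p-2}w(x^{-1})\,\mathrm{d}x$, and I would apply Lemma~\ref{two weight hardy}-$(ii)$ with $U(x)=(x^{p-2}w(x^{-1}))^{1/p}$ and $V(\lambda)=(\lambda^{2p-1}W(\lambda^{-1}))^{1/p}$. The analogous substitutions reduce \eqref{two weight 2} to $B_\infty=\sup_{r>0}\|U\|_{L_p((0,r))}\|1/V\|_{L_{p'}((r,\infty))}<\infty$, where $\|U\|_{L_p((0,r))}\lesssim(r^pW(1/r))^{1/p}$ is obtained by integrating $\int_{1/r}^\infty\mu^{-p}\,\mathrm{d}W(\mu)$ by parts, and $\|1/V\|_{L_{p'}((r,\infty))}\lesssim(W(1/r)^{-(p'-1)}r^{-p'})^{1/p'}$ follows from $\int_0^{S}(\mu/W(\mu))^{p'-1}\,\mathrm{d}\mu\lesssim W(S)^{-(p'-1)}S^{p'}$ — both again consequences of the scaling $W\in\cI_o(0,p)$. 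This gives $B_\infty\lesssim_{p,[w]_{A_p}}1$, and summing the bounds for $F_1$ and $F_2$ proves the lemma.

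The step requiring the most care is the verification of the Hardy admissibility conditions $B_0,B_\infty<\infty$: this is the only place where the Muckenhoupt hypothesis on $w$ is used, and it enters exclusively through the self‑improving scaling $W^{1/p}\in\cI_o(0,1)$ of Remark~\ref{22.10.28.18.21} (equivalently the two‑sided power bounds for $W$). Everything else — the pointwise kernel bound, the splitting, and the changes of variables — is routine.
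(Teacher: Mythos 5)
Your proof is correct and follows essentially the same route as the paper's: the same pointwise bound $\mathrm{e}^{-x}\le 1\wedge x^{-1}$, the same splitting of the kernel at $t\lambda=1$, Muckenhoupt's two-weight Hardy inequality (Lemma \ref{two weight hardy}) for each piece, and the scaling $W\in\cI_o(0,p)$ from Remark \ref{22.10.28.18.21} to verify $B_0,B_\infty<\infty$. The only difference is bookkeeping — the paper substitutes $\lambda\to\lambda^{-1}$ inside $F$ first (so its two pieces invoke parts $(i)$ and $(ii)$ of Lemma \ref{two weight hardy} in the opposite pairing), while you substitute in the outer variable afterward; the resulting admissibility checks are equivalent.
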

Then by Lemma \ref{extension lem1} with $J(\lambda, v(\lambda); X_0, X_1)$ in place of $f(\lambda)$,
\begin{align*}
    &\|u\|_{L_p(\bR_+,w\,\mathrm{d}t;X_0)}+\|f\|_{L_p(\bR_+,w\,\mathrm{d}t;X_1)}\\
    &\lesssim \left\|\int_0^{\infty} \exp(-\cdot\lambda)J(\lambda,v(\lambda);X_0,X_1)\frac{\mathrm{d}\lambda}{\lambda}\right\|_{L_p(\bR_+,w\,\mathrm{d}t)}\\
    &\lesssim_{p, [w]_{A_p}} \Phi_p^{W^{1/p}}\left(J\left( \cdot,v\left( \cdot \right);X_0,X_1 \right) \right) \lesssim \|a\|_{(X_0,X_1)_{W^{1/p},p}}.
\end{align*}
Thus $u$ and $f$ are well-defined in $L_p(\bR_+,w\,\mathrm{d}t;X_0)$ and $L_p(\bR_+,w\,\mathrm{d}t;X_1)$, respectively. Also, by Fubini's theorem (\textit{e.g.} \cite[Proposition 1.2.7.]{hytonen2016analysis}), we have
$$
u(t)=a+\int_0^tf(s)\, \mathrm{d}s,
$$
for $t \in \bR_+$.

We prove Lemma~\ref{extension lem1} and complete the proof of Theorem~\ref{extension_local}.
\begin{proof}[Proof of Lemma~\ref{extension lem1}]
Without loss of generality, we assume that $f$ is nonnegative. First we divide $F$ into two parts;
$$
F(t)=\int_0^t\mathrm{e}^{-t / \lambda}f(\lambda^{-1})\frac{\mathrm{d}\lambda}{\lambda}+\int_{t}^{\infty}\mathrm{e}^{-t / \lambda}f(\lambda^{-1})\frac{\mathrm{d}\lambda}{\lambda}=:F_1(t)+F_2(t).
$$
Since $e^{-x} \leq 1 \wedge x^{-1}$ for all $x\in\bR_+$, we have
$$
F_1(t)\leq \frac{1}{t}\int_0^t f(\lambda^{-1})\mathrm{d}\lambda,\quad F_2(t)\leq \int_t^{\infty}f(\lambda^{-1})\frac{\mathrm{d}\lambda}{\lambda}
$$
\textbf{Estimate of $F_1$}: We claim that
\begin{align}
\label{f1 est}
    \int_{0}^{\infty}|F_1(t)|^pw(t)\mathrm{d}t\leq C\int_0^{\infty}|f(t)|^pW(t^{-1})\frac{\mathrm{d}t}{t}
\end{align}
for some $C = C(p, [w]_{A_p}) > 0$. To obtain \eqref{f1 est}, it is enough to verify that \eqref{two weight 1} holds for
$$
U(t):=\frac{w(t)^{1/p}}{t},\quad \textrm{and} \quad V(t):=\frac{W(t)^{1/p}}{t^{1/p}}
$$
with the help of Lemma \ref{two weight hardy}-($i$). (Take $t F_1(t)$ and $f(t^{-1})$ in place of $F_0$ and $f$, respectively.)
Since $t^{-p}=\int_0^{t^{-1}}ps^{p-1}\, \mathrm{d}s$, we have
\begin{align*}
    \int_r^{\infty}|U(t)|^p\mathrm{d}t=p\int_0^{r^{-1}}\left(\int_{r}^{s^{-1}}w(t)\mathrm{d}t\right)s^{p-1}\mathrm{d}s\leq p\int_0^{r^{-1}}W(s^{-1})s^{p-1}\mathrm{d}s.
\end{align*}
Note that by Lemma \ref{22.10.26.15.52}-($vi$) and  \eqref{eq0126_01} of Lemma \ref{22.10.28.18.21}, for $s \in (0, r^{-1})$ we have
$$
\frac{W(s^{-1})}{W(r)} = \frac{W(s^{-1}r^{-1}r)}{W(r)} \lesssim_{p, [w]_{A_p}} (s^{-1}r^{-1})^{p-\varepsilon},
$$
for some $\varepsilon = \varepsilon(p, [w]_{A_p}) >0$ and then, $\int_0^{r^{-1}}W(s^{-1})s^{p-1}\mathrm{d}s\lesssim_{\varepsilon,p} r^{-p}W(r)$. This implies that
\begin{align}
\label{22.10.30.16.25}
    \|U\|_{L_p((r,\infty))}\lesssim_{p, [w]_{A_p}}r^{-1}W(r)^{1/p}.
\end{align}
For $0<t<r$, by Lemma \ref{22.10.26.15.52}-($vi$) and \eqref{eq0126_01} of Lemma \ref{22.10.28.18.21} again,
$$
\left| \frac{1}{V(t)} \right|^p \lesssim_{p, [w]_{A_p}} W(r)^{-1}r^{p-\varepsilon}t^{1+\varepsilon-p},
$$
and then,
\begin{align}
\label{22.10.30.17.10}
    \|1/V\|_{L_{p'}((0,r))}\lesssim_{p, [w]_{A_p}} rW(r)^{-1/p}.
\end{align}
Combining \eqref{22.10.30.16.25} and \eqref{22.10.30.17.10}, we have \eqref{two weight 1}.

\textbf{Estimate of $F_2$}:
Now we prove that
\begin{align}
\label{f2 est}
    \int_{0}^{\infty}|F_2(t)|^pw(t)\mathrm{d}t\leq C\int_0^{\infty}|f(t)|^pW(t^{-1})\frac{\mathrm{d}t}{t}
\end{align}
for some $C = C(p, [w]_{A_p}) > 0$.
If we prove \eqref{two weight 2} for
$$
U(t):=w(t)^{1/p},\quad V(t):=W(t)^{1/p}t^{1-\frac{1}{p}},
$$
then by Theorem \ref{two weight hardy}-($ii$), we obtain \eqref{f2 est}.
Clearly,
\begin{equation}
\label{eq0117_01}
\|U\|_{L_p((0,r))}= W(r)^{1/p}.
\end{equation}
For $0<r<t$, by Lemma \ref{22.10.26.15.52}-($vi$) and \eqref{eq0126_01} of Lemma \ref{22.10.28.18.21} again,
$$
\left| \frac{1}{V(t)} \right|^p \lesssim_{p, [w]_{A_p}} W(r)^{-1}r^{\varepsilon}t^{1-\varepsilon-p},
$$
and then,
\begin{equation}
\label{eq0117_02}
\|1/V\|_{L_{p'}((r,\infty))}\lesssim_{p, [w]_{A_p}} W(r)^{-1/p}.
\end{equation}
Combining \eqref{eq0117_01} and \eqref{eq0117_02}, we obtain \eqref{two weight 2}.
The lemma is proved.
\end{proof}

\subsection{Proof of Theorems~\ref{trace_non_local} and \ref{extension_non_local}: non-local derivative case}\label{ssec_nlocal}
In this subsection, we prove the trace theorem (Theorems~\ref{trace_non_local}) for functions $u$, roughly speaking, involving a non-local derivative. 
We also provide the corresponding extension theorem (Theorem \ref{extension_non_local}).

We briefly recall the essential tools from Section \ref{volterra}.

$\bullet$ $\kappa:\bR_+\rightarrow \bR_+$ is a right-continuous decreasing function with $\kappa(0+) = \infty$, $\kappa(\infty)=0$, and $\int_0^1 \kappa(s) \, \mathrm{d}s < \infty$. For the remainder of this section, we fix a $\kappa$ that satisfies the above assumptions.  Also, see Remark \ref{rmk_0129_01} for a sufficient condition for the above assumptions.

$\bullet$ There is a nonnegative measure $\mu$ defined on $\bR_+$ such that $\kappa(x)=\int_0^{\infty}\mathbbm{1}_{(x,\infty)}\mu(\mathrm{d}t)$. By the choice of $\kappa$, this $\mu$ satisfies \eqref{2211041216}.

$\bullet$ Since $\mu$ satisfies \eqref{2211041216}, $\phi(\lambda) =\int_0^{\infty}\left(1-e^{-\lambda t}\right)\mu(\mathrm{d}t)$ is a Bernstein function.

$\bullet$ There is a subordinator $S=(S_t)_{t\geq0}$ defined on an probability space $(\Omega, \cF, \bP)$ with Laplace exponent $\phi$, \textit{i.e.},   $\bE\big[\mathrm{e}^{-\lambda S_t}\big]=\mathrm{e}^{-t\phi(\lambda)}$ for any $t,\,\lambda\geq 0$.

$\bullet$ For $(t, \lambda) \in [0, \infty) \times \bR_+$, $\Theta(t,\lambda) := \lambda\int_0^{\infty}\mathrm{e}^{-\lambda r}\bP(S_r\geq t)\mathrm{d}r$ satisfies $\Theta(0,\lambda)=1$ and $\partial_t^{\kappa}\Theta(t,\lambda)=\lambda \Theta(t,\lambda)$.

It is given in Proposition~\ref{prop0129_01} that for $\kappa$ mentioned above, 
if $\kappa \in \cI_o(-1, 0)$, then $\phi(\lambda):=\lambda \cL[\kappa](\lambda)$ is a Bernstein function and $\phi(\lambda)\simeq \kappa(\lambda^{-1})$.
For this $\phi$, we denote
\begin{align}
\label{23.03.02.15.43}
    \psi(t) := \frac{1}{\phi^{-1}(t^{-1})},
\end{align}
where $\phi^{-1}$, the inverse function of $\phi$, is well-defined since Bernstein functions are continuous and strictly increasing.
It is provided in Remark \ref{23.03.02.15.47} that $\psi\simeq \kappa^{*}$, where $\kappa^{*}$ denotes the function defined as \eqref{23.08.11.17.26}. 
Due to $W(t):=\int_0^t w(s)\dd s \in \cI_o(0,p)$, we have $W\circ \psi\simeq W\circ \kappa^{*}$,
so that
$$
(X_0,X_1)_{(W\circ\kappa^{*})^{1/p},p}=(X_0,X_1)_{(W\circ\psi)^{1/p},p}.
$$
Therefore, we prove Theorems~\ref{trace_non_local} and \ref{extension_non_local} for $(X_0,X_1)_{(W\circ\psi)^{1/p},p}$ instead of $(X_0,X_1)_{(W\circ\kappa^{*})^{1/p},p}$.

We first prove a non-local counterpart of Theorem \ref{trace_local}.

\subsubsection{Proof of Theorem~\ref{trace_non_local}}

Note that the space $(X_0,X_1)_{(W\circ\psi)^{1/p},p}$ is well-defined due to the assumption $(W\circ \psi)^{1/p} \in \cI_o(0, 1)$.

Similar to the proof of Theorem~\ref{trace_local}, we take $\cL$ to the both sides of \eqref{eq0128_01} and use Fubini's theorem (\textit{e.g.} \cite[Proposition 1.2.7.]{hytonen2016analysis}) to obtain
$$
\cL[\kappa](\lambda) \left( \cL[u](\lambda) - \frac{u_0}{\lambda}  \right) = \frac{\phi(\lambda)}{\lambda}\left( \cL[u](\lambda) - \frac{u_0}{\lambda}  \right) = \frac{1}{\lambda}\cL[f](\lambda),\quad \lambda\in\bR_+,
$$
where we use \eqref{22.11.02.14.24} for the first equality.
That is, we have
\begin{align}\label{nlocal_rep1}
u_0 = \lambda \cL[u](\lambda) + \frac{\lambda \cL[f](\lambda)}{\phi(\lambda)},\quad \lambda \in \bR_+,
\end{align}
or, equivalently, (Put $\lambda = \phi^{-1}(\tau)(=1/\psi(\tau^{-1}))$.)
\begin{align}\label{nlocal_rep2}
u_0=\frac{\cL[u](1/\psi(\tau^{-1}))}{\psi(\tau^{-1})}+\frac{\cL[f](1/\psi(\tau^{-1}))}{\tau\psi(\tau^{-1})},\quad \tau \in \bR_+.
\end{align}
Then by the definition of $K$-functional, 
\begin{align}
\label{22.11.08.14.20}
    K(\tau,u_0;X_0,X_1)\leq \frac{\cL[\|u\|_{X_0}+\|f\|_{X_1}](1/\psi(\tau^{-1}))}{\psi(\tau^{-1})},
\end{align}
and it follows from \eqref{22.11.08.14.20} that
\begin{equation}\label{23.02.20.20.30}
\begin{aligned}
	\|u_0\|_{(X_0,X_1)_{(W\circ\psi)^{1/p},p}}^p
	&= \int_0^\infty (W\circ\psi)(\tau^{-1})  \left|K(\tau, u_0 ; X_0, X_1)\right|^p~\frac{\mathrm{d}\tau}{\tau}\\
	&\lesssim \int_0^\infty (W\circ\psi)(\tau^{-1}) \left|\frac{\cL[\|u\|_{X_0}+\|f\|_{X_1}](1/\psi(\tau^{-1}))}{\psi(\tau^{-1})} \right|^p~\frac{\mathrm{d}\tau}{\tau}
\end{aligned}
\end{equation}
Thus it suffices for \eqref{trace 2} to show that for nonnegative $h$,
\begin{align}
\label{22.11.08.14.24}
    \left(\int_{0}^{\infty}\left| (W\circ\psi)(\tau^{-1}) \frac{\cL[h](1/\psi(\tau^{-1}))}{\psi(\tau^{-1})}\right|^p\frac{\mathrm{d}\tau}{\tau}\right)^{1/p}\lesssim_{p, [w]_{A_p}} \|h\|_{L_p(\bR_+,w\,\mathrm{d}t)},
\end{align}
where $\cL[h](\lambda) = \int_0^{\infty}\mathrm{e}^{-\lambda t}h(t) \,\mathrm{d}t$.
Note that \eqref{nlocal_rep1} and \eqref{nlocal_rep2} are well-defined in $X_0+X_1$ due to \eqref{22.11.08.14.24}.

By changing variable $\tau \to \phi(\lambda^{-1})$ with the help of \eqref{2302231059}, we have
$$
\int_{0}^{\infty}\left| (W\circ\psi)(\tau^{-1}) \frac{\cL[h](1/\psi(\tau^{-1}))}{\psi(\tau^{-1})}\right|^p \, \frac{\mathrm{d}\tau}{\tau} \simeq_{\kappa} \int_0^{\infty}|\cL[h](\lambda^{-1})|^p\frac{W(\lambda)}{\lambda^{p+1}} \, \mathrm{d}\lambda.
$$
We also have
$$
\cL[h](\lambda^{-1}) = \int_0^{\lambda} \left( \int_0^{\infty}t\mathrm{e}^{-t}h(st) \, \mathrm{d}t \right) \mathrm{d}s := \int_0^{\lambda}g(s)\,\mathrm{d}s 
$$
and $g(s)\lesssim \cM(h\mathbbm{1}_{\bR_+})(s)$ for $s \in \bR_+$.
Then it follows by Lemma \ref{lem0114_01} that
$$
\int_0^{\infty}|\cL[h](\lambda^{-1})|^p\frac{W(\lambda)}{\lambda^{p+1}}\mathrm{d}\lambda\lesssim \int_0^{\infty}\left|\int_0^{\lambda}\cM (h\mathbbm{1}_{\bR_+})(s)\mathrm{d}s\right|^p\frac{W(\lambda)}{\lambda^{p+1}}\mathrm{d}s.
$$
Also, by Lemma \ref{trace lem1} and weighted $L_p$ estimates for the Hardy-Littlewood maximal functions, 
\begin{align*}
\int_0^{\infty}\left|\int_0^{\lambda}\cM (h\mathbbm{1}_{\bR_+})(s)\mathrm{d}s\right|^p\frac{W(\lambda)}{\lambda^{p+1}}\mathrm{d}\lambda &\lesssim_{p, [w]_{A_p}} \int_0^{\infty}|\cM(h\mathbbm{1}_{\bR_+})(s)|^pw(s)\mathrm{d}s\\
&\lesssim_{p, [w]_{A_p}} \|h\|_{L_p(\bR_+,w\,\mathrm{d}t)}^p,
\end{align*}
and this implies \eqref{22.11.08.14.24}.
Hence the theorem is proved.

\subsubsection{Proof of Theorem~\ref{extension_non_local}}

As noted in the proof of Theorem \ref{trace_non_local}, the space $(X_0,X_1)_{(W\circ\psi)^{1/p},p}$ is well-defined. Since 
$$a \in (X_0,X_1)_{(W\circ\psi)^{1/p},p} = (X_0,X_1)_{(W\circ\psi)^{1/p},p}^J\,,
$$
there exists a measurable function $v:\bR_+\to X_0\cap X_1$ such that
$$
a=\int_0^{\infty}v(\lambda)\frac{\mathrm{d}\lambda}{\lambda} \quad \textrm{and} \quad \Phi_{p}^{(W\circ\psi)^{1/p}}\left(J\left( \cdot,v\left( \cdot \right);X_0,X_1 \right) \right) \leq2\|a\|_{(X_0,X_1)_{(W\circ\psi)^{1/p},p}}.
$$
Recall that $\Theta(t,\lambda) = \lambda\int_0^{\infty}\mathrm{e}^{-\lambda r}\bP(S_r\geq t)\mathrm{d}r$ for $(t, \lambda) \in [0, \infty) \times \bR_+$. Let
$$
u(t):=\int_0^{\infty}\Theta(t,\lambda)v(\lambda)\frac{\mathrm{d}\lambda}{\lambda},\quad f(t):=-\int_0^{\infty}\lambda \Theta(t,\lambda)v(\lambda)\frac{\mathrm{d}\lambda}{\lambda}
$$
for $t \in [0, \infty)$ and $t \in \bR_+$, respectively. Since $\Theta(0, \lambda) = 1$ for $\lambda \in \bR_+$, it is clear that $u(0) = a$. We also have
\begin{align}\label{23.02.22.20.56}
\|u(t)\|_{X_0}+\|f(t)\|_{X_1}\lesssim \int_0^{\infty}\Theta(t,\lambda)J(\lambda,v(\lambda);X_0,X_1)\frac{\mathrm{d}\lambda}{\lambda}.
\end{align}
Note that the right-hand side of \eqref{23.02.22.20.56} is a form of $\int_0^\infty \Theta(t,\lambda) f(\lambda)\frac{\mathrm{d}\lambda}{\lambda}$.
For this quantity, we assume the following lemma for a moment:
\begin{lem}
\label{extension lem2}
Let $p\in(1,\infty)$ and $w\in A_p$. Suppose that $\phi \in \cI_o(0, 1)$ and $W\circ\psi\in \cI_o(0,p)$. Then for $f \in L_p(\bR_+, (W \circ \psi)(t^{-1}) \frac{\mathrm{d}t}{t})$, we have
\begin{align*}
    \int_0^{\infty}|F(t)|^pw(t)\mathrm{d}t \lesssim_{N} \int_0^{\infty}|f(t)|^p(W\circ\psi)(t^{-1})\frac{\mathrm{d}t}{t},
\end{align*}
where $F(t)=\int_0^{\infty} \Theta(t,\lambda) f(\lambda) \,\frac{\mathrm{d}\lambda}{\lambda}$ and $N = N(p, [w]_{A_p}, W \circ \psi, \kappa)$.
\end{lem}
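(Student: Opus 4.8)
The plan is to follow the strategy of Lemma~\ref{extension lem1}: bound the integral operator $f\mapsto F$ by one-sided averaging operators and invoke the two-weight Hardy inequality (Lemma~\ref{two weight hardy}). The two new ingredients are that the kernel $\Theta(t,\lambda)$ must first be replaced by its two-sided bound $1\wedge\frac{\phi(t^{-1})}{\lambda}$ from Proposition~\ref{22.11.07.13.23}\,$(ii)$, and that the change of variables $\lambda=\phi(s^{-1})$ is used to straighten the profile $\phi(t^{-1})$ into the identity while simultaneously converting the weight $(W\circ\psi)(\lambda^{-1})$ into $W(s)$.

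First I would reduce to a clean inequality. Assuming $f\ge 0$, Proposition~\ref{22.11.07.13.23}\,$(ii)$ gives $F(t)\lesssim_{\kappa}\int_0^\infty\big(1\wedge\frac{\phi(t^{-1})}{\lambda}\big)f(\lambda)\frac{\dd\lambda}{\lambda}$. Since $\phi$ is continuous, strictly increasing, with $\phi(0+)=0$ and $\phi(\infty)=\infty$, the map $\lambda=\phi(s^{-1})$ is an orientation-reversing bijection of $\bR_+$ with $\psi\big(1/\phi(s^{-1})\big)=s$, and by \eqref{2302231059} it satisfies $\frac{\dd\lambda}{\lambda}\simeq_{\kappa}\frac{\dd s}{s}$. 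Writing $g(s):=f(\phi(s^{-1}))$ and splitting the $s$-range at $s=t$ (using $\phi(t^{-1})\ge\phi(s^{-1})\Leftrightarrow s\ge t$), I obtain
\[
F(t)\lesssim_{\kappa}\int_t^\infty g(s)\,\frac{\dd s}{s}+\phi(t^{-1})\int_0^t\frac{g(s)}{s\,\phi(s^{-1})}\,\dd s=:A(t)+B(t),
\]
while the same substitution turns the right-hand side of the lemma into $\simeq_{\kappa}\int_0^\infty g(s)^pW(s)\frac{\dd s}{s}$. Thus it remains to bound $\|A\|_{L_p(\bR_+,w\,\dd t)}$ and $\|B\|_{L_p(\bR_+,w\,\dd t)}$ by $\big(\int_0^\infty g(s)^pW(s)\frac{\dd s}{s}\big)^{1/p}$.

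The term $A$ is handled exactly as $F_2$ in Lemma~\ref{extension lem1}, applying Lemma~\ref{two weight hardy}\,$(ii)$ with $U(t)=w(t)^{1/p}$ and $V(s)=W(s)^{1/p}s^{1-1/p}$; the Muckenhoupt-type condition \eqref{two weight 2} holds because $W\in\cI_o(0,p)$ (Remark~\ref{22.10.28.18.21}), and no hypothesis on $\kappa$ beyond $w\in A_p$ enters here. For $B$ I would apply Lemma~\ref{two weight hardy}\,$(i)$ with $U(t)=\phi(t^{-1})w(t)^{1/p}$ and $V(s)=\phi(s^{-1})W(s)^{1/p}s^{1-1/p}$, which leaves only \eqref{two weight 1}, i.e. $\sup_r\|U\|_{L_p((r,\infty))}\|1/V\|_{L_{p'}((0,r))}<\infty$, to check. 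Here the key device is the substitution $t=\psi(\tau)$: it turns $\phi(t^{-1})$ into $\tau^{-1}$, $W(t)$ into $(W\circ\psi)(\tau)$, $w(t)\,\dd t$ into $\dd[(W\circ\psi)(\tau)]$, and (via \eqref{2302231059} transcribed for $\phi^{-1}$) $\frac{\dd t}{t}$ into a measure $\simeq_{\kappa}\frac{\dd\tau}{\tau}$. With $r_0:=1/\phi(r^{-1})$, an integration by parts and $(W\circ\psi)(\tau)=o(\tau^p)$ as $\tau\to\infty$ reduce $\|U\|_{L_p((r,\infty))}^p=\int_r^\infty\phi(t^{-1})^pw(t)\,\dd t$ to $\lesssim r_0^{-p}(W\circ\psi)(r_0)=\phi(r^{-1})^pW(r)$ by Lemma~\ref{22.10.26.15.52}\,$(vii)$, and (using $(1-1/p)p'=1$ and $(W\circ\psi)^{-p'/p}\in\cI_o(-p',0)$) $\|1/V\|_{L_{p'}((0,r))}^{p'}\simeq_{\kappa}\int_0^{r_0}\tau^{p'-1}(W\circ\psi)(\tau)^{-p'/p}\,\dd\tau$ to $\lesssim r_0^{p'}(W\circ\psi)(r_0)^{-p'/p}=\phi(r^{-1})^{-p'}W(r)^{-p'/p}$ by Lemma~\ref{22.10.26.15.52}\,$(viii)$; the product is then $\lesssim 1$, and the constants depend only on $p$, $[w]_{A_p}$, $\kappa$ and $W\circ\psi$.

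The hard part is exactly the control of $\int_r^\infty\phi(t^{-1})^pw(t)\,\dd t$ in the verification of \eqref{two weight 1} for $B$: without the hypothesis $W\circ\psi\in\cI_o(0,p)$ this integral is in general divergent, so one cannot estimate it using $w\in A_p$ and $\phi\in\cI_o(0,1)$ alone; the hypothesis must be fed in through the change of variables $t=\psi(\tau)$ and the $\cI_o$-integral estimates of Lemma~\ref{22.10.26.15.52}. The remainder is bookkeeping of the Hardy constants and of the equivalences coming from \eqref{2302231059} and Proposition~\ref{22.11.07.13.23}.
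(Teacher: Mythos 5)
Your proposal is correct and follows essentially the same route as the paper: replace $\Theta$ by $1\wedge\phi(t^{-1})/\lambda$ via Proposition~\ref{22.11.07.13.23}\,$(ii)$, change variables through $\phi$ so the target norm becomes $\int g^pW(s)\frac{\dd s}{s}$, split at $s=t$, and apply Lemma~\ref{two weight hardy} with exactly the same weights $U,V$ for each piece, with $W\circ\psi\in\cI_o(0,p)$ entering only through the condition \eqref{two weight 1} for the $B$-term. The only (immaterial) difference is that you verify $\int_r^\infty\phi(t^{-1})^pw(t)\,\dd t\lesssim\phi(r^{-1})^pW(r)$ by the substitution $t=\psi(\tau)$ plus integration by parts, whereas the paper uses the layer-cake identity $\phi(t^{-1})^p=p\int_0^{\phi(t^{-1})}s^{p-1}\,\dd s$ and Fubini.
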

Then by Lemma \ref{extension lem2} with $J(\lambda,v(\lambda);X_0,X_1)$ in place of $f(\lambda)$,
\begin{align*}
    &\|u\|_{L_p(\bR_+,w\,\mathrm{d}t;X_0)}+\|f\|_{L_p(\bR_+,w\,\mathrm{d}t;X_1)}\\
    &\lesssim \left\|\int_0^{\infty}\Theta(\cdot,\lambda)J(\lambda,v(\lambda);X_0,X_1)\frac{\mathrm{d}\lambda}{\lambda}\right\|_{L_p(\bR_+,w\,\mathrm{d}t)}\\
    &\lesssim \Phi_p^{(W\circ\psi)^{1/p}}\left(J\left( \cdot,v\left( \cdot \right);X_0,X_1 \right) \right) \lesssim\|a\|_{(X_0,X_1)_{(W\circ\psi)^{1/p},p}}.
\end{align*}
Thus $u$ and $f$ are well-defined in $L_p(\bR_+,w\,\mathrm{d}t;X_0)$ and $L_p(\bR_+,w\,\mathrm{d}t;X_1)$, respectively.
Finally, by Fubini's theorem (\textit{e.g.} \cite[Proposition 1.2.7.]{hytonen2016analysis}) and Proposition \ref{22.11.07.13.23}-($ii$), it directly follows that \eqref{eq0129_01}.

We prove Lemma~\ref{extension lem2} and complete the proof of Theorem~\ref{extension_non_local}.
Note that it is a variant of Lemma \ref{extension lem1} in the sense that one can recover Lemma~\ref{extension lem1} by choosing $\phi(t) = \psi(t) = t$ and $\Theta(t, \lambda) = e^{-t\lambda}$ in Lemma~\ref{extension lem2}.

\begin{proof}[Proof of Lemma~\ref{extension lem2}]
Without loss of generality, we assume that $f$ is nonnegative. First we divide $F$ into two parts; As in the proof of Theorem \ref{trace_non_local},  by using \eqref{2302231059},
$$
F(t) \simeq_{\kappa} \int_0^{\infty}\Theta(t,\phi(\lambda^{-1}))(f\circ\phi)(\lambda^{-1})\frac{\mathrm{d}\lambda}{\lambda}= \int_0^{t}\cdots+\int_{t}^{\infty}\cdots=:F_1(t)+F_2(t).
$$
By Proposition \ref{22.11.07.13.23}, we know that $\Theta (t, \phi(\lambda^{-1})) \simeq 1 \wedge \frac{\phi(t^{-1})}{\phi(\lambda^{-1})}$ and then
$$
F_1(t)\lesssim_{\kappa} \phi(t^{-1})\int_0^{t} \frac{(f\circ\phi)(\lambda^{-1})}{\phi(\lambda^{-1})}\frac{\mathrm{d}\lambda}{\lambda},\quad F_2(t)\lesssim_{\kappa} \int_{t}^{\infty}(f\circ\phi)(\lambda^{-1})\frac{\mathrm{d}\lambda}{\lambda}.
$$
On the other hand, by changing variable $t \to \phi(t)$ with the help of \eqref{2302231059} again, we have
$$
\int_0^{\infty}|f(t)|^p(W\circ\psi)(t^{-1})\frac{\mathrm{d}t}{t} \simeq_{\kappa} \int_0^{\infty}|(f\circ\phi)(t)|^p\frac{W(t^{-1})}{t}\mathrm{d}t.
$$

\textbf{Estimate of $F_1$}: We claim that
\begin{align}
\label{f1 est2}
    \int_0^{\infty}|F_1(t)|^pw(t)\mathrm{d}t&\leq C\int_0^{\infty}|(f\circ\phi)(t)|^pW(t^{-1})\,\frac{\mathrm{d}t}{t} \nonumber\\
    &= C\int_0^{\infty}|(f\circ\phi)(t^{-1})|^pW(t)\, \frac{\mathrm{d}t}{t}
\end{align}
for some $C = C(p, W \circ \psi, [w]_{A_p}) > 0$, and by Theorem \ref{two weight hardy}-($i$), it is enough to obtain \eqref{two weight 1} for
$$
U(t):=\phi(t^{-1})w(t)^{1/p}\quad \textrm{and} \quad V(t):=W(t)^{1/p}\phi(t^{-1})t^{1-1/p}.
$$
Since $\phi(t^{-1})^{p}=p\int_0^{\phi(t^{-1})}s^{p-1}\,\mathrm{d}s$, we have
\begin{align*}
    \int_r^{\infty}|U(t)|^p~\mathrm{d}t
    &=p\int_0^{\phi(r^{-1})}\left(\int_{r}^{\psi(s^{-1})}w(t)\mathrm{d}t\right)s^{p-1}~\mathrm{d}s
    \\
    &\leq p\int_0^{\phi(r^{-1})}(W\circ\psi)(s^{-1})s^{p-1}~\mathrm{d}s.
\end{align*}
By the assumption that $W \circ \psi \in \cI_o(0, p)$, we have
$$
\frac{\left( W \circ \psi \right) \left( s^{-1} \right) }{\left( W \circ \psi \right) \left( \frac{1}{\phi \left( r^{-1} \right)} \right)}
 \lesssim_{W \circ \psi} \left(  \frac{\phi(r^{-1})}{s} \right)^{p - \varepsilon}
$$
for $r \in \bR_+$, $s \in (0, \phi(r^{-1}))$ and $\varepsilon = \varepsilon(W \circ \psi) > 0$. Then
$$
\int_0^{\phi(r^{-1})}(W\circ\psi)(s^{-1})s^{p-1}~\mathrm{d}s
\lesssim_{W \circ \psi} \phi(r^{-1})^{p}W(r)
$$
 and this implies
\begin{align}
\label{22.11.07.16.33}
    \|U\|_{L_p((r,\infty))}\lesssim_{p, W \circ \psi}\phi(r^{-1})W(r)^{1/p}.
\end{align}
Similarly, by the assumption that $W \circ \psi \in \cI_o(0, p)$, for $ 0 < t < r$,
\begin{align*}
    \frac{1}{V(t)}&=\frac{t^{\frac{1}{p}-1}}{ \left( \left( W \circ \psi \right) \left( \frac{1}{\phi(t^{-1})} \right) \right)^{1/p}\phi(t^{-1})} \\
    &\lesssim_{p, [w]_{A_p}} \left( \phi \left( r^{-1} \right) \right)^{-1+\frac{\varepsilon}{p}} W(r)^{-1/p}   \phi(t^{-1})^{-\frac{\varepsilon}{p}} t^{\frac{1}{p} - 1}
\end{align*}
where $\varepsilon = \varepsilon(p, [w]_{A_p}) > 0$. From this,
\begin{align}
\label{22.11.07.18.39}
    \|1/V\|_{L_{p'}((0,r))}\lesssim_{p, [w]_{A_p}} \phi(r^{-1})^{-1}W(r)^{-1/p}.
\end{align}
Combining \eqref{22.11.07.16.33} and \eqref{22.11.07.18.39}, we have \eqref{two weight 1}, and thus \eqref{f1 est2}.

\textbf{Estimate $F_2$}: Now we prove that
\begin{align}
\label{f2 est2}
    \int_0^{\infty}|F_2(t)|^pw(t)\mathrm{d}t\leq C\int_0^{\infty}|(f\circ\phi)(t)|^p\frac{W(t^{-1})}{t}\mathrm{d}t
\end{align}
for some $C = C(p, [w]_{A_p}) > 0$, and by Theorem \ref{two weight hardy}-($ii$),  it is enough to obtain \eqref{two weight 2} for
$$
U(t):=w(t)^{1/p}\quad \textrm{and} \quad V(t):=W(t)^{1/p}t^{1-\frac{1}{p}}.
$$
Clearly, $\|U\|_{L_p((0,r))}= W(r)^{1/p}$.
For $0<r<t$, by Lemmas  \ref{22.10.26.15.52}-($ix$) and \ref{22.10.28.18.21},
$$
\frac{1}{V(t)}\lesssim_{p, [w]_{A_p}}W(r)^{-1/p}r^{\varepsilon/p}t^{\frac{(1-\varepsilon)}{p}-1},
$$
where $\varepsilon = \varepsilon(p, [w]_{A_p}) > 0$, and then
$$
\|U\|_{L_p((0,r))} \|1/V\|_{L_{p'}((r,\infty))} \lesssim_{p, [w]_{A_p}} 1.
$$
Now we obtain \eqref{f2 est2} and also desired inequality.
The lemma is proved.
\end{proof}

\begin{rem}
By keep tracking the proof of Theorem \ref{trace_non_local}, one verify that the trace estimate \eqref{trace 2} depend only on $p$, $M$, $\nu_l$, $\nu_u$, and $K_{\kappa}$, where $[w]_{A_p} \le M$, $\nu_l \le \kappa(1) \le \nu_u$ and $K_{\kappa}$ ($\kappa$ in place of $\phi$ in Remark \ref{rmk_bdd}).
Similarly, the inequality \eqref{eq0922_01} in Theorem \ref{extension_non_local} depend only on $p$, $M$, $\nu_l$, $\nu_u$, $K_{\kappa}$, and $K_{W \circ \kappa^{*}}$.
\end{rem}

\mysection{Trace theorems for finite time interval}
\label{23.08.09.13.27}

In this section, we assume that $X_0$ is continuously embedded into $X_1$, \textit{i.e.}, $X_0\subset X_1$ so that there exists a constant $N_{X_0,X_1}>0$ such that
\begin{align}\label{2308171037}
\|a\|_{X_1}\leq N_{X_0,X_1}\|a\|_{X_0}\quad\forall\,\,a\in X_0.
\end{align}
A representative example of this situation is when $X_0=W_p^2(\bR^d)$ and $X_1=L_p(\bR^d)$ with $N_{X_0,X_1}=1$.

As a corollary of Theorem \ref{trace_local}, we obtain a version of Theorem~\ref{trace_local} for finite time intervals $(0,T)$, $T<\infty$, instead of $\bR_+$.

\begin{corollary}[Trace theorem with local derivative; finite interval]
\label{2307061138}
Let $T\in(0,\infty)$, $p\in(1,\infty)$ and $w\in A_p$.
Assume that $X_0$ is continuously embedded into $X_1$, and $u\in L_p((0,T),w\dd t;X_0)$, $f\in L_p((0,T),w\dd t;X_1)$ and $u_0\in X_0+X_1$ satisfy that
$$
u(t)=u_0+\int_0^tf(s)\dd s \quad\forall t\in(0,T).
$$
Then $u_0\in (X_0,X_1)_{W^{1/p},p}$ and
$$
\|u_0\|_{(X_0,X_1)_{W^{1/p},p}}\lesssim_{p,[w]_{A_p},N_{X_0,X_1},T} \|u\|_{L_p((0,T),w\dd t;X_0)}+\|f\|_{L_p((0,T),w\dd t;X_1)}\,,
$$
where $W(t):=\int_0^tw(s)\dd s$.
\end{corollary}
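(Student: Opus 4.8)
The plan is to deduce Corollary~\ref{2307061138} from the half-line case, Theorem~\ref{trace_local}, by producing a norm-controlled extension of the pair $(u,f)$ from $(0,T)$ to $\bR_+$ with the \emph{same} initial value $u_0$. First I would record some elementary facts. Since $X_0$ is continuously embedded in $X_1$ we have $X_0+X_1=X_1$, so $u_0\in X_1$; and since $w\in A_p$ forces $w^{-1/(p-1)}$ to be locally integrable, H\"older's inequality gives $f\in L_1((0,T);X_1)$, so $u$ extends to an absolutely continuous function from $[0,T]$ into $X_1$ (namely $u(t)=u_0+\int_0^tf(s)\dd s$), with $u(t)\in X_0$ for a.e.\ $t$ and $\|u(\cdot)\|_{X_0}\in L_p((0,T),w\dd t)$. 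I also recall from Remark~\ref{22.10.28.18.21} (inequalities \eqref{22.10.28.13.57} and \eqref{eq0126_01}) that $W$ is doubling: $W(2r)\le s_W(2)\,W(r)$ for all $r>0$, with $s_W(2)$ depending only on $p$ and $[w]_{A_p}$.

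Next I would select a good interior time. By Chebyshev's inequality with respect to the measure $w\dd t$ on the interval $(0,T/2)$ (and since $W(T/2)>0$), there exists $t_\ast\in(0,T/2)$ with $u(t_\ast)\in X_0$ and
\[
\|u(t_\ast)\|_{X_0}^p\;\le\;\frac{2}{W(T/2)}\int_0^{T/2}\|u(s)\|_{X_0}^p\,w(s)\dd s\;\le\;\frac{2\,s_W(2)}{W(T)}\,\|u\|_{L_p((0,T),w\dd t;X_0)}^p,
\]
where the last step uses $W(T/2)\ge W(T)/s_W(2)$. Then I would define the extension by
\[
\tilde f(t):=
\begin{cases}
f(t), & 0<t<t_\ast,\\[3pt]
-\dfrac{u(t_\ast)}{T-t_\ast}, & t_\ast\le t<T,\\[6pt]
0, & t\ge T,
\end{cases}
\qquad
\tilde u(t):=u_0+\int_0^t\tilde f(s)\dd s .
\]
By construction $\tilde u=u$ on $[0,t_\ast]$, $\tilde u(t)=\frac{T-t}{T-t_\ast}\,u(t_\ast)$ on $[t_\ast,T]$, and $\tilde u\equiv0$ on $[T,\infty)$; hence $\tilde u$ is continuous into $X_1$, takes values in $X_0$, $\tilde u(0)=u_0$, and $\tilde u(t)=u_0+\int_0^t\tilde f(s)\dd s$ holds for every $t>0$.

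It then remains to estimate the two extended norms; the only point to watch is that $t_\ast<T/2$ guarantees $T-t_\ast>T/2$, so the constant vector coefficient $-u(t_\ast)/(T-t_\ast)$ is harmless. Using $\frac{T-t}{T-t_\ast}\le1$, $\int_{t_\ast}^T w\dd t\le W(T)$, the bound on $\|u(t_\ast)\|_{X_0}$ above, and \eqref{2308171037} to pass from the $X_0$-norm to the $X_1$-norm of $u(t_\ast)$, a direct computation yields
\[
\|\tilde u\|_{L_p(\bR_+,w\dd t;X_0)}\lesssim_{p,[w]_{A_p}}\|u\|_{L_p((0,T),w\dd t;X_0)},
\]
\[
\|\tilde f\|_{L_p(\bR_+,w\dd t;X_1)}\lesssim_{p,[w]_{A_p},N_{X_0,X_1},T}\|u\|_{L_p((0,T),w\dd t;X_0)}+\|f\|_{L_p((0,T),w\dd t;X_1)}.
\]

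Finally, applying Theorem~\ref{trace_local} to the triple $(\tilde u,\tilde f,u_0)$ on $\bR_+$ gives $u_0\in(X_0,X_1)_{W^{1/p},p}$ with $\|u_0\|_{(X_0,X_1)_{W^{1/p},p}}\lesssim_{p,[w]_{A_p}}\|\tilde u\|_{L_p(\bR_+,w\dd t;X_0)}+\|\tilde f\|_{L_p(\bR_+,w\dd t;X_1)}$, and combining with the two displayed estimates finishes the proof. The main work is the extension construction itself --- choosing $t_\ast$ via the averaging argument and checking that linearly interpolating $u(t_\ast)$ down to $0$ on $[t_\ast,T]$ keeps both norms controlled; the embedding $X_0\subset X_1$ enters precisely to ensure that the decay term $-u(t_\ast)/(T-t_\ast)$ lies in $L_p(\bR_+,w\dd t;X_1)$.
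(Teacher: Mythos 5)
Your proof is correct. It follows the same overall strategy as the paper --- extend $(u,f)$ from $(0,T)$ to $\bR_+$ with controlled norms and the same initial value, then invoke Theorem~\ref{trace_local} --- but the extension itself is built differently. The paper simply takes a smooth decreasing cutoff $\zeta$ with $\zeta=1$ on $[0,T/2]$ and $\zeta=0$ on $[T,\infty)$, and sets $\widetilde u:=\zeta u$, $\widetilde f:=\zeta f+\zeta' u$; the embedding $X_0\subset X_1$ is used only to place the term $\zeta' u$ in $L_p(\bR_+,w\dd t;X_1)$, and the whole argument is three lines. You instead select a good time $t_\ast\in(0,T/2)$ by a Chebyshev argument with respect to $w\dd t$ and ramp $u(t_\ast)$ linearly down to zero on $[t_\ast,T]$. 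Your construction needs two extra ingredients the paper avoids --- the a.e.\ selection of $t_\ast$ (which requires $W(T/2)>0$ and that the pointwise representative $u_0+\int_0^{t_\ast}f$ lie in $X_0$ with the right norm, both of which you justify) and the doubling of $W$ from Remark~\ref{22.10.28.18.21} to convert $W(T/2)^{-1}$ into $W(T)^{-1}$ --- but in exchange it produces an extension that agrees with $u$ exactly on an initial interval and isolates precisely where the constant's dependence on $T$ (through $(T-t_\ast)^{-1}\le 2/T$) and on $N_{X_0,X_1}$ enters. All the individual estimates you state check out, so the argument is complete.
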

\begin{proof}
Take a nonnegative decreasing $\zeta\in C^{\infty}(\bR)$ such that
$\zeta(s)= 1$ on $s\leq \frac{T}{2}$ and $\zeta(s)=0$ for all $s\geq T$.
Put $\widetilde{u}:=\zeta u$ and $\widetilde{f}:=\zeta f+\zeta'u$.
Since $X_0$ is continuously embedded into $X_1$,
\begin{equation*}
    \begin{gathered}
        \|\widetilde{u}\|_{L_p(\bR_+,w\dd t;X_0)}\leq \|u\|_{L_p((0,T),w\dd t;X_0)},\\ \|\widetilde{f}\|_{L_p(\bR_+,w\dd t;X_1)} \leq \|f\|_{L_p((0,T),w\dd t;X_1)}+N(N_{X_0,X_1},T)\|u\|_{L_p((0,T),w\dd t;X_0)}.
    \end{gathered}
\end{equation*}
Moreover
$$
\widetilde{u}(t)=u_0+\int_0^t\widetilde{f}(s)\dd s \quad\forall t\in\bR_+.
$$
By applying Theorem \ref{trace_local} for $(u,f)$ replaced by $(\widetilde{u},\widetilde{f})$, the corollary is proved.
\end{proof}

Similar to Corollary \ref{2307061138}, in the situation that $X_0$ is continuously embedded into $X_1$, we can also consider a version of Theorem \ref{trace_non_local} for finite time intervals $(0,T)$ instead of $\mathbb{R}_+$. 
However, the non-local derivative $\partial_t^{\kappa}$ makes the proof more intricate than the straightforward computation seen in Corollary \ref{2307061138}.
In particular, for finite time interval $(0,T)$ cases, it is natural to consider a kernel $\kappa^\circ$ and a weight function $w^\circ$ defined on $(0, T)$.
It turns out that if $w^{\circ} \in A_p\left( \left( 0, T \right) \right)$ (\textit{i.e.} $w^{\circ}$ satisfies \eqref{eq230925_01}) and $\kappa^{\circ} : (0, T) \to \bR_+$ satisfy 
$$
    \lambda^{-1+\varepsilon}\lesssim \frac{\kappa^{\circ}(\lambda t)}{\kappa^{\circ}(t)}\lesssim \lambda^{-\varepsilon}\quad \forall\lambda\in[1,\infty),\, 0 < t \le \lambda t < T.
$$
and
$$
\lambda^\delta\lesssim\frac{W^\circ\big({\kappa^{\circ}}^\ast(\lambda t)\big)}{W^\circ\big({\kappa^{\circ}}^\ast(t)\big)}\lesssim \lambda^{p-\delta} \quad \forall \, 0<t<\lambda t<\frac{1}{\kappa(T)}
$$
for some $\varepsilon$, $\delta > 0$, then there are proper extensions $w \in A_p(\bR)$ and $\kappa:\bR_+\to\bR_+$ of $w^{\circ} \in A_p\left( \left( 0, T \right) \right)$ and of $\kappa^{\circ}$, repectively.
In Proposition \ref{230925614}, we provide detailed descriptions of such extensions.
For simplicity, in the remaining of this section, we use $w \in A_p(\bR)$ and $\kappa:\bR_+\to\bR_+$ instead of $w^{\circ} \in A_p\left( \left( 0, T \right) \right)$ and $\kappa^{\circ}$, respectively.

We give a lemma required for the proof of Theorem~\ref{2307111235}.

\begin{lem}
	\label{lem2307170000}
Let $X_0$ be continuously embedded into $X_1$ and let $p\in(1,\infty)$, $w\in A_p$, $\kappa \in \cI_o(-1, 0)$, and $W \circ \kappa^{*} \in \cI_o(0, p)$.
Suppose $u\in L_p((0,T),w\,\mathrm{d}t;X_0)$, $f\in L_p((0,T),w\,\mathrm{d}t;X_1)$ and for some $u_0 \in X_1 \,(= X_0 + X_1)$, we have
    \begin{equation}
    \label{eq230720_02}
    \int_0^t \kappa(t-s) \left( u(s) - u_0 \right)\,\mathrm{d}s = \int_0^t f(s)\,\mathrm{d}s,
    \end{equation}
    for $t \in (0, T)$.
Then
\begin{align}\label{eq230720_021111}
\| u_0 \|_{X_1} \lesssim_{N} \|u\|_{L_p((0, T),w\,\mathrm{d}t;X_0)}+\|f\|_{L_p((0, T),w\,\mathrm{d}t;X_1)},
\end{align}
    where $N=N(p, [w]_{A_p}, \kappa,N_{X_0,X_1},T)$.
\end{lem}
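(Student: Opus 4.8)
The plan is to solve the Volterra equation \eqref{eq230720_02} for the \emph{constant} $u_0$ and then turn the resulting $L_1$-type estimate into the weighted $L_p$ bound by Hölder's inequality. Set $K(t):=\int_0^t\kappa(s)\,\mathrm{d}s$; since $\int_0^1\kappa<\infty$ and $\kappa$ is decreasing, $K(t)<\infty$ for every $t>0$, and since $\kappa>0$, $K(t)>0$ for every $t>0$. With this notation \eqref{eq230720_02} reads
$$
K(t)\,u_0=\int_0^t\kappa(t-s)u(s)\,\mathrm{d}s-\int_0^tf(s)\,\mathrm{d}s,\qquad t\in(0,T),
$$
where both sides are continuous in $t$ because $\kappa\in L_1((0,T))$ and, by Hölder's inequality together with $X_0\subset X_1$, $\|u(\cdot)\|_{X_1}\in L_1((0,T))$ (recall that $w^{-1/(p-1)}$ is locally integrable for $w\in A_p$). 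Dividing by $K(t)$ and using $\|u(s)\|_{X_1}\le N_{X_0,X_1}\|u(s)\|_{X_0}$ gives, for every $t\in(0,T)$,
$$
\|u_0\|_{X_1}\le\frac{1}{K(t)}\left(N_{X_0,X_1}\int_0^t\kappa(t-s)\|u(s)\|_{X_0}\,\mathrm{d}s+\int_0^t\|f(s)\|_{X_1}\,\mathrm{d}s\right).
$$

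Next I would average this inequality over $t\in(T/2,T)$, on which $K(t)\ge K(T/2)>0$. A routine application of Tonelli's theorem, together with the crude bound $\int_{\max(s,T/2)}^{T}\kappa(t-s)\,\mathrm{d}t\le\int_0^T\kappa(\tau)\,\mathrm{d}\tau=K(T)$, yields
$$
\int_{T/2}^{T}\int_0^t\kappa(t-s)\|u(s)\|_{X_0}\,\mathrm{d}s\,\mathrm{d}t\le K(T)\int_0^T\|u(s)\|_{X_0}\,\mathrm{d}s,
$$
while the monotonicity of $t\mapsto\int_0^t\|f(s)\|_{X_1}\,\mathrm{d}s$ gives $\int_{T/2}^{T}\int_0^t\|f(s)\|_{X_1}\,\mathrm{d}s\,\mathrm{d}t\le\frac{T}{2}\int_0^T\|f(s)\|_{X_1}\,\mathrm{d}s$. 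Combining these,
$$
\|u_0\|_{X_1}\le\frac{2}{T\,K(T/2)}\left(N_{X_0,X_1}K(T)\int_0^T\|u(s)\|_{X_0}\,\mathrm{d}s+\frac{T}{2}\int_0^T\|f(s)\|_{X_1}\,\mathrm{d}s\right).
$$

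Finally I would apply Hölder's inequality on $(0,T)$ with exponents $p,p'$ against the weight $w$, e.g. $\int_0^T\|u(s)\|_{X_0}\,\mathrm{d}s\le\|u\|_{L_p((0,T),w\,\mathrm{d}t;X_0)}\big(\int_0^Tw(s)^{-1/(p-1)}\,\mathrm{d}s\big)^{1/p'}$, and likewise for $f$; the factor $\big(\int_0^Tw^{-1/(p-1)}\big)^{1/p'}$ is finite and is controlled in terms of $p$, $[w]_{A_p}$, $T$ via the $A_p$-inequality on $(0,T)$. Together with the factors $K(T/2)^{-1}$, $K(T)$, $T$ and $N_{X_0,X_1}$ this gives \eqref{eq230720_021111} with $N=N(p,[w]_{A_p},\kappa,N_{X_0,X_1},T)$. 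I expect the only mildly delicate points to be the verification that $K$ is finite and strictly positive on $(T/2,T)$ (so that the division by $K(t)$ and the quantitative averaging are legitimate) and the bookkeeping of the weight factor in the last step; the non-local operator itself plays a soft role here, since we only use the lower bound $K(t)\ge K(T/2)>0$ on $(T/2,T)$ rather than the finer scaling properties $\kappa\in\cI_o(-1,0)$ and $W\circ\kappa^{*}\in\cI_o(0,p)$.
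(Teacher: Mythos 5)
Your proof is correct and follows the same overall strategy as the paper's: isolate $\big(\int_0^t\kappa\big)u_0$, average the resulting inequality in $t$, use Tonelli on the convolution term, and finish with H\"older against the dual $A_p$ weight $w^{-1/(p-1)}$. The implementation differs in two places. The paper averages against the kernel $\mathrm{e}^{-t}$ on $(0,\infty)$ (after zero-extending $\|u\|_{X_1}$ and $\|f\|_{X_1}$), which converts the convolution term exactly into $\cL[\kappa](1)\int_0^\infty \mathrm{e}^{-s}\|u(s)\|_{X_1}\,\mathrm{d}s$, and then invokes $\kappa\in\cI_o(-1,0)$ to bound $\int_0^T\mathrm{e}^{-t}K(t)\,\mathrm{d}t$ from below and $W^*\in\cI_o(0,p')$ to control $\int_0^\infty \mathrm{e}^{-sp'}w^{-1/(p-1)}(s)\,\mathrm{d}s$; you instead average over $(T/2,T)$, use the crude bounds $K(t)\ge K(T/2)>0$ and $\int_{\max(s,T/2)}^T\kappa(t-s)\,\mathrm{d}t\le K(T)$, and apply H\"older directly on $(0,T)$. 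Your version is more elementary and, as you note, uses only positivity and local integrability of $\kappa$ rather than its scaling properties — a small gain in generality for this particular lemma. One caveat, shared equally by the paper's own argument: the H\"older factor is really bounded by the $A_p$ inequality as $\big(\int_0^T w^{-1/(p-1)}\big)^{1/p'}\lesssim_{p}T\,[w]_{A_p}^{1/p}W(T)^{-1/p}$, so the final constant also involves $W(T)$ (equivalently, the normalization of $w$), not only $p$, $[w]_{A_p}$ and $T$; this is an imprecision in the stated dependence of $N$ rather than a gap in your argument.
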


\begin{proof}
By \eqref{eq230720_02}, 
\[
\left( \int_0^t \kappa(s) \,\mathrm{d}s \right) \| u_0 \|_{X_1}   \le \int_0^t \kappa(t-s) \|u(s)\|_{X_1} \,\mathrm{d}s + \int_0^t \| f(s) \|_{X_1} \,\mathrm{d}s
\]
for $t \in (0, T)$.
Then,
\begin{align}\label{2308171038}
\begin{split}
     &\| u_0 \|_{X_1} \int_0^{T} \mathrm{e}^{-t} \left( \int_0^t \kappa(s)\,\mathrm{d}s \right)\mathrm{d}t  \\
     \leq  &\int_0^{\infty} \mathrm{e}^{-t} \left( \kappa *U \right) \left( t \right) \,\mathrm{d}t +  \int_0^{\infty} \mathrm{e}^{-t} \left( \int_0^tF(s)\,\mathrm{d}s \right)\mathrm{d}t\\
     = &\cL[\kappa](1) \int_0^{\infty} \mathrm{e}^{-s} U(s) \,\mathrm{d}s + \int_0^{\infty} \mathrm{e}^{-s} F(s) \,\mathrm{d}s,
    \end{split}
\end{align}
where $U(s) =  \|u(s)\|_{X_1}1_{(0, T)}(s)$ and $F(s) =  \| f(s) \|_{X_1}1_{(0, T)}(s)$ for $s \in \bR_+$.
For
$$
w^* := w^{-1/(p-1)} \in A_{p/(p-1)},
$$
we have
\begin{align}\label{2308171039}
\left| \int_0^{\infty} \mathrm{e}^{-s} U(s) \,\mathrm{d}s \right| \le \left( \int_0^{\infty} \mathrm{e}^{-sp/(p-1)} w^*(s) \,\mathrm{d}s \right)^{(p-1)/p} \left(  \int_0^{\infty} \left| U(s) \right|^p w(s)\,\mathrm{d}s\right)^{1/p}.
\end{align}
Note that
\begin{align*}
    \int_0^{\infty} \mathrm{e}^{-sp/(p-1)} w^*(s) \,\mathrm{d}s &\simeq_{p} \int_0^{\infty} \int_s^{\infty} \mathrm{e}^{-rp/(p-1)} w^*(s) \,\mathrm{d}r\,\mathrm{d}s\\
&= \int_0^{\infty} \left( \int_0^{r}  w^*(s)\,\mathrm{d}s \right) \mathrm{e}^{-rp/(p-1)}\,\mathrm{d}r\\
&= \int_0^{\infty} W^*(r) \mathrm{e}^{-rp/(p-1)}\,\mathrm{d}r \lesssim_{[w]_{A_p}} 1,
\end{align*}
where 
$$
W^*(r) := \int_0^{r}  w^*(s)\,\mathrm{d}s.
$$
The last inequality is due to Lemma \ref{22.10.28.18.21}, in particular, $W^* \in \cI_o(0, p/(p-1))$.
Due to \eqref{2308171037}, we have
$$
 \int_0^{\infty} \mathrm{e}^{-s} U(s) \,\mathrm{d}s  \lesssim_{N} \|u\|_{L_p((0, T),w\,\mathrm{d}t;X_0)}\,,
$$
where $N=N(p, [w]_{A_p}, N_{X_0,X_1})$.
Similarly, 
\begin{align}\label{23081710310}
 \int_0^{\infty} \mathrm{e}^{-s} F(s) \,\mathrm{d}s  \lesssim_{p, [w]_{A_p}} \|f\|_{L_p((0, T),w\,\mathrm{d}t;X_1)}.
\end{align}
Finally, by the assumption $\kappa \in \cI_o(-1, 0)$ and \eqref{w scaling},
\begin{align}\label{23081710311}
\| u_0 \|_{X_1} \lesssim_{\kappa,T}  \int_0^{T} \mathrm{e}^{-t} \left( \int_0^t \kappa(s)\,\mathrm{d}s \right)\mathrm{d}t \, \| u_0 \|_{X_1}
\end{align}
and then, by combining \eqref{2308171038} - \eqref{23081710311}, we obtain \eqref{eq230720_021111}.
The lemma is proved.
\end{proof}

\begin{thm}[Trace theorem with non-local derivative; finite interval]
\label{2307111235}
Let $X_0$ be continuously embedded into $X_1$ and let $p\in(1,\infty)$, $w\in A_p$, $\kappa \in \cI_o(-1, 0)$, and $W \circ \kappa^{*} \in \cI_o(0, p)$.
Suppose $u\in L_p((0,T),w\,\mathrm{d}t;X_0)$, $f\in L_p((0,T),w\,\mathrm{d}t;X_1)$ and for some $u_0 \in X_1\,(= X_0 + X_1)$, we have
    \begin{equation}
    \label{eq230710_01}
    \int_0^t \kappa(t-s) \left( u(s) - u_0 \right)\,\mathrm{d}s = \int_0^t f(s)\,\mathrm{d}s,
    \end{equation}
    for $t \in (0,T)$.
Then we have $u_0 \in (X_0,X_1)_{(W\circ\kappa^{*})^{1/p},p}$ and
    \begin{equation}
        \label{trace 3}
    \begin{aligned}
        \|u_0\|_{(X_0,X_1)_{(W\circ\kappa^{*})^{1/p},p}} \lesssim_N\|u\|_{L_p((0, T),w\,\mathrm{d}t;X_0)}+\|f\|_{L_p((0, T),w\,\mathrm{d}t;X_1)},
    \end{aligned}
    \end{equation}
    where $N=N(p, [w]_{A_p}, \kappa,N_{X_0,X_1},T)$.
\end{thm}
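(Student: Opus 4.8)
The plan is to reduce the finite-interval statement to the half-line Theorem~\ref{trace_non_local} by a cutoff-and-extension argument, exactly in the spirit of Corollary~\ref{2307061138} but with extra care because the non-local derivative $\partial_t^\kappa$ does not localize. First I would fix a smooth nonincreasing cutoff $\zeta\in C^\infty(\bR)$ with $\zeta\equiv 1$ on $(-\infty,T/2]$ and $\zeta\equiv 0$ on $[T,\infty)$, and set $\widetilde u:=\zeta\,u$ on $\bR_+$, extended by $u_0$... no — more precisely, I would work with $v:=u-u_0$ so that \eqref{eq230710_01} reads $\int_0^t\kappa(t-s)v(s)\,\mathrm ds=\int_0^tf(s)\,\mathrm ds$ on $(0,T)$, then truncate $v$ and $f$. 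The difficulty is that applying $\partial_t^\kappa$ to $\zeta v$ produces, besides $\zeta f$, a commutator term coming from the convolution $\kappa\ast(\zeta v)-\zeta(\kappa\ast v)$, which is not simply $\zeta' v$ as in the local case. So I expect the argument to split into two independent contributions: one handled by Theorem~\ref{trace_non_local} applied to a genuinely extended pair, and one handled by the crude bound Lemma~\ref{lem2307170000}.

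Concretely, the key steps I would carry out are: (1) Use Lemma~\ref{lem2307170000} to get $\|u_0\|_{X_1}\lesssim_N \|u\|_{L_p((0,T),w\,\mathrm dt;X_0)}+\|f\|_{L_p((0,T),w\,\mathrm dt;X_1)}$; this is what lets me treat $u_0$ as a bona fide element of $X_0+X_1=X_1$ with controlled norm, and it will absorb all error terms that are a priori only bounded in $X_1$. (2) Define $\widetilde u(t):=\zeta(t)u(t)+(1-\zeta(t))u_0$ for $t\in\bR_+$, so that $\widetilde u\equiv u$ on $(0,T/2)$, $\widetilde u\equiv u_0$ on $(T,\infty)$, and $\widetilde u-u_0=\zeta(u-u_0)$ is compactly supported in $[0,T]$. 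Then $\widetilde u\in L_p(\bR_+,w\,\mathrm dt;X_0)$ with norm controlled by $\|u\|_{L_p((0,T),w\,\mathrm dt;X_0)}+\|u_0\|_{X_0+X_1}$ (the tail $(1-\zeta)u_0$ lives on $(T/2,\infty)$ where $w$ has finite mass against any $\cI_o(0,p)$-type weight — here I use $p\in(1,\infty)$, $w\in A_p$, and the fact that the relevant integrals over $(T/2,\infty)$ converge). (3) Compute $\widetilde f:=\partial_t^\kappa\widetilde u$ on $\bR_+$, i.e. $\widetilde f(t)=\partial_t\int_0^t\kappa(t-s)(\widetilde u(s)-u_0)\,\mathrm ds$, and show $\widetilde f\in L_p(\bR_+,w\,\mathrm dt;X_1)$ with the right bound: on $(0,T/2)$ one has $\widetilde f=f$ by \eqref{eq230710_01}, while on $(T/2,\infty)$ one estimates $\widetilde f$ directly from the convolution formula using that $\kappa$ is decreasing, $\kappa\in\cI_o(-1,0)$ (so $\kappa$ and $\int_0^\cdot\kappa$ have power-type bounds via \eqref{w scaling}), the compact support of $\widetilde u-u_0$ in $[0,T]$, and the already-established bounds $\|\widetilde u-u_0\|_{L_p((0,T),w\,\mathrm dt;X_1)}\lesssim_N \cdots$ (using $X_0\hookrightarrow X_1$ and \eqref{2308171037}). (4) Apply Theorem~\ref{trace_non_local} to the pair $(\widetilde u,\widetilde f)$, which satisfies $\int_0^t\kappa(t-s)(\widetilde u(s)-u_0)\,\mathrm ds=\int_0^t\widetilde f(s)\,\mathrm ds$ on all of $\bR_+$ by construction, to conclude $u_0\in(X_0,X_1)_{(W\circ\kappa^*)^{1/p},p}$ with the stated estimate; the hypotheses $\kappa\in\cI_o(-1,0)$ and $W\circ\kappa^*\in\cI_o(0,p)$ are inherited directly.

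The main obstacle is step~(3): controlling $\widetilde f$ on $(T/2,\infty)$. For large $t$, $\widetilde f(t)=\partial_t\int_0^T\kappa(t-s)(\widetilde u(s)-u_0)\,\mathrm ds=\int_0^T\kappa'(t-s)(\widetilde u(s)-u_0)\,\mathrm ds$ (interpreting $\kappa'$ as $-\mu$, i.e. integrating against the Lévy-type measure $\mu$ from \eqref{2302231052}), and one needs $\|\widetilde f\|_{L_p((T/2,\infty),w\,\mathrm dt;X_1)}<\infty$ with the correct dependence. Here I would pull out $\sup_{T/2\le t}\big(\text{tail behaviour of }\kappa'\big)$ using $\kappa'\in\cI_o(-1,0)$? — no, rather $\kappa$ decreasing with $\kappa\in\cI_o(-1,0)$ gives, via Remark~\ref{rmk_0129_01}, the bound $\kappa(r)\lesssim r^{-\varepsilon}\kappa(1)$ for $r\ge 1$, hence $|\kappa(t-s)-\kappa(t)|$ and the total-variation mass $\mu((t-T,t])$ decay in $t$, and integrability against $w\,\mathrm dt$ on $(T/2,\infty)$ follows because the resulting weight is comparable to $W(\cdot)/(\cdot)$-type quantities already shown to be finite in Remark~\ref{22.10.28.18.21}. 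A clean alternative that sidesteps differentiating $\kappa$: estimate $\|\partial_t^\kappa\widetilde u\|$ on $(T/2,\infty)$ by writing $\widetilde f(t)-f(t)\mathbbm 1_{(0,T/2)}(t)$ as $\partial_t$ of a convolution of $\kappa$ with a fixed $L_1$-in-time, $X_1$-valued function supported in $[0,T]$ and invoking that $\kappa\in\cI_o(-1,0)$ is locally integrable with controlled tails, then absorb everything into $\|u\|_{L_p((0,T),w\,\mathrm dt;X_0)}+\|f\|_{L_p((0,T),w\,\mathrm dt;X_1)}$ via step~(1) and \eqref{2308171037}. All constants track through as $N=N(p,[w]_{A_p},\kappa,N_{X_0,X_1},T)$, matching the statement.
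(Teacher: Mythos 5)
Your overall instincts are right on two points that the paper also uses: Lemma~\ref{lem2307170000} to control $\|u_0\|_{X_1}$ and absorb tail contributions, and the half-line machinery behind Theorem~\ref{trace_non_local}. But the reduction you propose breaks down at Step (2), and the failure is not repairable by a better tail estimate in Step (3).

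The extension $\widetilde u:=\zeta u+(1-\zeta)u_0$ equals $u_0$ identically on $[T,\infty)$, so
$$
\|\widetilde u\|_{L_p(\bR_+,w\,\mathrm dt;X_0)}^p\;\ge\;\|u_0\|_{X_0}^p\int_T^\infty w(t)\,\mathrm dt .
$$
This is infinite for every nonzero $u_0$: for any $w\in A_p$ one has $W\in\cI_o(0,p)$ (Remark~\ref{22.10.28.18.21}), hence $W(\lambda t)\gtrsim\lambda^{\varepsilon}W(t)$ and $W(\infty)=\infty$ --- already $w\equiv 1$ is a counterexample to your claim that ``$w$ has finite mass against any $\cI_o(0,p)$-type weight'' on $(T/2,\infty)$. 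Worse, $u_0$ is only known to lie in $X_1=X_0+X_1$, not in $X_0$, so $\|u_0\|_{X_0}$ need not even be defined. Thus $\widetilde u$ does not satisfy the hypothesis $\widetilde u\in L_p(\bR_+,w\,\mathrm dt;X_0)$ of Theorem~\ref{trace_non_local}, and the whole cutoff-and-extend reduction collapses. (The alternative $\widetilde u:=\zeta u$, which you considered and discarded, fixes the $X_0$-integrability of $\widetilde u$ but transfers the problem to $\widetilde f$: for $t>T$ one picks up a term comparable to $\kappa(t-c)\,u_0$, and $\int_T^\infty\kappa(t-c)^p\,w(t)\,\mathrm dt<\infty$ is a nontrivial claim that would itself require the full strength of $W\circ\kappa^*\in\cI_o(0,p)$ and an argument you have not supplied.)

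The paper avoids constructing an admissible half-line pair altogether. It zero-extends $u$ and $f$, rewrites \eqref{eq230710_01} as $\bigl(\int_0^t\kappa\bigr)u_0=\int_0^t\kappa(t-s)u(s)\,\mathrm ds-\int_0^tf(s)\,\mathrm ds$ on $(0,T)$, and takes Laplace transforms to get $u_0\,\phi(\lambda)\lambda^{-2}=I_1(\lambda)+I_2(\lambda)+I_3(\lambda)$, where $I_1,I_2$ are the truncated transforms of the data and $I_3(\lambda)=\bigl(\int_T^\infty \mathrm e^{-\lambda t}K(t)\,\mathrm dt\bigr)u_0$ is the tail. The contributions of $I_1,I_2$ to the $K$-functional integral are estimated exactly as in the half-line proof (inequality \eqref{22.11.08.14.24}), while the contribution of $I_3$ is shown to be a finite constant times $\|u_0\|_{X_1}$ because $\bigl((W\circ\psi)(\tau^{-1})\bigr)^{1/p}\in\cI_o(-1,0)$ is integrable near $\tau=0$; that constant is then absorbed using Lemma~\ref{lem2307170000}. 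In other words, the non-local tail is only ever required to be finite after being tested against the interpolation weight on the transform side --- a much weaker demand than membership of an extension $\widetilde f$ in $L_p(\bR_+,w\,\mathrm dt;X_1)$, which is what your route would need and what fails in general.
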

\begin{proof}
Thanks to Proposition \ref{23.08.11.15.13}, we have an extension $\kappa \in \cI_o(-1, 0)$ of $\kappa^{\circ}$.
Through zero-extension, we consider $u$ and $f$ as functions defined on $\bR_+$, so that 
$$
u(t)=f(t)=0\quad \forall \,\,t\geq T.
$$
From \eqref{eq230710_01}, we have
\[
\left( \int_0^t \kappa(s) \,\mathrm{d}s \right) u_0 = \int_0^t \kappa(t-s) u(s) \,\mathrm{d}s - \int_0^t f(s)\,\mathrm{d}s
\]
for $t \in (0, T)$.
Then by integration by parts with the fact that $\mathcal{L}[\kappa](\lambda) = \phi(\lambda)/\lambda$,
\begin{align*}
    u_0 \frac{\phi(\lambda)}{\lambda^2}  &=  \int_0^{\infty} \mathrm{e}^{-\lambda t} \left( \int_0^t \kappa (s)\,\mathrm{d}s \right)u_0 \, \mathrm{d}t\\
    &= \int_0^T \mathrm{e}^{-\lambda t} \left( \int_0^t \kappa(t-s) u(s) \,\mathrm{d}s\right) \,\mathrm{d}t\\
    &\quad- \int_0^T \mathrm{e}^{-\lambda t} \left( \int_0^t  f(s) \,\mathrm{d}s\right) \,\mathrm{d}t+ \left( \int_T^{\infty} \mathrm{e}^{-\lambda t} \left( \int_0^t \kappa (s)\,\mathrm{d}s \right) \, \mathrm{d}t \right)u_0 \\
    &:= I_1(\lambda) + I_2(\lambda) + I_3(\lambda).
\end{align*}
By the definition of $K$-functional,
$$
K(\tau, u_0; X_0, X_1) \le \frac{\lambda^2}{\phi(\lambda)} \left( \left\| I_1(\lambda) \right\|_{X_0} + \tau  \left\| I_2(\lambda) \right\|_{X_1} + \tau \left\| I_3(\lambda) \right\|_{X_1} \right)\quad \forall \tau,\lambda\in(0,\infty).
$$
If we take $0 < \tau := \phi(\lambda)$, \textit{i.e.},
\begin{equation}
	\label{eq2307141147}
\lambda = \phi^{-1}(\tau) = \frac{1}{\psi(\tau^{-1})}.
\end{equation}
Then
\begin{equation}\label{2307122141}
\begin{aligned}
	&\|u_0\|_{(X_0,X_1)_{(W\circ\psi)^{1/p},p}}^p
	\\
 = &\int_0^\infty (W\circ\psi)(\tau^{-1})  \left|K(\tau, u_0 ; X_0, X_1)\right|^p~\frac{\mathrm{d}\tau}{\tau}\\
	\lesssim &\int_0^\infty (W\circ\psi)(\tau^{-1}) \left|  \frac{\tau^{-1}}{\psi(\tau^{-1})^2} \left( \left\| I_1(1/\psi(\tau^{-1})) \right\|_{X_0} + \tau  \left\| I_2(1/\psi(\tau^{-1})) \right\|_{X_1} \right)  \right|^p~\frac{\mathrm{d}\tau}{\tau}\\
	& + \int_0^\infty (W\circ\psi)(\tau^{-1}) \left|  \frac{1}{\psi(\tau^{-1})^2} \left\| I_3(1/\psi(\tau^{-1})) \right\|_{X_1}  \right|^p~\frac{\mathrm{d}\tau}{\tau} \\
 := &A + B.
\end{aligned}
\end{equation}

\textbf{Estimate of $A$}: Note that
\begin{align*}
\| I_1(\lambda) \|_{X_0} &\leq  \int_0^{\infty} \mathrm{e}^{-\lambda t} \left( \int_0^t \kappa (t-s)\|u(s)\|_{X_0}\,\mathrm{d}s \right) \,\mathrm{d}t \\
&= \cL[\kappa](\lambda) \cL[\| u \|_{X_0}](\lambda) = \frac{\phi(\lambda)}{\lambda} \cL[\| u \|_{X_0}](\lambda)
\end{align*}
and
$$
\| I_2(\lambda) \|_{X_1} \le  \int_0^{\infty} \mathrm{e}^{-\lambda t} \left( \int_0^t \| f(s) \|_{X_1}\,\mathrm{d}s \right) \,\mathrm{d}t = \lambda^{-1} \cL[\| f\|_{X_1}](\lambda).
$$
Therefore,
\begin{align*}
    A\leq \int_0^{\infty}(W\circ\psi)(\tau^{-1})\left|\frac{\cL[\|u\|_{X_0}+\|f\|_{X_1}](1/\psi(\tau^{-1}))}{\psi(\tau^{-1})}\right|^p\frac{\mathrm{d}\tau}{\tau}
\end{align*}
Then by \eqref{22.11.08.14.24}, we obtain
$$
A \lesssim_{p, [w]_{A_p}, \kappa} \|u\|_{L_p((0, T),w\,\mathrm{d}t;X_0)}+\|f\|_{L_p((0, T),w\,\mathrm{d}t;X_1)}.
$$

\textbf{Estimate of $B$}: We prove only
\begin{align}
\label{23.08.11.20.50}
J^p := \int_0^\infty (W\circ\psi)(\tau^{-1}) \left|  \frac{1}{\psi(\tau^{-1})^2}  \left( \int_T^{\infty} \mathrm{e}^{-\frac{t}{\psi(\tau^{-1})}} \left( \int_0^t \kappa (s)\,\mathrm{d}s \right) \, \mathrm{d}t \right)  \right|^p~\frac{\mathrm{d}\tau}{\tau} \leq N,
\end{align}
where $N=N(p,[w]_{A_p},\kappa)$.
If we have \eqref{23.08.11.20.50}, then by Lemma \ref{lem2307170000},
$$
B \lesssim_{p, [w]_{A_p}, \kappa} \|u_0\|_{X_1} \lesssim_{N} \|u\|_{L_p((0, T),w\,\mathrm{d}t;X_0)}+\|f\|_{L_p((0, T),w\,\mathrm{d}t;X_1)}\,,
$$
where $N=N(p, [w]_{A_p}, \kappa, N_{X_0,X_1},T)$.
Note that 
\[
\int_0^t \kappa(s) \,\mathrm{d}s \leq \mathrm{e}\int_0^{\infty} \mathrm{e}^{-s/t} \kappa(s) \,\mathrm{d}s = \mathrm{e}\,t \phi(t^{-1}).
\]
Then by Minkowski's integral inequality, we have
\begin{align*}
    J &\lesssim  \left( \int_0^\infty (W\circ\psi)(\tau^{-1}) \left| \frac{1}{\psi(\tau^{-1})^2}  \left( \int_T^{\infty} \mathrm{e}^{-\frac{t}{\psi(\tau^{-1})}}t \phi(t^{-1}) \, \mathrm{d}t \right)  \right|^p~\frac{\mathrm{d}\tau}{\tau} \right)^{1/p}\\
&\leq \int_T^\infty \left( \int_0^{\infty}  \frac{  \mathrm{e}^{-\frac{t p}{\psi(\tau^{-1})}} (W\circ\psi)(\tau^{-1})}{\psi(\tau^{-1})^{2p}}\, \frac{\mathrm{d}\tau}{\tau} \right)^{1/p}  t \phi(t^{-1}) \, \mathrm{d}t.
\end{align*}
By changing variable $\tau \to \phi(\lambda)$ with the help of \eqref{2302231059},
$$
\int_0^{\infty}  \frac{  \mathrm{e}^{-\frac{t p}{\psi(\tau^{-1})}} (W\circ\psi)(\tau^{-1})}{\psi(\tau^{-1})^{2p}}\, \frac{\mathrm{d}\tau}{\tau}\lesssim\int_0^{\infty}  \lambda^{2p-1}  \mathrm{e}^{-\lambda t p} W(\lambda^{-1})\, \mathrm{d}\lambda \lesssim_{p, [w]_{A_p}} t^{-2p}W(t),
$$
where the last inequality is due to Lemma \ref{22.10.28.18.21}.
Thus, by changing variable $\lambda \to \phi^{-1}(\tau)$ again,
\begin{align*}
J &\lesssim \int_T^{\infty} \phi(t^{-1}) \left( W \left( t \right) \right)^{1/p}\, \frac{\mathrm{d}t}{t} \simeq \int_0^{\phi(T^{-1})}  \left( (W \circ \psi) \left( \tau^{-1} \right) \right)^{1/p} \,\mathrm{d}\tau \leq N(p, [w]_{A_p}, \kappa,T),
\end{align*}
where the last inequality is due to $\left( (W \circ \psi) \left( \tau^{-1} \right) \right)^{1/p} \in \cI_o(-1, 0)$.
By combining the above estimates for $A$ and $B$, we obtain \eqref{trace 3}.
The theorem is proved.
\end{proof}

\begin{rem}
Note that Theorem~\ref{2307111235} is independent of the choice of the extension $\kappa$.
More precisely, for two extensions $\kappa$ and $\tilde{\kappa}$ of $\kappa^{\circ}$ satisfying the assumptions in Theorem \ref{2307111235},
$$
(X_0,X_1)_{(W\circ\kappa^{*})^{1/p},p}=(X_0,X_1)_{(W\circ\tilde{\kappa}^{*})^{1/p},p}.
$$
This is an easy consequence of Proposition \ref{prop_230223}-($iv$).
\end{rem}

\vspace{1cm}

\textbf{Acknowledgements.}
J.-H. Choi has been supported by the National Research Foundation of Korea(NRF) grant funded by the Korea government(ME) (No.RS-2023-00237315).
J. B. Lee has been supported by the NRF grants funded by the Korea government(MSIT) (No.2021R1C1C2008252 and No. 2022R1A4A1018904).
J. Seo has been supported by a KIAS Individual Grant (MG095801) at Korea Institute for Advanced Study.
K. Woo has been supported by the NRF grant funded by the Korea government(MSIT) (No.2019R1A2C1084683 and No.2022R1A4A1032094).

\appendix
\mysection{Examples of generalized real interpolation spaces}
\label{23.08.09.13.53}
In this appendix, we present concrete examples of spaces $(X_0,X_1)_{\phi,p}$ when $X_0$ and $X_1$ are Sobolev or Besov spaces.
First, we recall the definitions of Sobolev and Besov spaces.
\begin{defn}
    Let $p\in(1,\infty)$, $q\in[1,\infty]$, $\gamma\in\bR$ and $w\in A_p$.
    The sets $H_p^{\gamma}(w)$ and $B_{p,q}^{\gamma}(w)$ are Banach spaces equipped with norms given by
    \begin{align*}
        &\|f\|_{H_p^{\gamma}(w)}:=\|(1-\Delta)^{\gamma/2}f\|_{L_p(w)},\\
        &\|f\|_{B_{p,q}^{\gamma}(w)}:=\|S_0f\|_{L_p(w)}+\left\|\left\{(2^{j\gamma} \|\Delta_jf\|_{L_p(w)}\right\}_{j\in\bN}\right\|_{\ell_q(\bN)},
    \end{align*}
    where
    \begin{align*}
        (1-\Delta)^{\gamma/2}f:=\cF^{-1}[(1+|\cdot|^2)^{\gamma/2}\cF[f]],\quad \Delta_jf:=\cF^{-1}[\Psi(2^{-j}\cdot)\cF[f]],\quad S_0:=1-\sum_{j\in\bN}\Delta_j.
    \end{align*}
    Here $\cF$ and $\cF^{-1}$ are Fourier and inverse Fourier transforms and $\Psi$ is a nonnegaive infinitely smooth function satisfying $supp(\Psi)\subseteq \{\xi\in\bR^d:1/2\leq|\xi|\leq2\}$ and
    $$
    \sum_{k\in\bZ}\Psi(2^{-j}\xi)=1\quad \forall \xi\in\bR^d\setminus\{0\}.
    $$
    For $\gamma\in\bR_+$, we also denote the homogeneous spaces $\mathring{H}_p^{\gamma}(w)$ and $\mathring{B}_{p,q}^{\gamma}(w)$
    \begin{align*}
        \|f\|_{\mathring{H}_p^{\gamma}(w)}:=\|(-\Delta)^{\gamma/2}f\|_{L_p(w)},\quad \|f\|_{\mathring{B}_{p,q}^{\gamma}(w)}:=\left\|\left\{(2^{j\gamma} \|\Delta_jf\|_{L_p(w)}\right\}_{j\in\bZ}\right\|_{\ell_q(\bZ)}.
    \end{align*}
\end{defn}
We also introduce a result of the generalized interpolation on $L_p$ spaces which yields Orlicz-type spaces $L_{\phi, p}$.
To describe the Orlicz-type spaces, we need a definition of decreasing rearrangements of $f$.
\begin{defn}
    Let $X$ be a measure space with a $\sigma$-finite positive measure $\mu$.
    \begin{enumerate}[(i)]
        \item For a measurable function $f$ on $X$, $d_f(\lambda) := \mu\big(\{ x\in X : |f(x)| > \lambda \}\big)$ is called the distribution function of $f$.

        \item For a measurable function $f$ on $X$, the function
        $$
        f^*:[0,\infty)\rightarrow [0,\infty),\quad f^\ast(t) := \inf\{s>0 : d_f(s) \leq t\}, \quad t\in [0,\infty).
        $$
    is called the decreasing rearrangement of $f$.

    \item For $p\in[1,\infty]$ and $\phi\in\cI_o(0,1)$, 
    $L_{\phi,p}$ denotes the set of all measurable functions satisfying
    $$
    \|f\|_{L_{\phi, p}} := \|\phi f^*\|_{L_p\left(\bR_+,\frac{\mathrm{d}t}{t}\right)}<\infty.
    $$
    \end{enumerate}
\end{defn}
We state the main result of this appendix.
\begin{prop}
\label{exs}
    Let $p,p_0,p_1\in[1,\infty]$, $q_0,q_1,q\in[1,\infty]$, $s,s_0,s_1\in\bR$, $w\in A_p$ and $\phi\in\cI_o(0,1)$.

    \begin{enumerate}[(i)]
    \item If $s_0\neq s_1$, then
        \begin{align*}
        (B_{p, q_0}^{s_0}(w), B_{p, q_1}^{s_1}(w))_{\phi, q} &= B_{p,q}^{\phi(s_0, s_1)}(w),\\
        (\mathring{B}_{p, q_0}^{s_0}(w), \mathring{B}_{p, q_1}^{s_1}(w))_{\phi, q} &= \mathring{B}_{p,q}^{\phi(s_0, s_1)}(w),
    \end{align*}
    where $B_{p,q}^{\phi(s_0, s_1)}(w)$ and $\mathring{B}_{p,q}^{\phi(s_0, s_1)}(w)$ are spaces equipped with norms given by
    \begin{align*}
        \| f\|_{B_{p,q}^{\phi(s_0, s_1)}(w)} &= \|S_0f\|_{L_p(\bR^d,w\,\mathrm{d}x)}+ \left\|\left\{ 2^{js_0  } \phi(2^{-j(s_0 - s_1)}) \|\Delta_j f\|_{L_p(w)}\right\}_{j\in\bN}\right\|_{\ell_q(\bN)}\,,\\
        \| f\|_{\dot{B}_{p,q}^{\phi(s_0, s_1)}(w)} &=  \left\|\left\{  2^{js_0 } \phi(2^{-j(s_0 - s_1)}) \|\Delta_j f\|_{L_p(w)}\right\}_{j\in\bZ}\right\|_{\ell_q(\bZ)}.
    \end{align*}
        \item If $s_0\neq s_1$, then
        \begin{align*}
    (H_p^{s_0}(w), H_p^{s_1}(w))_{\phi, q} &= B_{p,q}^{\phi(s_0, s_1)}(w),\\
    (\mathring{H}_p^{s_0}(w), \mathring{H}_p^{s_1}(w))_{\phi, q} &= \mathring{B}_{p,q}^{\phi(s_0, s_1)}(w).
\end{align*}
\item If we put $\widetilde{\phi}(t) := t \phi(t^{-1})$, then
    \begin{align}\label{lorentz 1 infty}
        (L_1, L_\infty)_{\phi, q} = L_{\widetilde{\phi}, q},\quad \widetilde{\phi}(t) := t \phi(t^{-1}).
    \end{align}
    Moreover for any $p_0,\,p_1\in[1,\infty]$ with $p_1\neq p_1$,
    \begin{align}\label{lorentz pq}
        (L_{p_0}, L_{p_1})_{\phi, q} = (L_1, L_\infty)_{\psi, q},\quad \psi(s) := s^{\theta_0} \phi(s^{\theta_1-\theta_0}),\quad \theta_i:=1-\frac{1}{p_i},\,i=0,1.
    \end{align}
    \end{enumerate}
\end{prop}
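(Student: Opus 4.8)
The plan is to reduce all three parts to two model computations that can be done by hand---the generalized interpolation of a diagonal $\ell_q$-couple, and of $(L_1,L_\infty)$---and then to propagate these via the retraction principle together with the stability theorem (Theorem~\ref{thm stability}). First I would record that, exactly as in \cite[\S1.2.4]{triebel}, the spaces $(A_0,A_1)_{\phi,p}$ commute with retractions: if $R\colon A_i\to B_i$, $E\colon B_i\to A_i$ are bounded with $RE=\mathrm{id}_{B_i}$, then $R$ maps $(A_0,A_1)_{\phi,p}$ onto $(B_0,B_1)_{\phi,p}$ with $\|b\|_{(B_0,B_1)_{\phi,p}}\simeq\inf\{\|a\|_{(A_0,A_1)_{\phi,p}}:Ra=b\}$; the proof is verbatim the classical one, using only the submultiplicativity of the $K$- and $J$-functionals, monotonicity of $\phi$, and the vanishing of $\min(1,t^{-1})K(t,\cdot)$ at $0$ and $\infty$ already established in Lemma~\ref{22.10.27.16.32} and Lemma~\ref{22.10.26.15.52}-$(vi)$. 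Then the weighted Littlewood--Paley theory for $A_p$-weights (boundedness of the square function and of $(g_j)\mapsto\sum_j\Delta_j'g_j$ with $\Delta_j'=\Delta_{j-1}+\Delta_j+\Delta_{j+1}$) shows that $B_{p,q}^{\gamma}(w)$ is a retract of the weighted sequence space $\ell_q(L_p(w);\{2^{j\gamma}\})$ over $\bN\cup\{0\}$, and $\mathring B_{p,q}^{\gamma}(w)$ a retract of the same over $\bZ$.

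The model computation is: for a fixed Banach space $X$, exponents $\sigma_0\neq\sigma_1$, and any $q\in[1,\infty]$,
$$
(\ell_q(X;\{2^{j\sigma_0}\}),\ell_q(X;\{2^{j\sigma_1}\}))_{\phi,q}=\ell_q\big(X;\{2^{j\sigma_0}\phi(2^{-j(\sigma_0-\sigma_1)})\}\big).
$$
For a diagonal $\ell_q$-couple one has $K(t,c;\,\cdot\,)=\big\|\big(\min(2^{j\sigma_0},t2^{j\sigma_1})\|c_j\|_X\big)_j\big\|_{\ell_q}$ coordinatewise; inserting this into the definition of $\|\cdot\|_{(\cdot,\cdot)_{\phi,q}}$ and using Tonelli reduces matters to evaluating $\int_0^\infty\phi(t^{-1})^q\min(2^{j\sigma_0},t2^{j\sigma_1})^q\frac{\mathrm{d}t}{t}$. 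Splitting at the transition point $2^{j(\sigma_0-\sigma_1)}$, recalling that $\widetilde\phi(t):=t\phi(t^{-1})\in\cI_o(0,1)$ (Proposition~\ref{prop_230223}-$(iii)$), and applying the averaging bound of Lemma~\ref{22.10.26.15.52}-$(vii)$ to $\phi^q\in\cI_o(0,q)$ and to $\widetilde\phi^q\in\cI_o(0,q)$ identifies this integral up to constants with $\big(2^{j\sigma_0}\phi(2^{-j(\sigma_0-\sigma_1)})\big)^q$, which is the claim (the case $q=\infty$ being the corresponding supremum computation).

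Combining these with the retraction principle gives $(B_{p,q}^{s_0}(w),B_{p,q}^{s_1}(w))_{\phi,q}=B_{p,q}^{\phi(s_0,s_1)}(w)$ when both fine indices equal $q$. To remove this restriction I would fix $\beta_0<\min(s_0,s_1)\le\max(s_0,s_1)<\beta_1$, let $\theta_j$ solve $s_j=(1-\theta_j)\beta_0+\theta_j\beta_1$, and use the elementary embeddings $B_{p,1}^{s_j}(w)\hookrightarrow B_{p,q_j}^{s_j}(w)\hookrightarrow B_{p,\infty}^{s_j}(w)$ together with the power-weight instance just proved to place $B_{p,q_j}^{s_j}(w)$ in $K(\theta_j)\cap J(\theta_j)$ for the couple $(B_{p,q}^{\beta_0}(w),B_{p,q}^{\beta_1}(w))$. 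Theorem~\ref{thm stability} then yields $(B_{p,q_0}^{s_0}(w),B_{p,q_1}^{s_1}(w))_{\phi,q}=(B_{p,q}^{\beta_0}(w),B_{p,q}^{\beta_1}(w))_{\psi,q}$ with $\psi(t)=t^{\theta_0}\phi(t^{\theta_1-\theta_0})$; the same-index case then identifies the right-hand side as $B_{p,q}^{\psi(\beta_0,\beta_1)}(w)$, and a direct check using $(\beta_0-\beta_1)(\theta_1-\theta_0)=s_0-s_1$ shows the attached weight $2^{j\beta_0}\psi(2^{-j(\beta_0-\beta_1)})$ equals $2^{js_0}\phi(2^{-j(s_0-s_1)})$, which proves $(i)$ (the homogeneous case is identical over $\bZ$). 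Part $(ii)$ follows by the same scheme with $H_p^{s_j}(w)$ in place of $B_{p,q_j}^{s_j}(w)$, now using $B_{p,1}^{s_j}(w)\hookrightarrow H_p^{s_j}(w)\hookrightarrow B_{p,\infty}^{s_j}(w)$ to place $H_p^{s_j}(w)$ in $K(\theta_j)\cap J(\theta_j)$ for a Besov couple, then invoking Theorem~\ref{thm stability} and part $(i)$.

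For $(iii)$, the classical formula $K(t,f;L_1,L_\infty)=\int_0^tf^*(s)\,\mathrm{d}s$ gives that $\|f\|_{(L_1,L_\infty)_{\phi,q}}=\big\|\phi(t^{-1})\int_0^tf^*\big\|_{L_q(\mathrm{d}t/t)}$. Since $f^*$ is nonincreasing, $tf^*(t)\lesssim\int_0^tf^*$, which yields $\|f\|_{L_{\widetilde\phi,q}}\lesssim\|f\|_{(L_1,L_\infty)_{\phi,q}}$; conversely the scaling bound $\phi(t^{-1})\lesssim(s/t)^{\varepsilon}\phi(s^{-1})$ for $s<t$ from Lemma~\ref{22.10.26.15.52}-$(vi)$ turns $\phi(t^{-1})\int_0^tf^*$ into an average $\int_0^t(s/t)^{\varepsilon}\widetilde\phi(s)f^*(s)\frac{\mathrm{d}s}{s}$, whose $L_q(\mathrm{d}t/t)$-norm is dominated by $\|\widetilde\phi f^*\|_{L_q(\mathrm{d}t/t)}=\|f\|_{L_{\widetilde\phi,q}}$ via Minkowski's inequality; this proves \eqref{lorentz 1 infty}. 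For \eqref{lorentz pq}, the classical facts $(L_1,L_\infty)_{\theta_j,p_j}=L_{p_j}$ with $\theta_j=1-1/p_j$ and the $\ell_1\subset\ell_{p_j}\subset\ell_\infty$ norm-embeddings place $L_{p_j}\in K(\theta_j)\cap J(\theta_j)$ for $(L_1,L_\infty)$, so Theorem~\ref{thm stability} applies and gives $(L_{p_0},L_{p_1})_{\phi,q}=(L_1,L_\infty)_{\psi,q}$ with $\psi(s)=s^{\theta_0}\phi(s^{\theta_1-\theta_0})$. The conceptual content here is light: everything is the retraction principle, Theorem~\ref{thm stability}, and two elementary model computations. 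The main obstacle is purely bookkeeping---keeping the index sets $\bN\cup\{0\}$ versus $\bZ$ straight for the (in)homogeneous spaces, quoting the correct weighted Littlewood--Paley and Besov-embedding statements for $A_p$-weights, and checking at each application of the stability theorem that the substituted function still lies in the appropriate class $\cI_o(0,1)$ (resp.\ that its power lies in $\cI_o(0,q)$) so that Lemma~\ref{22.10.26.15.52}-$(vii)$ remains available, and that the resulting weight is exactly $2^{js_0}\phi(2^{-j(s_0-s_1)})$.
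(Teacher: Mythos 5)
Your proposal is correct, but it routes parts (i) and (ii) differently from the paper. For (i), the paper proves a full off-diagonal sequence-space theorem, $(\ell_{q_0}^{\sigma_0}(A),\ell_{q_1}^{\sigma_1}(A))_{\phi,q}=\ell_q^{\phi(\sigma_0,\sigma_1)}(A)$ for \emph{arbitrary} $q_0,q_1$ (Proposition \ref{thm interpolation}), by sandwiching between the $\ell_\infty$-couple (for the upper estimate on $K$) and the $\ell_1$-couple (for the $J$-side), and then applies retraction once; you instead compute only the diagonal couple and recover general $q_0,q_1$ by reiteration through Theorem \ref{thm stability}. Both work, but your route quietly requires verifying $B^{s_j}_{p,q_j}(w)\in K(\theta_j)\cap J(\theta_j)$ for the auxiliary couple $(B^{\beta_0}_{p,q}(w),B^{\beta_1}_{p,q}(w))$; the $J$-class membership is not literally a consequence of ``the power-weight instance just proved'' (that has fine index $q$, not $1$) but needs the standard crossover/H\"older estimate $\|a\|_{B^{s_j}_{p,1}(w)}\lesssim\|a\|_{B^{\beta_0}_{p,q}(w)}^{1-\theta_j}\|a\|_{B^{\beta_1}_{p,q}(w)}^{\theta_j}$ via Lemma \ref{lem 221112 1609}; this is routine but should be said. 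For (ii), the paper does not pass through (i) at all: it estimates $\|\Delta_k f\|_{L_p(w)}\lesssim 2^{-s_0k}K(2^{k(s_0-s_1)},f;\mathring H^{s_0}_p(w),\mathring H^{s_1}_p(w))$ and $J(2^{(s_0-s_1)k},\Delta_kf;\cdot)\lesssim 2^{s_0k}\|\Delta_kf\|_{L_p(w)}$ directly via the weighted Mikhlin theorem and then invokes the discrete $K$/$J$ characterizations of Proposition \ref{22.10.27.16.40}; you derive (ii) from (i) plus the embeddings $B^{s_j}_{p,1}(w)\hookrightarrow H^{s_j}_p(w)\hookrightarrow B^{s_j}_{p,\infty}(w)$ and the stability theorem. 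The analytic input (the weighted multiplier theorem) is the same either way, so your derivation is legitimate and arguably shorter, while the paper's is more self-contained. Part (iii) coincides with the paper's argument essentially verbatim (the $K$-functional formula, $tf^*(t)\le\int_0^tf^*$, Minkowski plus the $\cI_o(0,1)$ scaling for the converse, and the stability theorem for \eqref{lorentz pq}). One small repair: in your model integral the piece $\int_0^\infty(1\wedge t/x)^q\phi(t^{-1})^q\,\frac{\mathrm{d}t}{t}$ is controlled by Lemma \ref{22.10.26.15.52}-$(viii)$ (applied to $\phi(t^{-1})^q\in\cI_o(-q,0)$), not $(vii)$.
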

For verifying $(i)$, we need the following proposition:
\begin{prop}\label{thm interpolation}
Let $1\leq q, q_0, q_1 \leq\infty$ and $\sigma_0, \sigma_1 \in \mathbb{R}$ with $\sigma_0\neq \sigma_1$.
If $A$ is a Banach space, then we have
	\begin{align*}
		(\ell_{q_0}^{\sigma_0}(A), \ell_{q_1}^{\sigma_1}(A) )_{\phi, q} = \ell_q^{\phi(\sigma_0, \sigma_1)}(A),
	\end{align*}
where 
\begin{align*}
		\| a\|_{\ell_q^{\phi(\sigma_0, \sigma_1)}(A)}: = 
  \begin{cases}
      \bigg(\sum_{j\in \mathbb{Z}} \left( 2^{j\sigma_0} \phi \left( 2^{-j \left( \sigma_0 - \sigma_1 \right)} \right) \|a_j\|_A\right)^q\bigg)^{1/q}\quad&\textrm{if}\,\,\,\, q\in[1,\infty)\\[4mm]
      \sup_{j\in\bZ} \left(2^{j\sigma_0} \phi \left( 2^{-j \left( \sigma_0 - \sigma_1\right)} \right) \|a_j\|_A \right)\quad&\textrm{if}\,\,\,\,q=\infty
  \end{cases}
\end{align*}
\end{prop}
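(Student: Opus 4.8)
The plan is to reduce the vector-valued weighted statement to the scalar unweighted statement $(\ell_{q_0},\ell_{q_1})_{\phi,q}=\ell_q^{\phi(0,*)}$ by two changes of variable, and then prove that base case directly via the $K$-functional. First I would handle the weights: the map $T\colon (a_j)_{j\in\bZ}\mapsto (2^{j\sigma_0}a_j)_{j\in\bZ}$ is an isometric isomorphism from $\ell_{q_0}^{\sigma_0}(A)$ onto $\ell_{q_0}(A)$, and simultaneously from $\ell_{q_1}^{\sigma_1}(A)$ onto $\ell_{q_1}^{\sigma_1-\sigma_0}(A)$. Since real interpolation is an exact functor (it commutes with isomorphisms of couples), it suffices to prove
$$
(\ell_{q_0}(A),\ell_{q_1}^{\sigma}(A))_{\phi,q}=\ell_q^{\phi_\sigma}(A),\qquad \sigma:=\sigma_1-\sigma_0\neq 0,
$$
where $\phi_\sigma(2^{-j})\|a_j\|_A$ is the weight appearing on the right; then transporting back through $T$ and using Lemma \ref{22.10.26.15.52}-$(i)$,$(ix)$ (to see $t^{\sigma_0}\phi(t^{-\sigma})$ lands in the right $\cI_o$ class) gives the claimed norm $2^{j\sigma_0}\phi(2^{-j(\sigma_0-\sigma_1)})\|a_j\|_A$.

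Next, for the reduced couple I would compute the $K$-functional explicitly. For $a=(a_j)\in\ell_{q_0}(A)+\ell_{q_1}^{\sigma}(A)$, the infimum defining $K(t,a)$ decouples over $j$ up to the $\ell^\infty$/mixing subtleties; the clean route is to use Holmstedt-type pointwise bounds. Concretely, one shows
$$
K\big(t,a;\ell_{q_0}(A),\ell_{q_1}^{\sigma}(A)\big)\simeq \Big\| \big(\min(1,\,t\,2^{j\sigma})\,\|a_j\|_A\big)_j\Big\|_{\ell_{?}},
$$
with the index chosen appropriately; the standard and cleanest statement is via discretizing $t=2^{-k\sigma}$ (WLOG $\sigma>0$ after the reflection in Proposition \ref{prop_230223}-$(iii)$). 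Splitting the sum at $j\approx k$ gives $K(2^{-k\sigma},a)\simeq \big(\sum_{j\le k}\|a_j\|_A^{q_0}\big)^{1/q_0}+2^{-k\sigma}\big(\sum_{j> k}2^{j\sigma q_1}\|a_j\|_A^{q_1}\big)^{1/q_1}$. Feeding this into $\|a\|_{(\cdot)_{\phi,q}^K}^q\simeq\sum_k \big(\phi(2^{k\sigma})K(2^{-k\sigma},a)\big)^q$ (Proposition \ref{22.10.27.16.40}-$(i)$, with the discretization parameter absorbed), one is left with two discrete Hardy-type inequalities: for the first term, summability of $\phi(2^{k\sigma})\big(\sum_{j\le k}\cdots\big)^{1/q_0}$ against $2^{j\sigma_0}\phi(2^{-j\sigma})\|a_j\|_A$ follows from $\phi\in\cI_o(0,1)$ i.e. $\phi(2^{k\sigma})\lesssim 2^{(k-j)\sigma(1-\varepsilon)}\phi(2^{j\sigma})$ for $k\ge j$ (Lemma \ref{22.10.26.15.52}-$(vi)$), which makes the geometric series converge; the second term is handled symmetrically using the lower bound $\phi(2^{k\sigma})\gtrsim 2^{(k-j)\sigma\varepsilon}\phi(2^{j\sigma})$ for $k\le j$. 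This gives "$\subset$"; for "$\supset$" I would use the $J$-method description (Proposition \ref{22.10.27.16.40}-$(ii)$) with the natural decomposition $a=\sum_j (0,\dots,a_j,\dots,0)$, where $J(2^{-j\sigma},e_j a_j)\simeq \|a_j\|_A\max(1,2^{-j\sigma}2^{j\sigma})=\|a_j\|_A$ up to the weight normalization, and the same two Hardy inequalities run in reverse.

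The main obstacle will be the case-bookkeeping when $q_0,q_1$ differ and especially the endpoint $q_0=\infty$ or $q_1=\infty$, where the "$\min$ of two norms" heuristic for $K$ needs care: the $K$-functional does not split perfectly across coordinates when a supremum is involved, so one must instead use the Holmstedt inequality in the form comparing $K$ to a combination of $K_{q_0}$ on a head and $K_{q_1}$ on a tail, and verify the comparison constants are independent of $t$. I expect that everything else — the weight reduction, the reduction to $\sigma>0$, and the two discrete Hardy inequalities — is routine given $\phi\in\cI_o(0,1)$ and Lemma \ref{22.10.26.15.52}. A subtlety worth flagging explicitly in the write-up: the discretization in Proposition \ref{22.10.27.16.40} is stated at the dyadic scale $2^j$, whereas here the natural scale is $2^{j\sigma}$; since $\sigma$ is a fixed nonzero constant, rescaling $t\mapsto t^{1/|\sigma|}$ (Lemma \ref{22.10.26.15.52}-$(ii)$,$(iii)$,$(ix)$, which preserves the $\cI_o$ classes) reconciles the two, at the cost of replacing $\phi$ by $\phi(t^{\pm 1/\sigma})$, and this is exactly the substitution that produces the exponent $-(\sigma_0-\sigma_1)$ inside $\phi$ in the final formula.
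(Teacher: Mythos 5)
Your plan is workable and shares the key analytic ingredients with the paper (explicit form of the $K$-functional, the two-sided scaling bounds from $\phi\in\cI_o(0,1)$ in Lemma \ref{22.10.26.15.52}-$(vi)$ driving convergent geometric series, the $J$-method for the reverse inclusion, and the rescaling $t\mapsto t^{\sigma_0-\sigma_1}$ that produces the argument $2^{-j(\sigma_0-\sigma_1)}$ inside $\phi$), but it is structurally different from, and heavier than, the paper's argument. The paper never computes the Holmstedt formula for a general couple $(\ell_{q_0}^{\sigma_0},\ell_{q_1}^{\sigma_1})$: it proves the inclusion $(\cdot)_{\phi,q}\subset\ell_q^{\phi(\sigma_0,\sigma_1)}$ only for the $\ell_\infty$-couple, where $K(t,a)\simeq\sup_k\min(2^{k\sigma_0},t2^{k\sigma_1})\|a_k\|_A$ is a supremum and the lower bound is immediate, proves the reverse inclusion only for the $\ell_1$-couple, where $K$ is a sum and two discrete Hardy estimates suffice, and then sandwiches the general case via $\ell_1^{\sigma_i}\subset\ell_{q_i}^{\sigma_i}\subset\ell_\infty^{\sigma_i}$ and monotonicity of the functor. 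This trick is exactly what disposes of the endpoint and $q_0\neq q_1$ bookkeeping you correctly identify as the main obstacle of your route; if you keep your approach you will indeed have to prove the two-sided Holmstedt equivalence for arbitrary $(q_0,q_1)$, which is standard but not free. One concrete slip to fix in your write-up: with your normalization ($X_1=\ell_{q_1}^{\sigma}(A)$, weight $2^{j\sigma}$, $\sigma>0$) the cheap coordinates for $X_1$ are $j<k$ at $t=2^{-k\sigma}$, so the Holmstedt split should read $K(2^{-k\sigma},a)\simeq\big(\sum_{j\ge k}\|a_j\|_A^{q_0}\big)^{1/q_0}+2^{-k\sigma}\big(\sum_{j<k}2^{j\sigma q_1}\|a_j\|_A^{q_1}\big)^{1/q_1}$; you have the head and tail interchanged. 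The initial weight-shift reduction via the isometry $T$ is correct but unnecessary in the paper's scheme, where the weights are carried along explicitly.
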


\begin{proof}[Proof of Proposition \ref{exs}]

    ($i$) This is a direct consequence of Proposition \ref{thm interpolation} with $A=L_p(w)$.
    For detailed arguments, see the proof of Theorem~\cite[Theorem 2.4.2/(a)]{triebel} with \cite[Theorem 1]{kurtz1979results}.

    ($ii$) First, we prove the homogeneous case,
$$
(\mathring{H}_p^{s_0}(w), \mathring{H}_p^{s_1}(w))_{\phi, q} = \mathring{B}_{p,q}^{\phi(s_0, s_1)}(w).
$$
For $f = f_0 + f_1$, $f_j \in \mathring{H}_p^{s_j}(w)$ with $j=0,1$, one can observe that 
	\begin{equation}\label{230706550}
\begin{aligned}
     \| \Delta_k f \|_{L_p(w)}
	    &\leq \|\Delta_k f_0\|_{L_p(w)}+\|\Delta_k f_1\|_{L_p(w)}\\
	    &=2^{-s_0k}\|M_{k,s_0} ((-\Delta)^{s_0/2}f_0)\|_{L_p(w)} +2^{-s_1k}\|M_{k,s_1} ((-\Delta)^{s_1/2}f_1)\|_{L_p(w)},
	\end{aligned}
 \end{equation}
	where $k\in\bZ$ and $M_{k,s}$ is defined by
	$$
	\cF[M_{k,s}g](\xi):=m_{s}(2^{-k}\xi)\cF[g](\xi):=\frac{\cF[\varphi](2^{-k}\xi)}{|2^{-k}\xi|^{s}}\cF[g](\xi).
	$$
	Since for all multi-index $\alpha$,
	$$
	|D^{\alpha}m_{s}(\xi)|\lesssim_{\alpha,s}\mathbbm{1}_{B_{2}\setminus B_{1/2}}(\xi),
	$$
	we have
	$$
	|D^{\alpha}(m_{s}(2^{-k}\cdot))(\xi)|\lesssim_{\alpha,s}2^{-k|\alpha|}\mathbbm{1}_{B_{2^{k+1}}\setminus B_{2^{k-1}}}(\xi)\lesssim |\xi|^{-|\alpha|}.
	$$
	Therefore by the weighted Mikhlin multiplier theorem (\textit{e.g.} \cite[Theorem 1]{kurtz1979results}), \eqref{230706550} implies that
	$$
	\| \Delta_k f \|_{L_p(w)}\lesssim 2^{-s_0k} \| f_0\|_{\mathring{H}_p^{s_0}(w)} + 2^{-s_1 k} \|f_1\|_{\mathring{H}_p^{s_1}(w)}
	$$
	which yields
	\begin{equation}\label{ineq 221109 2120}
    \begin{aligned}
		\| \Delta_k f \|_{L_p(w)} &\lesssim 2^{-s_0k} K(2^{k(s_0 - s_1)}, f ; \mathring{H}_p^{s_0}(w), \mathring{H}_p^{s_1}(w)).
	\end{aligned}
    \end{equation}
	With the help of \eqref{ineq 221109 2120} and Proposition \ref{22.10.27.16.40}-$(i)$, we have
\begin{equation}\label{230706605}
 \begin{aligned}
		\| f\|_{\mathring{B}_{p,q}^{\phi(s_0, s_1)}(w)} &:=
        \left\| \left\{ 2^{s_0k} \phi(2^{-(s_0 - s_1)k}) \| \Delta_k f \|_{L_p(w)}\right\}_{k\in\bZ}\right\|_{\ell_q(\bZ)}  \\
		&\lesssim  \left\| \left\{  \phi\big(2^{-(s_0 - s_1)k}\big) K\left(2^{\left( s_0 - s_1 \right)k}, f ; \mathring{H}_p^{s_0}\left( w \right), \mathring{H}_p^{s_1} \left( w \right)\right)  \right\}_{k\in\bZ}\right\|_{\ell_q(\bZ)}  \\
		&\simeq \|f\|_{ \left( \mathring{H}_p^{s_0} \left( w \right), \mathring{H}_p^{s_1}\left( w \right) \right)_{\phi, q}}.
	\end{aligned}
 \end{equation}
For the converse, we make use of $J$-method. Since
$$
\|\Delta_k f\|_{\mathring{H}_{p}^{s}(w)}=\|(2^{s(k-1)}M_{k-1,-s_i}+2^{sk}M_{k,-s_i}+2^{s(k+1)}M_{k+1,-s})(\Delta_kf)\|_{L_p(w)}\,,
$$
we have
    \begin{equation}\label{ineq 221109 2123}
	\begin{aligned}
		 J\left(2^{(s_0-s_1)k}, \Delta_k f; \mathring{H}_p^{s_0}(w), \mathring{H}_p^{s_1}(w)\right) \lesssim 2^{s_0k} \| \Delta_k f\|_{L_p(w)}.
	\end{aligned}
    \end{equation}
	By \eqref{ineq 221109 2123} and Proposition \ref{22.10.27.16.40}-$(ii)$, we have
\begin{equation}
\label{230706604}
\begin{aligned}
		&\|f\|_{(\mathring{H}_p^{s_0}(w), \mathring{H}_p^{s_1}(w))_{\phi, q}}\\
		&\lesssim \left\| \left\{ \phi(2^{-(s_0-s_1)k}) J\left(2^{(s_0 - s_1)k}, \Delta_k f; \mathring{H}_p^{s_0}(w), \mathring{H}_p^{s_1}(w)\right) \right\}_{k\in\bZ}\right\|_{\ell_q(\bZ)}\\
		&\lesssim    \left\| \left\{ 2^{s_0k} \phi(2^{-(s_0 - s_1)k}) \| \Delta_k f \|_{L_p(w)} \right\}_{k\in\bZ}\right\|_{\ell_q(\bZ)} \simeq \| f\|_{B_{p,q}^{\phi(s_0, s_1)}(w)}.
	\end{aligned}	       
\end{equation}
The homogeneous case is proved by \eqref{230706605} and \eqref{230706604}.
For the inhomogeneous case, based on the proof for the homogeneous case, it suffices to observe that additionally
\begin{equation*}
\begin{aligned}
    J(1, S_0 f; H_p^{s_0}(w), H_p^{s_1}(w)) &\lesssim\| S_0 f\|_{L_p(\bR^d,w\,\mathrm{d}x)} \\
    &\lesssim  K(1, f ; H_p^{s_0}(w), H_p^{s_1}(w) )\\
    &\lesssim \|f\|_{(H_p^{s_0}(w), H_p^{s_1}(w))_{\phi, q}}.
\end{aligned}
\end{equation*}
This follows from a modification of the proof for the homogeneous case.

    ($iii$) It is clear that \eqref{lorentz pq} follows from \eqref{lorentz 1 infty} and Theorem \ref{thm stability}; note that for $A_0=L_1$ and $A_1=L_\infty$, $L_{p_i}\in K(\theta_i)\cap J(\theta_i)$ (see \cite[Theorem 1.10.3/1]{triebel} for the case $0<\theta_i<1$).
    Therefore, we only prove \eqref{lorentz 1 infty}.
    We borrow the argument in \cite[Theorem 1.18.6]{triebel}.
    Let $f\in L_1 + L_\infty$. 
    By \cite[1.18.6/(9)]{triebel}, we have
    $$
        K(t, f; L_1, L_\infty) = \int_0^t f^*(\tau) \mathrm{d}\tau\geq tf^\ast (t),
    $$
    which yields
    \begin{align*}
        \|f\|_{(L_1, L_\infty)_{\phi, p}} 
        &:= \left\| \phi(\cdot^{-1}) K(\cdot, f; L_1, L_\infty)\right\|_{L_p\left(\bR_+,\frac{\mathrm{d}t}{t}\right)}
        \geq \left\| \widetilde{\phi} f^\ast\right\|_{L_p\left(\bR_+,\frac{\mathrm{d}t}{t}\right)}
        = \|f\|_{L_{\widetilde{\phi},p}}.
    \end{align*}
    For the converse inequality, 
    \begin{align*}
        \left\| \phi(\cdot^{-1}) K(\cdot, f;L_1,L_{\infty})\right\|_{L_p\left(\bR_+,\frac{\mathrm{d}t}{t}\right)}
        &= \left\| \tilde{\phi}\cdot \left(\int_0^1f^\ast(\cdot\tau)\dd \tau\right)\right\|_{L_p\left(\bR_+,\frac{\mathrm{d}t}{t}\right)}\\
        &\leq \int_0^1         \left\| \tilde{\phi}f^\ast(\cdot\tau)\right\|_{L_p\left(\bR_+,\frac{\mathrm{d}t}{t}\right)} \mathrm{d}\tau\\
        &=\int_0^1 \left\| \tilde{\phi}(\cdot\tau^{-1})f^\ast\right\|_{L_p\left(\bR_+,\frac{\mathrm{d}t}{t}\right)} \mathrm{d}\tau\\
        &\lesssim \int_0^1         \tau^{-1+\varepsilon}\left\| \tilde{\phi} f^\ast\right\|_{L_p\left(\bR_+,\frac{\mathrm{d}t}{t}\right)} \mathrm{d}\tau\simeq  \|f\|_{L_{\widetilde{\phi},p}}.
    \end{align*}
    Note that the last inequalities hold due to Proposition \ref{22.10.26.15.52}.-($vi$).
    The proposition is proved.
\end{proof}

We end this appendix by proving Proposition \ref{thm interpolation}.
\begin{proof}[Proof of Proposition \ref{thm interpolation}]
Without loss of generality, we assume that $\sigma_0>\sigma_1$ (see Proposition~\ref{prop_230223}-($iii$)).

\textbf{Step 1)} We first prove that 	\begin{align}\label{right ineq}
		(\ell_\infty^{\sigma_0}(A), \ell_\infty^{\sigma_1}(A) )_{\phi, q} \subset \ell_q^{\phi(\sigma_0, \sigma_1)}(A)\quad \forall q\,\,\in[1,\infty].
	\end{align}
For the notational convenience, put $\ell_\infty^{\sigma_0}(A) = X_0$ and $\ell_\infty^{\sigma_1}(A) = X_1$.
Note that 
	\begin{align}
		K(t, a; X_0, X_1) \simeq \sup_{k\in \mathbb{Z}} \min(2^{k\sigma_0}, t2^{k\sigma_1}) \|a_k\|_{A}
	\end{align}
 (see, \textit{e.g.}, \cite[1.18.2/(3)]{triebel}).
Then for $q\in[1,\infty)$, it follows that 
	\begin{equation}\label{ineq-221012 1340}
	\begin{aligned}
		\| a \|_{(X_0,\, X_1)_{\phi, q}}^q
		&= \int_0^\infty \left( \phi(t^{-1}) K(t, a;  X_0, X_1) \right)^q \frac{\mathrm{d}t}{t}\\
		&\simeq \int_0^\infty \left( \phi(t^{-1}) \sup_{k\in\mathbb{Z}} \left(\min(2^{k\sigma_0}, t2^{k\sigma_1}) \|a_k\|_{A} \right)\right)^q \frac{\mathrm{d}t}{t}\\
		&= \sum_{j\in\mathbb{Z}} \int_{2^{j(\sigma_0 - \sigma_1)}}^{2^{(j+1)(\sigma_0 - \sigma_1)}} \left( \phi(t^{-1}) \sup_{k\in\mathbb{Z}} \left(\min(2^{k\sigma_0}, t2^{k\sigma_1}) \|a_k\|_{A} \right)\right)^q
		\frac{\mathrm{d}t}{t}\\
		&\simeq \sum_{j\in\mathbb{Z}} \phi(2^{-j(\sigma_0 - \sigma_1)})^q   \sup_{k\in\mathbb{Z}} \left(\min(2^{k\sigma_0}, 2^{j(\sigma_0 - \sigma_1)}2^{k\sigma_1}) \|a_k\|_{A}\right)^q\\
		&\geq \sum_{j\in\mathbb{Z}} \phi(2^{-j(\sigma_0 - \sigma_1)})^q 2^{j\sigma_0 q} \|a_j\|_{A}^q.
	\end{aligned}
	\end{equation}
By \eqref{ineq-221012 1340} we have shown \eqref{right ineq} for $q\in[1,\infty)$.
A modification of \eqref{ineq-221012 1340} shows that \eqref{right ineq} holds for $q=\infty$.

\textbf{Step 2)} In this step, we prove that
	\begin{align}\label{left_ineq}
		\ell_q^{\phi(\sigma_0, \sigma_1)}(A) \subset (\ell_1^{\sigma_0}(A), \ell_1^{\sigma_0}(A))_{\phi, q}, \quad\forall q\in [1,\infty].
	\end{align}

We first consider the case $q\in[1,\infty)$.
For simplicity we put $\ell_1^{\sigma_0} (A) = Y_0$ and $\ell_1^{\sigma_1}(A) = Y_1$. 
Note that 
	\begin{align}
		K(t, a; Y_0, Y_1) \sim \sum_{k\in\mathbb{Z}} \min(2^{k\sigma_0}, t2^{k\sigma_1}) \|a_k\|_{A}
	\end{align}
 (see, \textit{e.g.}, \cite[Step 2 of proof of Theorem 1.18.2]{triebel}).
Then for $q\in[1,\infty)$, we have
	\begin{equation}\label{230706542}
\begin{aligned}
  &\int_0^\infty \left( \phi(t^{-1}) K(t, a;  Y_0, Y_1) \right)^q \frac{\mathrm{d}t}{t}\quad\left(=\| a \|_{(X_0,\, X_1)_{\phi, q}}^q\right)\\
		&\simeq \sum_{j\in\mathbb{Z}} \int_{2^{j(\sigma_0 - \sigma_1)}}^{2^{(j+1)(\sigma_0 - \sigma_1)}} \left( \phi(t^{-1}) \sum_{k\in\mathbb{Z}} \min(2^{k\sigma_0}, t2^{k\sigma_1}) \|a_k\|_{A}\right)^q \frac{\mathrm{d}t}{t}\\
		&\simeq \sum_{j\in\mathbb{Z}} \phi(2^{-j(\sigma_0 - \sigma_1)})^q 2^{j\sigma_0 p}  \left( \sum_{k\in\mathbb{Z}} \min(2^{(k-j)\sigma_0}, 2^{(k-j)\sigma_1}) \|a_k\|_{A}\right)^q\\
		&\lesssim  \sum_{j\in\mathbb{Z}} \phi(2^{-j(\sigma_0 - \sigma_1)})^q 2^{j\sigma_0 q}  \left( \bigg( \sum_{k \leq j}2^{(k-j)\sigma_0} \|a_k\|_{A} \bigg)^q+  \bigg(\sum_{k>j} 2^{(k-j)\sigma_1} \|a_k\|_{A}\bigg)^q \right)\\
		&=: I + II.
	\end{aligned}
 \end{equation}
We handle terms $I$ and $II$ separately.
Note that since $\phi\in\cI_o(0,1)$, there exists $\varepsilon\in(0,1/2)$ such that $\phi\in\cI(\varepsilon,1-\varepsilon)$, and therefore
\begin{align}\label{230706455}
\sup_{t>0}\left( \phi \left( \lambda t \right) / \phi \left( t \right) \right)\lesssim \lambda^\varepsilon+\lambda^{1-\varepsilon}.
\end{align}

$\bullet$ Case 1. Estimation of $I$.

Recall that we assume that $\sigma_0>\sigma_1$.
Take $\chi_0<\sigma_0$ sufficiently close to $\sigma_0$ such that
\begin{align}\label{230706500}
(\sigma_0 - \sigma_1)\varepsilon - (\sigma_0 - \chi_0) >0\,,
\end{align}
and observe that
\begin{equation}\label{ineq-221012 1317}
			\begin{aligned}
				I\,&=\sum_{j\in\mathbb{Z}} \phi(2^{-j(\sigma_0 - \sigma_1)})^q 2^{j\sigma_0 q}   \left( \sum_{k:k \leq j}2^{(k-j)\sigma_0} \|a_k\|_{A} \right)^q\\
				&=\sum_{j\in\mathbb{Z}} \phi(2^{-j(\sigma_0 - \sigma_1)})^q\left( \sum_{k:k \leq j}2^{k\sigma_0} \|a_k\|_{A} \right)^q\\
				&\leq \sum_{j\in\mathbb{Z}} \phi(2^{-j(\sigma_0 - \sigma_1)})^q   \left( \sum_{k:k \leq j}2^{k(\sigma_0 - \chi_0)q'} \right)^{q/q'} \left( \sum_{k:k \leq j} 2^{k\chi_0 q} \|a_k\|_{A}^q \right)\\
				& = \sum_{j\in\mathbb{Z}} \phi(2^{-j(\sigma_0 - \sigma_1)})^q 2^{j(\sigma_0 -\chi_0)q}  \sum_{k:k \leq j} 2^{k\chi_0 q} \|a_k\|_{A}^q \\
				&= \sum_{k\in\mathbb{Z}} 2^{k\chi_0 q} \|a_k\|_A^q \sum_{j:k\leq j} \phi(2^{-j(\sigma_0 - \sigma_1)})^q 2^{j(\sigma_0 -\chi_0)q},
			\end{aligned}
			\end{equation}
   where $q'=q/(q-1)$.
   Observe that
  \begin{equation}\label{ineq-221012 1305}
  \begin{aligned}
   &\sum_{j:k\leq j} \phi(2^{-j(\sigma_0 - \sigma_1)})^q 2^{j(\sigma_0 -\chi_0)q}\\
   &\leq 2^{k(\sigma_0 -\chi_0)q}\phi(2^{-k(\sigma_0 -\sigma_1)})^q\sum_{j:k\leq j} \bigg(\frac{2^{j(\sigma_0 -\chi_0)}\phi\big(2^{-j(\sigma_0 - \sigma_1)}\big)}{2^{k(\sigma_0 -\chi_0)}\phi\big(2^{-k(\sigma_0 -\sigma_1)}\big)}\bigg)^q.
   \end{aligned}
   \end{equation}
Due to \eqref{230706455} and that $k\leq j$,
			\begin{align}\label{ineq-221012 1303}
				\frac{2^{j(\sigma_0 -\chi_0)}\phi\big(2^{-j(\sigma_0 - \sigma_1)}\big)}{2^{k(\sigma_0 -\chi_0)}\phi\big(2^{-k(\sigma_0 -\sigma_1)}\big)} \lesssim  2^{(j-k)\left[(\sigma_0-\chi_0)-(\sigma_0-\sigma_1)\varepsilon\right]}
			\end{align}
		Together with \eqref{230706500} - \eqref{ineq-221012 1303}, we have
\begin{align}\label{left ineq 1}
				I \lesssim \sum_{k\in\mathbb{Z}} 2^{k\sigma_0 q} \phi(2^{-k(\sigma_0 -\sigma_1)})^q \|a_k\|_A^q.
			\end{align}
$\bullet$ Case 2. Estimation of $II$.
		
		In the same manner of \eqref{230706500} and \eqref{ineq-221012 1317},
  Take sufficiently small $\xi_1>\sigma_1$ such that
  $$
  (\sigma_0-\chi_1)-(\sigma_0-\sigma_1)(1-\varepsilon)=\varepsilon(\sigma_0-\sigma_1)-(\chi_1-\sigma_1)>0\,,
  $$
  and observe that
  \begin{equation}\label{230706520}
\begin{aligned}				II\,&=\sum_{j\in\mathbb{Z}} \phi(2^{-j(\sigma_0 - \sigma_1)})^q 2^{j\sigma_0 q}   \left( \sum_{k:k>j }2^{(k-j)\sigma_1} \|a_k\|_{A} \right)^q\\
&\simeq\sum_{k\in\mathbb{Z}} 2^{k\chi_1 q} \|a_k\|_A^q \sum_{j:j<k} \phi(2^{-j(\sigma_0 - \sigma_1)})^q 2^{j(\sigma_0 - \chi_1)q}\\
				&=\sum_{k\in\mathbb{Z}} 2^{k\sigma_0 q} \phi(2^{-k(\sigma_0 -\sigma_1)})^q \|a_k\|_A^q \sum_{j:j<k}\bigg( \frac{2^{j(\sigma_0 -\chi_1)}\phi(2^{-j(\sigma_0 - \sigma_1)})}{2^{k(\sigma_0 -\chi_1)}\phi(2^{-k(\sigma_0 -\sigma_1)})}\bigg)^q.
			\end{aligned}
\end{equation}
  Due to \eqref{230706455} and that $\sigma_0>\sigma_1$,
			\begin{align*}
				\frac{2^{j(\sigma_0 -\chi_1)}\phi(2^{-j(\sigma_0 - \sigma_1)})}{2^{k(\sigma_0 -\chi_1)}\phi(2^{-k(\sigma_0 -\sigma_1)})}
    \lesssim 
   2^{-(j-k)[(\sigma_0 - \sigma_1)(1-\varepsilon)-(\sigma_0-\chi_1)]}.
			\end{align*}
		Therefore the summation over $j:j<k$ in \eqref{230706520} is finite, hence it follows that
			\begin{align}\label{left ineq 2}
				II \lesssim \sum_{k\in\mathbb{Z}} 2^{k\sigma_0 q} \phi(2^{-k(\sigma_0 -\sigma_1)})^q \|a_k\|_A^q.
			\end{align}	
\eqref{left_ineq} follows from \eqref{left ineq 1} and \eqref{left ineq 2} for $q\in [1,\infty)$.
Modifications of \eqref{230706542}, \eqref{ineq-221012 1317}, and \eqref{230706520} shows that \eqref{left_ineq} holds for $q=\infty$.

\textbf{Step 3)} 
Note that $\ell_1^{\sigma_0}\subset \ell_{p_0}^{\sigma_0}(A)\subset \ell_\infty^{\sigma_0}(A)$ and $\ell_1^{\sigma_1}(A)\subset \ell_{p_1}^{\sigma_1}(A)\subset \ell_\infty^{\sigma_1}(A)$.
Together with \eqref{right ineq}, \eqref{left_ineq}, and the definition of generalized interpolation, we have
	\begin{equation}
	\begin{aligned}
		\ell_q^{\phi(\sigma_0, \sigma_1)}(A) 
		&\subseteq (\ell_1^{\sigma_0}(A), \ell_1^{\sigma_0}(A))_{\phi, q}\\
		&\subset (\ell_{p_0}^{\sigma_0}(A), \ell_{p_1}^{\sigma_0}(A))_{\phi, q}\\
		&\subseteq (\ell_\infty^{\sigma_0}(A), \ell_\infty^{\sigma_0}(A))_{\phi, q}
		\subseteq \ell_q^{\phi(\sigma_0, \sigma_1)}(A).
	\end{aligned}
	\end{equation}
The proposition is proved.
\end{proof}

\mysection{Extenstions from $(0, T)$ to $\bR_+$}
\label{appendix_finite}
In this section, we provide suitable extensions of $\kappa^{\circ}: (0, T) \to \bR_+$ and $w^\circ \in A_p \left( \left(0, T \right) \right)$ (\textit{i.e.} $w^{\circ}$ satisfies \eqref{eq230925_01}) to functions defined on $\bR_+$.
To prove the existence of such extensions (Proposition \ref{230925614}), we need the following lemmas.

\begin{lem}\label{2307061156}
	Let $\kappa^\circ:(0,T)\rightarrow \bR_+$ be a function satisfying
	\begin{align}\label{2307061155}
	s^{\circ}_{\kappa^\circ}(\lambda):=\sup_{\substack{0<t< (1\wedge \lambda^{-1})T}}\frac{\kappa^\circ(\lambda t)}{\kappa^\circ(t)}<\infty \quad\forall\lambda\in(0,\infty).
 \end{align}
If we put
	\begin{align}\label{2307061157}
	\kappa(t):=
	\begin{cases}
		\,\,\kappa^\circ(t)\quad &\textrm{for} \quad 0<t<T,\\[2mm]
		\,\,\inf\limits_{r<T}\kappa^\circ(r)s^{\circ}_{\kappa^\circ}(t/r)\quad &\textrm{for} \quad t\geq T,
	\end{cases}
	\end{align}
then
\begin{align}\label{230608938}
s^{\circ}_{\kappa^\circ}(\lambda)=s_{\kappa}(\lambda)\left(:=\sup_{t>0}\frac{\kappa(\lambda t)}{\kappa(t)}\right)\quad \forall \lambda\in(0,\infty).
\end{align}
In addition, if $\kappa^\circ$ is decreasing on $(0,T)$, then $\kappa$ is also decreasing on $\bR_+$.
\end{lem}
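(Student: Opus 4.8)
The plan is to reduce everything to two elementary facts about the gauge $s^{\circ}_{\kappa^\circ}$: that it is everywhere finite and strictly positive, and that it is \emph{submultiplicative}, i.e. $s^{\circ}_{\kappa^\circ}(\mu\nu)\le s^{\circ}_{\kappa^\circ}(\mu)\,s^{\circ}_{\kappa^\circ}(\nu)$ for all $\mu,\nu>0$. Finiteness is exactly hypothesis \eqref{2307061155}; since $s^{\circ}_{\kappa^\circ}(1)=1$, once submultiplicativity is known we get $1=s^{\circ}_{\kappa^\circ}(1)\le s^{\circ}_{\kappa^\circ}(\lambda)s^{\circ}_{\kappa^\circ}(\lambda^{-1})$, hence positivity. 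For submultiplicativity I would split into the two cases ``$\mu>1$ and $\nu>1$'' and ``$\nu\le 1$'' (the latter, together with the symmetry $s^{\circ}_{\kappa^\circ}(\mu\nu)=s^{\circ}_{\kappa^\circ}(\nu\mu)$, covers all remaining cases): in either situation, for $u$ in the admissible range of the supremum defining $s^{\circ}_{\kappa^\circ}(\mu\nu)$ one checks that $\nu u$ again lies in $(0,T)$ and in the admissible ranges needed to write $\kappa^\circ(\mu\nu u)/\kappa^\circ(u)=\bigl(\kappa^\circ(\mu\cdot\nu u)/\kappa^\circ(\nu u)\bigr)\bigl(\kappa^\circ(\nu u)/\kappa^\circ(u)\bigr)\le s^{\circ}_{\kappa^\circ}(\mu)s^{\circ}_{\kappa^\circ}(\nu)$, and then one takes the supremum over $u$. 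The only delicate point is keeping track of the constraints ``$\,\cdot<T$'' and ``$\,\cdot<(1\wedge(\cdot)^{-1})T$'' so that every value of $\kappa^\circ$ appearing is legitimate; this bookkeeping is the main (modest) obstacle of the lemma.

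Granting this, I would first record that $\kappa$ is a well-defined map $\bR_+\to\bR_+$. For $t\ge T$ finiteness is clear from any single admissible $r_0<T$, since $\kappa^\circ(r_0)s^{\circ}_{\kappa^\circ}(t/r_0)<\infty$. Positivity of the infimum follows from the bound $\kappa^\circ(r_0)\le \kappa^\circ(r)\,s^{\circ}_{\kappa^\circ}(r_0/r)\le\kappa^\circ(r)\,s^{\circ}_{\kappa^\circ}(r_0/t)\,s^{\circ}_{\kappa^\circ}(t/r)$, valid for all $r<T$ by submultiplicativity, which yields $\kappa^\circ(r)\,s^{\circ}_{\kappa^\circ}(t/r)\ge\kappa^\circ(r_0)/s^{\circ}_{\kappa^\circ}(r_0/t)>0$ uniformly in $r$.

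For \eqref{230608938} the inequality $s_{\kappa}(\lambda)\ge s^{\circ}_{\kappa^\circ}(\lambda)$ is immediate: for $0<t<(1\wedge\lambda^{-1})T$ both $t$ and $\lambda t$ lie in $(0,T)$, where $\kappa=\kappa^\circ$, so the supremum defining $s_{\kappa}(\lambda)$ already dominates the one defining $s^{\circ}_{\kappa^\circ}(\lambda)$. The reverse inequality amounts to $\kappa(\lambda t)\le s^{\circ}_{\kappa^\circ}(\lambda)\,\kappa(t)$ for all $t,\lambda>0$, which I would prove by a four-way case split according to whether each of $t$ and $\lambda t$ is $<T$ or $\ge T$. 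The case $t,\lambda t<T$ is the definition of $s^{\circ}_{\kappa^\circ}(\lambda)$; the case $t<T\le\lambda t$ follows by taking $r=t$ in the infimum defining $\kappa(\lambda t)$ (and $\kappa(t)=\kappa^\circ(t)$); the case $t,\lambda t\ge T$ follows from submultiplicativity inside the infimum, $\kappa^\circ(r)\,s^{\circ}_{\kappa^\circ}(\lambda t/r)\le s^{\circ}_{\kappa^\circ}(\lambda)\,\kappa^\circ(r)\,s^{\circ}_{\kappa^\circ}(t/r)$, followed by $\inf_{r<T}$; and the case $\lambda t<T\le t$ combines $\kappa^\circ(\lambda t)\le\kappa^\circ(r)\,s^{\circ}_{\kappa^\circ}(\lambda t/r)$ (legitimate since $\lambda t<T$ and $r<T$) with submultiplicativity and $\inf_{r<T}$.

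Finally, for the monotonicity claim: if $\kappa^\circ$ is decreasing on $(0,T)$, then comparing $\kappa^\circ(\lambda u)$ and $\kappa^\circ(u)$ for $0<u<\lambda^{-1}T$ gives $s^{\circ}_{\kappa^\circ}(\lambda)\le 1$ for every $\lambda\ge 1$. Decreasingness of $\kappa$ then follows case by case: on $(0,T)$ it coincides with $\kappa^\circ$; for $t<T\le t'$ one has $\kappa(t')\le\kappa^\circ(t)\,s^{\circ}_{\kappa^\circ}(t'/t)\le\kappa^\circ(t)=\kappa(t)$ since $t'/t>1$; and for $T\le t_1<t_2$, for every $r<T$ one has $s^{\circ}_{\kappa^\circ}(t_2/r)=s^{\circ}_{\kappa^\circ}\bigl((t_2/t_1)(t_1/r)\bigr)\le s^{\circ}_{\kappa^\circ}(t_2/t_1)\,s^{\circ}_{\kappa^\circ}(t_1/r)\le s^{\circ}_{\kappa^\circ}(t_1/r)$, whence $\kappa(t_2)\le\kappa(t_1)$ after taking infima over $r<T$. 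Assembling the three ranges gives that $\kappa$ is decreasing on all of $\bR_+$.
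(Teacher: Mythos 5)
Your proposal is correct and follows essentially the same strategy as the paper: establish submultiplicativity of $s^{\circ}_{\kappa^\circ}$ (with the same careful tracking of the admissible ranges in the supremum), then deduce both \eqref{230608938} and the monotonicity of $\kappa$ from it, the only cosmetic differences being your four-way case split for $\kappa(\lambda t)\le s^{\circ}_{\kappa^\circ}(\lambda)\kappa(t)$ versus the paper's two-way split via the intermediate inequality $\kappa(\lambda s)\le\kappa^\circ(s)s^{\circ}_{\kappa^\circ}(\lambda)$ for $s<T$, and your use of $s^{\circ}_{\kappa^\circ}(\lambda)\le 1$ for $\lambda\ge1$ where the paper notes that $s^{\circ}_{\kappa^\circ}$ is decreasing. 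Your added check that the infimum defining $\kappa(t)$ for $t\ge T$ is finite and strictly positive is a small but welcome supplement the paper leaves implicit.
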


\begin{proof}
Clearly, $s^{\circ}_{\kappa^\circ}(\lambda)\leq s_{\kappa}(\lambda)$.
Thus, to prove \eqref{230608938}, we only need to show that for any $t,\,\lambda>0$, 
\begin{align}\label{230608943}
\kappa(\lambda t)\leq \kappa(t)s^{\circ}_{\kappa^\circ}(\lambda).
\end{align}
Note that if $s<T$, then for any $\lambda>0$, 
\begin{align}\label{2307041106}
\kappa(\lambda s)\leq \kappa^\circ(s)s_{\kappa^\circ}^\circ(\lambda).
\end{align}

$\bullet$ Case 1. $t<T$.

In this case, \eqref{230608943} follows immediately from \eqref{2307041106}.

$\bullet$ Case 2. $t\geq T$.

We first claim that for any $a,\,b>0$,
\begin{align}\label{230608957}
	s^{\circ}_{\kappa^\circ}(ab)\leq s^{\circ}_{\kappa^\circ}(a)s^{\circ}_{\kappa^\circ}(b).
\end{align}
Without loss of generality, we assume that $a\geq 1$ or $0<b\leq 1$.
Let $0<s<T$ and $0<abs<T$.
Then the assumption for $a$ and $b$ implies that $0<bs<T$.
Therefore we have 
\begin{align*}
\frac{\kappa^\circ(abs)}{\kappa^\circ(s)}=\frac{\kappa^\circ(abs)}{\kappa^\circ(bs)}\cdot \frac{\kappa^\circ(bs)}{\kappa^\circ(s)}\leq s^{\circ}_{\kappa^\circ}(a)s^{\circ}_{\kappa^\circ}(b)\,,
\end{align*}
which implies \eqref{230608957}.
Due to \eqref{2307041106} and \eqref{230608957}, we obtain that for any $0<r<T$,
$$
\kappa(\lambda t)=\kappa(r\cdot(\lambda t/r))\leq \kappa^\circ(r)s^{\circ}_{\kappa^\circ}\left(\frac{\lambda t}{r}\right)\leq \kappa^\circ(r)s^{\circ}_{\kappa^\circ}\left(\frac{t}{r}\right)\cdot s^{\circ}_{\kappa^\circ}(\lambda).
$$
By taking the infimum for $0<r<T$, we have
$$
\kappa(\lambda t)\leq \kappa(t)s^{\circ}_{\kappa^\circ}(\lambda).
$$
Therefore \eqref{230608943} is proved, thus \eqref{230608938} is also.

Next, we prove the second assertion.
To observe that $s^\circ_{\kappa^\circ}$ is decreasing, let $0<\lambda_1<\lambda_2<\infty$.
Then for any $t\in(0,T)$ satisfying $0<\lambda_2t<T$, by \eqref{2307041106},
$$
\kappa^\circ(\lambda_2t)\leq \kappa^\circ(\lambda_1t)\leq s^\circ_{\kappa^\circ}(\lambda_1)\kappa^\circ(t).
$$
This implies that $s^\circ_{\kappa^\circ}$ is decreasing on $\bR_+$.
Since $s_{\kappa^\circ}^\circ$ is decreasing on $\bR_+$,
$$
\kappa(t)\leq \kappa^\circ(s)s_{\kappa^\circ}^\circ(t/s)\leq\kappa^\circ(s)s_{\kappa^\circ}^\circ(1)=\kappa(s)\,,\quad 0<s\leq t<\infty. 
$$
Therefore, $\kappa$ is decreasing on $\bR_+$.
The lemma is proved.
\end{proof}

With the help of Lemma \ref{2307061156}, the following proposition allows us to find an extension $\kappa: \bR_+ \to \bR_+$ of $\kappa^{\circ}: (0, T) \to \bR_+$ such that $\kappa \in \cI_o(-1, 0)$.

\begin{lem}
\label{23.08.11.15.13}
Let $\kappa^\circ:(0,T)\to \bR_+$ be a right-continuous decreasing function.
The following are equivalent.
\begin{enumerate}[(i)]
\item There exists a right-continuous extension $\kappa:\bR_+\to\bR_+$ of $\kappa^{\circ}:(0,T)\to\bR_+$ such that $\kappa\in\cI_o(-1,0)$,
    $$
    \kappa(t)=\kappa^{\circ}(t)\quad \forall t\in(0,T)\quad \textrm{and}\quad s_\kappa(\lambda) \simeq s^\circ_{\kappa^\circ}(\lambda) \quad \forall \lambda \in (0, \infty).
    $$

\item There exists $\varepsilon\in(0,1)$ such that
    \begin{equation}
    \label{23.08.11.15.33}
    \lambda^{-1+\varepsilon}\lesssim \frac{\kappa^{\circ}(\lambda t)}{\kappa^{\circ}(t)}\lesssim \lambda^{-\varepsilon}\quad \forall\lambda\in[1,\infty),\, 0 < t \le \lambda t < T.
        \end{equation}
\end{enumerate}
\end{lem}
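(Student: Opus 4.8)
The plan is to prove the two implications separately, using Lemma~\ref{2307061156} for the construction of the extension and parts $(v)$, $(vi)$ of Lemma~\ref{22.10.26.15.52} to pass between scaling inequalities and membership in the classes $\cI(a,b)$, $\cI_o(a,b)$. The direction $(i)\Rightarrow(ii)$ is immediate: if $\kappa\in\cI_o(-1,0)$ extends $\kappa^\circ$, then Lemma~\ref{22.10.26.15.52}-$(vi)$ with $a=-1$, $b=0$ yields $\varepsilon>0$ with $\lambda^{-1+\varepsilon}\lesssim \kappa(\lambda t)/\kappa(t)\lesssim\lambda^{-\varepsilon}$ for all $t>0$ and $\lambda\geq1$; after shrinking $\varepsilon$ we may assume $\varepsilon\in(0,1)$, and restricting to $t$ with $0<t\le\lambda t<T$ (so that $t,\lambda t\in(0,T)$, hence $\kappa=\kappa^\circ$ at both points) gives exactly $(ii)$.

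For $(ii)\Rightarrow(i)$, the first step is to deduce from $(ii)$ that $s^\circ_{\kappa^\circ}(\lambda)<\infty$ for every $\lambda>0$, with the quantitative rates $s^\circ_{\kappa^\circ}(\lambda)\lesssim\lambda^{-\varepsilon}$ for $\lambda\geq1$ and $s^\circ_{\kappa^\circ}(\lambda)\lesssim\lambda^{-1+\varepsilon}$ for $0<\lambda\leq1$. When $\lambda\geq1$ the supremum defining $s^\circ_{\kappa^\circ}(\lambda)$ runs over $0<t<\lambda^{-1}T$, i.e.\ over $t$ with $0<t\le\lambda t<T$, so $(ii)$ applies directly and bounds the ratio by $\lesssim\lambda^{-\varepsilon}$. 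When $0<\lambda<1$ the supremum runs over all $0<t<T$; writing $\mu:=1/\lambda>1$ and substituting $s:=\lambda t$ (so $t=\mu s$ and $0<s\le\mu s=t<T$) turns $\kappa^\circ(\lambda t)/\kappa^\circ(t)$ into $\kappa^\circ(s)/\kappa^\circ(\mu s)$, and the \emph{lower} bound in $(ii)$, namely $\kappa^\circ(\mu s)/\kappa^\circ(s)\gtrsim\mu^{-1+\varepsilon}$, gives $\kappa^\circ(s)/\kappa^\circ(\mu s)\lesssim\mu^{1-\varepsilon}=\lambda^{-1+\varepsilon}$. Taking suprema yields the asserted bounds, in particular finiteness everywhere.

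With $s^\circ_{\kappa^\circ}(\lambda)<\infty$ for all $\lambda$, Lemma~\ref{2307061156} applies and produces the decreasing extension $\kappa$ defined in \eqref{2307061157}, which satisfies $s_\kappa(\lambda)=s^\circ_{\kappa^\circ}(\lambda)$ for all $\lambda>0$; the bounds of the previous paragraph then say $s_\kappa(\lambda)=O(\lambda^{-1+\varepsilon})$ as $\lambda\to0$ and $s_\kappa(\lambda)=O(\lambda^{-\varepsilon})$ as $\lambda\to\infty$, i.e.\ $\kappa\in\cI(-1+\varepsilon,-\varepsilon)$, whence $\kappa\in\cI_o(-1,0)$ by Lemma~\ref{22.10.26.15.52}-$(v)$ since $-1<-1+\varepsilon$ and $-\varepsilon<0$. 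To secure right-continuity I would pass to $t\mapsto\kappa(t+)$: it agrees with $\kappa^\circ$ on $(0,T)$ because $\kappa^\circ$ is right-continuous there, remains decreasing, satisfies $s_{\kappa(\cdot+)}(\lambda)\le s_\kappa(\lambda)$ by letting $\delta\downarrow0$ in $\kappa(\lambda(t+\delta))/\kappa(t+\delta)\le s_\kappa(\lambda)$, and conversely $s_{\kappa(\cdot+)}(\lambda)\ge s^\circ_{\kappa^\circ}(\lambda)$ since the truncated supremum only involves arguments in $(0,T)$; hence $s_{\kappa(\cdot+)}(\lambda)=s^\circ_{\kappa^\circ}(\lambda)$ is unchanged and $\kappa(\cdot+)\in\cI_o(-1,0)$, giving $(i)$.

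I expect the only genuinely delicate point to be the range $0<\lambda<1$ in the second paragraph: hypothesis $(ii)$ controls only $\lambda\geq1$, so one must flip the two arguments by the substitution $s=\lambda t$ and — crucially — use the \emph{lower} bound in $(ii)$ to dominate the \emph{upper} ratio defining $s^\circ_{\kappa^\circ}$ on $(0,1)$; this is what forces the constraint $0<t\le\lambda t<T$ in $(ii)$ to be two-sided and what makes the hypothesis exactly strong enough. Everything else is bookkeeping with Lemmas~\ref{2307061156} and \ref{22.10.26.15.52}.
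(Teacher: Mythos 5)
Your proposal is correct and follows essentially the same route as the paper: both directions rest on Lemma~\ref{22.10.26.15.52}-$(v)$,$(vi)$, the extension is produced via Lemma~\ref{2307061156}, and right-continuity is restored by replacing $\kappa$ with its right limit while checking that $s_\kappa$ is essentially unchanged. Your explicit verification that $s^\circ_{\kappa^\circ}(\lambda)<\infty$ for $0<\lambda<1$ (via the substitution $s=\lambda t$ and the lower bound in \eqref{23.08.11.15.33}) spells out a step the paper's proof leaves implicit when it invokes \eqref{2307061155}.
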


\begin{proof}
Suppose $(\textit{i})$ holds.
Then due to Lemma \ref{22.10.26.15.52}-$(vi)$, \eqref{23.08.11.15.33} holds.
Conversely, if \eqref{23.08.11.15.33} holds, then since $\kappa^{\circ}$ satisfies \eqref{2307061155}, by Lemma \ref{2307061156}, the extension $\kappa$ (defined in \eqref{2307061157}) exists satisfying \eqref{230608938}.
    There is no guarantee that the extension $\kappa$ of $\kappa^{\circ}$ in \eqref{2307061157} is right-continuous on $[T,\infty)$.
For the right continuity, we put
$$
\kappa_{\mathrm{\,r.c.}}(t):=\lim_{s\searrow t}\kappa(s)=\lim_{\lambda\nearrow 1}\kappa(t/\lambda).
$$
Then $\kappa_{\mathrm{\,r.c.}}$ is right-continuous, decreasing and an extension of $\kappa^{\circ}$.
Moreover, we have
$$
\left(\lim_{\lambda\nearrow 1}s_{\kappa}(\lambda)\right)^{-1}\kappa(t)\leq \kappa_{\mathrm{\,r.c.}}(t)\leq \kappa(t),
$$
(note that $\lim_{\lambda\nearrow 1}s_{\kappa}(\lambda)<\infty$) which implies that
$$
s_{\kappa_{\mathrm{r.c.}}}\simeq s_{\kappa}=s^\circ_{\kappa^\circ}.
$$
This implies that for $0<\lambda<1$,
    $$
    \frac{s_{\kappa_{\mathrm{r.c.}}}(\lambda)}{\lambda^{-1}}=\frac{s_{\kappa^{\circ}}^{\circ}(\lambda)}{\lambda^{-1}}\lesssim\lambda^{\varepsilon},
    $$
    thus $s_{\kappa_{\mathrm{r.c.}}}(\lambda)=o(\lambda^{-1})$ as $\lambda\to0$.
    Similarly, $s_{\kappa_{\mathrm{r.c.}}}(\lambda)=o(1)$ as $\lambda\to\infty$.
    Therefore, $\kappa_{\mathrm{r.c.}}\in\cI_o(-1,0)$.
    The lemma is proved.
\end{proof}

We recall the definition and properties of $A_p\left( \left( 0, T \right) \right)$.
For $p\in (1,\infty)$, by $A_p\left( \left(0, T \right) \right)$ we denote the set of all locally integrable function $w^\circ:(0,T)\rightarrow (0,\infty]$ satisfying
\begin{align}
\label{eq230925_01}
\sup_{0\leq a<b\leq T}\left(\frac{1}{b-a}\int_a^bw^\circ(t)\dd t\right)\left(\frac{1}{b-a}\int_a^bw^\circ(t)^{-\frac{1}{p-1}}\dd t\right)^{p-1}<\infty.
\end{align}
By $A_1\left( \left( 0,T \right) \right)$ we denote the set of all locally integrable function $w^\circ:(0,T)\rightarrow (0,\infty]$ satisfying
\begin{align*}
M(w^\circ\mathbbm{1}_{(0,T)})\lesssim w\quad \textrm{on} \quad (0,T),
\end{align*}
where $\cM$ is the maximal operator defined in Definition~\ref{23.08.08.16.02}.
The following properties are introduced in \cite[Propositions 2.9, 2.10 and Lemma 2.12]{KurkiMudarra2022}.
\begin{enumerate}[(1)]
	\item For $p\geq 1$, if $w^\circ\in A_p\left( \left(0, T \right) \right)$, then there exists $\gamma_0>0$ such that for any $\gamma\in (0,\gamma_0)$, $(w^\circ)^{1+\gamma}\in A_p\left( \left(0, T \right) \right)$.
	
	\item For $p\in (1,\infty)$, $w^\circ\in A_p\left( \left(0, T \right) \right)$ if and only if there exists $v_1,\,v_2\in A_1\left( \left(0, T \right) \right)$ such that $w^\circ=v_1\,v_2^{1-p}$.
	
	\item For $f\in L_{1,\mathrm{loc}}\left( \left(0, T \right) \right)$, if $0<\cM(f\mathbbm{1}_{(0,T)})<\infty$ a.e. on $\bR$, then for any $\varepsilon\in(0,1)$, $\left( \cM \left( f\mathbbm{1}_{\left( 0,T \right)} \right)\right)^\varepsilon\in A_1\left( \left(0, T \right) \right)$.
\end{enumerate}
Indeed, the 'if' part in the second property is not provided in \cite[Propositions 2.9]{KurkiMudarra2022}, but it follows from direct calculations.
Obviously, the above properties also hold even for $\bR$ instead of $(0,T)$ (see \cite[Chapter 9]{grafakos2014modern}).

\begin{lem}\label{230921600}
    Let $p \in (1, \infty)$ and $w^\circ\in A_p\left( \left(0, T \right) \right)$.
    Then for any small enough $\varepsilon>0$, there exists $w_\varepsilon\in A_p = A_p(\bR)$ such that
    $$
w_\varepsilon=w^\circ \quad \textrm{on} \quad (0,T)\quad \textrm{and}\quad w_\varepsilon(t)\simeq |t|^{-1+\varepsilon} \quad \textrm{if}\quad |t|\geq T+1.
    $$
\end{lem}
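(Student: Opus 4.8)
The plan is to reduce the statement to an extension property for $A_1$ weights via Jones' factorization, and to realise the $A_1$-extensions concretely through the Coifman--Rochberg-type device furnished by property~$(3)$ above.

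First I would apply property~$(2)$ on $(0,T)$ to write $w^\circ=v_1v_2^{1-p}$ with $v_1,v_2\in A_1((0,T))$. Two elementary facts about any $v\in A_1((0,T))$ will be used: testing the defining condition against the interval $(0,T)$ itself gives $v(t)\gtrsim \tfrac1T\int_0^Tv(s)\,\mathrm{d}s$ for a.e.\ $t$, so $v$ is bounded below by a positive constant and, since $v<\infty$ a.e., $v\in L_1((0,T))$; and, by property~$(1)$, there is $\gamma_0=\gamma_0(v)>0$ with $v^{1+\gamma}\in A_1((0,T))\subset L_1((0,T))$ for $0<\gamma<\gamma_0$.

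The key step is the following extension claim. Let $v\in A_1((0,T))$ and let $\varepsilon>0$ be small enough that $\varepsilon/(1-\varepsilon)<\gamma_0(v)$. Put $h:=v^{1/(1-\varepsilon)}\mathbbm{1}_{(0,T)}$; then $h=v^{1+\varepsilon/(1-\varepsilon)}\mathbbm{1}_{(0,T)}\in L_1(\bR)$, so $0<\cM h<\infty$ a.e., and $(\cM h)^{1-\varepsilon}\in A_1(\bR)$ by property~$(3)$ (in its $\bR$-version). Since $h\in A_1((0,T))$, comparing $\cM$ with the maximal operator restricted to $(0,T)$ gives $\cM h\simeq h$ on $(0,T)$, while $\cM h(t)\simeq \|h\|_{L_1(\bR)}/|t|$ for $|t|\ge T+1$ (using that this set stays at distance $\ge1$ from $\mathrm{supp}\,h\subset(0,T)$). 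Hence $(\cM h)^{1-\varepsilon}\simeq v$ on $(0,T)$ and $(\cM h)^{1-\varepsilon}\simeq |t|^{-1+\varepsilon}$ on $\{|t|\ge T+1\}$. Defining $V:=v$ on $(0,T)$ and $V:=(\cM h)^{1-\varepsilon}$ off $(0,T)$ produces a weight comparable on all of $\bR$ to the $A_1(\bR)$ weight $(\cM h)^{1-\varepsilon}$; as a weight comparable to an $A_1$ weight is again $A_1$, we get $V\in A_1(\bR)$ with $V=v$ exactly on $(0,T)$ and $V\simeq |t|^{-1+\varepsilon}$ for $|t|\ge T+1$. A small variant produces an extension of $v_2$ that is comparable to a constant at infinity: with $h_2:=v_2^{1/\delta}\mathbbm{1}_{(0,T)}$ for some $\delta\in(0,1)$ satisfying $1/\delta<1+\gamma_0(v_2)$ and with $c_0:=\inf_{(0,T)}v_2>0$, set $V_2:=v_2$ on $(0,T)$ and $V_2:=(\cM h_2)^{\delta}+c_0$ off $(0,T)$; since a positive constant and $(\cM h_2)^{\delta}$ are $A_1(\bR)$ weights, and $A_1(\bR)$ is stable under addition and under passage to comparable weights, $V_2\in A_1(\bR)$, $V_2=v_2$ on $(0,T)$, and $V_2\simeq1$ on $\{|t|\ge T+1\}$.

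Finally I would apply the extension claim to $v_1$, obtaining $V_1\in A_1(\bR)$ with $V_1=v_1$ on $(0,T)$ and $V_1\simeq|t|^{-1+\varepsilon}$ at infinity, take $V_2$ as above, and set $w_\varepsilon:=V_1V_2^{1-p}$. By the easy ``if'' direction of the factorization in property~$(2)$ (on $\bR$), $w_\varepsilon\in A_p(\bR)$; on $(0,T)$ it equals $v_1v_2^{1-p}=w^\circ$; and for $|t|\ge T+1$ it is $\simeq|t|^{-1+\varepsilon}\cdot1^{1-p}=|t|^{-1+\varepsilon}$. The main obstacle is the construction of the $A_1(\bR)$-extension with a \emph{prescribed} decay rate: the device $(\cM h)^{\theta}$ only yields far-field decay $|t|^{-\theta}$ with the admissible $\theta$ forced away from $1$ by the reverse-H\"older exponent $\gamma_0(v_1)$, and it is exactly this constraint (that $\theta=1-\varepsilon$ be admissible) that dictates the hypothesis ``$\varepsilon$ small enough''; one also has to carry out the routine check that $\cM h\simeq|t|^{-1}$ holds throughout $\{|t|\ge T+1\}$ and not merely asymptotically.
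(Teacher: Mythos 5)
Your proposal is correct and takes essentially the same route as the paper's proof: Jones factorization into $A_1$ pieces, Coifman--Rochberg powers $(\cM h)^{1-\varepsilon}$ of maximal functions of suitably boosted factors, with the $A_1$/$A_p$ self-improvement (property (1)) supplying exactly the room needed to make the exponent $1-\varepsilon$ admissible. The only difference is bookkeeping: you factor $w^\circ$ first and apply the openness property to each $A_1$ factor separately (and tame $v_2$ at infinity by adding the constant $c_0$), whereas the paper raises $w^\circ$ to the power $1+\gamma$ before factoring and replaces $v_2$ by $\overline{v}_2=v_2\mathbbm{1}_{(0,T)}+\mathbbm{1}_{\bR\setminus(0,T)}$; both assemblies produce the same weight up to constants.
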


\begin{proof}
We use the aforementioned properties of $A_p\left( \left( 0, T \right) \right)$ (properties $(1)$--$(3)$ right above this lemma). 

Since $w^\circ\in A_p\left( \left( 0,T \right)\right)$, there exists $\gamma_0$ such that $(w^\circ)^{1+\gamma}\in A_p\left( \left(0, T \right) \right)$ for any $\gamma \in (0, \gamma_0)$.
Then for any $\gamma\in (0,\gamma_0)$, there exists $v_1,\,v_2\in A_1\left( \left(0, T \right) \right)$ such that
\begin{align}\label{230921558}
(w^\circ)^{1+\gamma}=v_1\,v_2^{1-p}.
\end{align}
Direct calculation implies that
\begin{align}\label{230921557}
\cM(v_1)\simeq v_1\quad \textrm{on}\quad (0,T)\quad \textrm{and}\quad \cM(v_1)(t)\lesssim |t|^{-1}\quad \textrm{if}\quad |t|\geq T+1\,,
\end{align}
In addition, we obtain that for $\overline{v}_2:=v_2\mathbbm{1}_{(0,T)}+\mathbbm{1}_{\bR\setminus (0,T)}$,
\begin{align}\label{230921556}
\cM(\overline{v}_2)\simeq v_2\quad \textrm{on}\quad (0,T)\quad \textrm{and}\quad \cM(v_2)\simeq 1\quad \textrm{on}\quad (T+1,\infty).
\end{align}
Here, the first inequality is due to the fact that if $t\in (0, T)$, then
\begin{align*}
\cM\big(\overline{v}_2\big)(t)\leq \cM\big(v_2\mathbbm{1}_{(0,T)}\big)(t)+1 \lesssim \cM\big(v_2\mathbbm{1}_{(0,T)}\big)(t)+\frac{1}{T}\int_0^Tv_2(s)\dd s\lesssim \cM\big(v_2\big)(t).
\end{align*}
The second inequality is due to that if $|t|\geq T+1$ and $a\leq t\leq b$, then
\begin{align*}
\frac{1}{b-a}\int_a^b\overline{v}_2(s)\dd s&\leq \frac{1}{b-a}\int_a^bv_2(s)\mathbbm{1}_{(0,T)}\dd s+1\lesssim \int_0^Tv_2(s)\dd s+1\lesssim 1.
\end{align*}
In addition, one can observe that $\cM(v_1)$ and $\cM(v_2)$ are finite almost everywhere, so that $\cM(v_1)^{1/(1+\gamma)}$ and  $\cM(v_2)^{1/(1+\gamma)}$ are $A_1(\bR)$-weights (see \cite[Theorem 9.2.7]{grafakos2014modern}).

Let $\varepsilon<\frac{\gamma_0}{1+\gamma_0}$ and put $\gamma:=\frac{\varepsilon}{1+\varepsilon}<\gamma_0$, so that $1/(1+\gamma)=1-\varepsilon$.
For this $\gamma$ and $v_1$, $v_2$ in the above, we set
\begin{align*}
W_\varepsilon:=\left[\cM \left(v_1 \right) \left(\cM \left(v_2 \right) \right)^{1-p}\right]^{1-\varepsilon}= \cM \left(v_1 \right)^{1-\varepsilon} \left[ \cM \left(v_2 \right)^{1-\varepsilon}\right]^{1-p}.
\end{align*}
Then $W_\varepsilon\in A_p(\bR)$, since $\cM(v_1)^{1/(1+\gamma)},\,\cM(v_2)^{1/(1+\gamma)}\in A_1(\bR)$.
Moreover, 
$$
W_\varepsilon\simeq w^\circ \quad \textrm{on}\quad (0,T)\quad \textrm{and}\quad W_\varepsilon(t)\simeq t^{-1+\varepsilon}\quad \textrm{if}\quad |t|\geq T+1\,,
$$
due to \eqref{230921558}, \eqref{230921557}, and \eqref{230921556}.
By taking
$$
w_\varepsilon=w^\circ\mathbbm{1}_{(0,T)}+W_\varepsilon\mathbbm{1}_{\bR\setminus (0,T)},
$$
the lemma is proved.

\end{proof}

Since we assume that $X_0 \subset X_1$ in Section \ref{23.08.09.13.27}, the explicit form of the extended function $\left( W\circ \kappa^\ast \right) \left( t \right)$ in the following proposition for large $t$ is not used in proving Theorem \ref{2307111235}, and $W\circ \kappa^\ast \in \cI_o(0, p)$ is sufficient.

\begin{prop}\label{230925614}
Let $p\in(1,\infty)$, $w^{\circ} \in A_p\left( \left( 0, T \right) \right)$ and  $\kappa^{\circ}:(0,T)\rightarrow \bR_+$ be a right-continuous decreasing function such that
\begin{equation}
	\label{eq230926_01}
\lambda^{-1+\varepsilon}\lesssim \frac{\kappa^{\circ}(\lambda t)}{\kappa^{\circ}(t)}\lesssim \lambda^{-\varepsilon},\quad \forall \,\, 0 < t \le \lambda t < T
\end{equation}
and
\begin{align}\label{230925628}
\lambda^\delta\lesssim\frac{W^\circ\big({\kappa^{\circ}}^\ast(\lambda t)\big)}{W^\circ\big({\kappa^{\circ}}^\ast(t)\big)}\lesssim \lambda^{p-\delta}, \quad \forall  \, 0<t<\lambda t<\frac{1}{\kappa(T)}
\end{align}
for some $\varepsilon$, $\delta > 0$, where 
$$
W^\circ(t):=\int_0^t w(s)\dd s\quad \textrm{and}\quad  {\kappa^{\circ}}^\ast(t):=\big(\kappa^\circ\mathbbm{1}_{(0,T)}\big)^\ast(t):=\big(\kappa^\circ\mathbbm{1}_{(0,T)}\big)(1/t).
$$
Then there are extensions $w \in A_p(\bR)$ and $\kappa\in \cI_o(-1,0)$ of $w^{\circ}$ and of $\kappa^{\circ}$ such that $\kappa$ is right-continuous and decreasing, and $W\circ \kappa^\ast\in \cI_o(0,p)$, where $W(t):=\int_0^tw(s)\dd s$.
\end{prop}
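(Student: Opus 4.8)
The plan is to build the two extensions separately and then check the single non-formal assertion, namely $W\circ\kappa^\ast\in\cI_o(0,p)$. For the kernel, note that hypothesis \eqref{eq230926_01} is exactly condition $(ii)$ of Lemma~\ref{23.08.11.15.13}; applying that lemma produces a right-continuous decreasing extension $\kappa:\bR_+\to\bR_+$ of $\kappa^\circ$ with $\kappa\in\cI_o(-1,0)$ and $s_\kappa\simeq s^\circ_{\kappa^\circ}$. For the weight, I would invoke Lemma~\ref{230921600}: for every sufficiently small $\varepsilon'>0$ it furnishes $w_{\varepsilon'}\in A_p(\bR)$ with $w_{\varepsilon'}=w^\circ$ on $(0,T)$ and $w_{\varepsilon'}(t)\simeq|t|^{-1+\varepsilon'}$ for $|t|\ge T+1$; I set $w:=w_{\varepsilon'}$ with $\varepsilon'$ to be pinned down at the end and $W(t):=\int_0^t w$. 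This tail is chosen deliberately: a faster-growing weight would push $W\circ\kappa^\ast$ out of $\cI_o(0,p)$, whereas the exponent $-1+\varepsilon'$ keeps $W(s)\simeq s^{\varepsilon'}$ for large $s$.

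The core is to show $G:=W\circ\kappa^\ast\in\cI_o(0,p)$, and I would split $\bR_+$ at a small cut-off $t_0:=1/\kappa(T)$ and a large cut-off $t_1$. Near $0$: since $\kappa=\kappa^\circ$ on $(0,T)$ and $\kappa$ is decreasing with $\kappa(0+)=\infty$, one has $\kappa^\ast(t)=\kappa^{-1}(t^{-1})={\kappa^{\circ}}^\ast(t)\le T$ for $t<t_0$, and $W=W^\circ$ on $(0,T]$, so $G(t)=W^\circ\big({\kappa^{\circ}}^\ast(t)\big)$ there; then \eqref{230925628} states precisely that $\lambda^\delta\lesssim G(\lambda t)/G(t)\lesssim\lambda^{p-\delta}$ whenever $0<t<\lambda t<t_0$. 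Near $\infty$: from $\kappa\in\cI_o(-1,0)$, Remark~\ref{23.03.02.15.47} gives $\kappa^\ast\simeq\psi$ for the function $\psi$ of \eqref{23.03.02.15.43}, and Lemma~\ref{22.10.26.15.52}-$(iii),(iv)$ then yield $\psi\in\cI(\sigma_1,\sigma_2)$ for some $1\le\sigma_1\le\sigma_2<\infty$, while $w(s)\simeq s^{-1+\varepsilon'}$ for $s\ge T+1$ gives $W(s)\simeq s^{\varepsilon'}$ for large $s$; hence $G(t)\simeq\kappa^\ast(t)^{\varepsilon'}$ for large $t$, so $\lambda^{\varepsilon'\sigma_1}\lesssim G(\lambda t)/G(t)\lesssim\lambda^{\varepsilon'\sigma_2}$ for $t,\lambda t$ large. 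On the compact range $[t_0,t_1]$ the positive increasing function $G$ has ratios trapped between positive constants. I would then fix $\varepsilon'$ small enough that $\varepsilon'\sigma_2<p-\delta$ (and below the threshold of Lemma~\ref{230921600}), patch the three regimes using the monotonicity of $G$ and the submultiplicativity of $s_G$, and conclude $G\in\cI(\eta,p-\delta)$ with $\eta:=\min(\delta,\varepsilon'\sigma_1)>0$; since $0<\eta$ and $p-\delta<p$, Lemma~\ref{22.10.26.15.52}-$(v)$ gives $W\circ\kappa^\ast=G\in\cI_o(0,p)$.

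I expect the main obstacle to be the patching across ``mixed'' scales: bounding $G(\lambda t)/G(t)$ when $t$ lies near $0$ but $\lambda t$ is large, or the reverse. There I would write the ratio as a product through the two cut-offs $t_0$ and $t_1$, estimate each factor by the relevant power law, and verify that the exponents recombine to stay inside $(0,p)$ --- this is exactly where the freedom to shrink $\varepsilon'$ is indispensable (so that $\varepsilon'\sigma_2<p-\delta$) and where one must use the \emph{global} power-type scaling of the generalized inverse $\kappa^\ast$, not merely its behaviour near $0$. A secondary technical point, that $w^\circ$ and $\kappa^\circ$ are controlled only on $(0,T)$, is harmless: it is absorbed into \eqref{eq230926_01}, \eqref{230925628}, and the two extension lemmas.
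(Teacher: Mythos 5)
Your proposal is correct and follows essentially the same route as the paper's proof: extend $\kappa^\circ$ via Lemma \ref{23.08.11.15.13}, extend $w^\circ$ via Lemma \ref{230921600} with a tail $|t|^{-1+\varepsilon'}$ for $\varepsilon'$ chosen small at the end, and verify the scaling of $W\circ\kappa^\ast$ in the three regimes (both arguments small, both large, mixed), concluding with Lemma \ref{22.10.26.15.52}-$(v)$. The only cosmetic differences are that the paper routes through $\psi=1/\phi^{-1}(1/\cdot)\simeq\kappa^\ast$ (which forces it to shrink the small cut-off from $1/\kappa(T)$ to some $c_0$, a point worth keeping in mind when you use $t_0=1/\kappa(T)$ directly) and estimates the mixed regime by a direct computation using $1/\lambda\lesssim t\lesssim 1$ rather than by factoring the ratio through the cut-off, but both hinge on the same smallness condition on $\varepsilon'$.
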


\begin{proof}
Due to Lemma \ref{23.08.11.15.13}, there exists an extension of $\kappa^\circ$, denoted by $\kappa$.
Note that $\kappa \in \cI(-1+\varepsilon,-\varepsilon)$ by definition of the class $\cI(-1+\varepsilon,-\varepsilon)$ and the assumption \eqref{eq230926_01} (and \eqref{230608938}).
For this $\kappa$, let $\phi(\lambda):=\lambda\int_0^\infty\mathrm{e}^{-\lambda t}\kappa (t) \dd t$ which is well-defined.
Due to Proposition~\ref{prop0129_01} and Remark~\ref{23.03.02.15.47},
\begin{align}\label{230925629}
\phi(\lambda)\simeq \kappa(\lambda^{-1})\quad \textrm{and}\quad \psi(\lambda):=\frac{1}{\phi^{-1}(1/\lambda)}\simeq \kappa^{\ast}(\lambda)\quad \textrm{for all}\quad \lambda>0\,,
\end{align}
which implies that $\phi\in \cI(\varepsilon,1-\varepsilon)$, and therefore $\psi\in \cI(\frac{1}{1-\varepsilon},\frac{1}{\varepsilon})$.
In addition, from \eqref{230925628} and \eqref{230925629}, one can observe that there exists small enough $c_0>0$ such that 
\begin{align}\label{230925637}
\lambda^\delta\lesssim \frac{W^\circ\left( \psi \left( \lambda t \right) \right)}{W^\circ\left( \psi \left( t \right) \right)}\lesssim \lambda^{p-\delta}\quad \textrm{for all}\quad 0<t\leq \lambda t<c_0.
\end{align}
On the other hand, since $w^{\circ} \in A_p\left( \left( 0,T \right) \right)$, there exists $\varepsilon'>0$ such that 
\begin{align*}
	\lambda^{\varepsilon'}\lesssim\frac{W^\circ(\lambda t)}{W^\circ(t)}\lesssim \lambda^{p-\varepsilon'},\quad \forall \, 0<t<\lambda t<T. 
\end{align*}
We can take $\varepsilon'$ to be small enough such that $\varepsilon'<\varepsilon \delta/2$ and then by applying Proposition~\ref{230921600} for this $\varepsilon'$ (instead of $\varepsilon$), we have an extension $w = w_{\varepsilon'} \in A_p(\bR)$ of $w^{\circ}$ such that
$$
W(t)\simeq |t|^{\varepsilon'}\quad \textrm{if} \quad t\geq T+1.
$$
Since $\psi\simeq \kappa^\ast$ and $W\in\cI_o(0,p)$ and due to Lemma \ref{22.10.26.15.52}-$(vi)$, the proof is completed if we prove that for some $\overline{\varepsilon}>0$,
$$
\lambda^{\overline{\varepsilon}}\lesssim \frac{W\left( \psi \left( \lambda t \right) \right)}{W\left( \psi \left( \lambda t \right) \right)}\lesssim \lambda^{p-\overline{\varepsilon}}\quad \textrm{for all} \quad 0<t\leq \lambda t<\infty.
$$
If $0<t\leq \lambda t\leq c_0$, there is nothing to prove because of \eqref{230925637}.
If $0<t\leq c_0\leq \lambda t$, then
$$
\frac{W\left( \psi \left( \lambda t \right) \right)}{W\left( \psi \left( t \right) \right)}\simeq \frac{\left( \psi \left( \lambda t \right) \right)^{\varepsilon'}}{W^\circ\left( {\psi} \left( t \right) \right)}
$$
so that
\begin{align}
\label{230921758}
\lambda^{\varepsilon'/(1-\varepsilon)}\lesssim \frac{\left( \psi \left( \lambda t \right) \right)^{\varepsilon'}}{\left( \psi \left( t \right) \right)^{ \varepsilon' }}\lesssim \frac{W\left( \psi \left( \lambda t \right) \right)}{W\left( \psi \left( t \right) \right)}\lesssim \frac{(\lambda t)^{\varepsilon'/\varepsilon}}{t^{p-\delta}}\lesssim \lambda^{p-\delta/2}\,,
\end{align}
where the last inequality is due to $1/\lambda\lesssim t\lesssim 1$ and $\varepsilon' < \varepsilon \delta /2$.
If $c_0 \leq t\leq \lambda t$, then
\begin{align}\label{230921759}
\lambda^{\varepsilon'/(1-\varepsilon)}\lesssim \frac{W\left( \psi \left( \lambda t \right) \right)}{W\left( \psi \left( t \right) \right)}\simeq \frac{\left( \psi \left( \lambda t \right) \right)^{\varepsilon'}}{\left( \psi \left( t \right)\right)^{\varepsilon'}}\lesssim \lambda^{\delta/2},
\end{align}
where the last inequality is due to $\varepsilon' < \varepsilon \delta /2$ and $\lambda \ge 1$.
Consequently, due to \eqref{230925637}--\eqref{230921759}, we have $W\circ \psi \in\cI_o(0,p)$, and the proof is completed.
\end{proof}

\bibliographystyle{acm}
\bibliography{TRACEreference}

\end{document}